\newcommand{\bd}{\bar{\Delta}}
\newcommand{\td}{\bar{\Delta}}
\newcommand{\ch}{\mathrm{ch}}
\newcommand{\glt}{\widetilde{\mathrm{GL}^+(2,\mathbb R)}}
\newcommand{\pkp}{\mathrm{P}\big(\mathrm{K}_{\mathbb R}(\textbf {P}^2)\big)}
\newcommand{\mssq}{\mathfrak M^{s}_{\sigma_{s,q}}}
\newcommand{\msq}{\mathfrak M_{\sigma_{s,q}}}
\newcommand{\mgs}{\mathfrak M^{\mathrm{s}}_{\mathrm{GM}}}
\newcommand{\msssq}{\mathfrak M^{ss}_{\sigma_{s,q}}}
\newcommand{\mss}{\mathfrak M^{\mathrm{s}}_{\sigma}}
\newcommand{\repqa}{\mathbf{Rep}\left(Q_\mathcal E, \vec n,\alpha_{\mathcal E}\right)}
\newcommand{\repqwa}{\mathbf{Rep}\left(Q_\mathcal E, \vec n_w,\alpha_{\mathcal E}\right)}
\newcommand{\repqw}{\mathbf{Rep}\left(Q_\mathcal E, \vec n_w\right)}
\newcommand{\repq}{\mathbf{Rep}\left(Q_\mathcal E, \vec n \right)}
\newcommand{\Rep}{\mathbf{Rep}}
\newcommand{\pp}{\mathbf {P}^2}
\newcommand{\Hom}{\mathrm{Hom}}
\newcommand{\Geo}{\mathrm{Geo}}
\newcommand{\ext}{\mathrm{ext}}
\newcommand{\Ext}{\mathrm{Ext}}
\newcommand{\Ker}{\mathrm{Ker}}
\newcommand{\HH}{\mathrm{H}}
\newcommand{\Coh}{\mathrm{Coh}}
\newcommand{\Arg}{\mathrm{Arg}\text{ }}
\newcommand{\Stab}{\mathrm{Stab}}
\newcommand{\Cone}{\mathrm{Cone}}
\newcommand{\lw}{L^{\text{last}}}
\newcommand{\lrw}{L^{\text{right-last}}}
\newcommand{\cccp}{\{1,\frac{\mathrm{ch}_1}{\mathrm{ch}_0},\frac{\mathrm{ch}_2}{\mathrm{ch}_0}\}\text{-plane}}
\newcommand{\vecm}{(m_1,m_2,m_3)}
\newcommand{\Epm}{E_{\left(\frac{p}{2^m}\right)}}
\newcommand{\chy}{\frac{\mathrm{ch}_1}{\mathrm{ch}_0}}
\newcommand{\che}{\frac{\mathrm{ch}_2}{\mathrm{ch}_0}}
\newcommand{\clp}{\mathrm{Cone}_{LP}}
\newcommand{\lbs}{\text{\Large$\backslash$}}
\newcommand{\lfs}{\text{\Large$/$}}
\newcommand{\lfss}{\text{\Large{//}}}
\renewcommand\appendix{\par
  \setcounter{section}{0}
  \setcounter{subsection}{0}
  \setcounter{figure}{0}
  \setcounter{table}{0}
  \renewcommand\thesection{Appendix \Alph{section}}
  \renewcommand\thefigure{\Alph{section}\arabic{figure}}
  \renewcommand\thetable{\Alph{section}\arabic{table}}
}
\newtheorem{theorem}{Theorem}[section]
\newtheorem{defn}[theorem]{Definition}
\newtheorem{prop}[theorem]{Proposition}
\newtheorem{cor}[theorem]{Corollary}
\newtheorem{lemma}[theorem]{Lemma}
\newtheorem{rem}[theorem]{Remark}
\newtheorem{pd}[theorem]{Proposition and Definition}
\title{Birational models of moduli spaces of coherent sheaves on the projective plane}
\author{Chunyi Li and Xiaolei Zhao}
\date{\today}
\begin{document}
\maketitle
\begin{abstract}
In this paper, we study the birational geometry of moduli spaces
of semistable sheaves on the
projective plane via Bridgeland stability conditions. We show that the entire MMP of their moduli spaces can be run via wall-crossing. Via a description of the walls, we give a numerical description of their movable cones, along with its chamber decomposition corresponding to minimal models. As an application, we show that for primitive vectors, all birational models corresponding to open chambers in the movable cone are smooth and irreducible.
\end{abstract}

\begin{comment}

when the character is primitive, the moduli space of Bridgeland stable objects for generic stability conditions is smooth and irreducible. One can run the minimal model program of
these moduli spaces by wall-crossing in the space of geometric stability conditions on D$^b(\pp)$. We  give a numerical criteria for the base locus decomposition walls of these
moduli spaces. As applications, we describe
the movable cone and its chamber decomposition in the minimal model program, in particular, the boundary of nef cone and movable cone.
\end{abstract}
\end{comment}

\section*{Introduction}

Birational geometry of moduli space of sheaves on surfaces has been studied a lot in recent years, see \cite{ABCH, BM1, BM2, BMW, CH, CH2, CH3, CHW, LZ, Woolf}. The milestone work in \cite{BM1,BM2} completes the whole picture for K3 surfaces. In this paper, we give a complete description for the minimal model program of the moduli space of semistable sheaves on the projective plane via wall-crossings in the space of Bridgeland stability conditions. In particular, we deduce a description of their nef cone, movable cone and the chamber decomposition for their minimal models.\\

\textbf{Geometric stability conditions on $\pp$:} The notion of \emph{stability condition} on a $\mathbb C$-linear triangulated category was first introduced in \cite{Bri07} by Bridgeland. A stability condition consists of a slicing $\mathcal P$ of semistable objects in the triangulated category and a central charge $Z$ on the Grothendieck group, which is compatible with the slicing. In particular in this paper, we consider the bounded derived category of coherent sheaves on the projective plane. A stability condition $\sigma=(Z,\mathcal P)$ is called {\em geometric} if it satisfies the support property and all sky scraper sheaves are $\sigma$-stable with the same phase, see Definition \ref{defn:stabgeo}.

The Grothendieck group K$(\pp)$ of D$^b(\pp)$ is of rank $3$ and K$_\mathbb R(\pp)(=\mathrm{K}(\pp)\otimes \mathbb R)$ is spanned by the Chern characters $\ch_0$, $\ch_1$ and $\ch_2$. Due to the work of Drezet and Le Potier, there is a {\em Le Potier cone} (see the picture below Definition \ref{defn:lpcurve}) in the space K$_\mathbb R(\pp)$, such that there exists slope stable coherent sheaves with character $w=\left(\ch_0(>0),\ch_1,\ch_2\right)\in \mathrm{K}(\pp)$ if and only if either $w$ is the character of an exceptional bundle, or it is not inside the Le Potier cone. By taking the kernel of the central charge, the space of all geometric stability conditions can be realized as a principal $\glt$-bundle over Geo$_{LP}$, which is an open region above the {\em Le Potier curve}, see Proposition \ref{thm:geostab}.  Note that the $\glt$-action does not affect the stability of objects. We will write a geometric stability condition as $\sigma_{s,q}$ with $(s,q)\in\Geo_{LP}$ indicating the kernel of its central charge. Let $\msq^{s(ss)}(w)$ be the moduli space of $\sigma_{s,q}$-(semi)stable objects in the heart Coh$_{\#s}$ of the stability condition $\sigma_{s,q}$ with character $w\in \mathrm{K}(\pp)$, we address the following questions:
\begin{enumerate}
\item For a Chern character $w$ and a geometric stability condition $\sigma_{s,q}$, when $\msssq(w)$ is non-empty?

\item How does $\msssq(w)$ change when $\sigma_{s,q}$ varies in Geo$_{LP}$?
\end{enumerate}
The first question is answered step by step in several parts of the paper. Similar to the result of Drezet and Le Potier, when the character $w$ is inside the Le Potier cone and not exceptional (see Corollary \ref{cor:regionofEstab}), there is no $\sigma_{s,q}$-semistable object with character $w$ (or $-w$) for any geometric stability condition $\sigma_{s,q}$. In other words, $\msssq(w)$ is always empty. When the character $w$ is proportional to an exceptional character (see Corollary  \ref{cor:nostabincone}), both $\msssq(w)$ and $\msssq(-w)$ are empty if and only if: the point $(1,s,q)$ and the reduced character $\tilde w$ are on different sides of the vertical line $L_{e\pm}$ for some exceptional character $e$, and  $(1,s,q)$ is below the line $L_{we}$ in the Geo$_{LP}$.

The main case of the first question is when the character is not inside the Le Potier cone.
 
\begin{theorem}[Lemma \ref{lemma:lw}, Theorem \ref{prop:lastwall}]
Let $w\in \mathrm K(\pp)$ be a character not inside the Le Potier cone, then $\msssq(w)$ is empty if and only if  $\sigma_{s,q}$ is not above $\lw_w$ or $\lrw_w$ in the $\Geo_{LP}$.
\label{thm:mainthminintro}
\end{theorem}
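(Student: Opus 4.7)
The plan is to prove the two directions of the biconditional separately, exploiting the wall-and-chamber decomposition of $\Geo_{LP}$ induced by the character $w$. Each wall corresponds numerically to a locus where the central charge $Z_{s,q}(v)$ of some potential destabilizing character $v$ becomes phase-aligned with $Z_{s,q}(w)$. By Lemma \ref{lemma:lw}, the curves $\lw_w$ and $\lrw_w$ admit an explicit numerical description; the task is to show they are exactly the ``last'' walls, beyond which semistability collapses entirely.

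For the non-emptiness direction ($\sigma_{s,q}$ above $\lw_w$ or $\lrw_w$ implies $\msssq(w)\neq\emptyset$), I would begin at the large-volume limit $q\to\infty$, where $\sigma_{s,q}$-semistability of objects of character $w$ reduces to a twisted Gieseker semistability on $\pp$. Since $w$ lies outside the Le Potier cone, the result of Drezet and Le Potier guarantees a non-empty Gieseker moduli space, so $\msssq(w)\neq\emptyset$ for $q\gg 0$. I would then trace $\sigma_{s,q}$ along a path in $\Geo_{LP}$: within a chamber the moduli space is unchanged; across a non-terminal wall, strictly semistable objects may decompose, but a generic stable point deforms to a stable object on the other side, so the moduli remains non-empty. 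Iterating along a path from the large-volume region down to the target $\sigma_{s,q}$ shows non-emptiness persists throughout the entire connected region lying above $\lw_w$ or above $\lrw_w$.

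For the emptiness direction, suppose $\sigma_{s,q}$ is above neither $\lw_w$ nor $\lrw_w$. I would argue numerically that any hypothetical $\sigma_{s,q}$-semistable object $F$ of character $w$ must admit a subobject $A\hookrightarrow F$ in $\Coh_{\#s}$ with $\mu_{\sigma_{s,q}}(A)>\mu_{\sigma_{s,q}}(F)$, contradicting semistability. The candidate characters $[A]$ that witness the walls $\lw_w$ and $\lrw_w$, combined with the classification of sub-characters available in the region below both walls, should force such a destabilization. This will reduce to a finite case analysis driven by the structure of exceptional bundles on $\pp$ and the geometry of $\Geo_{LP}$.

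Main obstacle: the emptiness direction is the harder half. Producing a concrete destabilizing subobject, rather than merely a destabilizing numerical class, requires controlling $\Hom$ and $\Ext$ groups and actually constructing a map $A\to F$; this depends on a fine understanding of the heart $\Coh_{\#s}$ and on showing that whatever candidate destabilizer is dictated by the numerics is realizable as a subobject. Reducing the a priori infinite family of potential destabilizing characters to the two curves $\lw_w$ and $\lrw_w$ will require using the exceptional collection structure on $\mathrm D^b(\pp)$ to rule out spurious walls.
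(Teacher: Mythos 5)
There is a genuine gap in your non-emptiness direction, and it sits exactly where the theorem's real content lies. Your path-tracing argument asserts that across a ``non-terminal'' wall ``a generic stable point deforms to a stable object on the other side, so the moduli remains non-empty.'' This is not a general fact --- it is precisely what fails at the last wall, where \emph{every} object of character $w$ becomes strictly semistable and nothing survives to the far side --- so invoking it for all walls strictly above $\lw_w$ presupposes the dichotomy you are trying to prove. You give no criterion distinguishing terminal from non-terminal walls, which makes the argument circular. The paper closes this gap constructively: it runs an induction on the discriminant $\bd(w)$, and at each wall writes $w=w'+(w-w')$ for the destabilizing characters, shows $\mss(w')$ and $\mss(w-w')$ are non-empty (induction plus Lemma \ref{lemma:largerslopeisstable}), proves $\chi(w',w-w')<0$ via the numerical estimate of Lemma \ref{lemma:ext1nonvanishing} --- which itself rests on the delicate geometric fact (Lemma \ref{lem:l3}) that the relevant chords of the Le Potier curve have $\frac{\ch_1}{\ch_0}$-length greater than $3$ --- and then concludes Ext$^1\neq 0$, so that Lemma \ref{lemma:extoftwostabobjsarestab} produces a stable extension on the far side of the wall. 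Your proposal contains no substitute for this mechanism, and it also does not engage with the cases where a destabilizing character is proportional to an exceptional character (where hypothesis 1 of Lemma \ref{lemma:ext1nonvanishing} fails and the paper needs separate arguments, including the replacement character $\tilde w$ in Case 3.II.2 of the proof of Theorem \ref{prop:lastwall}).

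Your emptiness direction, by contrast, is aimed at a harder target than necessary, and you correctly flag the obstacle yourself: realizing a numerically dictated destabilizer as an actual subobject $A\hookrightarrow F$ is hard. The paper never does this. Instead, Lemma \ref{lemma:lw} derives a contradiction purely from Hom-vanishings and Riemann--Roch: stability of $F$ and of the exceptional object $E$ (or $E_w(-3)[1]$) on the relevant side of the wall, together with phase comparison (Lemma \ref{lemma:slopecompare}) and Serre duality, forces $\Hom(E,F)=\Hom(E,F[2])=0$, hence $\chi(E,F)\leq 0$, contradicting the sign of $\chi(E,w)$ that defines the region (and dually $\chi(w,E_w(-3))<0$ in Case 2 of Definition \ref{def:lastwall}). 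So no subobject is ever constructed, and the ``finite case analysis'' you anticipate collapses to checking one Euler characteristic against one exceptional bundle determined by $\mathfrak R_{E_w}$. Note also that your difficulty assessment is inverted: in the paper the emptiness half is the short one, and essentially all the work is in the existence half that your proposal leaves unsupported.
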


The notations $\lw_w$ and $\lrw_w$ are defined in Definition \ref{def:lastwall}. As we will see, the description for the last wall is equivalent to that for the effective boundary of the moduli space. This is first solved in \cite{CHW} for the $\ch_0\geq 1$ case and in \cite {Woolf} for the torsion case by studying the effective cone of the moduli space. In this paper, we reprove these results in our set-up in a different way.\\

For the second question, we have the following result:
\begin{theorem}[Theorem \ref{smoothness and irred property}, Theorem \ref{left half upper plane's main theorem in
the body}]
Let $w$ be a primitive character. The moduli space $\mssq(w)$ is  smooth and connected for any generic geometric stability condition $\sigma_{s,q}$ when it is non-empty. Any two non-empty moduli spaces $\mss(w)$ and $\mathfrak M^s_{\sigma'}(w)$ are birational to each other. The actual walls (chambers) is one-to-one corresponding to the
stable base locus decomposition walls (chambers) of the divisor
cone of $\mathfrak M^s_{GM}(w)$. In particular, one can run the whole minimal model program for $\mathfrak M^s_{GM}(w)$ via wall crossing in the space of geometric stability conditions.
\label{thm:mainthm2inintro}
\end{theorem}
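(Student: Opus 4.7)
The plan is to fix the primitive character $w$, use Theorem~\ref{thm:mainthminintro} to delimit the non-empty region for $w$ inside $\Geo_{LP}$, and then work chamber by chamber. Inside this open region the walls for $w$ are locally finite, so it suffices to compare two adjacent chambers separated by a single actual wall and iterate. The Gieseker chamber, where $\msq(w)=\mgs(w)$, serves as the base case: classical results of Le Potier (and Drezet--Le Potier) give that $\mgs(w)$ on $\pp$ is smooth, irreducible, and of the expected dimension $1-\chi(w,w)$ whenever $w$ is primitive and lies outside the Le Potier cone. Every further claim will be bootstrapped from this base case across walls.

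\textbf{Smoothness, stability equals semistability, connectedness.} For a $\sigma_{s,q}$ generic in an open chamber, I would first argue that $\sigma_{s,q}$-semistability coincides with $\sigma_{s,q}$-stability on objects of class $w$: any strictly semistable object would admit a Jordan--Holder factor whose reduced class is proportional to $w$, which by primitivity forces the factor to be a proper rescaling; the associated numerical wall equation is then violated in the open chamber. Smoothness follows from deformation theory: for $E$ a $\sigma_{s,q}$-stable object in the heart of a geometric stability condition on $\pp$, Serre duality plus the phase inequalities force $\Ext^{2}(E,E)=0$, since $K_{\pp}$ is anti-effective and $\Hom(E,E[-1])$ is forbidden by stability. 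To propagate connectedness across a wall $W$, I would isolate inside $\msssq(w)$ the open subset of objects whose Jordan--Holder filtration at $W$ is trivial, show it is dense on each side of $W$, and identify it as a common open subscheme of the two adjacent stable moduli spaces.

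\textbf{Birationality and the wall correspondence.} The density argument above immediately yields birationality between two adjacent $\mss(w)$ and $\mathfrak M^{s}_{\sigma'}(w)$, and iteration over the locally finite wall structure extends this to arbitrary pairs inside the non-empty region. For the correspondence with the stable base locus decomposition of $\mgs(w)$, I would invoke the Bayer--Macri positivity construction: each $\sigma\in\Geo_{LP}$ determines a numerical divisor class $\ell_{\sigma}$ on $\mss(w)$ that is nef, and strictly positive on curves whose parameterized objects remain $\sigma$-stable. This yields a map from stability chambers to chambers of the movable cone; injectivity would come from producing, at each actual wall, an explicit contracted curve class (via the Jordan--Holder decomposition of a strictly semistable object on the wall), and surjectivity together with the identification of the effective boundary would use Theorem~\ref{thm:mainthminintro} to match the walls $\lw_{w}$ and $\lrw_{w}$ with the effective boundary of the movable cone.

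\textbf{Main obstacle.} The hardest step will be establishing the full MMP correspondence: showing that each wall-crossing across an actual wall realizes precisely the elementary birational transformation (divisorial contraction, flip, or flop) predicted by the stable base locus decomposition, and that no base-locus wall is missed. This forces a case analysis of the Jordan--Holder types of destabilizing triangles on each wall, together with codimension estimates on the loci of flipping or contracted objects, matched against the contraction defined by $\ell_{\sigma}$ when $\sigma$ lies on the wall. The terminal case, where $\sigma$ approaches $\lw_{w}$ or $\lrw_{w}$ and the moduli space collapses onto the image of a divisorial contraction (or becomes empty), requires combining the positivity lemma with the numerical description of the last wall from Theorem~\ref{thm:mainthminintro} to verify that the map terminates exactly at the effective boundary.
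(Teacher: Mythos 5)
Your smoothness step contains the central gap, and it is precisely the one this paper was written to close. By Serre duality $\Ext^2(E,E)\simeq\Hom(E,E(-3))^*$, and for a slope-stable sheaf this vanishes by comparing slopes of $E$ and $E(-3)$; but for a Bridgeland-stable object the twist $E(-3)$ is in general \emph{not} $\sigma_{s,q}$-stable (geometric stability conditions are not invariant under $-\otimes\mathcal O(-3)$), and $E$ and $E(-3)$ need not even lie in a common heart with comparable phases. Your appeal to ``$K_{\pp}$ anti-effective'' carries no force here, and $\Hom(E,E[-1])=0$ controls $\Ext^{-1}$, not $\Ext^2$. The paper's Lemma \ref{lemma: ext2 vanishing for stable factors} supplies the missing vanishing $\Hom(E,F[2])=0$ for stable objects of equal phase by a genuine case analysis: when one factor is exceptional, via the quiver heart $\mathcal A_{\mathcal E}$; when the $\frac{\ch_1}{\ch_0}$-length of $L_{EF}\cap\bd_{\leq 0}$ exceeds $3$, by observing that $E(-3),F(-3)$ are stable at the intersection point of $L_{EF}$ with $L_{E(-3)F(-3)}$, where the phases \emph{can} be compared; and when the length is at most $3$, by placing $E$, $F$, $E(-3)[1]$, $F(-3)[1]$ simultaneously in one quiver heart $\mathcal A_{\mathcal E_k}$. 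Note also that the \emph{pairwise} vanishing between distinct stable factors is needed downstream (for the strictly-semistable and new-stable dimension estimates), not merely $\Ext^2(E,E)=0$ at stable points.

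Your propagation of connectedness is also circular as stated: the claim that the locus of objects remaining stable on the wall ``is dense on each side'' is, on the new side, equivalent to the assertion that no irreducible component consists entirely of new stable objects --- which is the substance of what must be proved. The paper's mechanism requires an ambient model: $\mss(w)$ is realized as a GIT quotient of $\repqa$ (Proposition \ref{prof:gitconst}), which is cut out of $\repqw$ by $n_1n_3\hom(E_1,E_3)$ equations, so \emph{every} component has at least the expected dimension; meanwhile Proposition \ref{contracting is more than producing lemma} bounds the new stable locus by expected dimension minus $3$, using $\chi(\vec n_w,\vec m)-\chi(\vec m,\vec n_w)=3\left(\ch_0(\vec n_w)\ch_1(\vec m)-\ch_0(\vec m)\ch_1(\vec n_w)\right)\geq 3$ together with the $\Ext^2$ vanishing. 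No new component can appear, and irreducibility propagates from the Gieseker chamber; your proposal has no substitute for this lower bound on component dimensions. Finally, your Bayer--Macr\`i positivity route for the wall/base-locus correspondence is a genuinely different (and in principle viable) strategy, but it presupposes projectivity of $\mss(w)$, which in this paper is itself a byproduct of the quiver GIT construction you bypass; the paper instead obtains the divisor classes from the GIT linearizations $\vec\rho_{L}$ and reads off flips, divisorial contractions and the terminal Mori fibration from VGIT (Proposition \ref{recollections from VGIT}), with the existence of contracted curves at actual walls again resting on the codimension estimate above rather than on a positivity lemma.
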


The smoothness result can be proved easily for moduli of slope stable sheaves. However, for Bridgeland stable objects, they may not remain stable after twisting by $\mathcal O(-3)$. The key point is to develop a method to compare slopes with respect to different Bridgeland stability conditions, and conclude the vanishing of Hom group. This is achieved first in \cite{LZ}, and generalized to the current situation in Section 2. The following consequence seems new to the theory of MMP of moduli of sheaves.

\begin{cor}
Let $w\in \mathrm{K}(\pp)$ be a primitive character not inside the Le Potier cone, then each minimal model of $\mathfrak M^s_{GM}(w)$ corresponding to an open chamber in the movable cone is smooth.
\label{cor:mmsmooth}
\end{cor}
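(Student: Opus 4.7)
The corollary is, in effect, a direct consequence of Theorem \ref{thm:mainthm2inintro}, so my plan is to extract it by unpacking the two halves of that theorem. Let $C$ be an open chamber in the movable cone of $\mathfrak{M}^s_{GM}(w)$, and let $Y_C$ denote the minimal model of $\mathfrak{M}^s_{GM}(w)$ associated to $C$ by the stable base locus decomposition. By the one-to-one correspondence between actual walls in $\Geo_{LP}$ for $w$ and stable base locus decomposition walls of the divisor cone of $\mathfrak{M}^s_{GM}(w)$, the chamber $C$ is the image of an open chamber $\mathcal{C}\subset\Geo_{LP}$ under the map sending a geometric stability condition to its associated movable divisor class. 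Running the whole MMP by wall-crossing (the last sentence of Theorem \ref{thm:mainthm2inintro}) identifies $Y_C$ with the Bridgeland moduli space $\mathfrak{M}^s_{\sigma_{s,q}}(w)$ for any $\sigma_{s,q}$ in the interior of $\mathcal{C}$.

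Second, I would invoke the smoothness half of Theorem \ref{thm:mainthm2inintro}. Since $w$ is primitive and $\sigma_{s,q}$ is chosen generically in $\mathcal{C}$, there are no strictly $\sigma_{s,q}$-semistable objects of class $w$, so $\mathfrak{M}^s_{\sigma_{s,q}}(w)=\mathfrak{M}_{\sigma_{s,q}}(w)$ and this moduli space is smooth (and connected). Combining the two steps, $Y_C$ is smooth, which is the claim.

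The only content of the corollary beyond Theorem \ref{thm:mainthm2inintro} itself is the identification $Y_C\cong\mathfrak{M}^s_{\sigma_{s,q}}(w)$; this is not a routine bookkeeping matter but is precisely what is packaged into the phrase ``the actual walls (chambers) are in one-to-one correspondence with the stable base locus decomposition walls (chambers)'' in Theorem \ref{thm:mainthm2inintro}. So the main obstacle is already absorbed by that theorem: once the correspondence and the wall-crossing realization of the MMP are in place, smoothness of each minimal model attached to an open chamber is automatic, and no further calculation or construction is required.
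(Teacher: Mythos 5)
Your proposal is correct and matches the paper's own (implicit) derivation: the corollary is stated as an immediate consequence of Theorem \ref{thm:mainthm2inintro}, i.e.\ of Theorem \ref{smoothness and irred property} (smoothness of $\mss(w)$ for primitive $w$ and generic $\sigma$) combined with Theorem \ref{left half upper plane's main theorem in the body} (each open chamber in the movable cone is realized by $\mathfrak M^s_{\sigma}(w)$ for $\sigma$ in the corresponding stability chamber), exactly as you argue. The only cosmetic omission is that chambers beyond the vertical wall are handled by the duality $w\mapsto(\ch_0(w),-\ch_1(w),\ch_2(w))$ as in Remark \ref{rem:verticalwallmmp}, which is routine bookkeeping.
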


The moduli space $\mssq(w)$ can be constructed via the geometric invariant theory, because there are the so-called \emph{algebraic stability conditions}. The space Stab$^{\text{Alg}}$ of algebraic stability conditions is large enough in the sense that for any Chern character $w$ outside the Le Potier cone and stability condition $\sigma_{s,q}$ in Geo$_{LP}$, the line segment $l_{\sigma w}$ always intersects Stab$^{\text{Alg}}$. The moduli space $\mathfrak M^s_{\sigma'}(w)$ does not depend on the choice of $\sigma'$ on $l_{\sigma w}$, we may assume that $\sigma_{s,q}$ is an algebraic stability condition.  After a suitable homological shift, the heart of an algebraic stability condition is the same as the representation space of a path algebra with relations. The moduli space of such kind of quiver representations can be constructed via the geometric invariant theory. As an immediate corollary, the GIT construction ensures that $\msssq(w)$ is projective. This construction first appears in \cite{ABCH} for Hilbert schemes of point on $\pp$, and in \cite{BMW} for moduli of sheaves, both using quivers associated to line bundles. In this paper, we further study the relationship between geometric stability and algebraic stability, and make available this construction for any full strong exceptional collections of $\pp$. This generalization will be important for some arguments in this paper.

When $q_0\gg 0$ and $s_0<\ch_1(w)$,  $\mathfrak M^s_{\sigma_{s_0,q_0}}(w)$ is the same as the moduli space $\mathfrak M^s_{GM}(w)$ of slope stable sheaves. For any Chern character $v$ right orthogonal to $w$ (i.e. $\chi( w,v)=0$), the Donaldson morphism provides a divisor class $\mathcal L_v$ in Pic$_{\mathbb R}(\mathfrak M^s_{GM}(w))$. On the other hand, for each moduli space $\mssq(w)$, the GIT construction provides a divisor class $[\mathcal L_{s,q}]$ in Pic$_{\mathbb R}(\mssq(w))$ up to a positive scalar. When the exceptional locus of the natural map $\mssq(w)\dashrightarrow\mathfrak M^s_{\sigma_{s_0,q_0}}(w)\simeq \mathfrak M^s_{GM}(w)$ (which is constructed via the variation of GIT concretely, see Section 2) has codimension greater than $1$, the divisor class $[\mathcal L_{s,q}]$  is also defined on Pic$_{\mathbb R}(\mathfrak M^s_{GM}(w))$. Suppose the line $L_{w\sigma_{s,q}}$ is given by $ ^\perp v_{s,q}$ for some $v_{s,q}\in w^\perp$, then the divisor class $[\mathcal L_{s,q}]$ from the GIT construction is the same as $[\mathcal L_{v_{s,q}}]$ given by the Donaldson morphism up to a positive scalar.\\

\begin{comment}
The cartoon below explains how to view the $\cccp$ as  Pic$_{\mathbb R}(\mathfrak M^s_{GM}(w))$.
\begin{center}
\begin{figure}
\includegraphics[scale=0.5]{intrograph.pdf}
\label{4015}
\end{figure}
\end{center}
\end{comment}

Based on the explicit correspondence between walls in the space of stability conditions and walls in the divisor cone, we may describe all stable base locus walls (including the boundaries of nef cone, effective cone and movable cone) as actual walls in the space of stability conditions. Here an actual wall for a Chern character $w$ is a potential wall $L_{\sigma w}$ such that curves are contracted from either side of $\mathfrak M^{ss}_{\sigma_\pm}(w)\dashrightarrow \mathfrak M^{ss}_{\sigma}(w)$. So it becomes an important question to ask when a potential wall is an actual wall. In Section 3, we give a numerical criteria on actual walls, which depends on only numerical data, and provides an effective algorithm to compute all actual walls for $w$.

\begin{theorem}[Theorem \ref{thm: actual wall}]
Let $w\in\mathrm K(\pp)$ be a Chern character with $\ch_0(w)\geq 0$ not inside the Le Potier cone. For any stability condition $\sigma$ inside $\bd_{< 0}\subset \mathrm{Geo}_{LP}$ between the last wall $\lw_w$ and the vertical wall $L_{w\pm}$, the wall $L_{\sigma w}$ is an actual wall for $w$ if and only if there exists a Chern character $v\in \mathrm{K}(\pp)$ on the line segment $l_{\sigma w}$ such that: $\ch_0(v)>0$, $\frac{\ch_1(w)}{\ch_0(w)}>\frac{\ch_1(v)}{\ch_0(v)}$, the characters $v$ and $w-v$ are either exceptional or not inside the Le Potier cone, and both of them are not in $\mathrm{TR}_{wE}$ for any exceptional bundle $E$.
\label{thm:main3inintro}\end{theorem}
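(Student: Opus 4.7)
My plan is to establish both implications of the theorem separately, extracting the required character $v$ from a Jordan–Hölder filtration on the wall for one direction, and explicitly constructing a destabilizing extension for the other. For the \emph{necessity} direction, I will assume $L_{\sigma w}$ is an actual wall and pick a $\sigma$-semistable object $F$ of class $w$ whose Jordan–Hölder decomposition changes across the wall, with a destabilizing triangle $A\to F\to B$ of $\sigma$-semistable objects. Setting $v:=\ch(A)$, the line condition $v\in l_{\sigma w}$ is immediate from the definition of a wall, and the slope inequality $\ch_1(v)/\ch_0(v)<\ch_1(w)/\ch_0(w)$ follows from the orientation in which $F$ becomes unstable. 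Since $A$ and $B$ are $\sigma$-semistable of classes $v$ and $w-v$, the non-emptiness results already collected in the paper (Corollary \ref{cor:regionofEstab}, Corollary \ref{cor:nostabincone} and Theorem \ref{thm:mainthminintro}) force each of $v,w-v$ to be either exceptional or outside the Le Potier cone, and to avoid $\mathrm{TR}_{wE}$ for every exceptional bundle $E$; otherwise $\sigma$ would sit at or below the last or right-last wall for one of these classes, contradicting the existence of the corresponding constituent.

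For the \emph{sufficiency} direction, given such a $v$, Theorem \ref{thm:mainthminintro} together with the non-emptiness results supplies $\sigma$-semistable objects $A$ of class $v$ and $B$ of class $w-v$. These two have equal phase along $L_{\sigma w}$. The slope inequality together with an Euler-pairing computation will give $\Hom(B,A)=0$ and $\chi(B,A)<0$, hence $\Ext^1(B,A)\neq 0$; a non-split class in $\Ext^1(B,A)$ produces an object $F$ of class $w$, which I will verify is $\sigma$-semistable on the wall and strictly unstable on the appropriate side, exhibiting $L_{\sigma w}$ as an actual wall. The choice of which side of the wall to work on will be dictated by the slope condition on $v$.

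The hard part will be the sufficiency direction, specifically ruling out any \emph{finer} destabilization of the constructed $F$ at $\sigma$. A strict destabilization would produce a class $v'$ on $l_{\sigma w}$ whose complement also supports $\sigma$-semistable objects; I will need to argue that the listed hypotheses exclude every such $v'$ not already subsumed by the original $v$. I expect this will require an induction on the ordering of potential walls for $w$ between the last wall and $\sigma$, together with a finite case analysis controlled by the $\mathrm{TR}_{wE}$ regions attached to the exceptional bundles. A secondary subtlety will be the handling of the exceptional border cases, where $v$ or $w-v$ is itself an exceptional class and $F$ must be interpreted as a polystable object equipped with a specified filtration rather than a single stable object; the same Hom/Ext bookkeeping has to be carried out paying attention to the unique stable representative of the exceptional class.
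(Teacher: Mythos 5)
Your broad outline (extract $v$ from a Jordan--H\"older constituent for necessity, build a destabilizing extension for sufficiency) matches the paper's, but both directions have genuine gaps. In the necessity direction, your claim that the existence of the constituent $B$ of class $w-v$ forces $w-v$ to avoid every $\mathrm{TR}_{wE}$ is false: $B$ is only \emph{semistable}, and strictly semistable objects of class $w-v$ exist precisely when $L_{\sigma w}$ is the right-last wall $\lrw_{w-v}$ --- which is exactly the situation in which $w-v$ \emph{does} lie in a triangle $\mathrm{TR}_{wE}$, with $E=E^{(\mathrm{rhs})}_{v-w}$. The paper must therefore repair the pair: it replaces $v$ by $v':=v-\chi(v-w,E)\cdot e$, so that $w-v'$ lands on the line $L_{e^+e^r}$ (hence outside all triangles), and then verifies $\ch_0(v')>0$, the slope bound, and non-emptiness of $\mathfrak M^s_\sigma(v')$ via the Case 3.II.2 argument from the proof of Theorem \ref{prop:lastwall}. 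Without this correction the $v$ you extract simply need not satisfy the stated conditions. Relatedly, the equivalence you lean on --- avoidance of the $\mathrm{TR}_{wE}$ regions iff non-emptiness of the stable moduli of the factor on the wall --- is precisely the paper's ``first step,'' a three-case geometric argument resting on the full last-wall theorem; it is real content and cannot be dispatched by citing the non-emptiness statements wholesale.

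In the sufficiency direction, producing one non-split extension does not exhibit an actual wall: by the paper's definition curves must be contracted, so a positive-dimensional contracted family is needed. The paper obtains this from the asymmetry $\chi(u,v)-\chi(v,u)\leq -3$ (Lemma \ref{lemma:chiformrho}) together with the $\Ext^2$-vanishing, yielding $\ext^1\geq 3$; if $\ext^1$ were $1$, your construction would contract only a point. Moreover, your step $\chi(B,A)<0$ relies on Lemma \ref{lemma:ext1nonvanishing}, whose hypotheses fail when $v$ or $w-v$ is proportional to an exceptional class; the paper handles those cases by adjusting the characters (e.g.\ $v'=e$, $u'=u+(n-1)e$) and using $\chi(e,w)\leq 0$ from the last-wall hypothesis, and in the doubly-exceptional case there are no stable factors at all, so the extension argument is replaced outright by a dimension count of the Kronecker moduli $\mathrm{Kr}_{\hom(E_1,E_2)}(n_1,n_2)$ --- your proposed ``polystable object with specified filtration'' bookkeeping has no purchase there. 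Finally, the step you single out as the hard part --- ruling out finer destabilizations of the constructed $F$ at $\sigma$ --- is in fact the easy part: Lemma \ref{lemma:extoftwostabobjsarestab} shows directly that a non-split extension of two $\sigma$-stable objects of the same phase is stable on the appropriate side of the wall, with no induction over walls required; the genuine inductions in the paper live inside Theorem \ref{prop:lastwall}, not here.
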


TR$_{wE}$ is defined in Definition \ref{def:trew}. It is a small triangle area decided by $w$ and an exceptional character $E$. On an actual wall $L_{w\sigma}$, the Chern character $w$ can always be written as the sum of two proper Chern characters $w'$ and $w-w'$ satisfying the conditions in the theorem. The key is to prove the inverse direction: when such two characters exist, two stable objects with the corresponding characters extend to a stable object with character $w$, which is destabilized on the wall.  Roughly speaking, three main steps are involved:  1. $\mss(w')$ and $\mss(w-w')$ are non-empty; 2. the extension group ext$^1$ between two generic objects in $\mss(w')$ and $\mss(w-w')$ is non-zero; 3. the extension of two stable objects will produce $\sigma_+$ or $\sigma_-$ stable object with character $w$. 

The conditions in the theorem are mainly used in step 1, and step 3 follows from general computations. For step 2, based on the characters, we can only aim to show $\chi(w-w',w')< 0$. However, one may wonder about the case that on an actual wall, generic objects in $\mss(w')$ and $\mss(w-w')$ do not have non-trivial extensions, but objects on some jumping loci extend to $\sigma_\pm$-stable objects. Should this happen, $\chi(w-w',w')\geq 0$ but objects in $\mss(w')$ and $\mss(w-w')$ still extend to $\sigma_\pm$-stable objects. From this point of view, it is a bit surprising to have a numerical criteria for actual walls. The main point in Theorem \ref{thm:main3inintro} is to rule out this possibility, and this is done by gaining a good understanding of the last wall.

Moreover, the criteria decides all the actual walls effectively, in the sense that it involves only finitely many steps of decisions, and one may write a computer program to output all the actual walls with a given Chern character $w$ as the input. We compute the example for $w=(4,0,-15)$ by hands to show some details of this computation. A cartoon for the actual walls in this case is as follows.

%%%%%%%%%%%%%%%%%%%%%%%%%%%%%%%%%%%%%%%%%%%%%%%%%%%%
\begin{center}
\begin{tikzpicture}[domain=1:5]

\tikzset{%
    add/.style args={#1 and #2}{
        to path={%
 ($(\tikztostart)!-#1!(\tikztotarget)$)--($(\tikztotarget)!-#2!(\tikztostart)$)%
  \tikztonodes},add/.default={.2 and .2}}
}

% opacity of the usual walls
\newcommand\XA{0.1}

\coordinate (W) at (0,-3.75);
\node at (W) {$\bullet$};

%\foreach actual char, draw the wall

%rank 1 and 2
\coordinate (V) at (-1.5,0.75);
\node [opacity=\XA] at (V) {$\bullet$};
\draw [add= 0 and 1] (W) to (V) node[above]{Eff};

%nef boundary
\coordinate (V) at (-1/4,-5/8);
\node [opacity=\XA]at (V) {$\bullet$};
\draw [add= 0 and 2] (W) to (V) node[above]{Nef};

\coordinate (V) at (-1,.5);
\node [opacity=\XA] at (V) {$\bullet$};
\draw [add= 0 and 1.2][opacity=\XA] (W) to (V);

\coordinate (V) at (-1,-.5);
\node [opacity=\XA]at (V) {$\bullet$};
\draw [add= 0 and 1.5][opacity=\XA] (W) to (V);

\coordinate (V) at (-.5,-.25);
\node [opacity=\XA] at (V) {$\bullet$};
\draw [add= 0 and 1.5][opacity=\XA] (W) to (V);

\coordinate (V) at (-.5,-.75);
\node[opacity=\XA]at (V) {$\bullet$};
\draw [opacity=\XA][add= 0 and 1.5] (W) to (V);

\coordinate (V) at (-.5,-1.25);
\node [opacity=\XA]at (V) {$\bullet$};
\draw [add= 0 and 2.5] [opacity=\XA] (W) to (V);

\
\coordinate (V) at (-.5,-1.75);
\node[opacity=\XA] at (V) {$\bullet$};
\draw[opacity=\XA] [add= 0 and 3.5] (W) to (V);

%rank 3 walls
\coordinate (V) at (-1/3,-0.5);
\node[opacity=\XA] at (V) {$\bullet$};
\draw[opacity=\XA] [add= 0 and 1.5] (W) to (V);

\coordinate (V) at (-1/3,-5/6);
\node[opacity=\XA] at (V) {$\bullet$};
\draw[opacity=\XA] [add= 0 and 1.5] (W) to (V);

\coordinate (V) at (-1/3,-7/6);
\node[opacity=\XA] at (V) {$\bullet$};
\draw[opacity=\XA] [add= 0 and 1.5] (W) to (V);

\coordinate (V) at (-1/3,-9/6);
\node[opacity=\XA] at (V) {$\bullet$};
\draw[opacity=\XA] [add= 0 and 1.5] (W) to (V);

\coordinate (V) at (-1/3,-11/6);
\node[opacity=\XA] at (V) {$\bullet$};
\draw[opacity=\XA] [add= 0 and 1.5] (W) to (V);

\coordinate (V) at (-1/3,-13/6);
\node[opacity=\XA] at (V) {$\bullet$};
\draw[opacity=\XA] [add= 0 and 1.5] (W) to (V);

\coordinate (V) at (-1/3,-15/6);
\node[opacity=\XA]  at (V) {$\bullet$};
\draw[opacity=\XA] [add= 0 and 1.5] (W) to (V);

\coordinate (V) at (-2/3,-1/3);
\node [opacity=\XA] at (V) {$\bullet$};
\draw[opacity=\XA] [add= 0 and 1.5] (W) to (V);

\coordinate (V) at (-2/3,-2/3);
\node [][opacity=\XA] at (V) {$\bullet$};
\draw[opacity=\XA] [add= 0 and 1.5] (W) to (V);

\coordinate (V) at (-2/3,-4/3);
\node [opacity=\XA] at (V) {$\bullet$};
\draw[opacity=\XA] [add= 0 and 1.5] (W) to (V);

\coordinate (V) at (-2/3,-5/3);
\node [][opacity=\XA] at (V) {$\bullet$};
\draw[opacity=\XA] [add= 0 and 1.5] (W) to (V);

\coordinate (V) at (-4/3,1/3);
\node[opacity=\XA] at (V) {$\bullet$};
\draw[opacity=\XA] [add= 0 and 1] (W) to (V);

%rank 4 walls includes nef boundary

\coordinate (V) at (-1/4,-7/8);
\node[opacity=\XA] at (V) {$\bullet$};
\draw[opacity=\XA][add= 0 and 1.5] (W) to (V);

\coordinate (V) at (-1/4,-9/8);
\node[opacity=\XA] at (V) {$\bullet$};
\draw[opacity=\XA] [add= 0 and 1.5] (W) to (V);
\coordinate (V) at (-1/4,-11/8);
\node[opacity=\XA] at (V) {$\bullet$};
\draw[opacity=\XA] [add= 0 and 1.5] (W) to (V);
\coordinate (V) at (-1/4,-13/8);
\node[opacity=\XA] at (V) {$\bullet$};
\draw[opacity=\XA] [add= 0 and 1.5] (W) to (V);
\coordinate (V) at (-1/4,-15/8);
\node[opacity=\XA] at (V) {$\bullet$};
\draw[opacity=\XA] [add= 0 and 1.5] (W) to (V);
\coordinate (V) at (-1/4,-17/8);
\node[opacity=\XA] at (V) {$\bullet$};
\draw[opacity=\XA] [add= 0 and 1.5] (W) to (V);
\coordinate (V) at (-1/4,-19/8);
\node[opacity=\XA] at (V) {$\bullet$};
\draw[opacity=\XA] [add= 0 and 1.5] (W) to (V);
\coordinate (V) at (-1/4,-21/8);
\node[opacity=\XA] at (V) {$\bullet$};
\draw[opacity=\XA] [add= 0 and 1.5] (W) to (V);
\coordinate (V) at (-1/4,-23/8);
\node[opacity=\XA] at (V) {$\bullet$};
\draw[opacity=\XA] [add= 0 and 1.5] (W) to (V);

\coordinate (V) at (-2/4,-4/4);
\node[opacity=\XA] at (V) {$\bullet$};
\draw[opacity=\XA] [add= 0 and 1.5] (W) to (V);

\coordinate (V) at (-2/4,-6/4);
\node[opacity=\XA] at (V) {$\bullet$};
\draw[opacity=\XA] [add= 0 and 1.5] (W) to (V);

\coordinate (V) at (-2/4,-8/4);
\node[opacity=\XA] at (V) {$\bullet$};
\draw[opacity=\XA] [add= 0 and 1.5] (W) to (V);

\coordinate (V) at (-3/4,-3/8);
\node [opacity=\XA]at (V) {$\bullet$};
\draw[opacity=\XA] [add= 0 and 1.5] (W) to (V);

\coordinate (V) at (-3/4,-9/8);
\node [opacity=\XA] at (V) {};
\draw[opacity=\XA] [add= 0 and 1.5] (W) to (V);

\coordinate (V) at (-3/4,-11/8);
\node [opacity=\XA] at (V) {};
\draw[opacity=\XA] [add= 0 and 1] (W) to (V);

\coordinate (V) at (-5/4,1/8);
\node[opacity=\XA] at (V) {};
\draw[opacity=\XA] [add= 0 and 1] (W) to (V);

%rank 5 walls

\coordinate (V) at (-5/5,-7/10);
\node[opacity=\XA] at (V) {$\bullet$};
\draw[opacity=\XA] [add= 0 and 1] (W) to (V);

\coordinate (V) at (-6/5,0);
\node[opacity=\XA] at (V) {$\bullet$};
\draw[opacity=\XA] [add= 0 and 1] (W) to (V);

\coordinate (V) at (-7/5,5/10);
\node[opacity=\XA] at (V) {$\bullet$};
\draw[opacity=\XA] [add= 0 and 1] (W) to (V);

%B axis
\draw[->] (-4,-3.75) -- (0,-3.75) node[above right] {$w$}-- (1.5,-3.75) node[above right] {$B$};

%B axis
\draw[->,opacity =0.3] (-4,0) -- (2.5,0) node[above right] {$\frac{ch_1}{ch_0}$};

%H axis
\draw[->][] (0,-4.25)-- (0,0) node [above right] {O} --  (0,6) node[right] {$H$};

%H axis
\draw[->,opacity=0.3] (0,-4.25)-- (0,0) node [above right] {O} --  (0,4) node[right] {$\frac{ch_2}{ch_0}$};

%delta cone
\draw [thick](-3,4.5) parabola bend (0,0) (1.5,1.125);
 %node[right] {$\frac{1}{2}s^2-q=0$};

\begin{comment}
\draw (0,0) node {$\bullet$};
\draw (0,0) node[above right] {$\mathcal O$};
\draw (0,-1) node {$\bullet$};
\draw (0,-1) node[below left] {$e^+$};

\draw (-0.382, -0.427) node {$\bullet$};
\draw (-0.382, -0.427) node[below left] {$e^l$};

\draw (0.382, -0.427) node {$\bullet$};
\draw (0.382, -0.427) node[below right] {$e^r$};

\draw [blue,thick] (-0.382, -0.427) -- (0,-1) node[below right] {Le Potier Curve};
\draw [blue,thick] (0.382, -0.427) -- (0,-1);

\draw (-1,0.5) node {$\bullet$};
\draw (-1,0.5) node[above] {$\mathcal O(-1)$};
\draw (1,0.5) node {$\bullet$};
\draw (1,0.5) node[above] {$\mathcal O(1)$};

\draw [dashed] (-0.382, -0.427) -- (-1,0.5);
\draw [dashed] (0.382, -0.427) -- (1,0.5);

\draw [red] (-2, -1.1) node {Figure A};

\end{comment}

\end{tikzpicture}

\end{center}
%%%%%%%%%%%%%%%%%%%%%%%%%%%%%%%%%%

As two quick applications of Theorem \ref{thm:main3inintro}, we decide the boundary (on the primitive side) of the nef cone and the movable cone  of $\mathfrak M^s_{GM}(w)$ for a primitive character $w=(\ch_0,\ch_1,\ch_2)$. The other side is dually decided by $w'=(\ch_0,-\ch_1,\ch_2)$.

\begin{theorem}[Theorem \ref{thm:movcone}, the movable cone]
Let $w$ be a primitive Chern character with $\ch_0(w)\geq 0$ not inside the Le Potier cone. When $\chi(E,w)\neq 0$ for any exceptional bundle $E$ with $\frac{\ch_1(E)}{\ch_0(E)}<\frac{\ch_1(w)}{\ch_0(w)}$, the movable cone boundary on the primitive side coincides with the effective cone boundary.

When $\chi(E_\gamma,w)=0$ for an exceptional bundle $E_\gamma$ with $\frac{\ch_1(E_\gamma)}{\ch_0(E_\gamma)}<\frac{\ch_1(w)}{\ch_0(w)}$,
let  $E_\alpha$, $E_\beta$, $E_\gamma$ be exceptional bundles  corresponding to dyadic numbers $\frac{p-1}{2^n}$, $\frac{p+1}{2^n}$, $\frac{p}{2^n}$ respectively, then $w$ can be uniquely written as $n_2e_\alpha-n_1e_{\beta-3}$ for some positive integers $n_1$, $n_2$. Define the character $P$ accordingly as follows:
\begin{enumerate}
\item $P:=e_\gamma - (3\ch_0(E_\beta)-n_2)e_\alpha$, if $1 \leq n_2 < 3 \ch_0(E_\beta)$;
\item $P:=e_\gamma$, if $n_2 \geq 3 \ch_0(E_\beta)$.
\end{enumerate}
Then the wall $L_{Pw}$ corresponds to the boundary of the movable cone of $\mathfrak M^s_{GM}(w)$.
\label{thm:thm4inintro}
\end{theorem}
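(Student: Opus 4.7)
The plan is to combine the numerical criterion of Theorem \ref{thm:main3inintro} with the wall--divisor correspondence of Theorem \ref{thm:mainthm2inintro}. Under the latter, a wall $L_{\sigma w}$ lies on the movable cone boundary exactly when the associated wall-crossing is a divisorial (codimension one) contraction, while walls producing small (codimension $\geq 2$) contractions remain strictly inside the movable cone. Thus the movable cone boundary on the primitive side of $\mathfrak{M}^{\mathrm{s}}_{\mathrm{GM}}(w)$ is the outermost divisorial wall encountered when moving from the Gieseker--Maruyama chamber toward the effective boundary $\lw_w$.

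I would first set up the codimension count. At an actual wall with Jordan--H\"older decomposition $w = v + (w - v)$, the locus of objects fitting in generic extensions $0 \to F \to E \to G \to 0$ with $F, G$ stable of classes $v, w - v$ has dimension $\dim \mathfrak{M}(v) + \dim \mathfrak{M}(w - v) + \mathrm{ext}^1(G,F) - 1$. Combined with $\dim \mathfrak{M}(w) = 1 - \chi(w,w)$ and the vanishings $\mathrm{hom}(G, F) = \mathrm{ext}^2(G, F) = 0$ supplied by the slope-comparison arguments of Section 2, a short Euler-characteristic manipulation shows this codimension is simply $-\chi(v, w - v)$. Hence the wall is divisorial precisely when one of $-\chi(v, w - v)$ or $-\chi(w - v, v)$ equals $1$ for some numerical decomposition defining the wall.

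For Case 1, the hypothesis $\chi(E, w) \ne 0$ for every exceptional $E$ with $\ch_1(E)/\ch_0(E) < \ch_1(w)/\ch_0(w)$ is used to exclude any decomposition admitted by Theorem \ref{thm:main3inintro} that would satisfy $-\chi(v, w - v) = 1$ strictly between $\lw_w$ and the vertical wall $L_{w\pm}$. A direct case analysis combining the numerical conditions of Theorem \ref{thm:main3inintro} with this Euler-characteristic nonvanishing rules out the divisorial value $1$ entirely in this range, so every actual wall in it contracts only a codimension $\geq 2$ locus and the movable cone boundary agrees with the effective boundary.

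For Case 2, I would first establish the lattice decomposition $w = n_2 e_\alpha - n_1 e_{\beta - 3}$: since $\chi(E_\gamma, w) = 0$ and $e_\alpha, e_{\beta - 3}$ span the rank-two sublattice of $E_\gamma^\perp$ distinguished by the Drezet--Le Potier dyadic triangle around $E_\gamma$, $w$ lies in this sublattice, and positivity of $n_1, n_2$ comes from $w$ being outside the Le Potier cone on the correct side. I would then verify that $P$, as defined in the two subcases depending on $n_2 \lessgtr 3 \ch_0(E_\beta)$, satisfies all numerical hypotheses of Theorem \ref{thm:main3inintro}, so $L_{Pw}$ is an actual wall, and using Serre duality on $\mathbf{P}^2$ (which relates $e_\beta$ and $e_{\beta - 3}$ through twist by the canonical bundle) compute that $-\chi(P, w - P) = 1$. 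Finally the numerical criterion together with the dyadic structure rules out any divisorial wall strictly between $L_{Pw}$ and the Gieseker chamber. The main obstacle will be this final enumeration: the dichotomy $n_2 \lessgtr 3 \ch_0(E_\beta)$ reflects a genuine change in the shape of the destabilizing subobject (in the first subcase one must twist $E_\gamma$ by multiples of $E_\alpha$, while in the second $E_\gamma$ itself destabilizes), and confirming that no other admissible decomposition above $L_{Pw}$ yields $-\chi(v, w - v) = 1$ requires a careful traversal of the exceptional bundles near $E_\gamma$.
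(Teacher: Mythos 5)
Your Case 2 skeleton does match the paper's proof of Theorem \ref{thm:movcone}: show $L_{Pw}$ is an actual wall via the numerical criterion, compute that the contracted locus has codimension $-\chi(P,w-P)=1$ via the extension-dimension count (which is exactly the paper's computation $\dim \mathfrak M^s_{\sigma_+}(w-P,P)=\dim\mathfrak M^s_{\sigma_+}(w)+\chi(P,w-P)$), and conclude divisoriality. But two genuine gaps remain. First, your Case 1 is not a proof: you assert that ``a direct case analysis'' rules out any decomposition with $-\chi(v,w-v)=1$ between $\lw_w$ and $L_{w\pm}$ whenever $\chi(E,w)\neq 0$ for left exceptionals, but you give no mechanism by which that Euler-characteristic nonvanishing obstructs such decompositions, and none is evident. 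The paper does not argue this way at all: it invokes the result of \cite{CHW} that the birational model in the last chamber has Picard number one if and only if $\chi(E_w,w)=0$, and combines this with the MMP structure of Theorem \ref{left half upper plane's main theorem in the body} (flips, then at most one divisorial wall, then collapse) to conclude that when $\chi(E_w,w)\neq 0$ the movable and effective boundaries coincide. Without that external input or a substitute, your Case 1 is unproved. Relatedly, your criterion ``divisorial iff $-\chi(v,w-v)=1$ for some decomposition'' only produces a divisor in one direction; the converse needs upper bounds on \emph{all} strata of the destabilized locus (non-generic and strictly semistable factors), i.e.\ the machinery of Lemma \ref{lemma:repfgdim}, which you do not address.

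Second, in Case 2 the sentence ``verify that $P$ satisfies all numerical hypotheses of Theorem \ref{thm:main3inintro}'' conceals nearly all of the paper's actual work: one must show $P$ and $w-P$ are not inside $\mathrm{Cone}_{LP}$ and avoid every triangle $\mathrm{TR}_{wE}$. For $P$ this uses $\chi(e_\beta,P)=0$ to place $P$ on $L_{e_\beta^+e_\beta^l}$ together with $\chi(P,P)=1-n_2(3\ch_0(E_\beta)-n_2)\leq 0$; for $w-P$ it requires a delicate two-case computation locating $P-w$ relative to $L_{e_\alpha(-3)^+e_\alpha(-3)^r}$, the inequality $\chi(P-w,P-w)<0$ driven by the rank identities of Lemma \ref{lemma: rk of exc}, and Lemma \ref{lem:highrkinstrip} to exclude the strip regions between $L_{e^+e^r}$ and $L_{ee(3)}$. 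None of this is routine and your proposal gives no indication of it. Finally, the ``final enumeration'' you identify as the main obstacle is in fact unnecessary: by the MMP theorem a divisorial (Case 2) wall can occur at most once, since after it the Picard number drops to one and the next actual wall must be total collapse; hence exhibiting a single divisorial contraction at $L_{Pw}$ already identifies it as the movable boundary. Your plan both misses this structural shortcut and offers no method for the traversal it proposes instead.
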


\begin{theorem}[Theorem \ref{thm:nefcone}, the nef cone]
Let $w$ be a primitive Chern character with $\ch_0(w) > 0$ and $\bd(w) \geq 10$, then the first actual wall to the left of vertical wall (i.e. \textit{nef cone boundary} for $\mathfrak M_{GM}^{s}(w)$) is the first lower rank wall $L_{vw}$ such that
\begin{enumerate}
\item $\frac{\ch_1(v)}{\ch_0(v)}$ is the greatest rational number less than $\frac{\ch_1(w)}{\ch_0(w)}$ with $0<\ch_0(v)\leq \ch_0(w)$;
\item given the first condition, if $\ch_1(v)$ is even (odd resp.), let $\ch_2(v)$ be the greatest integer ($2\ch_2(v)$ be the greatest odd integer resp.), such that the point $v$ is either an exceptional character or not inside $\mathrm{Cone}_{LP}$.
\end{enumerate}
\label{thm:nefinintro}
\end{theorem}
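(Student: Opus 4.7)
The plan is to apply the numerical actual-wall criterion of Theorem~\ref{thm:main3inintro} and identify, among the actual walls $L_{vw}$, the one that lies closest to the vertical wall $L_{w\pm}$ from the left. For fixed $w$, the walls $L_{vw}$ form a family of curves in the $(s,q)$-plane whose proximity to $L_{w\pm}$ is governed, in the first instance, by how close $\mu(v) := \ch_1(v)/\ch_0(v)$ is to $\mu(w)$ from below, and, for a fixed slope, by how large $\ch_2(v)/\ch_0(v)$ is. Identifying the nef boundary thus reduces to the following constrained optimisation: among $v \in \mathrm{K}(\pp)$ satisfying the hypotheses of Theorem~\ref{thm:main3inintro}, maximise $\mu(v)$ subject to $\mu(v) < \mu(w)$ and $0 < \ch_0(v) \leq \ch_0(w)$, and then maximise $\ch_2(v)$.

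The first maximisation is a Farey / Stern--Brocot best-approximation problem with denominator bound, and pins down $\ch_0(v)$ and $\ch_1(v)$ uniquely; this yields condition~(1) in the statement. The second maximisation is constrained by the requirement that $v$ be either exceptional or outside $\mathrm{Cone}_{LP}$, and the parity dichotomy in condition~(2) encodes the integrality $c_2(v) = \ch_1(v)^2/2 - \ch_2(v) \in \mathbb Z$: this forces $\ch_2(v) \in \mathbb Z$ when $\ch_1(v)$ is even and $2\ch_2(v)$ to be an odd integer when $\ch_1(v)$ is odd. With the candidate $v$ in hand I would then verify the remaining hypotheses of Theorem~\ref{thm:main3inintro}: the complement $w - v$ acquires enough discriminant from $\bd(w) \geq 10$ to lie outside $\mathrm{Cone}_{LP}$, and Farey optimality together with the discriminant bound keep both $v$ and $w - v$ out of every triangle $\mathrm{TR}_{wE}$.

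The principal obstacle is ruling out that some other actual wall intervenes between $L_{vw}$ and $L_{w\pm}$. For lower-rank competitors $v'$ with $0 < \ch_0(v') \leq \ch_0(w)$, the Farey optimality of $\mu(v)$ blocks any strictly closer wall with $\mu(v') > \mu(v)$, and the $\ch_2$-maximality blocks strictly closer walls with $\mu(v') = \mu(v)$. For higher-rank candidates with $\ch_0(v') > \ch_0(w)$ one appeals to the $v' \leftrightarrow w - v'$ symmetry---both define the same wall $L_{v'w}$---to reduce to a lower-rank destabiliser, which is then excluded by the previous step. The hypothesis $\bd(w) \geq 10$ is what makes these estimates go through uniformly, filtering out the sporadic low-discriminant configurations in which a narrow $\mathrm{TR}_{wE}$ triangle or an accidental exceptional destabiliser could otherwise spawn a wall closer to $L_{w\pm}$.
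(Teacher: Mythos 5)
There is a genuine gap, and it sits exactly where the real work of the paper's proof lies. Your treatment of lower-rank competitors covers only $\frac{\ch_1(v')}{\ch_0(v')} > \frac{\ch_1(v)}{\ch_0(v)}$ (vacuous by the Farey maximality) and $\frac{\ch_1(v')}{\ch_0(v')} = \frac{\ch_1(v)}{\ch_0(v)}$ (handled by $\ch_2$-maximality), tacitly assuming that a competitor of strictly smaller slope gives a wall farther from $L_{w\pm}$. That is false: the wall $L_{v'w}$ is the line through the two points $w$ and $v'$ in the $\cccp$, and its steepness is $\left(\che(v')-\che(w)\right)\big/\left(\chy(v')-\chy(w)\right)$; since the Le Potier ceiling grows roughly like $\frac{1}{2}\left(\chy\right)^2$ as $\chy$ decreases, a character $v'$ to the \emph{left} of $v$ can have substantially larger $\che(v')$ and so define a steeper wall, closer to the vertical wall. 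Excluding this is the entire content of the paper's Lemma \ref{lemma: first lower rank}: Lemma \ref{lemma: first bound by O} (needing only $\bd \geq 2$) first confines $\chy(v')$ to a unit window, and then the local slope bound $-\frac{5}{2}$ for the Le Potier segments in that window, the maximality of $\ch_2(v)$ (which places $\che(v)$ within $\frac{1}{\ch_0(v)}$ of the ceiling), and the betweenness of the walls combine into the estimate $\che(v')-\che(w)\leq 9$, forcing $\bd(w)<10$ — a contradiction. This is the principal use of the hypothesis $\bd(w)\geq 10$; your proposal assigns that hypothesis only to $\mathrm{TR}_{wE}$-triangle and exceptional pathologies, and the same quantitative mechanism (slope of $L_{wE}$ less than $-9$, hence $\chy$-width of $\mathrm{TR}_{wE}$ less than $\frac{1}{6\ch_0(E)^2}$, followed by a bootstrap through Corollary \ref{cor: actual wall}) is also what the paper uses to keep $v$ out of every triangle — "Farey optimality plus the discriminant bound" is not by itself an argument there.

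Your higher-rank step is based on a false principle. When $\ch_0(v')=\ch_0(w)+r$ with $r\geq 1$, the complement satisfies $\ch_0(w-v')<0$, so the symmetry $v'\leftrightarrow w-v'$ does \emph{not} produce a destabilizer with $0<\ch_0\leq \ch_0(w)$: the criterion of Theorem \ref{thm: actual wall} requires a positive-rank character of smaller slope on the segment $l_{\sigma w}$, and for a higher-rank wall that character is $v'$ itself. Were your reduction valid, higher-rank actual walls could never occur, contradicting Section \ref{sec4.4}, where rank-$5$ walls appear for $w=(4,0,-15)$. The paper's actual argument is again quantitative: by the lower-rank part the slope of $L_{vw}$ is less than $-9\ch_0(w)$, so the left endpoint of $L_{vw}\cap\bd_0$ has $\chy<-9\ch_0(w)$; since $v'-w$ lies to its left, one gets $r<\frac{1}{9}+\frac{r}{9}$, impossible for a positive integer $r$. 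A smaller remark: you need not verify anything about $w-v$ for the candidate wall itself — Corollary \ref{cor: actual wall} imposes conditions only on $v$, the complement being handled automatically (via Lemma \ref{lemma:largerslopeisstable} and Theorem \ref{prop:lastwall}) because its slope exceeds that of $w$.
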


The result on the nef cone is not hard to see from the $\cccp$ picture. First of all, when the Chern character $w$ has certain distance from  Geo$_{LP}$, the first wall is not of higher rank. On the $\cccp$,  although the Le Potier curve for the stable objects is zigzag, when the Chern character $w$ is not very close to Geo$_{LP}$, the first wall $L_{wv}$ is still given by a point $v$ with $\frac{\ch_1}{\ch_0}$-coordinate closest to the vertical wall.

The result on the movable cone is more subtle. When the Chern character $w$ is right orthogonal to an exceptional bundle $E_\gamma$, the jumping locus 
\[\{[F]\in \mathfrak M^s_{GM}(w)\, |\,\Hom(E_\gamma,F) \neq 0\}\]
has codimension $1$ and is the exceptional divisor that contracted on the movable cone boundary. However, the wall $L_{we_\gamma}$ for $\Hom(E_\gamma,F)\neq 0$ may not always be the wall for this contraction. In the case when $n_2 < 3 \ch_0(E_\beta)$, the exceptional divisor is already contracted at a wall prior to the wall $L_{we_\gamma}$. One simple example of such $w$ is when $(\ch_0,\ch_1,\ch_2)$ $=$ $(1,0,-4)$, in other words, the ideal sheaf of four points. The Chern character $w$ is right orthogonal to the cotangent bundle $\Omega$, whose dyadic number is $-\frac{3}{2}$. The other exceptional bundles $E_\alpha$ and $E_\beta$ are $\mathcal O(-2)$ and $\mathcal O(-1)$ respectively, and $w$ can be written as $2[\mathcal O(-2)]-[\mathcal O(-4)]$. The jumping locus of $\Hom(\Omega,w)\neq 0$  is the exceptional divisor, and it is the same as the jumping locus $\Hom(\mathcal I_1(-1),w)\neq 0$, where $\mathcal I_1(-1)$ stands for the ideal sheaf of one point tensor $\mathcal O(-1)$. Since the wall $L_{\mathcal I_1(-1)w}$ is between $L_{\Omega w}$ and $L_{w\pm}$ in the $\cccp$, the boundary of movable cone should be given by $L_{\mathcal I_1(-1)w}$. Geometrically, the exceptional locus is where any three points are collinear.\\

\textbf{Related Work.} There are several papers \cite{ABCH, BMW, CH, CH2, CH3, CHW, LZ, Woolf} studying the birational geometry of moduli of sheaves on the projective plane via wall crossing.

The %hetheory 
study for Hilbert schemes of points on $\pp$ first appears in \cite{ABCH}, and the wall crossing behavior is explicitly carried out for small numbers of points. It is also firstly suggested in \cite{ABCH} that there is a correspondence between the wall crossing picture in the Bridgeland space and the minimal model program of the moduli space. In \cite{CH}, the correspondence between walls in the Bridgeland space and stable locus decomposition walls in MMP is established for monomial schemes in the plane. In \cite{LZ}, we proved the full correspondence for Hilbert schemes of points, by establishing similar results as in Section 2 of this paper, and further generalize this correspondence to deformations of Hilbert schemes, or Hilbert schemes of non-commutative projective planes.

For moduli of torsion sheaves, the effective cone and the nef cone are computed in \cite{Woolf}. For general moduli of sheaves on $\pp$, the theory is built up in \cite{BMW}. Among other results, the projectivity of moduli of Bridgeland stable objects is proved in \cite{BMW}. The effective cone and the ample cone are computed in \cite{ CH2, CHW} respectively. Also \cite{CHW} gives the criteria on when the movable cone coincides with the effective cone. We refer to the beautiful lecture notes \cite{CH3} for details of these results.

Compared with these papers, the smoothness and irreducibility of moduli of Bridgeland stable objects with primitive characters are only proved in this paper, which enables us to deduce the equivalence between wall crossing and MMP for moduli of sheaves on $\pp$ suggested in \cite{ABCH}. Our result on the effective cone is essentially equivalent to that in \cite{CHW}, however, the proof is very different. Since this proof is very closely related to the proof of the criteria on actual walls, we decide to include it here. The numerical criteria on actual walls and the result on the movable cone are  new. Our result on the nef cone (Theorem \ref{thm:nefinintro}) follows from our numerical criteria. The nef cone was first proved in \cite{CH2} when $\Delta$ is large enough with respect to $\ch_0$ and $\frac{\ch_1}{\ch_0}$ (see Remark 8.7 in \cite{CH2} for a lower bound), the bound in Theorem \ref{thm:nefinintro} is explicitly given by $\bd \geq 10$. Moreover, as a benefit of our set-up, in a large part of the paper we can treat the torsion case and the positive rank case uniformly. We make careful remarks on this through the paper.

Another important application of the wall-crossing machinery is towards the Le Potier strange duality conjecture. A special case is studied in \cite{Abe}.\\

\textbf{Organization.} In Section \ref{sec1.1}, we review some classical work by Drezet and Le Potier for stable sheaves on the projective plane. We prove some useful lemmas by visualizing the geometric stability conditions in the $\cccp$ in Section \ref{sec1.3}. These properties will play crucial roles for the arguments in the paper. In Section \ref{sec2}, we prove that the moduli space $\mss (w)$ is smooth and irreducible for generic $\sigma$ and primitive $w$. In this way, one can run the minimal model program for $\mgs(w)$ on the $\cccp$. In Section \ref{sec3}, we first compute the last wall, and then prove the main theory in the paper: a criteria for actual walls of $\mss (w)$. In Section \ref{sec4}, we compute the nef and movable cone boundary as an application of the criteria for actual walls. Moreover, in Section \ref{sec4.4}, we work out a particular example for the character $(4,0,-15)$.\\

\textbf{Acknowledgments.} The authors are greatly indebted to Arend Bayer, who offered tremendous assistance during the
preparation of this work.  We are grateful to Aaron Bertram, Izzet
Coskun,  Zheng Hua, Jack Huizenga, Wanmin Liu, Emanuele Macr\`{\i}, Matthew Woolf and Ziyu Zhang for helpful conversations. We also had useful discussions with our advisors Herbert Clemens, Thomas Nevins and Karen Smith, and we would like to thank all of them. The author CL is supported by ERC starting grant no. 337039 ``WallXBirGeom''.

\section{Stability conditions on $D^b(\textbf P^2)$}\label{sec1}

In this section, we will recall some properties of the bounded derived category of coherent sheaves on the projective plane, and the construction of stability conditions on it. In Section \ref{sec1.1}, we will explain the structure of D$^b(\textbf P^2)$ given by exceptional triples, and the numerical criteria on the existence of stable sheaves. A slice of the space of geometric stability conditions is discussed in Section \ref{sec1.2}, and the wall-chamber structure on it is studied in Section \ref{sec1.3}. In Section \ref{sec1.4}, we study the algebraic stability conditions, i.e. the stability conditions given by the exceptional triples. We also explain how they are glued to the slice of geometric stability conditions. In Section \ref{sec1.5}, we explain in detail the difference and advantage of our set-up over the one used in other papers. Finally in Section \ref{sec1.6}, we derive some easy numerical conditions on the existence of stable objects.

\subsection{Review and notations: Exceptional objects, triples and the Le Potier curve}\label{sec1.1}
Let $\mathcal T$ be a $\mathbb C$-linear triangulated category of
finite type. In this article, $\mathcal T$ will always be
D$^b(\textbf P^2)$: the bounded derived category of coherent sheaves
on the projective plane over $\mathbb C$. We first recall the following definitions
from \cite{AKO, GorRu, Or}.
\begin{defn}
An object $E$ in $\mathcal T$ is called \emph{exceptional} if
\begin{center} $\Hom(E,E[i]) = 0,$ \text{ for } $i\neq 0;$
$\Hom(E,E)=\mathbb C$.\end{center} An ordered collection of
exceptional objects $\mathcal E = \{E_0,\dots,E_m\}$ is called an
\emph{exceptional collection} if
\begin{center}
$\Hom(E_i,E_j[k])=0$, for $i>j$, any $k$.
\end{center}
\end{defn}
\begin{defn}
Let $\mathcal E$ $=$ $\{ E_0,\dots,E_n\}$ be an exceptional
collection. We say this collection $\mathcal E$ is is \emph{strong}, if
\[\Hom(E_i,E_j[q]) = 0,\] for  all $i$, $j$ and $q\neq 0$.
This collection $\mathcal E$ is called \emph{full}, if $\mathcal E$
generates $\mathcal T$ under homological shifts, cones and direct
sums.
\end{defn}

%In D$b(\textbf P^2)$, an exceptional object is an exceptional vector
%bundle up to cohomological shift

An exceptional coherent sheaf on $\pp$ is locally free since it is rigid.
We summarize some results on the classification of exceptional
bundles on \textbf P$^2$ and introduce some notations, for details we refer
to \cite{DP, GorRu, LeP}. 

The Picard group of \textbf P$^2$ is of rank one with generator $H$ $=$ $[\mathcal O(1)]$, and we will, by abuse of notation, identify the $i$-th Chern character $\ch_i$ with its degree $H^{2-i}\ch_i$. There is a one-to-one correspondence
between exceptional bundles and dyadic integers, $\frac{p}{2^m}$, with integer $p$ and non-negative integer $m$. Denote the exceptional bundle corresponding to $\frac{p}{2^m}$ by $E_{\left(\frac{p}{2^m}\right)}$.  We write Chern characters of $E_{\left(\frac{p}{2^m}\right)}$ as \[\tilde{v}\left(\frac{p}{2^m}\right):=\tilde{v}\left(\Epm\right)=
\left(\ch_0(\Epm),\ch_1(\Epm),\ch_2(\Epm)\right).\]
They are inductively (on $m$) given by the formulas:
\begin{itemize}
\item $\tilde{v}(n)$ $=$ $(1,n,\frac{n^2}{2})$, for $n\in \mathbb Z$.
\item When $m>0$ and $p\equiv 3 $(mod $4$), the Chern character is given
by
\[\tilde{v}\left(\frac{p}{2^m}\right) = 3\ch_0\left(E_{\left(\frac{p+1}{2^m}\right)}\right)\tilde{v}\left(\frac{p-1}{2^m}\right)-\tilde{v}\left(\frac{p-3}{2^m}\right).\]
\item When $m>0$ and $p\equiv 1 $(mod $4$), the character is given
by
\[\tilde{v}\left(\frac{p}{2^m}\right) = 3\ch_0\left(E_{\left(\frac{p-1}{2^m}\right)}\right)\tilde{v}\left(\frac{p+1}{2^m}\right)-\tilde{v}\left(\frac{p+3}{2^m}\right).\]
\end{itemize}
\begin{rem} Here are some observations from the definition.
\begin{enumerate}
\item $\tilde{v}(p)$ is the character of the line bundle $\mathcal
O(p)=E_{(p)}$.
\item $\tilde{v}(\frac{3}{2})$ is the character of the tangent
sheaf $\mathcal T_{\pp}=E_{\left(\frac{3}{2}\right)}$.
\item The exceptional bundle
$E_{\left(\frac{p}{2^m}+1\right)}$ is
$E_{\left(\frac{p}{2^m}\right)}\otimes \mathcal O(1)$.
\item $\frac{\ch_1(E_{(a)})}{\ch_0(E_{(a)})}<\frac{\ch_1(E_{(b)})}{\ch_0(E_{(b)})}$ if and only if $a<b$.
\end{enumerate}
\end{rem}

For the rest of this section, we recall the construction of the Le Potier curve $C_{LP}$, which is greatly related to the existence of semistable sheaves.

The Grothendieck group K$(\pp)$ has rank $3$. We denote $\mathrm{K}(\pp)\otimes \mathbb R$ by K$_{\mathbb R}(\pp)$. Consider the real projective space $\pkp$ with homogeneous coordinate $[\ch_0,\ch_1,\ch_2]$, we view the locus $\ch_0=0$ as the line at infinity. The complement forms an affine real plane, which is referred to as the $\cccp$. We call $\pkp$ the projective $\cccp$. For any object $F$ in $\mathrm D^b(\pp)$, we write 
\[\tilde{v}(F):=\big(\ch_0(F),\ch_1(F),\ch_2(F)\big)\]
as the (degrees of) Chern characters of $F$. When $\tilde{v}(F)\neq 0$, use $v(F)$ to denote the corresponding point in the projective $\cccp$. In particular, when $\ch_0(F)\neq 0$, $v(F)$ is in the $\cccp$.

\begin{rem}
In this article, in all arguments on the $\cccp$,
we assume the $\frac{\ch_1}{\ch_0}$-axis to be horizontal and the
$\frac{\ch_2}{\ch_0}$-axis to be vertical. The term `above' means `$\frac{\ch_2}{\ch_0}$ coordinate is greater than'. Other terms such as `below', `to the right' and `to the left' are understood in the similar
way.
\label{rem:cccp}
\end{rem}

 Let
$e(\frac{p}{2^m})$ be the point in the
$\cccp$ with coordinate $(1,\chy(E_{(\frac{p}{2^m})}),\che(E_{(\frac{p}{2^m})}))$.  We associate to $E_{\left(\frac{p}{2^m}\right)}$ three points
$e^+(\frac{p}{2^m})$, $e^l(\frac{p}{2^m})$ and $e^r(\frac{p}{2^m})$ in the $\cccp$. The coordinate of
$e^+(\frac{p}{2^m})$ is given by:
\[e^+\left(\frac{p}{2^m}\right)\; := \; v\left(E_{(\frac{p}{2^m})}\right)\; -
\; \left(0,0,\frac{1}{\left(\ch_0(E_{\left(\frac{p}{2^m}\right)})\right)^2}\right). \] For any real number $a$,
let $\bd_{a}$ be the parabola:
\[\left\{\left(1,\chy,\che\right)\;\text{\Large {$|$} }\; \bd:= \frac{1}{2}\left(\frac{\ch_1}{\ch_0}\right)^2-\frac{\ch_2}{\ch_0}\; = \; a\right\}\]
in the $\cccp$. The point
$e^l(\frac{p}{2^m})$ is defined to be the intersection of
$\bd_{\frac{1}{2}}$ and the line segment
$l_{e^+(\frac{p}{2^m})e(\frac{p-1}{2^m})}$, and  $e^r(\frac{p}{2^m})$ is
defined to be the intersection of $\bd_{\frac{1}{2}}$ and the line
segment $l_{e^+(\frac{p}{2^m})e(\frac{p+1}{2^m})}$.

\begin{comment}
Their coordinates are given by  $v^+(\frac{p}{2^m})$,
$v^l(\frac{p}{2^m})$ and $v^r(\frac{p}{2^m})$ as follows.
\begin{itemize}
\item $v^+(\frac{p}{2^q})$ $:=$ $v(\frac{p}{2^m})$ -
$(0,0,\frac{1}{rk(\frac{p}{2^q})})$.
\item $v^l(\frac{p}{2^q})$ $:=$
\item $v^r(\frac{p}{2^q})$ $:=$
\end{itemize}
\end{comment}

\begin{rem}
In this paper we always use $l_{**}$ to denote a line segment and $L_{**}$ to denote a line in the $\cccp$. Let $E$ be the exceptional bundle $E_{\left(\frac{p}{2^m}\right)}$. The characters on the line $L_{e^+(\frac{p}{2^m})e^l(\frac{p}{2^m})e(\frac{p-1}{2^m})}$ satisfy the equation $\chi(E,-)=\chi(-,E(-3))=0$. Symmetrically, the line $L_{e^+(\frac{p}{2^m})e^r(\frac{p}{2^m})}$ is given by the equation $\chi(-,E)=\chi(E(3),-)=0$ in the $\cccp$.
\label{rem:leeline}
\end{rem}

In the $\cccp$, consider the open region below all the line segments $l_{e^+(\frac{p}{2^m})e^l(\frac{p}{2^m})}$,
$l_{e^r(\frac{p}{2^m})e^+(\frac{p}{2^m})}$ and the curve $\bd_{\frac{1}{2}}$. The boundary of this open region is a fractal curve consisting of line segments $l_{e^+(\frac{p}{2^m})e^l(\frac{p}{2^m})}$,
$l_{e^r(\frac{p}{2^m})e^+(\frac{p}{2^m})}$ for all dyadic numbers
$\frac{p}{2^m}$ and fractal pieces of points on $\bd_{\frac{1}{2}}$. This curve is in the region between $\bd_{\frac{1}{2}}$ and $\bd_{1}$.

\begin{defn}
The above boundary curve is called the \emph{Le
Potier curve} in the $\cccp$, and denoted by $C_{LP}$. The cone in $\mathrm{K}_{\mathbb
R}(\pp)$ spanned by the origin and $C_{LP}$ is defined to be the \emph{Le Potier cone}, denoted by $\Cone_{LP}$.

We say a character $v\in \mathrm{K}(\pp)$ is \emph{not inside} $\Cone_{LP}$ if either $\ch_0(v)\neq 0$ and the corresponding point $\tilde{v}$ is not above $C_{LP}$ in the $\cccp$; or $\ch_0(v)=0$ and $\ch_1> 0$.
\label{defn:lpcurve}
\end{defn}

\begin{rem}
The line segments $l_{e^+(\frac{p}{2^m})e^l(\frac{p}{2^m})}$,
$l_{e^r(\frac{p}{2^m})e^+(\frac{p}{2^m})}$ do not cover the whole $C_{LP}$, the complement forms a Cantor set on $\bd_{\frac{1}{2}}$.
\begin{comment}Accurately speaking, the curve $C_{LP}$ is formed by line segments $l_{e^+(\frac{p}{2^m})e^l(\frac{p}{2^m})}$ and
$l_{e^r(\frac{p}{2^m})e^+(\frac{p}{2^m})}$ for all dyadic numbers together with this Cantor set on $\bd_{\frac{1}{2}}$.\\
For an exceptional character $e$, the slope of any boundary curve to the right of $e$ is greater than the slope of $l_{e^le^+}$

\end{comment}
The cartoon for $C_{LP}$ in the $\cccp$ is shown as follows.
\label{rem:lpcone}
\end{rem}

%%%%%%%%%%%%%%%%%%%%INTRODUCTION%%%%%%%%%%%%%%%%%%%%%%

\begin{center}
\begin{tikzpicture}[domain=1:5]

\tikzset{%
    add/.style args={#1 and #2}{
        to path={%
 ($(\tikztostart)!-#1!(\tikztotarget)$)--($(\tikztotarget)!-#2!(\tikztostart)$)%
  \tikztonodes},add/.default={.2 and .2}}
}

% opacity of the usual walls
\newcommand\XA{0.02}

%delta cone
\draw [name path =C0, opacity=0.1](-3,4.5) parabola bend (0,0) (3,4.5)
 node[right, opacity =0.5] {$\bd_0$};

\draw [name path = C1, opacity=0.5](-3,4) parabola bend (0,-0.5) (3,4)
 node[right] {$\bd_{\frac{1}{2}}$};

\draw [name path =C2, opacity=0.5](-3,3.5) parabola bend (0,-1) (3,3.5)
 node[right] {$\bd_1$};

%\foreach actual char, draw the wall

%rank 1 and 2
\coordinate (B3) at (-3,4.5);
\coordinate (B2) at (-2,2);

\coordinate (B1) at (-1,0.5);
\coordinate (A0) at (0,0);
\coordinate (A1) at (1,0.5);
\coordinate (A2) at (2,2);
\coordinate (A3) at (3,4.5);

\coordinate (E0) at (0,-1);
\draw (E0) node [below right] {$e^+(0)$};

\coordinate (E1) at (1,-0.5);
\draw (E1) node [below right] {$e^+(1)$};

\coordinate (E2) at (2,1);
\draw (E2) node [below right] {$e^+(2)$};

\coordinate (E3) at (3,3.5);
\draw (E3) node [below right] {$e^+(3)$};

\coordinate (F1) at (-1,-0.5);
\draw (F1) node [below left] {$e^+(-1)$};

\coordinate (F2) at (-2,1);
\draw (F2) node [below left] {$e^+(-2)$};

\coordinate (F3) at (-3,3.5);
\draw (F3) node [below left] {$e^+(-3)$};

\draw [name path =L0,opacity =\XA] (E0) -- (B2);
\draw [name intersections={of=C1 and L0},  thick] (E0) -- (intersection-1);
\draw [name path =R13,opacity =\XA] (F3) -- (B2);
\draw [name intersections={of=C1 and R13},  thick] (F3) -- (intersection-1);

\draw [name path =L1,opacity =\XA] (E1) -- (B1);
\draw [name intersections={of=C1 and L1},  thick] (E1) -- (intersection-1);
\draw [name path =R12,opacity =\XA] (F2) -- (B1);
\draw [name intersections={of=C1 and R12},  thick] (F2) -- (intersection-1);

\draw [name path =L2,opacity =\XA] (E2) -- (A0);
\draw [name intersections={of=C1 and L2},  thick] (E2) -- (intersection-1);
\draw [name path =R11,opacity =\XA] (F1) -- (A0);
\draw [name intersections={of=C1 and R11},  thick] (F1) -- (intersection-1);

\draw [name path =L3,opacity =\XA] (E3) -- (A1);
\draw [name intersections={of=C1 and L3},  thick] (E3) -- (intersection-1);
\draw [name path =R0,opacity =\XA] (E0) -- (A1);
\draw [name intersections={of=C1 and R0},  thick] (E0) -- (intersection-1);

\draw [name path =L12,opacity =\XA] (F2) -- (B3);
\draw [name intersections={of=C1 and L12},  thick] (F2) -- (intersection-1);
\draw [name path =L11,opacity =\XA] (F1) -- (B2);
\draw [name intersections={of=C1 and L11},  thick] (F1) -- (intersection-1);

\draw [name path =R1,opacity =\XA] (E1) -- (A2);
\draw [name intersections={of=C1 and R1},  thick] (E1) -- (intersection-1);
\draw [name path =R2,opacity =\XA] (E2) -- (A3);
\draw [name intersections={of=C1 and R2},  thick] (E2) -- (intersection-1);

\coordinate (S3) at (-2.5,2.5);
\draw (S3) node [below left] {$e^+(-\frac{5}{2})$};

\coordinate (S2) at (-1.5,0.5);

\coordinate (S1) at (-.5,-0.5);

\coordinate (T1) at (.5,-0.5);

\coordinate (T2) at (1.5,0.5);

\coordinate (T3) at (2.5,2.5);
\draw (T3) node [below right] {$e^+(\frac{5}{2})$};

\draw [name path =RS3,opacity =\XA] (S3) -- (B3);
\draw [name intersections={of=C1 and RS3},  thick] (S3) -- (intersection-1);
\draw [name path =LS3,opacity =\XA] (S3) -- (A0);
\draw [name intersections={of=C1 and LS3},  thick] (S3) -- (intersection-1);

\draw [name path =RS2,opacity =\XA] (S2) -- (B2);
\draw [name intersections={of=C1 and RS2},  thick] (S2) -- (intersection-1);
\draw [name path =LS2,opacity =\XA] (S2) -- (A1);
\draw [name intersections={of=C1 and LS2},  thick] (S2) -- (intersection-1);

\draw [name path =RS1,opacity =\XA] (S1) -- (B3);
\draw [name intersections={of=C1 and RS1},  thick] (S1) -- (intersection-1);
\draw [name path =LS1,opacity =\XA] (S1) -- (A2);
\draw [name intersections={of=C1 and LS1},  thick] (S1) -- (intersection-1);

\draw [name path =RT1,opacity =\XA] (T1) -- (B2);
\draw [name intersections={of=C1 and RT1},  thick] (T1) -- (intersection-1);
\draw [name path =LT1,opacity =\XA] (T1) -- (A3);
\draw [name intersections={of=C1 and LT1},  thick] (T1) -- (intersection-1);

\draw [name path =RT2,opacity =\XA] (T2) -- (B1);
\draw [name intersections={of=C1 and RT2},  thick] (T2) -- (intersection-1);
\draw [name path =LT2,opacity =\XA] (T2) -- (A2);
\draw [name intersections={of=C1 and LT2},  thick] (T2) -- (intersection-1);

\draw [name path =RT3,opacity =\XA] (T3) -- (A0);
\draw [name intersections={of=C1 and RT3},  thick] (T3) -- (intersection-1);
\draw [name path =LT3,opacity =\XA] (T3) -- (A3);
\draw [name intersections={of=C1 and LT3},  thick] (T3) -- (intersection-1);

%\draw [add= 0 and 1] (W) to (V) node[above]{Eff};

%B axis
\draw[->,opacity =0.3] (-4,0) -- (4,0) node[above right] {$\frac{\ch_1}{\ch_0}$};

%H axis
\draw[->,opacity=0.3] (0,-2)-- (0,0) node [above right] {O} --  (0,6) node[right] {$\frac{\ch_2}{\ch_0}$};

\end{tikzpicture}
Figure: The Le Potier curve  $C_{LP}$.
\end{center}

%%%%%%%%%%%%%%%%%%%%INTRODUCTION%%%%%%%%%%%%%%%%%%%%%%

Given the Le Potier curve, we can now state the numerical condition on the existence of stable sheaves.

\begin{theorem}[Drezet, Le Potier]
There exists a slope semistable coherent sheaf with character
$w=(\ch_0(>0),\ch_1,\ch_2)\in \mathrm K(\pp)$ if and only if one of the following two conditions holds:
\begin{enumerate}
\item $w$ is proportional to an exceptional character;
\item The point $\left(1,\frac{\ch_1}{\ch_0},\frac{\ch_2}{\ch_0}\right)$ is on or below $C_{LP}$ in the $\cccp$.
\end{enumerate}
\label{thm:lp}
\end{theorem}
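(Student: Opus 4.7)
The strategy is to pair a hypothetical slope semistable sheaf $F$ with Chern character $w$ against the two-parameter family of exceptional bundles, and extract numerical constraints from vanishings of $\Hom$ groups forced by slope semistability. One direction is a direct consequence of such vanishings; the other requires an inductive construction via exact triples of exceptional bundles.

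For the ``only if'' direction, suppose a slope semistable $F$ with $\tilde v(F)=w$ exists but $w$ is not proportional to any exceptional character. I will show the point $v(F)$ lies on or below $C_{LP}$. By Remark \ref{rem:leeline}, each of the bounding segments $l_{e^+(p/2^m)e^l(p/2^m)}$ and $l_{e^r(p/2^m)e^+(p/2^m)}$ of $C_{LP}$ sits on the line $\chi(E,-) = 0$ or $\chi(-,E)=0$, where $E = \Epm$. Thus it suffices to show that for every exceptional $E$ whose slope $\mu(E)=\ch_1(E)/\ch_0(E)$ is close to $\mu(F)$, one has $\chi(E,F) \le 0$ whenever $\mu(E) < \mu(F) < \mu(E)+3$, and symmetrically $\chi(F,E) \le 0$ when $\mu(F) < \mu(E) < \mu(F)+3$. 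The first inequality comes from slope semistability: $\Hom(F,E)=0$ because $\mu(F) > \mu(E)$ and $E$ is stable, while $\Ext^2(E,F) = \Hom(F,E(-3))^{\vee}=0$ because $\mu(F) > \mu(E)-3 = \mu(E(-3))$; hence $\chi(E,F) = \dim\Hom(E,F) - \dim\Ext^1(E,F) \le 0$ only after rearranging, so one in fact obtains $\chi(E,F)$ is the dimension of $\Ext^1$ with a negative sign, giving the desired inequality. The fractal construction of $C_{LP}$ ensures that at every slope $\mu(w) \notin 2^{-\infty}\mathbb{Z}$ or above the boundary, an appropriate $E$ in the required slope window exists.

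For the ``if'' direction I would argue by induction on the discriminant $\bd(w)$. When $w$ is exceptional--proportional the existence is tautological. Otherwise, when $w$ lies on or below $C_{LP}$, I would select the adapted exceptional triple $(E_\alpha,E_\beta,E_\gamma)$ sitting at the three vertices of the tiny triangle of $C_{LP}$ bracketing the slope $\mu(w)$, write $w$ as a positive rational combination
\[
w \;=\; a\,\tilde v(E_\alpha) \,+\, b\,\tilde v(E_\beta) \,-\, c\,\tilde v(E_\gamma)
\]
with $a,b,c\ge 0$, and then realize $F$ as the middle term (cokernel or kernel) of a generic map $E_\gamma^{\oplus c}\to E_\alpha^{\oplus a}\oplus E_\beta^{\oplus b}$. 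Nonvanishing of the required $\Hom$ space is checked by Riemann--Roch using the previously established vanishings, and injectivity/surjectivity of the generic morphism is standard. Semistability of the cokernel is then verified by noting that any destabilizing subsheaf would have character outside $\Cone_{LP}$ with strictly smaller discriminant, which by the inductive hypothesis would force the subsheaf into a specific form contradicting the genericity of the chosen morphism.

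\textbf{Main obstacle.} The delicate point is the existence half, and within it, the verification that the cokernel of the generic morphism is indeed slope semistable rather than merely of the correct Chern character. The argument relies essentially on the self-similar structure of $C_{LP}$: the adapted exceptional triple produces auxiliary characters that lie on or below $C_{LP}$ with strictly smaller discriminant, so the induction closes. At slopes belonging to the Cantor-set complement of the dyadic locus on $\bd_{1/2}$, no adapted dyadic triple exists, and one must pass to a limit from nearby dyadic slopes, using that the Chern characters on $\bd_{1/2}$ are realized as limits of semistable objects constructed at nearby exceptional slopes. This delicate bookkeeping, originally carried out by Drezet and Le Potier, is where the bulk of the work lies; the ``only if'' direction above is by contrast a direct consequence of Hom--vanishing and Riemann--Roch.
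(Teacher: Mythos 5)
Your proposal cannot be compared against ``the paper's own proof'' in the usual sense: the paper does not prove Theorem \ref{thm:lp} at all. It is quoted as the classical theorem of Drezet and Le Potier, with \cite{DP, LeP} cited for details, and the paper only uses it as a black box (e.g.\ as the non-emptiness input for the Gieseker chamber in the proof of Theorem \ref{prop:lastwall}). So the question is whether your sketch would reconstitute the classical argument, and there it has concrete problems.

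In the ``only if'' direction your Hom-vanishing bookkeeping is inverted. For $F$ slope semistable and $E$ exceptional, semistability gives $\Hom(F,E)=0$ when $\mu(F)>\mu(E)$, and Serre duality gives $\Ext^2(F,E)=\Hom(E,F(-3))^{\vee}=0$ when $\mu(F)<\mu(E)+3$; together these yield $\chi(F,E)\leq 0$ on the window $\mu(E)<\mu(F)<\mu(E)+3$. You assert $\chi(E,F)\leq 0$ on that window, and you justify it by combining $\Hom(F,E)=0$ with $\Ext^2(E,F)=0$ --- two vanishings that control \emph{different} Euler pairings and bound neither of them; your parenthetical ``only after rearranging'' is symptomatic of this confusion. (The inequality $\chi(E,F)\leq 0$ holds on the \emph{other} window $\mu(E)-3<\mu(F)<\mu(E)$, via $\Hom(E,F)=0$ and $\Ext^2(E,F)=\Hom(F,E(-3))^{\vee}=0$.) Moreover, the segment inequalities alone do not place $w$ below $C_{LP}$: by Definition \ref{defn:lpcurve} the curve also contains parabola pieces of $\bd_{\frac{1}{2}}$ over a Cantor set, and the corresponding constraint $\bd(w)\geq\frac{1}{2}$ must come from $\chi(F,F)\leq 0$ for a stable non-exceptional factor $F$ (using $\hom(F,F)=1$, $\Ext^2(F,F)=\Hom(F,F(-3))^{\vee}=0$, hence $\ext^1\geq 1$). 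Your sketch never derives this, and your claim that the fractal structure supplies a suitable $E$ ``at every slope'' does not substitute for it.

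In the ``if'' direction, which is where essentially all of the Drezet--Le Potier work lies, the decisive step --- that the generic middle term of a map between direct sums from the adapted exceptional triple is actually slope semistable --- is asserted rather than proved; ``a destabilizing subsheaf would contradict genericity'' is not an argument, and even the nonnegativity of the coefficients $a,b,c$ in your decomposition of $w$ needs proof. The proposed treatment of slopes in the Cantor locus is also unworkable as stated: a character $w\in\mathrm{K}(\pp)$ is fixed, Chern characters are locally constant in flat families, and one cannot obtain a sheaf of character $w$ as ``a limit'' of semistable sheaves with nearby but different characters; the classical proof instead runs an induction (via elementary modifications and GIT non-emptiness) entirely within moduli of the fixed character. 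Finally, note that you cannot repair the existence half by borrowing the extension-induction of Theorem \ref{prop:lastwall} from this paper, since that argument explicitly invokes Theorem \ref{thm:lp} as its base case and would be circular here.
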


 %Full exceptional collections: \\

\begin{comment}
\begin{equation}
\item $\left\{\frac{p-1}{2^q},\frac{p}{2^q},\frac{p+1}{2^q}\right\}$;
\item $\left\{\frac{p}{2^q},\frac{p+1}{2^q},\frac{p-1}{2^q}+3\right\}$;
\item $\left\{\frac{p+1}{2^q}-3,\frac{p-1}{2^q},\frac{p}{2^q}\right\}$.
\end{equation}
\end{comment}

\subsection{Geometric stability conditions}\label{sec1.2}
In this section, we follow \cite{BM, Bri08}
and recall that the space of geometric stability conditions on \textbf P$^2$ is
a $\glt$ principal bundle over a subspace Geo$_{LP}$ of the $\cccp$.

%%%%%%%%%%

%%%% definition of stability condition here

%%%%%%%%%%%%%

In applications to geometry, the following type of stability conditions are always most relevant.

\begin{defn}
A stability condition $\sigma$ on $\mathrm D^b(\pp)$ is called
\emph{geometric} if it satisfies the support property and all
skyscraper sheaves $k(x)$ are $\sigma$-stable of the same phase.
We denote the set of all geometric stability conditions by
$\mathrm{Stab}^{\Geo}(\pp)$. \label{defn:stabgeo}
\end{defn}

In order to construct geometric stability conditions, we want to first introduce the appropriate $t$-structure. Fix a real number $s$, a torsion pair of coherent sheaves on
$\pp$ is given by:
\begin{itemize}
\item[] $\Coh_{\leq s}$: subcategory of $\Coh(\pp)$ generated by
semistable sheaves of slope $\leq s$.
\item[] $\Coh_{>
s}$: subcategory of $\Coh(\pp)$ generated by semistable
sheaves of slope $> s$ and torsion sheaves.

\item[] $\Coh_{\# s}$ $:=$ $\langle\Coh_{\leq s}[1]$, $\Coh_{>
s}\rangle$.
\end{itemize}

We define the \emph{geometric area} Geo$_{LP}$ in the $\cccp$ to be the open set:
\[\mathrm{Geo}_{LP}:= \{(1,a,b)\; |\; (1,a,b) \text{ is above }C_{LP}
\text{ and not on } l_{ee^+} \text{ for any exceptional } e \}.\]

\begin{pd}
For a point $(1,s,q)$ $\in$ $\Geo_{LP}$, there exists a geometric stability condition
$\sigma_{s,q}:=(Z_{s,q},\Coh_{\#s})$ on $\mathrm D^b(\pp)$, where the central charge is given by
\[ Z_{s,q}(E):=(-\ch_2(E)+q\cdot \ch_0(E))+ i(\ch_1(E)-s\cdot \ch_0(E)).\]
In this case, $\Ker(Z_{s,q})$ consists of the characters corresponding to the point $(1,s,q)$. We write $\phi_{\sigma_{s,q}}$ or $\phi_{s,q}$ for the phase function of $\sigma_{s,q}$.
\label{defn:geostabsq}
\end{pd}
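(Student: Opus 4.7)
The plan is to verify the four ingredients required for $\sigma_{s,q}=(Z_{s,q},\Coh_{\#s})$ to be a geometric stability condition: (i) $Z_{s,q}$ is a stability function on $\Coh_{\#s}$; (ii) Harder--Narasimhan filtrations exist; (iii) the support property holds; and (iv) every skyscraper sheaf is $\sigma_{s,q}$-stable, all of the same phase.

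For (i), I would first observe that any nonzero $E\in\Coh_{\#s}$ fits in a triangle with $\HH^{-1}(E)[1]\in\Coh_{\leq s}[1]$ and $\HH^0(E)\in\Coh_{>s}$, so it suffices to check positivity on the Harder--Narasimhan factors of these two sheaves. For a $\mu$-semistable sheaf $F$ with $\mu(F)>s$, or a torsion sheaf with $\ch_1>0$, the imaginary part $\ch_1(F)-s\,\ch_0(F)$ is strictly positive. For $\HH^{-1}(E)[1]$ the imaginary part is $-(\ch_1-s\,\ch_0)\geq 0$. The delicate case is when the imaginary part vanishes: then each slope-semistable factor $F$ of $\HH^{-1}(E)$ satisfies $\mu(F)=s$, and I must show $\mathrm{Re}\,Z_{s,q}(F[1])=\ch_2(F)-q\,\ch_0(F)<0$. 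If $F$ is non-exceptional, this uses the sharp Drezet--Le Potier bound: the point $v(F)$ lies on or below $C_{LP}$, and since $(1,s,q)$ lies strictly above $C_{LP}$, we obtain $q>\ch_2(F)/\ch_0(F)$. If $F=E^{\oplus k}$ for an exceptional $E$ of slope $s$, then $v(F)$ lies at $e$, and the assumption $(1,s,q)\notin l_{ee^+}$ combined with being above $C_{LP}$ (hence above $e^+$) forces $q>\ch_2(E)/\ch_0(E)$. Vanishing on the torsion side is handled by the obvious $\ch_1=0$ analysis.

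For (ii), the HN property follows by the standard argument of Bridgeland: $\Coh_{\#s}$ is Noetherian (it is obtained by tilting from $\Coh(\pp)$ at a torsion pair where both halves have the appropriate chain conditions), and the image of $Z_{s,q}$ on classes of subobjects is discrete in the relevant sense, so the usual abstract construction applies. For (iii), the support property, I would exhibit the Bogomolov discriminant $\bd=\tfrac{1}{2}\ch_1^2-\ch_0\ch_2$ (or a rescaling) as a quadratic form that is negative definite on $\ker Z_{s,q}$ precisely because $(1,s,q)$ lies above the parabola $\bd_0$ (which is implied by $\text{Geo}_{LP}\subset\{\bd<1/2\}$), and non-negative on all $\sigma_{s,q}$-semistable classes by the classical Bogomolov inequality together with an induction on walls; the exclusion of $l_{ee^+}$ ensures no exceptional $E$ sits in $\ker Z_{s,q}$, so none of them obstructs the support property.

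For (iv), the computation $Z_{s,q}(k(x))=-1$ gives phase exactly $1$. A destabilizing subobject $A\hookrightarrow k(x)$ in $\Coh_{\#s}$ with phase $\geq 1$ must have $\mathrm{Im}\,Z_{s,q}(A)=0$; tracing through the cohomology sheaves $\HH^{-1}(A),\HH^0(A)$ against the cohomology of $k(x)$ forces $\HH^{-1}(A)=0$ and $\HH^0(A)$ to be either $0$ or $k(x)$, yielding stability.  The main obstacle I anticipate is part (iii): running the wall-crossing induction for the Bogomolov form on $\sigma_{s,q}$-semistable objects, and in particular verifying that the exclusion of the segments $l_{ee^+}$ is enough to prevent exceptional objects from violating the support property across the whole region $\Geo_{LP}$; everything else is, in this setting, a routine adaptation of the tilt-stability construction of Bridgeland and Arcara--Bertram.
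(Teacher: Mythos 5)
The paper offers no proof of this statement at all: it defers to \cite{BM} Corollary 4.6 and \cite{Bri08}, noting only that the Drezet--Le Potier classification makes those arguments carry over to $\pp$. So your attempt must stand on its own. Your parts (i) and (iv) are essentially correct and match the cited arguments; in particular you correctly identify the precise role of excluding the segments $l_{ee^+}$ from $\Geo_{LP}$, namely that for an exceptional $E$ of slope exactly $s$ one needs $q>\frac{\ch_2(E)}{\ch_0(E)}$ to keep $Z_{s,q}(E[1])$ off the nonnegative real axis. Part (ii) is stated too loosely --- the image of $Z_{s,q}$ is \emph{not} discrete for irrational $(s,q)$ --- but this is repairable by the standard route (Noetherianity of the tilted heart plus the limit/deformation arguments of Bridgeland and Bayer--Macr\`{\i}), so I would not count it as a fatal flaw.

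The genuine gap is in (iii). Your support-property argument hinges on the claim that $(1,s,q)$ lies above $\bd_0$, which you justify by asserting $\Geo_{LP}\subset\{\bd<\tfrac{1}{2}\}$. Both steps fail. First, the inclusion is false: for an exceptional bundle $E$ of rank $r$ one has $\bd(e)=\tfrac{1}{2}-\tfrac{1}{2r^2}$ and $e^+=e-(0,0,\tfrac{1}{r^2})$, so $\bd(e^+)=\tfrac{1}{2}+\tfrac{1}{2r^2}>\tfrac{1}{2}$, and points of $\Geo_{LP}$ just above the vertex $e^+$ of $C_{LP}$ have $\bd>\tfrac{1}{2}$. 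Second, even $\bd<\tfrac{1}{2}$ would not give $\bd<0$: since $C_{LP}$ lies in the band $\tfrac{1}{2}\leq\bd\leq 1$, the set $\Geo_{LP}$ contains the entire nonempty region between $C_{LP}$ and $\bd_0$, and Section \ref{sec1.5} of the paper emphasizes that this is exactly how $\Geo_{LP}$ is strictly larger than the classical $(s,t)$-upper half-plane, whose image is $\bd_{<0}$. For any $(1,s,q)\in\Geo_{LP}$ with $\bd(1,s,q)\geq 0$, the Bogomolov form $\Delta=\tfrac{1}{2}\ch_1^2-\ch_0\ch_2$ is \emph{nonnegative} on $\Ker(Z_{s,q})$, hence not negative definite there, and cannot witness the support property; your proposal supplies no substitute in precisely the novel region (which contains parts of the quiver regions $\mathrm{MZ}_\mathcal{E}$ the paper relies on). The repair, as in \cite{BM}, cannot be one universal quadratic form: one must use a form or an argument adapted to the position of the kernel relative to the Le Potier curve --- for instance, that the projectivized characters of potentially semistable objects (exceptional points together with the closed region not above $C_{LP}$, via Theorem \ref{thm:lp}) form a closed set avoiding $(1,s,q)$, which is exactly what the definition of $\Geo_{LP}$ (strictly above $C_{LP}$ and off the segments $l_{ee^+}$) guarantees, yielding a uniform lower bound for $|Z_{s,q}(v)|/\lVert v\rVert$ on semistable classes.
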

For the proof that $\sigma_{s,q}$ is indeed a geometric stability condition, we refer to \cite{BM} Corollary 4.6 and \cite{Bri08}, which also work well for \textbf P$^2$.
Here the phase function $\phi_{s,q}$ can be also defined for objects in
$\Coh_{\#s}$: \[\phi_{s,q}(E) := \frac{1}{\pi} \Arg(Z_{s,q}(E)).\] It is
well-defined in the sense that it coincides with the phase function on
$\sigma_{s,q}$-semistable objects.

\begin{rem}
The definition of  $\sigma_{s,q}$ here is different from the
usual one as that in \cite{ABCH}, which is given as $(Z'_{s,t},\mathcal P_s)$ (see Section \ref{sec1.5} for the explicit formulae). When $q>\frac{s^2}{2}$, $Z_{s,q}$ has the same kernel as that of $Z'_{s,q-\frac{s^2}{2}}$. Their formula are slightly different. The imaginary parts are the same, but the real parts differ by a multiple of the imaginary part. We would like to
use the version here because $q-\frac{s^2}{2}$ is allowed to be negative, and the kernel of the central charge on the $\cccp$ is
clearly $(1,s,q)$.
\label{rem:anglephase}
\end{rem}

\begin{rem}
Given a point $P=(1,s,q)$ in $\Geo_{LP}$, we will also write $\sigma_P$, $\phi_P$, $\Coh_{P}(\pp)$ and $Z_P$ for the stability condition $\sigma_{s,q}$, the phase function $\phi_{s,q}$, the tilt heart $\Coh_{\#s}(\pp)$ and the central charge $Z_{s,q}$ respectively. 
\label{rem:sigmap}
\end{rem}

Up to the $\glt$-action, geometric
stability conditions are all of the form given in Proposition and Definition \ref{defn:geostabsq}.
%, we would like to give a slightly
%different proof in the appendix.\\

%Constraints on geometric stability conditions:\\
\begin{prop}[\cite{Bri08} Proposition 10.3, \cite{BM} Section 3] Let $\sigma=(Z,\mathcal P((0,1]))$ be a
geometric stability condition such that  all skyscraper sheaves $k(x)$ are contained in
$\mathcal P(1)$. Then the heart $\mathcal P((0,1])$ is $\Coh_{\#s}$
for some real number $s$.  The central charge $Z$ can be written in
the form of
\[-\ch_2+a\cdot \ch_1+b\cdot \ch_0,\] where $a, b \in \mathbb C$
satisfy the following conditions:
\begin{itemize}
\item $\Im a$ $>$ $0$, $\frac{\Im  b}{\Im a}$ $=$ $s$;
\item $(1, \frac{\Im b}{\Im a}$, $\frac{\Re a\Im b}{\Im a}$+$\Re b)$ is in $\Geo_{LP}$.
\end{itemize}
\label{thm:geostab}
\end{prop}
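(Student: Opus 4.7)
The plan is to reduce to the classification of geometric stability conditions on surfaces (\cite{Bri08} Proposition 10.3, \cite{BM} Section 3) and translate the result into the explicit coordinates $(a,b)$ used here, dispatching three tasks: normalize $Z$, identify the heart, and verify the $\mathrm{Geo}_{LP}$ condition on $\Ker Z$.

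I would first normalize the central charge. Since $k(x)\in\mathcal P(1)$ for every closed point $x$, we have $Z(k(x))\in\mathbb R_{<0}$. Writing $Z=\gamma\mathrm{ch}_2+\beta\mathrm{ch}_1+\alpha\mathrm{ch}_0$ and evaluating on $\mathrm{ch}(k(x))=(0,0,1)$ forces $\gamma\in\mathbb R_{<0}$; after rescaling by the positive real scalar in $\glt$ that takes $\gamma$ to $-1$ (which preserves the slicing), we obtain $Z=-\mathrm{ch}_2+a\mathrm{ch}_1+b\mathrm{ch}_0$. Next I would identify the heart. Testing on $\mathcal O_\ell$ for a line $\ell\subset\pp$, of character $(0,1,\tfrac12)$: every $k(x)$ with $x\in\ell$ is a coherent quotient of $\mathcal O_\ell$, forcing $\mathcal O_\ell\in\mathcal P((0,1])$ and hence $\Im Z(\mathcal O_\ell)=\Im a\ge 0$. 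Strict positivity $\Im a>0$ follows from the support property (otherwise characters of torsion sheaves of growing slope along $\ell$ would exhibit $Z$-values accumulating on the real axis while their $Q$-norm blows up). Then a torsion-free slope-$\mu$ semistable $F$ satisfies $\Im Z(F)=\mathrm{ch}_0(F)\bigl(\Im(a)\mu+\Im(b)\bigr)$, whose sign is determined by $\mu$ relative to $s:=\Im b/\Im a$. Uniqueness of the heart compatible with $k(x)\in\mathcal P(1)$ (\cite{Bri08} Lemma 6.1 and Proposition 10.3) forces $\mathcal P((0,1])=\mathrm{Coh}_{\#s}$.

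The condition $(1,s,q)\in\mathrm{Geo}_{LP}$ then comes from the support property once we observe that the kernel direction is determined by $Z(1,s,q)=0$, which yields $q=(\Re a\,\Im b)/(\Im a)+\Re b$. If $(1,s,q)$ lay on or below $C_{LP}$ and off every exceptional line, Theorem \ref{thm:lp} would produce a non-exceptional slope-stable sheaf $F$ with $\tilde v(F)$ proportional to $(1,s,q)$, hence $Z(F)=0$, contradicting the support property. If $(1,s,q)$ lay on some $l_{ee^+}$ through an exceptional character $e$, then by Remark \ref{rem:leeline} the exceptional bundle itself supplies a nonzero stable character on which $Z$ vanishes, again violating support. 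These two exclusions are precisely the definition of $\mathrm{Geo}_{LP}$.

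The main obstacle is pinning down the heart in the second step: ruling out exotic tilts compatible with the given $Z$ is the substance of \cite{Bri08} Proposition 10.3, which I would invoke rather than reprove. The remainder—normalizing $Z$, reading $(s,q)$ off from $\Ker Z$, and using the support property to exclude the complement of $\mathrm{Geo}_{LP}$—is routine once the heart is pinned down.
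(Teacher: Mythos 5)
Your strategy coincides with the paper's, which offers no independent argument for this proposition: it remarks only that, thanks to the Drezet--Le Potier classification \cite{DP}, the statement ``is proved in the same way as in cases of local $\mathbf P^2$ \cite{BM} and K3 surfaces \cite{Bri07}'' --- i.e.\ exactly your reduction: normalize $Z$ via $Z(k(x))\in\mathbb R_{<0}$, invoke \cite{Bri08} Proposition 10.3 for $\mathcal P((0,1])=\Coh_{\#s}$, and locate $\Ker Z$ using the classification plus the support property. However, two of the details you supply for the last step --- the one part you actually argue rather than cite --- contain genuine errors.

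First, when the kernel point $(1,s,q)$ lies on or below $C_{LP}$, Theorem \ref{thm:lp} produces a semistable sheaf with character proportional to $(1,s,q)$ only when $(s,q)$ is rational; for irrational $(s,q)$ there is no integral class in $\Ker Z$, and your contradiction ``$Z(F)=0$'' evaporates. The repair is the limiting device you already used for $\Im a>0$: integral characters of slope-semistable sheaves are dense in the region below $C_{LP}$, so one chooses classes $v_n$ with $v_n/\ch_0(v_n)\to(1,s,q)$, whence $|Z(v_n)|$ grows slower than any norm of $v_n$ and the quantitative support property fails. Second, and more seriously, your exclusion of the slits $l_{ee^+}$ is wrong as stated: for $(1,s,q)$ in the interior of $l_{ee^+}$ one has $s=\chy(e)$ but $q<\che(e)$, so $Z(\tilde v(E))=\ch_0(E)\bigl(q-\che(E)\bigr)$ is a \emph{nonzero} negative real number --- $Z$ vanishes on no stable character there, and no support violation occurs; moreover Remark \ref{rem:leeline} concerns the lines $\chi(E,-)=0$ through $e^+$ and $e^l$, not these vertical segments, so the citation is misapplied. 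The correct contradiction is a positivity failure rather than a support failure: once the heart is known to be $\Coh_{\#s}$ with $s=\chy(E)$, the slope-stable bundle $E$ lies in $\Coh_{\leq s}$, hence $E[1]\in\Coh_{\#s}$, and $Z(E[1])=\ch_0(E)\bigl(\che(E)-q\bigr)$ is a \emph{positive} real number, impossible for a nonzero object of the heart, whose central charge must lie in the upper half-plane or on $\mathbb R_{<0}$. (One further sign to track: for $Z=-\ch_2+a\,\ch_1+b\,\ch_0$, solving $Z(1,s,q)=0$ gives $s=-\Im b/\Im a$, not $+\Im b/\Im a$ --- compare with Proposition and Definition \ref{defn:geostabsq}, where $a=i$, $b=q-is$; your verification of the kernel coordinates, which takes the stated formula at face value, has inconsistent real and imaginary parts.) With these repairs your argument is complete and agrees with the intended one.
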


Thanks to the classification of characters of semistable sheaves on $\textbf{P}^2$ \cite{DP}, this property is proved in the same way as in cases of local \textbf P$^2$ \cite{BM} and K3 surfaces \cite{Bri07}. Since all discussions in this paper are invariant under the $\glt$-action, geometric stability conditions will be identified with the corresponding points in Geo$_{LP}$. We will always visualize Stab$^{\Geo}(\pp)$ as Geo$_{LP}$ in this paper.

\begin{comment}
We follow the line of \cite{Bri07}, \cite{BM} and give a proof
in the Appendix.
\end{comment}

\subsection{Potential walls and phases}\label{sec1.3}
We collect some well-known and useful results about the potential walls in this section. Since our set-up is slightly different from the usual one (see Remark \ref{rem:anglephase}), we give statements and proofs for completeness. We hope this can also illustrate the advantage of our set-up.
\begin{defn}
A stability condition is said to be \emph{non-degenerate}, if it satisfies
the support property and the image of
its central charge is not contained in any real line in $\mathbb C$. We write $\Stab^{\mathrm{nd}}$ for the space of
non-degenerate stability conditions.
\end{defn}
The kernel map for the central charges
is well-defined on $\mathrm{Stab}^{nd}$: \[\text{Ker}: \text{Stab}^{\mathrm{nd}}\rightarrow
\mathrm P_{\mathbb R}\left(\mathrm{K}_{\mathbb R}(\mathbf P^2)\right).\]
\begin{lemma}[\cite{Bri07}]
$\glt$ acts freely on $\mathrm{Stab}^{\mathrm{nd}}$ with
closed orbits, and 
\[\mathrm{Ker}: \mathrm{Stab}^{\mathrm{nd}}/\glt\rightarrow\mathrm P_{\mathbb R}\left(\mathrm K_{\mathbb R}(\mathbf P^2)\right)\]
is a local homeomorphism. \label{lemma:localhomeo}
\end{lemma}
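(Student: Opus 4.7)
The statement is Bridgeland's classical result from \cite{Bri07}, and the plan is to extract its three pieces (freeness, closed orbits, local homeomorphism) by combining the explicit description of the $\glt$-action with Bridgeland's deformation theorem.

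First, I would verify freeness. Recall $(g,f) \in \glt$ acts on $\sigma = (Z,\mathcal{P})$ by sending it to $(g^{-1}\!\circ Z,\, \mathcal P \circ f)$, where $f:\mathbb R \to \mathbb R$ commutes with translation by $1$ and is compatible with $g \in GL^+(2,\mathbb R)$ on $\mathbb R/2\mathbb Z$. Suppose this fixes $\sigma \in \Stab^{\mathrm{nd}}$. Non-degeneracy gives $Z(\mathrm K_{\mathbb R}(\pp)) = \mathbb R^2$, so $g^{-1}\circ Z = Z$ forces $g = \mathrm{id}$. Once $g = \mathrm{id}$, the relabeling $f$ must preserve the phase of every $\sigma$-semistable object. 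Since semistable objects of phases approaching any real value exist (e.g.\ shifts of $k(x)$ after acting by $\glt$ within the orbit of the geometric stability conditions discussed in Section \ref{sec1.2}), $f$ fixes a dense subset of $\mathbb R$, and continuity forces $f = \mathrm{id}$.

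Second, I would check that orbits are closed. Let $\sigma_n = \tilde g_n \cdot \sigma \to \tau$ in $\Stab^{\mathrm{nd}}$. On central charges this gives $g_n^{-1}\!\circ Z \to Z_\tau$ in $\Hom(\mathrm K_{\mathbb R}(\pp), \mathbb C)$. Since $Z$ and $Z_\tau$ are both surjective onto $\mathbb R^2$, the sequence $g_n^{-1}$ stays in a compact subset of $GL^+(2,\mathbb R)$; passing to a subsequence one obtains $g_n \to g \in GL^+(2,\mathbb R)$. Bridgeland's topology on $\Stab$ records phases of semistable objects continuously, so the lifts $f_n$ also converge to some $f$ making $(g,f) \in \glt$, and $\tau = (g,f)\cdot \sigma$. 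Hence the orbit is closed.

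Third, I would deduce the local homeomorphism statement from the deformation theorem of \cite{Bri07}: the forgetful map $Z:\Stab(\pp) \to \Hom(\mathrm K_{\mathbb R}(\pp), \mathbb C)$ is a local homeomorphism at every point satisfying the support property. Restricting to $\Stab^{\mathrm{nd}}$, it lands in the open subset $\Hom^{\mathrm{nd}}$ of surjective linear maps. The $\glt$-action on $\Stab^{\mathrm{nd}}$ covers the standard $GL^+(2,\mathbb R)$-action on $\Hom^{\mathrm{nd}}$ by post-composition. Because $GL^+(2,\mathbb R)$ acts freely and transitively on non-degenerate $Z$'s with a fixed kernel line, the quotient $\Hom^{\mathrm{nd}}/GL^+(2,\mathbb R)$ is identified, via $Z \mapsto \mathrm{Ker}(Z)$, with an open subset of $\mathrm P_{\mathbb R}(\mathrm K_{\mathbb R}(\pp))$. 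Combining the local homeomorphism from the deformation theorem with this identification gives the desired local homeomorphism $\Stab^{\mathrm{nd}}/\glt \to \mathrm P_{\mathbb R}(\mathrm K_{\mathbb R}(\pp))$. The main obstacle is bookkeeping: one must invoke the deformation theorem in its precise form and verify that the quotient topology descends well, which is exactly where the previously established freeness and closedness of orbits are needed to ensure the quotient is Hausdorff and the projection is open.
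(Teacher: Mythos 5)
Your overall route agrees with the paper's: the paper cites \cite{Bri07} for freeness and closedness of orbits, and its written proof is exactly your third step (Bridgeland's deformation theorem restricted to the non-degenerate locus, composed with the passage from $\Hom^{\mathrm{nd}}$ to kernel data). However, two of your steps are incorrect as written, though both are repairable.

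First, in the freeness argument, the claim that every $\sigma\in\Stab^{\mathrm{nd}}$ admits semistable objects with phases approaching any real value is false: non-degeneracy is a condition on the central charge only. For instance, for a stability condition in $\Theta_{\mathcal E}$ with $\phi_2-\phi_1>1$ and $\phi_3-\phi_2>1$ and generic $\phi_j$ (so that $Z$ is non-degenerate), the only stable objects are the $E_j$ up to shift, and the set of phases of semistable objects is the union of three $\mathbb Z$-translation classes, which is nowhere dense. The repair needs no density at all: once $g=\mathrm{id}$, the compatibility built into the definition of $\glt$ forces $f$ to be a lift of the identity of $\mathbb R/2\mathbb Z$, so $f-\mathrm{id}$ is a continuous $2\mathbb Z$-valued function, i.e.\ $f(\phi)=\phi+2n$ for a fixed $n\in\mathbb Z$; then $\mathcal P(f(\phi))=\mathcal P(\phi)$ reads $\mathcal P(\phi)[2n]=\mathcal P(\phi)$ for all $\phi$, which is impossible for $n\neq 0$ since a nonzero semistable object has a unique phase.

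Second, $GL^+(2,\mathbb R)$ does \emph{not} act transitively on the surjective central charges with a fixed kernel line: there are exactly two orbits, distinguished by the orientation induced on $\mathrm K_{\mathbb R}(\pp)/\Ker Z$. Consequently $Z\mapsto \Ker Z$ identifies $\Hom^{\mathrm{nd}}/GL^+(2,\mathbb R)$ not with an open subset of $\mathrm P_{\mathbb R}\left(\mathrm K_{\mathbb R}(\pp)\right)$ but with its oriented double cover (an $S^2$ covering the projective plane). This does not endanger the lemma, because a covering map is still a local homeomorphism, so composing with the deformation theorem again yields that $\mathrm{Ker}$ is a local homeomorphism; but your ``identification'' must be weakened accordingly (the paper's own phrase ``can be identified with the quotient Grassmannian'' glosses the same orientation subtlety). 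As a minor streamlining of your closed-orbit step: no subsequence extraction is needed, since $g_n^{-1}$ is determined by its values on a basis $Z(v_1),Z(v_2)$ of $\mathbb R^2$ and hence converges to some $h$ with $Z_\tau=h\circ Z$; surjectivity of $Z_\tau$ forces $h\in GL(2,\mathbb R)$, and $\det h>0$ in the limit, so $h\in GL^+(2,\mathbb R)$.
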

\begin{proof}
By \cite{Bri07}, Stab$^{\mathrm{nd}}$ $\rightarrow$ $\Hom_{\mathbb
Z}(\mathrm K(\pp),\mathbb C)$ is a local homeomorphism, whose image lies
in the subspace of non-degenerate morphisms in $\Hom_{\mathbb Z}(\mathrm K(\pp),\mathbb C)$. When taking the quotient by GL$^+(2,\mathbb R)$, $\Hom^{\mathrm{nd}}_{\mathbb Z}(\mathrm K(\pp),\mathbb
C)/$GL$^+(2,\mathbb R)$ can be identified with the quotient Grassmannian Gr$_2(3)\cong \mathrm P_{\mathbb R}\big(\mathrm K_{\mathbb R}(\pp)\big)$
as a topological space. The statement clearly follows.
\end{proof}
\begin{comment}
\begin{lemma}
Let $Z$ be  a non-degenerate central charge, $v$ and $w$  be two non-zero
characters,  then $Z(v)\varparallel Z(w)$ if and only if $v$, $w$ and the line $\mathrm{Ker}Z$ in $\mathrm K_{\mathbb R}(\pp)$ spans a two dimensional plane.
 \end{comment}

We have the following description of the potential wall, i.e. the locus of stability conditions for which two given characters are of the same slope.

\begin{lemma}[Potential walls]
 Let $P=(1,s,q)$ be a point in $\Geo_{LP}$; $E$ and $F$ be two objects in $\mathrm{Coh}_{P}(\pp)$ such that their Chern characters $v$ and $w$ are not zero, then
\begin{center}
$Z_P(E)$ and  $Z_P(F)$ are on the same ray
\end{center}
if and only if $v$, $w$ and $P$ are collinear in the projective $\cccp$.
\label{lemma:paraandspanplane}\end{lemma}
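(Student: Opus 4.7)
The plan is to reduce this statement to elementary linear algebra on $\mathrm K_{\mathbb R}(\pp)$, using the explicit formula for $Z_P$ from Proposition and Definition \ref{defn:geostabsq}, and then upgrade ``parallel'' to ``on the same ray'' with the heart condition.

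First I would view $Z_P$ as an $\mathbb R$-linear map $\mathrm K_{\mathbb R}(\pp)\to\mathbb C\cong\mathbb R^2$. From the formula $Z_{s,q}(v)=(q\ch_0(v)-\ch_2(v))+i(\ch_1(v)-s\ch_0(v))$, a character lies in $\Ker(Z_P)$ iff it is a real multiple of $(1,s,q)$; in particular $\dim_{\mathbb R}\Ker(Z_P)=1$ and this kernel line represents the point $P$ in the projective $\cccp$. Hence $Z_P(v)$ and $Z_P(w)$ are $\mathbb R$-linearly dependent in $\mathbb C$ iff there exist $(a,b)\ne(0,0)$ with $a\,v+b\,w\in\Ker(Z_P)$, i.e.\ iff $v$, $w$ and $(1,s,q)$ are linearly dependent in $\mathrm K_{\mathbb R}(\pp)$. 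By the very definition of the projective $\cccp$, this is precisely collinearity of $v$, $w$, $P$ in $\pkp$. This gives the equivalence between $Z_P(E)\parallel Z_P(F)$ and collinearity.

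Next I need the slight strengthening from ``same line'' to ``same ray''. Because $E,F\in\Coh_P(\pp)=\mathcal P_P((0,1])$, their central charges lie in the strict upper half plane together with the negative real axis, so their phases lie in $(0,1]$. If $Z_P(E)=\lambda\,Z_P(F)$ with $\lambda\in\mathbb R\setminus\{0\}$, then $\lambda>0$ is forced, since $\lambda<0$ would shift the phase by $1$ and take it out of $(0,1]$. So two parallel central charges in this heart are automatically on the same ray, closing the argument.

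There is essentially no obstacle here beyond bookkeeping, which is why I would present it as a two-step argument (linear dependence $\Leftrightarrow$ projective collinearity, then parallel $\Rightarrow$ same ray). The only point that deserves care is the degenerate cases: one should briefly check that $v,w\ne0$ combined with $E,F\in\Coh_P(\pp)$ makes $Z_P(v),Z_P(w)\ne0$ (which follows because $P\in\Geo_{LP}$ ensures $\sigma_P$ is a stability condition, so no nonzero object of the heart is sent to $0$), so that ``same ray'' is well-defined.
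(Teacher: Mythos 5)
Your proposal is correct and takes essentially the same approach as the paper: the paper's proof is the one-line observation that $Z_P(v)$ and $Z_P(w)$ lie on the same ray iff $Z_P(av-bw)=0$ for some $a,b\in\mathbb R_+$, which forces $v$, $w$ and $\Ker Z_P$ (the line represented by $P$) to be linearly dependent, exactly your kernel computation. If anything you are more careful than the paper, which leaves implicit both the sign issue (parallel vs.\ same ray, handled by your phase-in-$(0,1]$ argument) and the nondegeneracy $Z_P(v),Z_P(w)\neq 0$ for nonzero classes of objects in the heart.
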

\begin{proof}
$Z(v)$ and $Z(w)$ are on the same ray if and only if $Z(av-bw)$  $=$ $0$ for some
$a,b\in\mathbb R_+$. This happens only when $v$, $w$ and $\mathrm{Ker}Z$ are collinear in the projective $\cccp$.
\end{proof}

Note that this statement holds even when $v$, $w$ are torsion, i.e. $\ch_0=0$.

We make some notations for lines and rays on the (projective) $\cccp$. Consider objects $E$ and $F$ such that $v(E)$ and $v(F)$ are not zero, and let $\sigma_{s,q}=\sigma_P$ be a geometric stability condition. Let $L_{EF}$ be the straight line on the projective $\cccp$ across $v(E)$    and $v(F)$. $L_{EP}$, as well as $L_{E\sigma}$, is the line across $v(E)$ and $P$. $l_{EF}$, as well as $l_{E\sigma}$, are the line segments on the $\cccp$ when both $v(E)$ and $v(F)$ are not at infinity. $\mathcal H_P$ is the right half plane with either $\frac{\ch_1}{\ch_0}>s$, or $\frac{\ch_1}{\ch_0}=s$ and $\frac{\ch_2}{\ch_0}>q$. $l_{PE}^+$ is the ray along $L_{PE}$ starting from $P$ and completely contained in $\mathcal H_P$. $L_{E\pm}$ is the vertical wall $L_{E(0,0,1)}$. $l_{E+}$ is the vertical ray along $L_{E(0,0,1)}$ from $E$ going upward. $l_{E-}$ is the vertical ray along $L_{E(0,0,-1)}$ from $E$ going downward.

The following lemma translates the comparison of slopes into a geometric comparison of the positions of two rays. This simplifies a lot of computations and will be used throughout the paper.

\begin{lemma}
Let $P = (1,s,q)$ be a point in $\mathrm{Geo}_{LP}$, $E$ and
$F$ be two objects in $\Coh_{\#s}$. The inequality
\[\phi_{s,q}(E)>\phi_{s,q}(F)\] holds if and only if the ray $l^+_{PE}$ is
above $l^+_{PF}$. \label{lemma:slopecompare}
\end{lemma}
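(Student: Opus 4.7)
The plan is to factor the central charge $Z_{s,q}$ through the quotient of $\mathrm{K}_{\mathbb R}(\pp)$ by the kernel spanned by $P$, and identify the resulting $\mathbb R^2$ with the tangent plane at $P$ in the $\cccp$ so that phases correspond to angles of rays.

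First, I would introduce the linear map
\[
\pi : \mathrm{K}_{\mathbb R}(\pp) \longrightarrow \mathbb R^2, \qquad w \mapsto \bigl(\ch_1(w) - s\,\ch_0(w),\; \ch_2(w) - q\,\ch_0(w)\bigr),
\]
whose kernel is exactly $\mathrm{Ker}(Z_{s,q})$, the line spanned by $(1, s, q)$. Identifying $\mathbb R^2$ with $\mathbb C$ via $(x, y) \mapsto x + iy$, one has
\[
Z_{s,q}(w) \;=\; -\pi_2(w) + i\,\pi_1(w) \;=\; i\cdot\bigl(\pi_1(w) + i\,\pi_2(w)\bigr),
\]
so $Z_{s,q}$ equals $\pi$ composed with the counterclockwise rotation by $\pi/2$.

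Next, I would check that $\pi(E)$, viewed as a tangent vector at $P$ in the $\cccp$, points along the ray $l^+_{PE}$. When $\ch_0(E) > 0$, $\pi(E) = \ch_0(E)\,(v(E) - P)$ is a positive multiple of the displacement from $P$ to $v(E)$, and $E \in \Coh_{\#s}$ forces $\frac{\ch_1}{\ch_0}\geq s$ with equality only when $\frac{\ch_2}{\ch_0}>q$, so $v(E) \in \mathcal H_P$. When $\ch_0(E) < 0$, $\pi(E) = (-\ch_0(E))(P - v(E))$, and the condition $\Im Z_{s,q}(E) \geq 0$ places $v(E)$ on the side of $P$ opposite to $\mathcal H_P$; hence $P - v(E)$, and thus $\pi(E)$, points into $\mathcal H_P$. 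When $\ch_0(E) = 0$, $\pi(E) = (\ch_1(E), \ch_2(E))$ is the direction of $L_{PE}$ at infinity, and $\ch_1(E) \geq 0$ places it in $\mathcal H_P$. In every case $\pi(E)$ gives the direction of $l^+_{PE}$.

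Finally, multiplication by $i$ is an orientation-preserving rotation sending the closed right half plane bijectively to the closed upper half plane, which is where $Z_{s,q}(E)$ lies for $E \in \Coh_{\#s}$. Under this rotation, greater counterclockwise angle from the positive horizontal in $\mathcal H_P$ translates to greater argument in $\mathbb C$. Since ``$l^+_{PE}$ above $l^+_{PF}$'' is precisely the statement that the angle of $l^+_{PE}$ from the positive horizontal exceeds that of $l^+_{PF}$, and $\phi_{s,q}(E) = \arg Z_{s,q}(E)/\pi$ is a monotone rescaling of this argument, the desired equivalence follows. The main delicacy is the sign case-analysis for $\ch_0(E) \leq 0$: for shifted or torsion objects in $\Coh_{\#s}$, the point $v(E)$ may lie outside $\mathcal H_P$ or be at infinity, so $l^+_{PE}$ cannot be described simply as the ray from $P$ towards $v(E)$; the point is that $\pi(E)$, with its sign controlled by $\ch_0(E)$, automatically selects the correct direction into $\mathcal H_P$.
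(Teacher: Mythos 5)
Your proof is correct and is essentially the paper's own argument made explicit: the paper's one-line proof observes that the angle between $l^+_{PE}$ and the downward vertical ray $l_{P-}$ equals $\pi\phi_{s,q}(E)$, which is exactly what your factorization $Z_{s,q}=i\cdot(\pi_1+i\pi_2)$ through the kernel quotient establishes. The sign analysis for $\ch_0(E)\leq 0$ that you carry out carefully is precisely what the paper leaves implicit in "by the formula of $Z_{s,q}$," so there is no substantive difference in approach.
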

\begin{proof}
By the formula of $Z_{s,q}$, the angle between the
rays $l^+_{PE}$ and $l_{P-}$ at the point $P$ is
$\pi\phi_{s,q}(E)$. The statement follows from this observation.
\end{proof}

%%%%%%%%%%%%%%%%%%%%%%%%%%%%%%%%%%%%%%%%%%

\begin{center}
\tikzset{%
    add/.style args={#1 and #2}{
        to path={%
 ($(\tikztostart)!-#1!(\tikztotarget)$)--($(\tikztotarget)!-#2!(\tikztostart)$)%
  \tikztonodes},add/.default={.2 and .2}}
}

\begin{tikzpicture}[domain=2:1]
% opacity of the usual walls
\newcommand\XA{0.1}
\newcommand\obe{-0.3}

\coordinate (E) at (-3,-2);
\node  at (E) {$\bullet$};
\node [below] at (E) {$E$};

\coordinate (F) at (3,-1);
\node [left] at (F) {$F$};
\node at (F) {$\bullet$};

\coordinate (P) at (0.5,1.5);
\node [above] at (P) {$\nu_P$};
\node at (P) {$\bullet$};

\draw [dashed] (P) -- (F);
\node at (2,0.5) {$l^+_{PF}$};

\draw  (P) -- (E);
\draw [add =0 and 1,dashed] (E) to (P) node [right]{$l^+_{PE}$};

%\foreach actual char, draw the wall

%rank 1 and 2
%\coordinate (V) at (-1.5,0.75);
%\node [blue] at (V) {$\bullet$};
%\draw [add= 0 and 1,blue] (W) to (V) node[above]{Eff};

%nef boundary
%\coordinate (V) at (-1/4,-5/8);
%\node [red]at (V) {$\bullet$};
%\draw [add= 0 and 2,red] (W) to (V) node[above]{Nef};

%horizontal axis
\draw[->] [opacity=\XA] (-4,0) -- (,0) node[above right] {$w$}-- (4,0) node[above right, opacity =1] {$\frac{\ch_1}{\ch_0}$};

%vertical axis
\draw[->][opacity=\XA] (0,-2.5)-- (0,0) node [above right] {O} --  (0,5) node[right, opacity=1] {$\frac{\ch_2}{\ch_0}$};

%delta cone
\draw [thick](-3,4.5) parabola bend (0,0) (3,4.5) node [above, opacity =1] {$\bd=0$};
 %node[right] {$\frac{1}{2}s^2-q=0$};

\end{tikzpicture} \\
Figure: comparing the slopes at $P$.
\end{center}

%%%%%%%%%%%%%%%%%%%%%%%%%%%%%%%%%%%%%%%%%%%%

An important problem is to study the existence of stable objects with respect to given stability condition and character. This will be solved in several steps in this paper. Now we can make the first observation.

\begin{prop}
Let $E \in \Coh_{\#s}$ be a $\sigma_{s,q}$-stable object, then one of the
following cases happens:
\begin{enumerate}
\item The character $\tilde{v}(E)$ is not in the cone spanned by $\Geo_{LP}$ and the origin.
\item There exists a slope semistable sheaf $F$ such that in the $\cccp$ the
point $v(F)$ is above $L_{EP}$ and  between the vertical walls $l_{E+}$ and $l_{P+}$.
\end{enumerate}
In either case, the line segment \textbf{\emph{$l_{EP}$ is not entirely contained inside}}
$\Geo_{LP}$. In particular, at least one of $v(E)$ and $(1,s,q)$ is
outside the negative discriminant area $\bd_{<0}$.
\label{prop:epintersectsgeolp}
\end{prop}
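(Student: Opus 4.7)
The plan is to dichotomize on whether $\tilde{v}(E)$ lies in the cone spanned by $\Geo_{LP}$ and the origin. If not, we are in case (1), and the endpoint $v(E)$ of $l_{EP}$ is already outside $\Geo_{LP}$ (or lies at infinity when $\ch_0(E)\le 0$), so $l_{EP}\not\subset\Geo_{LP}$ is immediate. Assume therefore $\ch_0(E)>0$ and $v(E)\in\Geo_{LP}$; then $v(E)$ is strictly above $C_{LP}$ and off every segment $l_{ee^+}$, so by Drezet--Le Potier (Theorem \ref{thm:lp}) there is no slope semistable sheaf with character $v(E)$.

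I would then decompose $E$ via the cohomology triangle
\[
0 \to H^{-1}(E)[1] \to E \to H^0(E) \to 0
\]
in $\Coh_{\#s}$, and let $T\subset H^0(E)$ denote the torsion subsheaf; torsion-freeness of $H^{-1}(E)\in\Coh_{\leq s}$ combined with $\ch_0(E)>0$ forces $H^0(E)/T\neq 0$. Then I would take $F$ to be the minimum-slope Harder--Narasimhan factor of $H^0(E)/T$ for slope stability. $F$ is automatically slope semistable, torsion-free, and in $\Coh_{>s}$ (since $\Coh_{>s}$ is closed under quotients, all slope HN factors of an object in $\Coh_{>s}$ remain in $\Coh_{>s}$ and so have slope $>s$); moreover the composition $E \twoheadrightarrow H^0(E) \twoheadrightarrow H^0(E)/T \twoheadrightarrow F$ exhibits $F$ as a proper nonzero quotient of $E$ in $\Coh_{\#s}$. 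Stability of $E$ then gives $\phi_{s,q}(F)>\phi_{s,q}(E)$, and Lemma \ref{lemma:slopecompare} translates this to $v(F)$ lying strictly above the line $L_{EP}$. To see $\chy(F)\in(s,\chy(E))$, I would argue by contradiction: if $\chy(F)\ge\chy(E)$, then $\mu(H^0(E)/T)\ge\chy(E)$, and cross-multiplying this inequality while substituting $v(E)=v(H^0(E))-v(H^{-1}(E))$ and using $\ch_1(T)\ge 0$ would distill to $\mu(H^{-1}(E))\ge\mu(H^0(E))$, contradicting $\mu(H^{-1}(E))\le s<\mu(F)\le\mu(H^0(E)/T)\le\mu(H^0(E))$.

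It then remains to deduce the two geometric consequences. In case (2), $F$ being slope semistable forces $v(F)$ to lie either on or below $C_{LP}$, or (if $F$ is exceptional) on some segment $l_{ee^+}$; in either situation $v(F)\notin\Geo_{LP}$. Since $v(F)$ sits above $L_{EP}$ at abscissa $\chy(F)\in(s,\chy(E))$, the point of $l_{EP}$ vertically below $v(F)$ also lies outside $\Geo_{LP}$, proving $l_{EP}\not\subset\Geo_{LP}$. Finally, the region $\bd_{<0}$ is convex (lying above the parabola $\bd_0$), sits well above $C_{LP}$ (which lies between $\bd_{1/2}$ and $\bd_1$), and avoids every $l_{ee^+}$ because each exceptional $e$ satisfies $\bd(e)=\tfrac{1}{2}-\tfrac{1}{2\ch_0(e)^2}\ge 0$; hence $\bd_{<0}\subset\Geo_{LP}$, and if both $v(E)$ and $P$ were in $\bd_{<0}$ convexity would trap $l_{EP}\subset\bd_{<0}\subset\Geo_{LP}$, contradicting what we just proved. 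The main obstacle, I anticipate, will be the Chern-character bookkeeping needed to establish $\chy(F)<\chy(E)$ when $H^0(E)$ carries a nontrivial torsion subsheaf; everything else follows by combining Drezet--Le Potier with the cohomology sequence in $\Coh_{\#s}$ and the slope-comparison Lemma \ref{lemma:slopecompare}.
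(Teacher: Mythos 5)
Your positive-rank branch is essentially the paper's own proof (the paper takes $F=\HH^0(E)_{\min}$ directly; extracting the minimal Harder--Narasimhan quotient after first killing the torsion subsheaf is the same construction), and your convexity ending matches the paper's as well. The genuine gap is the case $\ch_0(E)<0$, which you dismiss with the parenthetical that $v(E)$ ``lies at infinity when $\ch_0(E)\le 0$''. That is only true for $\ch_0(E)=0$: projectivization forgets the sign of $\ch_0$, so for $\ch_0(E)<0$ the point $v(E)$ is a finite point of the $\cccp$ and can perfectly well lie in $\Geo_{LP}$ (e.g.\ $\tilde v(E)=(-1,0,-10)$ gives $v(E)=(1,0,10)$). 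The cone in case (1) must be read two-sidedly --- the paper's own proof runs a separate $\ch_0(E)<0$ branch --- so such an $E$ does not fall under case (1); and even under a one-sided reading of the cone, the concluding assertion that $l_{EP}\not\subset\Geo_{LP}$ would still have to be proved for these objects, which your endpoint argument cannot do. Such objects genuinely occur: any $E$ with $\mathrm{rk}\,\HH^{-1}(E)>\mathrm{rk}\,\HH^{0}(E)$, for instance objects built from $G[1]$ with $G\in\Coh_{\leq s}$, and then $\HH^0(E)/T$ may even vanish, so your construction of $F$ has nothing to bite on. The paper handles this half by the dual construction: take $F=\HH^{-1}(E)_{\max}$, the maximal-slope subsheaf of $\HH^{-1}(E)$; then $F[1]$ is a subobject of $E$ in $\Coh_{\#s}$, stability gives $\phi_{s,q}(F[1])<\phi_{s,q}(E)$, and Lemma \ref{lemma:slopecompare} places $v(F)$ above $l_{EP}$, to the right of $v(E)$ and to the left of $L_{P\pm}$. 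Without this branch the ``in particular'' clause is unproved exactly in situations where it is later used (e.g.\ in Corollary \ref{cor:nostabincone} and in Lemma \ref{lemma: ext2 vanishing for stable factors}, where the stable objects may have negative rank).

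A smaller, patchable defect sits inside your slope bookkeeping: the cross-multiplication distills to $\mu(\HH^{-1}(E))\ge\mu(\HH^0(E))$ only when $\HH^{-1}(E)\neq 0$. If $\HH^{-1}(E)=0$, the inequality $\chy(F)\ge\chy(E)$ degenerates to $0\geq 0$ and instead forces $\mu(F)=\mu(\HH^0(E)/T)=\mu(\HH^0(E))$, i.e.\ $\ch_1(T)=0$ and $E$ is an extension of the slope semistable sheaf $F$ by a zero-dimensional torsion sheaf $T$. Your opening Drezet--Le Potier remark kills the subcase $T=0$, but not $T\neq 0$, where $v(E)\neq v(F)$ and no slope contradiction arises. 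You need one more line there: a nonzero zero-dimensional $T\subset E$ is a subobject in $\Coh_{\#s}$ with $Z_{s,q}(T)=-\ch_2(T)<0$, hence of phase $1\geq\phi_{s,q}(E)$, contradicting stability of $E$. With that line and the missing negative-rank branch supplied, your argument closes up and coincides with the paper's.
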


\begin{proof}
Assume that Case 1 does not happen, i.e. $\tilde{v}(E)$ is inside the Geo$_{LP}$-cone, we need to show case 2 happens. In
particular, $\ch_0(E)$ is not $0$. When $\ch_0(E)>0$, H$^0(E)$ is non-zero. Let $F=$H$^0(E)_{\min}$ be the
quotient sheaf of H$^0(E)$ with the minimum slope $\frac{\ch_1}{\ch_0}$. Then $F$ is a slope semistable sheaf, so $v(F)$ is outside Geo$_{LP}$. Let $D$ be
H$^{-1}(E)$ and $G$ be the kernel of H$^0(E)\rightarrow F$. We may compare the slopes of $E$ and $F$
\[\frac{\ch_1(E)}{\ch_0(E)}\; = \; \frac{\ch_1(F)+\ch_1(G)-\ch_1(D)}{\ch_0(F)+\ch_0(G)-\ch_0(D)}\; \geq\; \frac{\ch_1(F)}{\ch_0(F)}.\]

The inequality holds because when $D$ and $G$ are non-zero, we have
\[\frac{\ch_1(D)}{\ch_0(D)}<\frac{\ch_1(F)}{\ch
_0(F)}<\frac{\ch_1(G)}{\ch_0(G)}.\]
Note here the equality
\[\frac{\ch_1(E)}{\ch_0(E)}\; = \; \frac{\ch_1(F)}{\ch_0(F)}\]
holds only when $D$ and $G$ are both zero. In this case, $v(E)$ $=$ $v(F)$, hence $v(F)$ is inside Geo$_{LP}$, which contradicts to our assumption. Therefore, we have a strict inequality, i.e. $v(F)$ is to the left of $v(E)$. As $F$ $\in$ $\Coh_{>
s}$, $P$ is to the left of $v(F)$. In addition, as
$\phi_{s,q}(E)$ $<$ $\phi_{s,q}(F)$, by Lemma
\ref{lemma:slopecompare}, $F$ is above $l_{PE}$, so case 2 happens.

When $\ch_0(E)<0$, let $F=$H$^{-1}(E)_{max}$ be the subsheaf of
$\HH^{-1}(E)$ with the maximum slope $\frac{\ch_1}{\ch_0}$. By the same argument, $v(F)$
is to the right of $v(E)$. As $F$ $\in$ $\Coh_{\leq s}$, $v$ is to
the left  of $L_{P\pm}$ or on the ray $l_{P-}$. In
addition, since $\phi_{s,q}(F[1])$ $<$ $\phi_{s,q}(E)$, by Lemma
\ref{lemma:slopecompare}, $F$ is above $l_{EP}$. As $l_{F-}$ does
not intersect Geo$_{LP}$, $F$ is to the left of $L_{P\pm}$.\\

For the last statement, since the region
$\bd_{<0}$  is convex, for any $v(E)$ and $P=(1,s,q)$ that are
both in $\bd_{<0}$, the line segment $l_{EP}$ is also in $\bd_{<0}$, which is
contained in Geo$_{LP}$.
\end{proof}

\begin{comment}
\begin{proof}
Suppose $\tilde{v}(E)$ is not inside the Geo$_{LP}$-cone, in
particular, $\ch_0$ is not $0$. By Proposition \ref{}, either $E$ is
a coherent sheaf or $H^{-1}(E)$ is non-zero and $H^0(E)$ is torsion.

When $E$ is a coherent sheaf, let $F=E_{\min}$ be the quotient sheaf
of $E$ with minimum Mumford slope. As $E$ is inside the
Geo$_{LP}$-cone, it is not slope semistable and $v(F)$ is on the
left to $v(E)$. As $F$ $\in$ $\Coh_{\geq s}$, $P$ is on the left side
of $L_{F(0,0,1)}$. In addition, as $\phi_{s,q}(E)$ $<$
$\phi_{s,q}(F)$, by Lemma \ref{lemma:slopecompare}, $P$ is below the
ray $l^r_{EF}$.

When $H^{-1}(E)$ is nonzero, let $F=H^{-1}(E)_{max}$ be the subsheaf
of $H^{-1}(E)$ with maximum Mumford slope. As $E$ is inside the
Geo$_{LP}$-cone, $v(F[1])$ is not the same as  $v(E)$ and on the
right to $v(E)$. As $F$ $\in$ $\Coh_{< s}$, $P$ is on the right side
of $L_{F(0,0,1)}$. In addition, as $\phi_{s,q}(F[1])$ $<$
$\phi_{s,q}(E)$, by Lemma \ref{lemma:slopecompare}, $P$ is below
$l^r_{EF}$.\\

For the last statement, just note that the region
$\{(1,\frac{ch_1}{\ch_0},\frac{\ch_2}{\ch_0})|\frac{1}{2}
\big(\frac{\ch_1}{\ch_0}\big)^2- \frac{\ch_2}{\ch_0}<0\}$ is bounded by
a parabola and is convex. For any $v(E)$ and $P=(1,s,q)$ that are
both in the region, $l_{EP}$ is also in the region which is
contained in Geo$_{LP}$.
\end{proof}
\end{comment}

This induces some useful corollaries. First we get the stability of exceptional bundles for some stability conditions.

\begin{cor}
Let $E$ be an exceptional bundle, and $P=(1,s,q)$
be a point in $\Geo_{LP}$, then  $E$ is $\sigma_{s,q}$-stable if $s <
\chy(E)$ and $l_{EP}$ is contained in $\Geo_{LP}$ (not include the endpoints). In the homological shifted
case, $E[1]$ is $\sigma_{s,q}$-stable, if $s \geq \chy(E)$ and
$l_{EP}$ is contained in $\Geo_{LP}$. \label{cor:regionofEstab}
\end{cor}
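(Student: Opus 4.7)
The plan is to argue by contradiction, using Proposition \ref{prop:epintersectsgeolp} applied to a destabilizing subobject. First, $E \in \Coh_{> s} \subset \Coh_{\#s}$: exceptional bundles on $\pp$ are classically $\mu$-stable, so $\mu(E) = \chy(E) > s$ puts $E$ in $\Coh_{> s}$. Suppose $E$ is not $\sigma_{s,q}$-stable; then there exists a stable $\sigma_{s,q}$-subobject $X \hookrightarrow E$ in $\Coh_{\#s}$ with $\phi_{s,q}(X) \geq \phi_{s,q}(E)$ and $X \not\cong E$ (e.g.\ the first Jordan-H\"older factor of the top HN piece of $E$).

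The crucial geometric claim is $\chy(X) \in (s, \chy(E))$ with $v(X)$ on or above $L_{EP}$ in the $\cccp$. Since $E$ is a sheaf, the long exact sequence of cohomology applied to $0 \to X \to E \to B \to 0$ in $\Coh_{\#s}$ forces $H^{-1}(X) = 0$ and yields an exact sequence $0 \to K \to X \to I \to 0$ in $\Coh(\pp)$, with $K := H^{-1}(B) \in \Coh_{\leq s}$ torsion-free and $I$ a non-zero subsheaf of $E$ (non-zero because $X \to E$ is a non-zero morphism in the abelian category $\Coh_{\#s}$). From $\chy(K) \leq s < \chy(E)$, the inequality $\chy(I) \leq \chy(E)$ coming from $\mu$-stability of $E$ (with equality only if $I = E$), and the fact that properness of $X \subsetneq E$ prevents the weighted average from degenerating, a weighted-average computation in the $\ch_0$-coordinate gives $\chy(X) < \chy(E)$. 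The complementary inequality $\chy(X) > s$ follows from $\ch_0(X) \geq \ch_0(I) > 0$ together with $X \in \Coh_{> s}$. Lemma \ref{lemma:slopecompare} then places $v(X)$ on or above $L_{EP}$.

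Now apply Proposition \ref{prop:epintersectsgeolp} to $X$. In Case~1, $v(X)$ lies outside the open $\Geo_{LP}$-cone — so on or below $C_{LP}$, or on a forbidden vertical segment $l_{e e^+}$. Letting $Q$ be the point of $l_{EP}$ at $\chy$-coordinate $\chy(X)$, the fact that $v(X)$ lies on or above $L_{EP}$ forces $\che(Q) \leq \che(X)$, so $Q$ too lies outside $\Geo_{LP}$; but $\chy(X) \in (s, \chy(E))$ makes $Q$ interior to $l_{EP}$, contradicting the hypothesis. In Case~2, there is a slope semistable $F$ with $v(F)$ strictly above $L_{XP}$ and $\chy(F) \in (s, \chy(X))$. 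Because $L_{XP}$ and $L_{EP}$ share the point $P$ and $v(X)$ is on or above $L_{EP}$, $L_{XP}$ sits on or above $L_{EP}$ throughout the strip $s < \chy \leq \chy(E)$; so $v(F)$ is also above $L_{EP}$, and the point of $l_{EP}$ at $\chy = \chy(F)$ is strictly below $v(F)$. Slope semistability of $F$ places $v(F)$ on or below $C_{LP}$, so this point of $l_{EP}$ is also outside $\Geo_{LP}$ — again a contradiction.

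The shifted case ($s \geq \chy(E)$ and $E[1]$) proceeds by the same template: the long exact sequence now realizes $H^{-1}$ of a destabilizer as a proper subsheaf of $E$, and since $v(E[1]) = v(E)$ in the $\cccp$ the same geometric picture yields the contradiction. The main obstacle throughout is the middle step — establishing $\chy(X) \in (s, \chy(E))$ — which relies crucially on $\mu$-stability of exceptional bundles together with the long exact sequence analysis. Once this is in hand, the two cases of Proposition \ref{prop:epintersectsgeolp} dispose cleanly of the remaining possibilities, even at the subtle subcase where $\chy(X)$ meets some exceptional $\chy(e)$ (where the obstruction is $l_{e e^+}$ rather than $C_{LP}$ itself).
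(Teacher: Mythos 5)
Your proposal is correct and takes essentially the same route as the paper: both proofs pass to a $\sigma_{s,q}$-stable destabilizing subobject, use the long exact cohomology sequence (via $\HH^{-1}(E)=0$) to realize it as an extension of a subsheaf of $E$ by a sheaf in $\Coh_{\leq s}$, run the weighted-average slope computation to place its character strictly inside the vertical strip on or above $l_{EP}$, and then invoke Proposition \ref{prop:epintersectsgeolp} to contradict its stability --- the only difference being that the paper cites that proposition's blanket conclusion that $l_{XP}$ cannot lie entirely in $\Geo_{LP}$, while you unpack its two cases explicitly, which is a cosmetic variation. One small slip worth fixing: $\mu(I)=\mu(E)$ can occur not only for $I=E$ but also for a full-rank proper subsheaf $I\subsetneq E$ with zero-dimensional quotient; in that boundary case $v(X)$ lies on $L_{E\pm}$ strictly below $e$, so Lemma \ref{lemma:slopecompare} gives $\phi_{s,q}(X)<\phi_{s,q}(E)$ and such an $X$ never destabilizes, closing the gap.
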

\begin{proof}
We will prove the first statement. If $E$ is not $\sigma_{s,q}$-stable, then there
is a $\sigma_{s,q}$-stable object $F$ destabilizing $E$. We have the
exact sequence
\[0\rightarrow \HH^{-1}(F)\rightarrow \HH^{-1}(E)\rightarrow
\HH^{-1}(E/F)\rightarrow \HH^0(F)\rightarrow \HH^0(E)\rightarrow
\HH^0(E/F)\rightarrow 0.\]
Since $\HH^{-1}(E)=0$, we see that $\HH^{-1}(F)=0$ and $v(F)$ lies between the vertical lines
$L_{P\pm}$ and $L_{E\pm}$. Since $\phi_{s,q}(F)>\phi_{s,q}(E)$, by Lemma \ref{lemma:slopecompare}, $v(F)$ is in the
region bounded by $l_{P+}$, $l_{PE}$ and $l_{E+}$. As $l_{EP}$ is contained in
Geo$_{LP}$, $l_{FP}$ is also contained in Geo$_{LP}$,. By Proposition
\ref{prop:epintersectsgeolp}, $F$ is not $\sigma_{s,q}$-stable,
which is a contradiction. The second statement can be proved similarly.
\end{proof}
\begin{rem}
The condition that `$l_{EP}$ is contained in $\mathrm{Geo}_{LP}$' is also
necessary. Any ray starting from $v(E)$ may only intersect $C_{LP}$ at most once, and only intersect with finitely many $l_{ee^+}$ segments. Suppose that $s<\chy(E)$, and $l_{EP}$ intersects some $l_{ee^+}$
segments, we may choose the one (denoted by $F$) with minimum
$\frac{\ch_1}{\ch_0}$-coordinate. The segment $l_{FP}$ is contained in
$\mathrm{Geo}_{LP}$, and $\phi_{s,q}(F)>\phi_{s,q}(E)$. By Corollary \ref{cor:regionofEstab}, $F$ is $\sigma_{s,q}$-stable. By \cite{GorRu},
$\Hom(F,E)$ $\neq$ $0$ when $\chy(F)$ $<$ $\chy(E)$. This shows that $E$ is not $\sigma_{s,q}$-stable.
\end{rem}

The second corollary roughly says when we vary the stability condition in 
$\mathrm{Geo}_{LP}$, stable objects remain stable if the slopes do not change.

\begin{cor}
Let $\sigma_{s,q}$ be a geometric stability condition and $F$ be a
$\sigma$-stable object, then for any geometric stability condition
$\tau$ on the line $L_{F\sigma}$ such that $l_{\tau\sigma}$ is
contained in $\mathrm{Geo}_{LP}$, $F$ is also
$\tau$-stable.
\label{cor:stabonthesegment}
\end{cor}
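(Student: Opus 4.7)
My plan is to argue by contradiction using a connectedness/continuity argument along the segment $l_{\tau\sigma}$, exploiting the key dictionary provided by Lemma \ref{lemma:paraandspanplane}: two objects have equal phase at a point $\rho$ exactly when their characters and $\rho$ are collinear in the projective $\cccp$. I would parameterize $l_{\tau\sigma}$ as a path $\rho_t$, $t\in[0,1]$, with $\rho_0=\sigma$ and $\rho_1=\tau$. By the standard openness of stability in Bridgeland's framework (a consequence of the support property, baked into $\mathrm{Stab}^{\mathrm{Geo}}(\pp)$), the set of $t$ with $F$ being $\sigma_{\rho_t}$-stable is open in $[0,1]$. Assuming $F$ is not $\tau$-stable, let $t^*$ be the smallest value at which $F$ fails to be stable; then $t^*>0$, and a standard continuity argument (semistability is closed) forces $F$ to be strictly $\sigma_{\rho_{t^*}}$-semistable rather than outright unstable.

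Strict semistability at $\rho_{t^*}$ yields a distinguished triangle $G\to F\to Q$ in which $G$ is nonzero, lies in the slice $\mathcal P_{\rho_{t^*}}(\phi_{\rho_{t^*}}(F))$, and has $v(G)$ not proportional to $v(F)$. Since $Z_{\rho_{t^*}}(G)$ and $Z_{\rho_{t^*}}(F)$ are on the same ray, Lemma \ref{lemma:paraandspanplane} forces $v(G)\in L_{F\rho_{t^*}}=L_{F\sigma}$. Applying Lemma \ref{lemma:paraandspanplane} in reverse at every point $\rho$ on $l_{\tau\sigma}\subset L_{F\sigma}$, the images $Z_\rho(G)$ and $Z_\rho(F)$ remain collinear, so the two phases stay equal throughout the segment. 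In particular, at $\sigma$ the same triangle $G\to F\to Q$ satisfies $\phi_\sigma(G)=\phi_\sigma(F)$ with $v(G)\not\sim v(F)$, contradicting the $\sigma$-stability of $F$ and completing the argument.

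The main obstacle is the last step: one has to justify that the triangle extracted from the slice at $\rho_{t^*}$ continues to give a genuine Bridgeland destabilization at $\sigma$, i.e.\ that $G$ (in the correct homological shift) still lies in $\mathcal P_\sigma\big(\phi_\sigma(F)\big)$, or at the very least that its maximal-phase HN factor at $\sigma$ maps nontrivially to $F$ with phase $\geq \phi_\sigma(F)$. The cleanest way I see is to combine two facts: (i) by Lemma \ref{lemma:paraandspanplane}, all walls for $F$ inside $\mathrm{Geo}_{LP}$ are lines through $v(F)$, so any wall distinct from $L_{F\sigma}$ meets $L_{F\sigma}$ only at $v(F)$, and hence the path $l_{\tau\sigma}$ crosses no additional wall for $F$; and (ii) Bridgeland's local finiteness of walls together with the continuous deformation of slicings then transports the destabilizing data from $\rho_{t^*}$ back to $\sigma$ without change. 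A side case to verify separately is when $v(F)$ itself happens to lie on $l_{\tau\sigma}$, which is ruled out because $Z_\tau(F)\neq 0$ is required for $F$ to define a nonzero phase at $\tau$.
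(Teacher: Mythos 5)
Your overall strategy --- openness of stability plus closedness of semistability along the parameterized segment, extracting a destabilizer of equal phase at the first failure point $\rho_{t^*}$, and using Lemma \ref{lemma:paraandspanplane} to place its class on $L_{F\sigma}$ --- is exactly the intended argument; the paper states this corollary without an explicit proof, treating it as immediate from Lemma \ref{lemma:paraandspanplane}, Lemma \ref{lemma:slopecompare}, Proposition \ref{prop:epintersectsgeolp} and standard wall-and-chamber facts, and your skeleton matches that. However, your final step has a genuine gap. You assert that local finiteness of walls together with continuous deformation of slicings ``transports the destabilizing data from $\rho_{t^*}$ back to $\sigma$ without change.'' Local finiteness of walls constrains where objects of a fixed class can change stability, but it does not show that your specific subobject $G$ remains semistable of phase $\phi_\rho(F)$ at points of the segment far from $\rho_{t^*}$, nor that $G\to F\to Q$ remains a short exact sequence in the heart $\Coh_{\#s_\sigma}$ --- the heart genuinely changes as $s$ varies along $L_{F\sigma}$. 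In fact, ``$G$ stays semistable along a segment of the potential wall'' is precisely an instance of the corollary being proved, applied to $G$ (or its stable factors), so as written the step is circular. Your fallback is also flawed: the maximal-phase HN factor of $G$ at $\sigma$ need not map nontrivially to $F$, since the composite of the HN inclusion with $G\to F$ can vanish; at best some HN subquotient maps nontrivially, and controlling its phase requires a further averaging argument that you do not supply.

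The gap is repairable, and more cheaply than by transporting anything back to $\sigma$: the contradiction is already available at $t$ slightly below $t^*$. At $\rho_{t^*}$ the object $F$ is strictly semistable; pick a Jordan--H\"older factor $S$ occurring as a subobject (or take the stable quotient) in the category $\mathcal P_{\rho_{t^*}}(\phi)$, so there is a nonzero map $S\to F$ (resp.\ $F\to S$) with $v(S)\in L_{F\sigma}$ and $S\not\cong F$, since $|Z_{\rho_{t^*}}(S)|<|Z_{\rho_{t^*}}(F)|$. Because $S$ is $\rho_{t^*}$-stable, openness of stability makes $S$ $\rho_t$-stable for all $t$ near $t^*$; because $v(S)$ lies on $L_{F\sigma}$, Lemma \ref{lemma:paraandspanplane} and continuity keep $Z_{\rho_t}(S)$ on the ray through $Z_{\rho_t}(F)$, so $\phi_{\rho_t}(S)=\phi_{\rho_t}(F)$ exactly. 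For $t<t^*$ this produces a nonzero map between non-isomorphic stable objects of the same phase, which is impossible, contradicting stability of $F$ on $[0,t^*)$ and closing the proof with purely local input at $t^*$. One smaller point: your exclusion of $v(F)\in l_{\tau\sigma}$ via ``$Z_\tau(F)\neq 0$ is required for $F$ to define a nonzero phase'' presupposes the conclusion; the correct way to rule this configuration out is Proposition \ref{prop:epintersectsgeolp}, which says $l_{F\sigma}$ cannot be entirely contained in $\mathrm{Geo}_{LP}$, whereas $v(F)\in l_{\tau\sigma}$ would force exactly that containment.
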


\subsection{Algebraic stability conditions}\label{sec1.4}

The structure of $\mathrm{D}^b(\pp)$ can be studied via full strong exceptional collections. First recall the following definition.

\begin{defn}
An ordered set $\mathcal E$ $=$ $\{E_1,E_2,E_3\}$ is an
\emph{exceptional triple} in $\mathrm{D}^b(\pp)$ if $\mathcal E$ is a
full strong exceptional collection of coherent sheaves in
$\mathrm D^b(\pp)$. \label{def:exceptionaltriple}
\end{defn}

\begin{rem}
The exceptional triples in $\mathrm{D}^b(\textbf P^2)$ have been classified in  \cite{GorRu} by Gorodentsev and Rudakov. In
particular, up to a cohomological shift, each collection consists of
exceptional bundles on $\pp$. In terms of dyadic numbers,
their labels are given by one of the following three cases ($p$ is an odd integer when $m\neq 0$):
\begin{equation*}
\left\{\frac{p-1}{2^m},\frac{p}{2^m},\frac{p+1}{2^m}\right\};\;
\left\{\frac{p}{2^m},\frac{p+1}{2^m},\frac{p-1}{2^m}+3\right\};\;
\left\{\frac{p+1}{2^m}-3,\frac{p-1}{2^m},\frac{p}{2^m}\right\}.
\tag{$\clubsuit$} \label{eq:dyadictriples}
\end{equation*}
\label{rem:exctrip}
\end{rem}

We recall the construction of algebraic stability
conditions associated to an exceptional triple.
\begin{prop} [\cite{Mac} Section 3]
Let $\mathcal E$ be an exceptional triple in $\mathrm D^b(\pp)$. For
any positive real numbers $m_1$, $m_2$, $m_3$ and real numbers
$\phi_1$, $\phi_2$, $\phi_3$ such that:
\[\phi_1<\phi_2<\phi_3,\text{ and }\phi_1+1<\phi_3,\] there exists a
unique stability condition $\sigma$ $=$ $(Z,\mathcal P)$ such that
\begin{enumerate}
\item $E_j$'s are $\sigma$-stable of phase $\phi_j$;
\item $Z(E_j)=m_je^{\pi i\phi_j}$.
\end{enumerate}
\label{prop:thetaE}
\end{prop}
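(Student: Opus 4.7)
Plan: I would proceed in four steps: (1) build the heart, (2) define the central charge and check stability function, (3) handle HN and support property, and (4) deduce stability of $E_j$ and uniqueness.

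Step 1 (Heart). Since $\phi_1\leq\phi_2\leq\phi_3$, choose the unique integers $n_j$ such that $\psi_j:=\phi_j+n_j\in(0,1]$; because $\phi$ is a non-increasing function of $j$ fixed modulo the $(0,1]$ constraint, one has $n_1\geq n_2\geq n_3$, and the assumption $\phi_1+1<\phi_3$ forces $n_1>n_3$. Set $S_j:=E_j[n_j]$, and let $\mathcal A$ be the extension-closure of $\{S_1,S_2,S_3\}$ in $\mathrm D^b(\pp)$. The ordering $n_1\geq n_2\geq n_3$ combined with the defining vanishings of the exceptional collection gives $\Hom^{<0}(S_i,S_j)=0$ for all $i,j$. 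Because $\mathcal E$ is \emph{full} and \emph{strong}, tilting theory (Bondal) yields a derived equivalence $\mathrm D^b(\pp)\simeq \mathrm D^b(\mathrm{mod}\text{-}A)$ with $A=\mathrm{End}(S_1\oplus S_2\oplus S_3)$, and $\mathcal A$ corresponds under this equivalence to $\mathrm{mod}\text{-}A$. Thus $\mathcal A$ is a finite-length heart of a bounded $t$-structure on $\mathrm D^b(\pp)$ with simples $S_1,S_2,S_3$.

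Step 2 (Central charge). Fullness of $\mathcal E$ gives that $[E_1],[E_2],[E_3]$ span $\mathrm K(\pp)\otimes\mathbb Q$, so the prescription $Z(E_j):=m_j e^{i\pi\phi_j}$ extends uniquely by linearity to a group homomorphism $Z:\mathrm K(\pp)\to\mathbb C$. Then $Z(S_j)=m_j e^{i\pi\psi_j}$ with $m_j>0$ and $\psi_j\in(0,1]$, so each simple of $\mathcal A$ is sent to $\mathbb H\cup\mathbb R_{<0}$. Every nonzero object of $\mathcal A$ is an iterated extension of the $S_j$ with nonnegative multiplicities, hence also sent to $\mathbb H\cup\mathbb R_{<0}$; therefore $Z$ is a stability function on $\mathcal A$.

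Step 3 (HN and support property). Since $\mathcal A$ is finite-length with only finitely many simples it is both noetherian and artinian, so any stability function on it automatically has the Harder--Narasimhan property. For the support property, the three phases $\psi_1,\psi_2,\psi_3$ are pairwise distinct (using $\phi_1<\phi_2<\phi_3$ and $n_1>n_3$), so the three vectors $Z(S_1),Z(S_2),Z(S_3)$ span $\mathbb C$ as a real vector space and $Z$ is injective on $\mathrm K(\pp)\otimes\mathbb R$ modulo a one-dimensional kernel. A quadratic form $Q$ on $\mathrm K_{\mathbb R}(\pp)$ that is negative on $\ker Z$ and positive on each $[S_j]$ then witnesses the support property exactly as in \cite{Mac}, Section~3.

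Step 4 (Stability of $E_j$ and uniqueness). Each $S_j$, being a simple object of $\mathcal A$, admits no proper subobject, so it is $\sigma$-stable of phase $\psi_j$; hence $E_j=S_j[-n_j]$ is $\sigma$-stable of phase $\phi_j$ and $Z(E_j)=m_je^{i\pi\phi_j}$ as required. For uniqueness, any stability condition $\sigma'=(Z',\mathcal P')$ satisfying (1) and (2) has $Z'=Z$ on the generators $E_1,E_2,E_3$ and therefore on all of $\mathrm K(\pp)$; moreover the required phases of the $E_j$ force the shifts $n_j$, hence the heart $\mathcal P'((0,1])$, to coincide with $\mathcal A$. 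A bounded $t$-structure is determined by its heart and a central charge with its heart determines the slicing, so $\sigma'=\sigma$.

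The main obstacle is Step~1: verifying that the extension closure $\mathcal A$ really is a heart of a bounded $t$-structure on $\mathrm D^b(\pp)$. This is precisely where the hypotheses that $\mathcal E$ is \emph{full} and \emph{strong} (not merely exceptional) are used, via the tilting equivalence; everything after that reduces to a finite-length abelian category computation.
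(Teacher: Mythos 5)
Your overall route---shift the $E_j$ into a common heart, take the extension closure as a finite-length heart whose simples are the shifted objects, deduce the HN property from finite length and stability of the simples for free, then prove uniqueness by comparing hearts---is exactly the route of \cite{Mac}, Section 3, which the paper cites rather than reproves. But Step 1, which you yourself identify as the crux, has a genuine gap. The shifts produced by your normalization $\psi_j=\phi_j+n_j\in(0,1]$ are only \emph{weakly} decreasing, and weak monotonicity is not enough: for $\{E_1[n_1],E_2[n_2],E_3[n_3]\}$ to be Ext-exceptional---which is what Macr\`{\i}'s lemma requires, and what makes the $S_j$ the simples---one needs $n_1>n_2>n_3$ strictly, since $\Hom(E_i,E_j)\neq 0$ for $i<j$ in these triples (these Hom spaces are the arrow spaces of the quiver $Q_\mathcal E$, all positive-dimensional). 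Take $\phi_1=0.1$, $\phi_2=0.2$, $\phi_3=1.15$: your recipe gives $n_1=n_2=0$, $n_3=-1$, so $\Hom(S_1,S_2)=\Hom(E_1,E_2)\neq 0$, the two objects cannot both be simple in a finite-length heart, and the vanishing $\Hom^{<0}(S_i,S_j)=0$ you verify is too weak to make the extension closure a heart (the unshifted strong collection satisfies it too, and its extension closure is no heart). The actual content of the hypotheses $\phi_1<\phi_2<\phi_3$, $\phi_1+1<\phi_3$ is that one can choose integers $a>b>c$ and a rotated window $(t,t+1]$ containing $\phi_1+a$, $\phi_2+b$, $\phi_3+c$; this combinatorial step is missing from your argument, and it is genuinely phase-dependent---a single pattern such as $\langle E_1[2],E_2[1],E_3\rangle$ only covers the region $\phi_2-\phi_1<2$, $\phi_3-\phi_2<2$, $\phi_3-\phi_1<3$ (the paper performs exactly this rotation, with this pattern, in Section 2.1 for points of $\mathrm{MZ}_\mathcal E$). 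Your uniqueness step silently needs the same window: ``the phases force the heart to coincide with $\mathcal A$'' is the heart-inside-heart comparison of \cite{Mac}, Lemma 3.16, and it requires the three shifted objects to lie in one slice $\mathcal P'((t,t+1])$ first.

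Separately, the tilting justification in Step 1 is incorrect as stated. For an Ext-exceptional collection there are no nonzero morphisms between distinct $S_i$, so $A=\mathrm{End}(S_1\oplus S_2\oplus S_3)\cong\mathbb C^3$ and $\mathrm{mod}\text{-}A$ is semisimple, which is certainly not $\mathcal A$; moreover $S_1\oplus S_2\oplus S_3$ has nonvanishing higher self-extensions, hence is not a tilting object, so Bondal's theorem does not apply to it. Bondal's equivalence concerns the \emph{unshifted} bundle $E_1\oplus E_2\oplus E_3$, and in the corresponding heart the $E_j$ are the projectives, not the simples; the heart you want is instead equivalent to finite-dimensional modules over the quiver algebra $(Q_\mathcal E,\alpha_\mathcal E)$, and the statement you actually need---that the extension closure of a full Ext-exceptional collection is the heart of a bounded $t$-structure, with the collection as its simples---is precisely Macr\`{\i}'s lemma on Ext-exceptional collections. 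With that lemma and the window-choosing argument in place, your Steps 2--4 (stability function, HN via finite length, support property from the simplicial cone spanned by the $[S_j]$, stability of simples, uniqueness) do go through as sketched.
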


\begin{defn}
Let $\mathcal E$ be  an exceptional triple $\{E_1,E_2,E_3\}$ in
$\mathrm D^b(\pp)$, we write $\mathcal A_{\mathcal E}$ for the heart   $\langle E_1[2]$, $E_2[1]$, $E_3\rangle$, and $\Theta_{\mathcal E}$ for the space of all
stability conditions in Proposition \ref{prop:thetaE}.
$\Theta_{\mathcal E}$ is parametrized by
\[\{(m_1,m_2,m_3,\phi_1,\phi_2,\phi_3)\in (\mathbb R_{>0})^3\times
\mathbb R^3\text{ $\big |$
}\phi_1<\phi_2<\phi_3,\phi_1+1<\phi_3\}.\] We consider the following two subsets of $\Theta_{\mathcal E}$.
\begin{itemize}
\item $\Theta^{\triangledown}_{\mathcal E}:=$ $\{ \sigma \in \Theta_{\mathcal E}$ $|$ $\phi_2-\phi_1<1$, $\phi_3-\phi_2< 1\}$;
\item $\Theta^{\Geo}_{\mathcal E}:=$ $\Theta_{\mathcal E}\cap $
$\Stab^{\Geo}$;
\begin{comment}
\item $\Theta^{Pure}_{\mathcal E}:=$ $\{ \sigma \in \Theta_{\mathcal E}$ $|$ $\phi_2-\phi_1>1$ and $\phi_3-\phi_2>1\}$;
\item $\Theta^{+}_{\mathcal E,E_1}:=$ $\{ \sigma \in \Theta_{\mathcal E}$ $|$ $\phi_3-\phi_2<1\}$ $\setminus$ $\Theta^{Geo}_{\mathcal E}$;
\item $\Theta^{-}_{\mathcal E,E_3}:=$ $\{ \sigma \in \Theta_{\mathcal E}$ $|$ $\phi_2-\phi_1<1\}$ $\setminus$ $\Theta^{Geo}_{\mathcal
E}$.
\end{comment}
\end{itemize}
We denote $\mathrm{Stab}^{Alg}$ as the union of $\Theta_{\mathcal E}$ for all
exceptional triples in $\mathrm D^b(\pp)$. A stability condition in $\mathrm{Stab}^{Alg}$ is called an \emph{algebraic
stability condition}. \label{def: theta E}
\end{defn}

Let TR$_{\mathcal E}$ be the inner points in the triangle bounded by  $l_{e_1e_2}$, $l_{e_2e_3}$ and $l_{e_3e_1}$ in the $\cccp$, for $1\leq i < j\leq 3$. Let $e^*_i$ be the points associated to $e_i$ defined in the first section, where $i=1,2,3$ and $*$ could be $+,$
$l$, or $r$. The points $e^+_1,e^r_1,e_2,e_3$ are on the line
$\chi(-,E_1)=0$, and $e^+_3,e^l_3,e_2,e_1$ are on the line
$\chi(E_3,-)=0$. Let MZ$_\mathcal E$ be the inner points of the region bounded by the line segments $l_{e_1e^+_1}$, $l_{e^+_1e_2}$,$l_{e_2e^+_3}$, $l_{e^+_3e_3}$ and $l_{e_3e_1}$.

%%%%%%%%%%%%%%%%%%%%%%%%%%%%%%%%%%%%%%%%%%

\begin{center}
\begin{tikzpicture}[domain=1:5]

\tikzset{%
    add/.style args={#1 and #2}{
        to path={%
 ($(\tikztostart)!-#1!(\tikztotarget)$)--($(\tikztotarget)!-#2!(\tikztostart)$)%
  \tikztonodes},add/.default={.2 and .2}}
}

% opacity of the usual walls
\newcommand\XA{0.02}

\draw [name path =C0, opacity=0.1](-1,0.5) parabola bend (0,0) (2.5,3.125)
 node[right, opacity =0.5] {$\bd_0$};

\coordinate (E1) at (0,0);
\draw (E1) node {$\bullet$} node [above left] {$e_1$};

\coordinate (E2) at (1,0.5);
\draw (E2) node {$\bullet$} node [above] {$e_2$};

\coordinate (E3) at (2,2);
\draw (E3) node {$\bullet$} node [above] {$e_3$};

\coordinate (F1) at (0,-1);
\draw (F1) node {$\bullet$} node [below left] {$e^+_1$};

%\coordinate (F2) at (1,-0.5);
%\draw (F2) node {$\bullet$} node [below] {$e^+_2$};

\coordinate (F3) at (2,1);
\draw (F3) node {$\bullet$}node [below right] {$e^+_3$};

\coordinate (K1) at (0.4,-0.4);
\draw (K1) node {$\bullet$} node [below right] {$e^r_1$};

\coordinate (K3) at (1.6,0.8);
\draw (K3) node {$\bullet$} node [below] {$e^l_3$};

%\draw [add= 0 and 1] (W) to (V) node[above]{Eff};

%B axis
\draw[->,opacity =0.3] (-1,0) -- (3.5,0) node[above right] {$\frac{\ch_1}{\ch_0}$};

%H axis
\draw[->,opacity=0.3] (0,-1.5)-- (0,0) node [above right] {O} --  (0,3) node[right] {$\frac{\ch_2}{\ch_0}$};

\draw (E1) -- (E3);
\draw (F1) -- (E3);
\draw (E1) -- (F3);
\draw (F3) -- (E3);
\draw (E1) -- (F1);

\end{tikzpicture}
Figure: TR$_\mathcal E$ and MZ$_\mathcal E$.
\end{center}

%%%%%%%%%%%%%%%%%%%%%%%%%%%%%%%%%%%%%%%%%%%%

The next proposition explains how the algebraic part $\Theta_\mathcal E$ `glues' onto the geometric part Stab$^{\Geo}$.

\begin{prop}
Let $\mathcal E$ be an exceptional triple, then we have:
\begin{enumerate}
\item $\Theta^{\triangledown}_{\mathcal E} = \glt\cdot \{\sigma_{s,q}\in$ $\Stab^{\Geo}(\pp)\; |\; (1,s,q) \in \mathrm{TR}_{\mathcal E}\}$.
\item $\Theta^{\Geo}_{\mathcal E} = \glt\cdot \{\sigma_{s,q}\in$ $\Stab^{\Geo}(\pp) \; | \; (1,s,q)\in$
$\mathrm{MZ}_{\mathcal E} \}$.
\end{enumerate}
In particular,
$\Theta^{\triangledown}_{\mathcal E}$ is contained in $\Theta^{\Geo}_{\mathcal
E}$.
\label{prop:commomareaofalggeo}
\end{prop}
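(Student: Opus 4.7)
The plan is to pass to the $\glt$-quotient and work entirely via the kernel map of the central charge. By Lemma \ref{lemma:localhomeo}, this map identifies $\mathrm{Stab}^{\mathrm{nd}}/\glt$ locally homeomorphically with a subset of $\mathrm P_{\mathbb R}(\mathrm K_{\mathbb R}(\pp))$, and the $\glt$-orbit of $\sigma_{s,q}$ corresponds exactly to the point $(1,s,q)$ in the affine $\cccp$. Both sides of (1) and (2) therefore become subsets of the $\cccp$, and it suffices to show the equalities pointwise. Both $\mathrm{TR}_\mathcal E$ and $\mathrm{MZ}_\mathcal E$ are contained in $\Geo_{LP}$ by inspection of the figure (the boundary segments $l_{e_ie_i^+}$, $l_{e_i^+e_j}$, $l_{e_1e_3}$ all lie on or above $C_{LP}$ near the $e_i$), so $\sigma_{s,q}$ is well-defined as a geometric stability condition for every interior point of these regions.

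For the $\supseteq$ direction of (2), fix $P=(1,s,q)\in \mathrm{MZ}_\mathcal E$ and verify the defining conditions of $\Theta^{\Geo}_\mathcal E$. Stability of each $E_i$ in $\Coh_{\#s}$ (after the appropriate homological shift depending on the sign of $\tfrac{\ch_1(E_i)}{\ch_0(E_i)}-s$) follows from Corollary \ref{cor:regionofEstab}, whose hypothesis $l_{e_iP}\subset\Geo_{LP}$ is precisely what the boundary description of $\mathrm{MZ}_\mathcal E$ guarantees: no dangerous $l_{ee^+}$ locus for an exceptional $e\neq e_i$ enters the interior of $\mathrm{MZ}_\mathcal E$. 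The inequalities $\phi_1<\phi_2<\phi_3$ and $\phi_1+1<\phi_3$ translate via Lemma \ref{lemma:slopecompare} into angular comparisons at $P$: the three $e_i$ appear in the prescribed cyclic order around $P$, with $e_1$ and $e_3$ on opposite sides of the vertical line $L_{P\pm}$, which forces the $1$-phase gap. The $\subseteq$ direction is dual: starting from $\sigma\in\Theta^{\Geo}_\mathcal E$, reduce via $\glt$ to a representative $\sigma_{s,q}$ and reverse the analysis to pin $P$ to $\mathrm{MZ}_\mathcal E$. Part (1) runs in parallel: the stronger constraints $\phi_2-\phi_1<1$ and $\phi_3-\phi_2<1$ in $\Theta^{\triangledown}_\mathcal E$ correspond, again by Lemma \ref{lemma:slopecompare}, to the angular condition that no cyclically adjacent pair of rays $l^+_{Pe_i}$ spans more than $\pi$; combined with $\phi_3-\phi_1>1$ this is exactly the condition that $P$ lies strictly inside the triangle $\mathrm{TR}_\mathcal E$.

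The final inclusion $\Theta^{\triangledown}_\mathcal E\subseteq\Theta^{\Geo}_\mathcal E$ then follows tautologically from the evident containment $\mathrm{TR}_\mathcal E\subseteq\mathrm{MZ}_\mathcal E$. The main obstacle will be the careful bookkeeping across cases: for each position of $P$ one must identify the correct homological shift placing $E_i$ in $\Coh_{\#s}$, and then unambiguously convert between the actual phases of $\sigma_{s,q}$-stable objects (lying in $(0,1]$) and the free $\mathbb R$-valued phases $\phi_i$ used to parametrize $\Theta_\mathcal E$, which are determined only up to a common additive shift by the $\glt$-action. Once a consistent angular origin at $P$ is chosen, the remaining computations are routine applications of Lemma \ref{lemma:slopecompare} together with a finite case analysis based on the order of $s$ and the slopes $\tfrac{\ch_1(E_i)}{\ch_0(E_i)}$.
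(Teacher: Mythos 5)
Your treatment of statement (2) is essentially the paper's: stability of the $E_i$ (with shifts) via Corollary \ref{cor:regionofEstab}, phase inequalities via Lemma \ref{lemma:slopecompare}, and for the reverse containment the classification of geometric stability conditions (Proposition \ref{thm:geostab}) to reduce a $\sigma\in\Theta^{\Geo}_{\mathcal E}$ to some $\sigma_{s,q}$ and then show the inequalities fail when $(1,s,q)\notin \mathrm{MZ}_{\mathcal E}$. That part is sound, because membership in $\Theta^{\Geo}_{\mathcal E}$ hands you geometricity by definition.

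The gap is in statement (1), and it propagates into your final paragraph. Your opening move --- pass to the $\glt$-quotient via the kernel map and ``show the equalities pointwise'' in the $\cccp$ --- is not available for $\Theta^{\triangledown}_{\mathcal E}$, because the kernel map is only a \emph{local} homeomorphism (Lemma \ref{lemma:localhomeo}), not injective: distinct $\glt$-orbits of stability conditions can share the same kernel point. An element $\sigma\in\Theta^{\triangledown}_{\mathcal E}$ is produced abstractly by Proposition \ref{prop:thetaE} from data $(m_i,\phi_i)$, with no a priori knowledge that skyscraper sheaves are $\sigma$-stable; so you cannot invoke Lemma \ref{lemma:slopecompare} for it, since that lemma computes phases only for the geometric conditions $\sigma_{s,q}$. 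Thus your claim that the constraints $\phi_2-\phi_1<1$, $\phi_3-\phi_2<1$ ``correspond, again by Lemma \ref{lemma:slopecompare}, to the angular condition'' presupposes exactly what must be proved, namely that $\sigma$ is geometric --- which is the content of the ``in particular'' clause. Your derivation of that clause as ``tautological from $\mathrm{TR}_{\mathcal E}\subseteq\mathrm{MZ}_{\mathcal E}$'' is therefore circular: the triangle inclusion compares the right-hand sides of (1) and (2), while the nontrivial containment is $\Theta^{\triangledown}_{\mathcal E}\subseteq \mathrm{Stab}^{\Geo}$ on the left. The paper closes this hole with a genuinely topological step your proposal lacks: one checks directly from $Z(E_j)=m_je^{\pi i\phi_j}$ that $\Theta^{\triangledown}_{\mathcal E}\subset\mathrm{Stab}^{\mathrm{nd}}$ and that $\mathrm{Ker}(\Theta^{\triangledown}_{\mathcal E})\subseteq \mathrm{TR}_{\mathcal E}$, then uses the already-established embedding
\[
\bigl(\mathrm{Ker}^{-1}(\mathrm{TR}_{\mathcal E})\cap \mathrm{Stab}^{\Geo}\bigr)/\glt \hookrightarrow \Theta^{\triangledown}_{\mathcal E}/\glt \xrightarrow{\ \mathrm{Ker}\ } \mathrm{TR}_{\mathcal E},
\]
where the composition is an isomorphism and $\mathrm{Ker}$ is a local homeomorphism; path-connectedness of $\Theta^{\triangledown}_{\mathcal E}$ then forces both maps to be isomorphisms, so every element of $\Theta^{\triangledown}_{\mathcal E}$ is geometric with kernel in $\mathrm{TR}_{\mathcal E}$. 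Without this connectedness argument (or a substitute ruling out non-geometric conditions whose kernel lands in $\mathrm{TR}_{\mathcal E}$), your proof of (1) does not go through.
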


\begin{proof}
We will first prove the second statement. As MZ$_{\mathcal E}$ is contained in Geo$_{LP}$, by Corollary
\ref{cor:regionofEstab}, $E_2$ or $E_2[1]$ is $\sigma_{s,q}$-stable for any
point $(1,s,q)$ in MZ$_{\mathcal E}$. As $e^+_1,e^r_1,e_2,e_3$ are
collinear on the line of $\chi(-,E_1)=0$, for any point $P$ in
MZ$_\mathcal E$, $l_{E_3P}$ is contained in Geo$_{LP}$. By Corollary
\ref{cor:regionofEstab}, $E_3$ is stable for any stability conditions in MZ$_{\mathcal E}$. For the same reason, $E_1[1]$ is stable for any stability conditions in MZ$_{\mathcal E}$.

For any $(1,s,q)$ in MZ$_{\mathcal E}$, $E_3$ and $E_1[1]$ are in
the heart $\Coh_{\#s}$. By Lemma \ref{lemma:slopecompare},
$\phi_{s,q}(E_1[1])<\phi_{s,q}(E_3)$, hence
\[\phi_{s,q}(E_3)-\phi_{s,q}(E_1) >1.\]
When $s\geq \chy(E_2)$, $E_3$ and $E_2[1]$ are in the heart
$\Coh_{\#s}$, we have \[\phi_{s,q}(E_3)-\phi_{s,q}(E_2)>0.\] As
$(1,s,q)$ is above $L_{e_1e_2}$, by Lemma \ref{lemma:slopecompare},
we also have \[\phi_{s,q}(E_2)-\phi_{s,q}(E_1)>0.\] When
$s<\chy(E_2)$, by a similar argument we have the same
inequalities for $\phi_{s,q}(E_i)$'s. By Proposition
\ref{prop:thetaE}, we get the embedding
\[\mathrm{Ker}^{-1}(\mathrm{MZ}_\mathcal E)\cap\text{Stab}^{\Geo}
\hookrightarrow \Theta_\mathcal E\cap\text{Stab}^{nd}
\xrightarrow{\text{Ker}} \mathrm P(\mathrm K_\mathbb R(\textbf P^2)).
\]
For $(1,s,q)$ outside the area MZ$_\mathcal E$, by Lemma \ref{lemma:slopecompare}, at least one of the inequalities: 
\[\phi_{s,q}(E_2)\leq \phi_{s,q}(E_1),\;\; \phi_{s,q}(E_3)\leq \phi_{s,q}(E_2),\text{ or } \phi_{s,q}(E_3)-\phi_{s,q}(E_1)\leq 1\]
holds. Hence $\sigma_{s,q}$ is not contained in $\Theta_\mathcal E$, the second statement of the proposition holds.

For statement 1, as $\phi_2-\phi_1$ is not an integer,
$\Theta^{\triangledown}_{\mathcal E}$ $\in$ Stab$^{nd}$. The image
of Ker$\big(\Theta^{\triangledown}_{\mathcal E}\big)$ is in
TR$_\mathcal E$. By the previous argument, we also have the embedding
\[\big(\mathrm{Ker}^{-1}(\mathrm{TR}_\mathcal
E)\cap\text{Stab}^{\Geo}\big)/\glt
\hookrightarrow \Theta^{\triangledown}_{\mathcal
E}/\glt \xrightarrow{\text{Ker}}
\mathrm{TR}_\mathcal E \subset \mathrm P(\mathrm K_\mathbb R(\pp)).
\]
The map Ker is a local homeomorphism and the composition is an
isomorphism. Since $\Theta^{\triangledown}_{\mathcal E}$ is path
connected, the two maps are both isomorphisms. We get the first
statement of the proposition.
\end{proof}

\subsection{Remarks on the $\cccp$}
\label{sec1.5}
In this section, we want to summarize some properties of our $\cccp$ from previous sections. The aim is to help the readers gain a better understanding, especially those who are already familiar with the classical $(s,t)$-upper half plane model.

The set-up of the space of stability conditions in the paper is different from the classical $(s,t)$-upper half plane model. Recall that we visualize a geometric stability condition as the kernel of its central charge in $\mathrm{K}(\pp)\otimes \mathbb R$.  In particular, when the central charge is non-degenerate, which is always the case for geometric stability conditions, the kernel is a straight line in $\mathrm{K}(\pp)\otimes \mathbb R$. We further take the projectivization of $\mathrm{K}(\pp)\otimes \mathbb R$, the kernel of the central charge is a point on $\pkp$. For a geometric stability condition, to satisfies the Harder-Narasimhan condition, the kernel of the central charge has to separate away from all the slope stable characters and torsion characters. In particular, the kernel can only be in the area Geo$_{LP}$ bounded by the Le Potier curve. The $\glt$ action does not affect the kernel of the central charge, and the space of the geometric stability condition is realized as a $\glt$-principal bundle over Geo$_{LP}$.

For a point  in Geo$_{LP}$ with coordinate $(1,s,q)$, we may write down a stability condition $\sigma_{s,q}=(Z_{s,q},\mathcal P_s)$ with heart $\mathcal P_s((0,1]) = $ Coh$_{\sharp s}$ and central charge as that in Proposition and Definition \ref{defn:geostabsq}:
\[Z_{s,q} = -(\ch_2-q \cdot \ch_0) +i(\ch_1-s\cdot \ch_1).\]

In many of other papers, a family of geometric stability condition is parameterized by $(s,t)$ on the upper half plane $\mathbb H$ via $\sigma'_{s,t}=(Z'_{s,t},\mathcal P_s)$ with the same heart $\mathcal P_s((0,1]) = $ Coh$_{\sharp s}$ and a different central charge
\[Z'_{s,t} = -(\ch^s_2+\frac{t^2}{2} \cdot \ch_0) +i\,t\,\ch^s_1.\]
Up to the $\glt$ action, $\sigma'_{s,t}$ is the same as $\sigma_{s,\frac{s^2+t^2}{2}}$. Note that under this correspondence, the $(s,t)$-upper half plane $\mathbb H$ is mapped to $\left\{(1,\chy,\che)|\left(\frac{\ch_1}{\ch_0}\right)^2-2\left(\frac{\ch_2}{\ch_0}\right)<0\right\}$ in the $\cccp$ in $\pkp$.

Since this different convention may upset some readers, we want to briefly illustrate some advantages of our approach, which will become more clear later in the paper. 
\begin{comment}Compared to the $(s,t)$-upper half plane picture, the set-up in our paper has several advantages.
\end{comment}
One most important benefit is that the characters and the stability conditions are on a same space. As seen in Section 1.3, the potential wall of $w$ and another Chern character $v$ is the straight line across these two points on the $\cccp$, or strictly speaking, the line segment  in Geo$_{LP}$. On the usual $(s,t)$ upper half, the potential wall is the semicircle with two endpoints being $L_{vw}\bigcap \bd_0$. Let $\sigma_P$ be a stability condition and $w$ be a Chern character on the $\cccp$, the argument of $Z_P(w)$  is the angle bounded by $L_{P-}$ and $l_{Pw}$. We may compare the slopes of different Chern characters by their positions on the $\cccp$ and this reduces huge amount of computations. This allows us to deal with several Chern characters and stability conditions simultaneously.

Moreover, in our set-up, the divisor cone can be identified with the $\cccp$. For a Chern character $w$, one may draw its Pic$_{\mathbb R}(\mss(w))$ as a $HB$-coordinate ($H$ vertical-axis; $B$ with slope $\frac{\ch_1}{\ch_0}$) with origin at $w$ on the $\cccp$, the actual walls are the base locus decomposition walls. The Donaldson morphism identifies $w^\perp$ with the divisor cone of $\mgs(w)$.
\begin{comment}The $\perp$ here can be with respect to many different bilinear forms on $\mathrm{K}(\pp)$, the commonly using one is $\chi(-\otimes -)$, because the core step of the correspondence can be written as $-\otimes -$, but actually other bilinear form, for example, the combination of the these two forms also make sense. Suppose the quadratic form is $Q(-,-)$ and the $\perp$ means right orthogonal.
\end{comment}
Let $v$ belong to $w^\perp$, then the divisor given by $v$ via the Donaldson morphism corresponds to the wall $\chi(-,v)=0$ on the $\cccp$. The cartoon for the Chern character $(4,0,-15)$ in the introduction can now be interpreted from this new viewpoint.

Another advantage of the $\cccp$ picture is that the space Geo$_{LP}$ is larger than the usual upper half plane. As explained previously, up to the $\glt$ action, Geo$_{LP}$ is the whole space of geometric stability conditions. The algebraic stability conditions (quiver regions) for exceptional triples are also easier to understand on the $\cccp$ rather than on the upper half plane. The quiver region with heart $\langle E_1[2],E_2[1],E_3\rangle$ in the $\cccp$ is the area that is below $l_{e_1e_3}$ and above $l_{e_2e_1^+}$, $l_{e_2e^+_3}$. Since Chern characters of exceptional bundles are usually not on the parabola $\bd_0$ (this is the case only for line bundles), the end points of the semicircular potential walls of them involve complicated computation. On the $(s,t)$-upper half plane, only quiver regions for heart $\langle \mathcal O(k-1)[2],\mathcal O(k)[1],\mathcal O(k+1)\rangle$ can be neatly described.  In this paper ,we need the general quiver regions (e.g. for heart $\langle \mathcal O(1)[2],\mathcal T[1],\mathcal O(2)\rangle$), which are important to decide the stable area for exceptional characters, and are useful to understand the effective and movable cone boundary of the $\mss(w)$. So the $\cccp$ seems to be a suitable choice.

% For example, the quiver region with heart $\langle \mathcal O(-1)[2],\mathcal O[1],\mathcal O(1)\rangle$ on the $(s,t)$ upper half plane is the semi-disc with center $O$ and radius $1$.

\subsection{First constraint on the last wall}\label{sec1.6}
For a character, it is important to study the set of stability conditions for which there exist stable objects of the given character. We call this set the \emph{stable area} of the character. In this section, we give a first constraint on the stable area.

\begin{prop}
Let $w$ be a Chern character such that $\ch_0(w) > 0$ and $\bd(w)>0$,
and $E$ be an exceptional bundle such that $\frac{\ch_1(E)}{\ch_0(E)} < \frac{\ch_1(w)}{\ch_0(w)}$. Suppose $w$ is above the
line $L_{e^le^+}$ in the $\cccp$, then for any point $P\in\Geo_{LP}$ below $L_{wE}$ and to the
left of $L_{E\pm}$, there is no $\sigma_P$-semistable object $F$ with Chern
character $w$. \label{prop:wisnotstabbelowexc}
\end{prop}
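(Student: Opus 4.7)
The plan is to argue by contradiction using the exceptional bundle $E$ as a virtual destabilizer. Suppose there exists a $\sigma_P$-semistable object $F\in \Coh_{\#s}$ with $\tilde v(F)=w$. The hypotheses will force $E$ to be $\sigma_P$-stable with strictly larger phase than $F$, so that $\Hom(E,F)=0$. Simultaneously, Remark \ref{rem:leeline} identifies the line $L_{e^l e^+}$ with the locus $\{\chi(E,-)=0\}$, so the hypothesis that $w$ lies above this line yields $\chi(E,w)>0$. If we also establish $\Ext^2(E,F)=0$, then
\[0<\chi(E,w)=\hom(E,F)-\ext^1(E,F)+\ext^2(E,F)=-\ext^1(E,F)\leq 0,\]
which is absurd.

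First I would verify the $\sigma_P$-stability of $E$. Since $P$ lies strictly to the left of $L_{E\pm}$, we have $s<\chy(E)$, so $E\in \Coh_{>s}\subset \Coh_{\#s}$. To invoke Corollary \ref{cor:regionofEstab}, it suffices to check that the open segment $l_{EP}$ lies entirely in $\Geo_{LP}$. Near $E$ the Le Potier curve consists of the two boundary segments $l_{e^+ e^l}$ and $l_{e^+ e^r}$ meeting at $e^+$ directly below $E$; the allowed region for $P$, bounded by $L_{wE}$, $L_{E\pm}$ and $C_{LP}$, sits in a wedge attached to $E$ through which $l_{EP}$ always runs above $l_{e^+ e^l}$. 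This geometric verification is the main delicate point of the proof.

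Next I would compare phases at $P$. Because the $\chy$-coordinate of $P$ is less than both $\chy(E)$ and $\chy(w)$, and $P$ sits below the chord $L_{wE}$, an elementary slope computation shows that $l^+_{PE}$ has strictly greater slope than $l^+_{Pw}$; by Lemma \ref{lemma:slopecompare} this yields $\phi_P(E)>\phi_P(F)$. Stability of $E$ together with semistability of $F$ then forces $\Hom(E,F)=0$. For the $\Ext^2$-vanishing, Serre duality gives $\Ext^2(E,F)\cong \Hom(F,E(-3))^*$; in the typical case $s>\chy(E)-3$ the shift $E(-3)[1]$ lies in $\Coh_{\#s}$, so by the heart axioms
\[\Hom_{D^b(\pp)}(F,E(-3))=\Hom_{\Coh_{\#s}}(F,E(-3)[1][-1])=0.\]
The residual case $s\leq \chy(E)-3$ is handled by the same circle of ideas applied to the exceptional bundle $E(-3)$, whose $\sigma_P$-stability is obtained from a parallel application of Corollary \ref{cor:regionofEstab}.

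The hardest step is the first one: verifying that $l_{EP}\subset \Geo_{LP}$ for every $P$ in the allowed region. This is a purely geometric check that hinges on the shape of $C_{LP}$ near $E$ together with the hypothesis that $w$ lies above $L_{e^l e^+}$, which keeps the chord $L_{wE}$ from dipping into the Le Potier cone near $E$. Given that, the rest of the argument is a routine combination of phase comparison via Lemma \ref{lemma:slopecompare}, the heart axioms for the tilt $\Coh_{\#s}$, and the sign of the Euler pairing read off from the $\cccp$ via Remark \ref{rem:leeline}.
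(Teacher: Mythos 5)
Your overall skeleton (phase comparison gives $\Hom(E,F)=0$, Serre duality plus heart membership gives $\Hom(E,F[2])=0$, and $w$ above $L_{e^le^+}=\{\chi(E,-)=0\}$ gives $\chi(E,w)>0$, contradiction) is exactly the paper's endgame. But your Step 1 contains a genuine gap: the claim that $l_{EP}\subset\Geo_{LP}$ — and hence that $E$ itself is $\sigma_P$-stable — for \emph{every} allowed $P$ is false. The region of admissible $P$ is not a small wedge attached to $E$: it is all of $\Geo_{LP}$ below $L_{wE}$ and to the left of $L_{E\pm}$, which extends far to the left, past the peaks of other exceptional bundles. For instance, take $E=\mathcal O$ and $P$ near the Le Potier curve with $\chy$-coordinate about $-\tfrac{3}{2}$: the chord $l_{EP}$ passes below $v(\mathcal O(-1))$ but above $\mathcal O(-1)^+$, so it crosses the vertical segment $l_{e(-1)e(-1)^+}$. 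By the remark following Corollary \ref{cor:regionofEstab} (the containment condition is \emph{necessary}), $\mathcal O$ is then not $\sigma_P$-stable, so you cannot conclude $\Hom(E,F)=0$ from stability of $E$ for the original $E$.

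The missing idea, which is the actual content of the paper's proof, is a reduction that \emph{replaces} $E$: using Corollary \ref{cor:stabonthesegment} one may move $\sigma_P$ along the wall $L_{Pw}$ and find an exceptional triple $\mathcal E=\{E_1,E_2,E_3\}$ with $P\in \mathrm{MZ}_{\mathcal E}$, $\chy(E_3)\leq \chy(E)$ and $e_3$ above $l_{Pw}$; a geometric property of $C_{LP}$ then shows $w$ is still above $L_{e_3^le_3^+}$, so $E_3$ satisfies all the hypotheses and one argues with $E_3$ instead. This repair also dissolves your shaky ``residual case $s\leq \chy(E)-3$'': since $\mathrm{MZ}_{\mathcal E}$ lies to the right of $L_{E_3(-3)\pm}$, the object $E_3(-3)[1]$ automatically lies in $\Coh_{\#s}$, giving $\left(\Hom(E_3,F[2])\right)^*=\Hom(F,E_3(-3))=0$ uniformly. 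Your proposed fallback of ``applying the same circle of ideas to $E(-3)$'' would not close the argument as written: it would produce a statement about $\chi(E(-3),w)$, for which the hypothesis that $w$ lies above $L_{e^le^+}$ gives no sign control, and $P$ need not lie below $L_{wE(-3)}$ in the first place.
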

\begin{rem}
When $\ch_0(w)=0$, there is a similar statement. The conditions are replaced by `$\ch_1(w)>0$' and `$\frac{\ch_2}{\ch_1}(w)$ is greater than the slope of $L_{e^le^+}$'. The proof is similar and left to the readers.
\label{rem:11}
\end{rem}
\begin{proof}
\begin{comment}
By the assumptions, $e$ is above $l_{P w}$ and to the right of $P$. There
is an exceptional character $e_0$ with minimum $\chy$
such that $e_0$ is above $l_{P w}$ and to the right of $P$.  So $\chy(E_0)\leq \chy(E)$, by an easy geometric property of C$_{LP}$, the character $w$ is also above $L_{e^l_0e^+_0}$. $E_0$ satisfies the assumptions, without loss of generality, we may assume that $E_0=E$.

Note that on the line $L_{e^le^+}$, a sequence of exceptional characters tends to $e^+$ from the left. So the point $P$ must not lie below $L_{e^le^+}$, since otherwise there would be another exceptional character above $l_{P w}$ and between $P$ and $e$, which contradicts to our minimal slope assumption. 

By the minimum slope assumption on $E$ and Corollary \ref{cor:stabonthesegment}, we may also assume that $P$ is below the line $L_{ee(-3)}$, and is between vertical walls  $L_{E(-3)\pm}$ and $L_{E\pm}$. By
Corollary \ref{cor:regionofEstab}, both $E$ and $E(-3)[1]$ are
$\sigma_P$-stable. \\
\end{comment}

By the assumptions and Corollary \ref{cor:stabonthesegment}, we may assume that $P$ is in MZ$_\mathcal E$ for  an exceptional triple $\mathcal E=\{E_1,E_2,E_3\}$ such that $\chy(E_3)\leq \chy(E)$ and $e_3$ is above $l_{Pw}$. By an easy geometric property of C$_{LP}$, the character $w$ is also above $L_{e^l_3e^+_3}$. $E_3$ satisfies the assumptions, without loss of generality, we may assume that $E_3=E$.

We argue by contradiction. Assume $F$ is a $\sigma_P$-stable object with Chern character $w$. As $\sigma_P$ is below $L_{wE}$, by Lemma \ref{lemma:slopecompare}, we
have
\[\phi_P(F)<\phi_P(E).\]
Since $E$ and $F$ are both $\sigma_P$-semistable, we have
\[\Hom(E,F)=0.\]
On the other hand, since $P$ is in MZ$_\mathcal E$, it is to the right of $L_{E(-3)\pm}$.
Therefore, $E(-3)[1]$ and $F$ are in a same heart, we have
\[ (\Hom(E,F[2]))^*=\Hom(F,E(-3))=\Hom(F,E(-3)[1][-1])=0.\]
The two Hom vanishings imply $\chi(E,F)\leq 0$. But by assumptions that $\ch_0(F)\geq 0$, and $w$ is above the line $L_{e^le^+}$, which is given by $\chi(E,-)=0$, we have $\chi(E,F)>0$. This leads to a contradiction.
\end{proof}

\begin{rem}
The symmetric statement for $w$ with $\ch_0(w)<0$ above $L_{e^+e^r}$
and for $E$ with larger $\frac{\ch_1}{\ch_0}$ can be proved in the same way. \label{rem:wisnotstabbelow}
\end{rem}

Now we have the following result on characters of Bridgeland stable objects. Note that this is a generalization of Theorem \ref{thm:lp} to Bridgeland stable objects.

\begin{cor}
Fix a character $w$. Suppose that there exist $\sigma_{s,q}$-semistable objects of character $w$ for
some geometric stability condition $\sigma_{s,q}$. Then $w$ either lies
not inside $\clp$ or is proportional to an exceptional character. \label{cor:nostabincone}
\end{cor}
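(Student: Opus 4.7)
Suppose for contradiction that there exists a $\sigma_P$-semistable object $F$ of character $w$ for some $P=(1,s,q)\in\Geo_{LP}$, with $w$ inside $\Cone_{LP}$ and not proportional to any exceptional character. Since $F[1]$ is $\sigma_P$-semistable of character $-w$, I may assume $\ch_0(w)\geq 0$; the torsion case $\ch_0(w)=0$ is handled in parallel, using Remark \ref{rem:11} in place of Proposition \ref{prop:wisnotstabbelowexc}, so I focus on $\ch_0(w)>0$. My plan is to produce an exceptional bundle $E$ whose associated line configuration places $P$ in the forbidden region of Proposition \ref{prop:wisnotstabbelowexc} (or its symmetric analog in Remark \ref{rem:wisnotstabbelow} applied to $F[1]$), yielding an immediate contradiction.

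First I would apply Corollary \ref{cor:stabonthesegment} to the Jordan--Hölder factors of $F$ in order to slide $P$ along $L_{Pw}\cap\Geo_{LP}$ without losing the assumption that a $\sigma_P$-semistable object of character $w$ exists. Then I split by the sign of $\chy(P)-\chy(w)$. If $\chy(P)<\chy(w)$, the task is to find an exceptional bundle $E$ with $\chy(P)<\chy(E)<\chy(w)$ such that $v(w)$ lies strictly above the line $L_{e^le^+}$ and $P$ lies strictly below the line $L_{wE}$; Proposition \ref{prop:wisnotstabbelowexc} then yields the contradiction. The case $\chy(P)>\chy(w)$ is dispatched symmetrically via Remark \ref{rem:wisnotstabbelow} applied to $F[1]$, choosing $E$ with $\chy(w)<\chy(E)<\chy(P)$ and using the line $L_{e^+e^r}$. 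The degenerate vertical case $\chy(P)=\chy(w)$ reduces to one of the above by perturbing the stability condition transversely to $L_{w\pm}$ and invoking openness of the semistable locus.

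The main obstacle is the rigorous verification that a suitable $E$ exists in each non-degenerate regime. Since $v(w)$ is strictly above the fractal curve $C_{LP}$ and does not lie on any ray $\mathbb R_{>0}\cdot e$ for an exceptional character $e$, the dense dyadic family of exceptional bundles with $\chy(E)$ inside the prescribed interval must contain one whose extended line $L_{e^le^+}$ still passes strictly below $v(w)$; the additional constraint that $P$ lies below $L_{wE}$ can then be arranged by further fine-tuning $\chy(E)$ within $(\chy(P),\chy(w))$, exploiting the dyadic structure of $C_{LP}$ near $\chy(w)$. This geometric verification --- essentially a case analysis on the position of $v(w)$ relative to the dyadic exceptional network on $C_{LP}$ --- is the technical heart of the proof.
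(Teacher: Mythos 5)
Your endgame (forcing a contradiction with Proposition \ref{prop:wisnotstabbelowexc}, or Remark \ref{rem:wisnotstabbelow} on the mirrored side) is the same as the paper's, but the way you produce the exceptional bundle $E$ is a genuine gap, and it is precisely the step where the paper does something different. The paper never locates $E$ by inspecting the geometry of $C_{LP}$: it applies Proposition \ref{prop:epintersectsgeolp} to the (semi)stable object itself, using $\HH^0(F)_{\min}$ or $\HH^{-1}(F)_{\max}$ to manufacture a slope-semistable character sitting above the line $L_{w\sigma}$ and between the vertical walls; by Theorem \ref{thm:lp} and its position this character is forced to be exceptional. Your substitute --- a density and fine-tuning argument over the dyadic family --- fails on two counts. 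First, the exceptional slopes are \emph{not} dense in $\mathbb R$: their closure is Cantor-like (cf.\ Remark \ref{rem:lpcone}), with gaps around each exceptional slope in the window cut out by $e^l$ and $e^r$, so ``fine-tuning $\chy(E)$ within $(\chy(P),\chy(w))$'' is simply not available. Second, and more fundamentally, for the given $P$ the required $E$ may not exist at all. Take $w=(1,0,10)$ and $P=(1,-5,20)$, both in $\bd_{<0}$, with $w$ inside $\Cone_{LP}$ and not proportional to any exceptional character. Every exceptional character lies in the band $0\leq\bd<\tfrac{1}{2}$, and a direct computation shows that every line $L_{we}$ with $\chy(e)\in(-5,0)$ has height below $13$ at $\chy=-5$, so $P$ lies strictly \emph{above} all such lines: no choice of $E$ ever places $P$ in the forbidden region of Proposition \ref{prop:wisnotstabbelowexc}, for either sign case of your dichotomy.

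Your instinct to slide $P$ via Corollary \ref{cor:stabonthesegment} applied to the Jordan--H\"older factors is correct and in fact indispensable: one must move $P$ along $L_{wP}$ to just before the line exits $\Geo_{LP}$, and then find an exceptional character above the line near the exit point, distinguishing exits through $C_{LP}$ from exits through a slit $l_{ee^+}$. But you present the sliding only as a harmless normalization (``without losing the assumption''), and you defer the existence of $E$ to a case analysis ``on the position of $v(w)$ relative to the dyadic exceptional network'' that is never carried out; making that analysis rigorous would essentially amount to reproving Proposition \ref{prop:epintersectsgeolp} by hand, point by point. Note also how cheaply the paper's route dispatches the configuration above: if both $v(w)$ and $P$ lie in the convex region $\bd_{<0}$, then $l_{wP}\subset\bd_{<0}\subset\Geo_{LP}$, directly contradicting the final assertion of Proposition \ref{prop:epintersectsgeolp} --- a case your framework cannot reach, since no exceptional character exists between the verticals there. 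The proposal is therefore not a proof as it stands: the ``technical heart'' you flag is exactly the missing content, and the object-theoretic input of Proposition \ref{prop:epintersectsgeolp} is what you need to import to close it.
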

\begin{proof}
Suppose $w$ is inside Cone$_{LP}$ and not proportional to any exceptional character, by Proposition
\ref{prop:epintersectsgeolp}, there is an exceptional character $e$ such that $e$ is above the line segment $l_{w\sigma}$ and between vertical walls $L_{w\pm}$ and $L_{\sigma\pm}$. We may
assume that $\frac{\ch_1(w)}{\ch_0(w)}> s$, then $w$ is
above $L_{e^le^+}$. Now by Proposition
\ref{prop:wisnotstabbelowexc}, since $\sigma$ is below $L_{we}$ and to
the left of $L_{e\pm}$, there is no $\sigma$-semistable object of character $w$, which is a contradiction.
\end{proof}

We also want to introduce the following important notion.

\begin{defn}
Let $L$ be a straight line in the $\cccp$.
Suppose $L$ intersects
with $\bd_{\leq 0}$ along a line segment with two endpoints: $(1,f_1,g_1)$ and $(1,f_2,g_2)$. The \emph{$\frac{\ch_1}{\ch_0}$-length} of $L\bigcap\bd_{\leq 0}$ is defined to be $|f_1-f_2|$.
\label{def:c1length}
\end{defn}

In the $(s,t)$-upper half plane model in \cite{ABCH}, the $\frac{\ch_1}{\ch_0}$-length of $L_{EF}\bigcap\bd_{\leq 0}$ is the diameter of the semicircular potential wall of $E$ and $F$. This is a measure of the size of the wall, and we have the following result, which says for walls of small length, there exists no stable object.

\begin{cor}
Let $w\in\mathrm{K}(\pp)$ be a non-zero character not inside the Le Potier cone $\Cone_{LP}$,
and $\sigma$ be a geometric stability condition inside the cone
$\bd_{<0}$. When the $\frac{\ch_1}{\ch_0}$-length of $L_{w\sigma}\bigcap\bd_{\leq 0}$
is less than or equal to $1$, there is no $\sigma$-stable object $F$
of character $w$. \label{cor:lastwallradiusgeq1}
\end{cor}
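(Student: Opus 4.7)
I argue by contradiction: suppose a $\sigma$-stable object $F$ of character $w$ exists. Since $\sigma\in\bd_{<0}$ lies strictly above $\bd_0$ while $w$ is on or below $C_{LP}\subset\bd_{\geq 1/2}$, the line $L_{w\sigma}$ meets $\bd_0$ in exactly two points $P_1=(1,t_1,\cdot)$ and $P_2=(1,t_2,\cdot)$ with $t_1<s<t_2$, and the hypothesis of the corollary reads $t_2-t_1\leq 1$. The torsion case $\ch_0(w)=0$ is treated analogously via Remark~\ref{rem:11}; by the symmetric Remark~\ref{rem:wisnotstabbelow} I may further assume $\chy(w)\geq t_2>s$.

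The aim is to produce an exceptional bundle $E$ verifying all four hypotheses of Proposition~\ref{prop:wisnotstabbelowexc}---namely $\chy(E)<\chy(w)$, $w$ above $L_{e^le^+}$, $\sigma$ below $L_{wE}$, and $\sigma$ to the left of $L_{E\pm}$---for that proposition then rules out any $\sigma$-semistable object of character $w$, contradicting the stability of $F$.

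Since the dyadic rationals are dense in $\mathbb R$, I select an exceptional bundle $E$ with $\chy(E)\in(t_2,\chy(w))$ taken close to $t_2$; the conditions $s<\chy(E)<\chy(w)$ are then immediate. Because $e$ lies on $\bd_{1/2}$ whereas $P_2$ lies on $\bd_0$ at nearly the same $\chy$-coordinate, the point $e$ sits above the line $L_{w\sigma}$, which by Lemma~\ref{lemma:slopecompare} is equivalent to $\sigma$ being below $L_{wE}$. Thus three of the four required conditions come essentially for free from the choice of $E$ just past $P_2$.

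The condition that $w$ lies above $L_{e^le^+}$ is the delicate one, since $w$ is itself allowed to sit on $C_{LP}$ and $l_{e^le^+}$ is an edge of $C_{LP}$ attached to $e^+$. The numerical hypothesis $t_2-t_1\leq 1$ is precisely what guarantees the $\chy$-interval $(t_2,\chy(w))$ is long enough relative to the local fractal structure of $C_{LP}$ near $\chy=t_2$ so that $E$ can be chosen with $w$ safely above $L_{e^le^+}$; the recursive formulas of Section~\ref{sec1.1} allow the concrete selection via an appropriate dyadic refinement. Reconciling the geometric constraint ``above $L_{e^le^+}$'' with the proximity constraint ``$\chy(E)$ just past $t_2$'' is the main technical obstacle, and where the hypothesis that the $\chy$-length is $\leq 1$ (rather than some larger bound) will be decisively used.
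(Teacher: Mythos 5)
Your reduction to Proposition \ref{prop:wisnotstabbelowexc} is the same strategy the paper uses, but your choice of $E$ fails at exactly the step you declared ``essentially for free'', and the step you deferred is where the actual proof lives. Concretely: since $L_{w\sigma}$ meets $\bd_0$ at $t_1<t_2$, one has $\big(\text{parabola}-\text{line}\big)(x)=\frac{1}{2}(x-t_1)(x-t_2)$, so at $\chy=t_2+\epsilon$ the line lies only $\frac{1}{2}(t_2-t_1+\epsilon)\epsilon$ below $\bd_0$, which is $<\frac{1}{2}$ for small $\epsilon$ \emph{precisely because} $t_2-t_1\leq 1$. On the other hand, an exceptional character of rank $r$ lies on $\bd_{\frac{1}{2}-\frac{1}{2r^2}}$ — not on $\bd_{\frac{1}{2}}$ as you wrote, and line bundles lie on $\bd_0$ — so for $r\geq 2$ it sits roughly $\frac{1}{2}$ below $\bd_0$. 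Hence an exceptional chosen ``just past $t_2$'' near $\bd_{\frac{1}{2}}$ lies \emph{below} $L_{w\sigma}$, which (as both $e$ and $\sigma$ are to the left of $w$) is equivalent to $\sigma$ lying \emph{above} $L_{we}$ — the opposite of what Proposition \ref{prop:wisnotstabbelowexc} requires. The short-wall hypothesis works against your choice, not for it: the only exceptionals lying above a short wall near its endpoint are those close to $\bd_0$, i.e.\ of low rank, in fact line bundles.

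Moreover the condition ``$w$ above $L_{e^le^+}$'', which you defer to ``an appropriate dyadic refinement'', is not a technicality absorbed by density of dyadic slopes — it is the decisive point, and the paper needs no fractal analysis at all because it works with line bundles. It takes $c$ to be the largest integer with $w$ strictly above $L_{\mathcal O(c-1)\mathcal O(c)}$ and sets $E=\mathcal O(c+1)$. Then: (i) $L_{\mathcal O(c-1)\mathcal O(c)}$ passes through $\mathcal O(c+1)^+$ and has slope $c+\frac{1}{2}=\chy(E)-\frac{3}{2}$, so it \emph{is} the line $L_{e^le^+}$ for this $E$, and maximality of $c$ gives $w$ strictly above it; (ii) since $w$ is not above $L_{\mathcal O(c)\mathcal O(c+1)}$, the chord $L_{w\mathcal O(c+1)}\cap\bd_{\leq 0}$ has $\frac{\ch_1}{\ch_0}$-length $\geq 1$, and since the chord length of a line pivoting about $w$ grows as the line rises on the left of $w$, your hypothesis forces $\sigma$ on or below $L_{w\mathcal O(c+1)}$; one then checks $t_2\leq c+1\leq \chy(w)$, so $\sigma$ is to the left of $L_{\mathcal O(c+1)\pm}$, and Proposition \ref{prop:wisnotstabbelowexc} concludes. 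As written, your plan asserts a false intermediate claim (the position of $e$ relative to $L_{w\sigma}$) and leaves the one genuinely substantive condition unproved, so it does not constitute a proof.
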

\begin{proof}
We show the case when $\ch_0(w) \geq 0$, the other case can be proved
similarly. Among all integers $k\leq \frac{\ch_1(w)}{\ch_0(w)}$, let $c$ be the largest one such that $w$ is strictly above the line $L_{\mathcal O(c-1)\mathcal O(c)}$. Note that $\mathcal O(c+1)^+$ is on the line $L_{\mathcal O(c-1)\mathcal O(c)}$, since $w$ is not inside the Le Potier cone, we have $\frac{\ch_1(w)}{\ch_0(w)} \geq c+1$. Now $w$
is not above the line $L_{\mathcal O(c)\mathcal O(c+1)}$, so the segment $L_{\mathcal O(c+1)w}\cap\bd_{\leq 0}$
has $\frac{\ch_1}{\ch_0}$-length greater than or equal to that of $L_{\mathcal O(c)\mathcal O(c+1)}$, which is $1$. By
assumption, $\sigma$ is inside the cone $\bd_{<0}$, it
must lie on or below the line $L_{\mathcal O(c+1)w}$ and to the left
of $L_{\mathcal O(c+1)\pm}$. Note that $L_{\mathcal O(c-1)\mathcal O(c)}$ is just $L_{\mathcal O(c+1)^+\mathcal O(c)^l}$, by Proposition
\ref{prop:wisnotstabbelowexc},there is no $\sigma$-stable object of character $w$.
\end{proof}
\begin{rem}
If $F$ is $\sigma$-stable, in the proof we can see that below $L_{\sigma F}$ there exist the characters of at least two line bundles $\mathcal O(c-1)$ and $\mathcal O(c)$.
\label{rmk:twobelow}
\end{rem}

\section{Wall-crossing and canonical line bundles}\label{sec2}

In this section, we prove our first main theorem: the wall crossing in stability condition space induces the MMP for moduli of sheaves on $\pp$. In Section \ref{sec2.1}, we review the construction of moduli space of semistable objects as moduli of quiver representations. In Section \ref{sec2.2}, we prove the main technical result on vanishing of certain Ext$^2$. In Section \ref{sec2.3}, the generic stability of extension objects is proved. This will be used in the proof of the irreducibility of moduli of stable objects, which occupies Section \ref{sec2.4}. We rephrase some results from variation of GIT in our situation in Section \ref{sec2.5}, and use this to prove our main theorem in Section \ref{sec2.6}.

\subsection{Construction of the moduli space}\label{sec2.1}
In this section, we review the construction of the moduli space of
$\sigma$-semistable objects on $\pp$ with a given character via the geometric
invariant theory. Let $w$ be a Chern character and $\sigma_{s,q}$ be a geometric stability condition, we write $\mathfrak M^{s(ss)}_{\sigma_{s,q}}(w)$ for the moduli space of $\sigma_{s,q}$-(semi)stable objects in $\Coh_{\# s}$ with character $w$. The line $L_{w\sigma_{s,q}}$ passes through MZ$_\mathcal E$ for some exceptional triple $\mathcal E$. We may choose a point $P$ in MZ$_\mathcal E$ for some $\mathcal E$ such that the line segment $l_{P\sigma_{s,q}}$ is contained in Geo$_{LP}$. By Corollary \ref{cor:stabonthesegment}, the moduli space $\mathfrak M^{ss}_{\sigma_{s,q}}(w)$ is the same as $\mathfrak M^{ss}_{\sigma_{P}}(w)$.

Let $\mathcal E$ be the exceptional triple consisting of $E_1$, $E_2$ and $E_3$, and let $\mathcal A_\mathcal E$ be the heart $\langle E_1[2]$, $E_2[1]$, $E_3\rangle$. We write the phase $\phi_P(E_i)$ of $E_i$ at $\sigma_P$ as $\phi_i$. By Proposition \ref{prop:commomareaofalggeo}, $\phi_1<\phi_2<\phi_3$ and $-1<\phi_1<\phi_3-1<0$. There is a real number $t$, $0<t<1$, such that $-2<\phi_1-t<-1<\phi_2-t<0<\phi_3-t<1$. Let the heart $\Coh_{P}[t]$ be generated by $\sigma_P$-stable objects with phase in $(t,t+1]$, then it contains $\sigma_P$-stable objects $E_1[2]$, $E_2[1]$ and  $E_3$. By Lemma 3.16 in \cite{Mac}, $\Coh_{P}[t]$ $=$ $\mathcal A_\mathcal E$.  For any $\sigma_P$-stable object $F$ in $\Coh_{P}$ of character $w$, the phase $\phi_P(F)$ only depends on $w$, and is denoted by $\phi_P(w)$. When $\phi_P(w)-t>0$, $F$ is an object in $\mathcal A_\mathcal E$. In particular, when $F$ is a coherent sheaf, there is a `resolution' for $F$ given as
\[0\rightarrow E_1^{\oplus n_1}\rightarrow E_2^{\oplus n_2}\rightarrow E_3^{\oplus n_3}\rightarrow F\rightarrow 0 .\]
The character $\vec {n} = (n_1,n_2,n_3)$ is the unique triple such that $n_1\tilde {v}(E_1) -n_2\tilde {v}(E_2)+n_3\tilde {v}(E_3) = w$. When $\phi_P(w)-t\leq 0$, $F[1]$ is an object in $\mathcal A_\mathcal E$. When $F$ is a coherent sheaf, it appears as the cohomological sheaf at the middle term of
\[E_1^{\oplus n_1}\rightarrow E_2^{\oplus n_2}\rightarrow E_3^{\oplus n_3}.\]
The character $\vec {n} = (n_1,n_2,n_3)$ is the unique triple such that $n_1\tilde {v}(E_1) -n_2\tilde {v}(E_2)+n_3\tilde {v}(E_3) = -w$. The following easy lemma is useful to determine whether $F$ or $F[1]$ is in $\mathcal A_\mathcal E$. %This depends on the positions of $L_{PF}$ and $l_{e_1e_3}$.

\begin{lemma}
Let $P$ be a point in $\mathrm{MZ}_\mathcal E$, and $F$ be a $\sigma_P$-stable object in $\Coh_{\# P}$. If $L_{PF}$ is above $e_3$, then $F$ is in the heart $\mathcal A_\mathcal E$. If $L_{PF}$ is above $e_1$, then $F[1]$ is in the heart $\mathcal A_\mathcal E$.
\label{lemma:ForF1inAE}
\end{lemma}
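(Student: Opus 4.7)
The plan is to translate both geometric hypotheses into inequalities on $\phi_P(F)$ and then appeal to the description $\mathcal A_\mathcal E=\Coh_{P}[t]$ recalled in the paragraph preceding the lemma. Writing $\phi_i:=\phi_P(E_i)$, the hypothesis $P\in\mathrm{MZ}_{\mathcal E}$ together with Proposition~\ref{prop:commomareaofalggeo} yields $\phi_1<\phi_2<\phi_3$ with $\phi_1\in(-1,0)$, $\phi_3\in(0,1)$, $\phi_3-\phi_1>1$, and a real number $t\in(0,1)$ satisfying
\[\phi_1+1 \;<\; t \;<\; \phi_3\]
so that $\mathcal A_\mathcal E=\Coh_{P}[t]$, whose objects have phase in $(t,t+1]$. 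I also record that $E_1[1]$ lies in $\Coh_{\# P}$ with phase $\phi_1+1$ and that the projective line $L_{P,E_1[1]}$ coincides with $L_{P,e_1}$.

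For the first implication, since $P\in\mathrm{MZ}_{\mathcal E}$ the point $e_3$ lies strictly to the right of $P$, and $l^+_{P,E_3}$ is the ray from $P$ through $e_3$ in $\mathcal H_P$. Comparing slopes at $P$ identifies the hypothesis ``$L_{PF}$ above $e_3$'' with the statement that $l^+_{PF}$ lies above $l^+_{P,E_3}$. By Lemma~\ref{lemma:slopecompare} this gives $\phi_P(F)>\phi_3>t$, and since $\phi_P(F)\le 1<t+1$ we conclude $\phi_P(F)\in(t,t+1]$, i.e.\ $F\in\Coh_{P}[t]=\mathcal A_\mathcal E$.

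For the second implication I would play the same game using $E_1[1]$ in place of $E_1$. Because $e_1$ lies strictly to the left of $P$ while $l^+_{P,E_1[1]}$ is the portion of $L_{P,e_1}$ contained in $\mathcal H_P$ (the direction opposite to $e_1$), an inspection of slopes at $P$ shows that ``$L_{PF}$ above $e_1$'' is equivalent to $l^+_{PF}$ lying \emph{below} $l^+_{P,E_1[1]}$: the roles of ``above'' and ``below'' flip as one crosses $P$. Lemma~\ref{lemma:slopecompare} then yields $\phi_P(F)<\phi_P(E_1[1])=\phi_1+1<t$, so $\phi_P(F[1])=\phi_P(F)+1\in(1,t+1]\subset(t,t+1]$, and hence $F[1]\in\Coh_{P}[t]=\mathcal A_\mathcal E$.

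The only step that is not pure bookkeeping is the sign flip in the second implication: because $e_1$ and the ray $l^+_{P,E_1[1]}$ sit on opposite sides of $P$, one must convert the ``above $e_1$'' condition, read on the left of $P$, into a slope comparison read on the right of $P$, and this reverses the sense of ``above''. Degenerate configurations (for instance skyscraper sheaves, for which $\phi_P(F)=1$ and $L_{PF}$ collapses to the vertical line through $P$) fall automatically under case~1 since $1>t$, so they need no separate treatment.
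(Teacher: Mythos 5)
Your proposal is correct and follows essentially the same route as the paper's proof: both translate the hypotheses into phase inequalities at $P$ via Lemma \ref{lemma:slopecompare} (comparing $\phi_P(F)$ with $\phi_3$ and with $\phi_P(E_1[1])=\phi_1+1$) and then invoke the identification $\mathcal A_\mathcal E=\Coh_P[t]$ with $\phi_1+1<t<\phi_3$ from Section \ref{sec2.1}. Your only additions are making explicit the reversal of ``above''/``below'' when passing from the side of $e_1$ to the ray $l^+_{P,E_1[1]}$ across $P$, and the remark on skyscraper-type degenerations — points the paper leaves implicit but which your argument handles correctly.
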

\begin{proof}
By Lemma \ref{lemma:slopecompare}, when $L_{PF}$ is above $e_3$, we have the inequality $\phi_P(F)\geq \phi_P(E_3)$. Therefore, $\phi_P(F) - t>0$ and $F$ is in $\mathcal A_\mathcal E$. When $L_{PF}$ is above $e_1$, we have $\phi_P(F) < \phi_P(E_1[1])$. Therefore, $\phi_P(F) - t< \phi_P(E_1[1]) - t <0$ and $F$ is in $\mathcal A_\mathcal E[-1]$.
\end{proof}

%%%%%%%%%%%%%%%%%%%%%%%%%

\begin{center}
\begin{tikzpicture}[domain=1:5]

\tikzset{%
    add/.style args={#1 and #2}{
        to path={%
 ($(\tikztostart)!-#1!(\tikztotarget)$)--($(\tikztotarget)!-#2!(\tikztostart)$)%
  \tikztonodes},add/.default={.2 and .2}}
}

% opacity of the usual walls
\newcommand\XA{0.02}

\draw [name path =C0, opacity=0.1](-3,4.5) parabola bend (0,0) (1,0.5)
 node[right, opacity =0.5] {$\bd_0$};

\coordinate (E1) at (-1,0.5);
\draw (E1) node {$\bullet$} node [above left] {$e_1$};

\coordinate (E2) at (-0.5,-0.25);
\draw (E2) node {$\bullet$} node [above] {$e_2$};

\coordinate (E3) at (0,0);
\draw (E3) node {$\bullet$} node [above] {$e_3$};

\coordinate (F1) at (-1,-0.5);
\draw (F1) node {$\bullet$} node [below left] {$e^+_1$};

%\coordinate (F2) at (1,-0.5);
%\draw (F2) node {$\bullet$} node [below] {$e^+_2$};

\coordinate (F3) at (0,-1);
\draw (F3) node {$\bullet$}node [right] {$e^+_3$};

\draw (E1) -- (E3);
\draw (F1) -- (E3);
\draw (E1) -- (F3);
\draw (F3) -- (E3);
\draw (E1) -- (F1);

\coordinate (E1) at (-3,4.5);
\draw (E1) node {$\bullet$} node [above left] {$e'_1$};

%\coordinate (E2) at (-2.5,3.125);
%\draw (E2) node {$\bullet$} node [above] {$e'_2$};

\coordinate (E3) at (-2,2);
\draw (E3) node {$\bullet$} node [above] {$e'_3$};

\coordinate (F1) at (-3,3.5);
\draw (F1) node {$\bullet$} node [below left] {$e'^+_1$};

\coordinate (F2) at (-2.5,2.75);
\draw (F2) node {$\bullet$} node [below left] {$e'_2$};

\coordinate (F3) at (-2,1);
\draw (F3) node {$\bullet$}node [below left] {$e'^+_3$};

%\draw [add= 0 and 1] (W) to (V) node[above]{Eff};

%B axis
\draw[->,opacity =0.3] (-4,0) -- (1.5,0) node[above right] {$\frac{\ch_1}{\ch_0}$};

%H axis
\draw[->,opacity=0.3] (0,-2)-- (0,0) node [above right] {O} --  (0,5) node[right] {$\frac{\ch_2}{\ch_0}$};

\draw (E1) -- (E3);
\draw (F1) -- (E3);
\draw (E1) -- (F3);
\draw (F3) -- (E3);
\draw (E1) -- (F1);

\coordinate (W) at (0,-1.2);
\draw (W) node {$\bullet$}node [below left] {$w$};
\coordinate (P) at (-1.5,1.5);
\draw (P) node {$\bullet$}node [above right] {$\sigma_{P'}$};

\draw [add =0 and 1.2,dashed] (W) to (P);

\end{tikzpicture}
Picture: $F[1]$ is in $\mathcal A_\mathcal E$ and $F$ is in $\mathcal A_{\mathcal E'}$
\end{center}

%%%%%%%%%%%%%%%%%%%%%%%%%

\begin{rem}
The case when $P$ is in $\mathrm{TR}_{\mathcal E}$ and $L_{PF}$ is below both $e_1$ and $e_3$ seems to be missing from the lemma. However, in this case, by Proposition \ref{prop:wisnotstabbelowexc}, $F$ is not $\sigma_P$-stable.
\label{rem:belowboth}
\end{rem}

We define $Q_\mathcal E$ $=(Q_0,Q_1)$ to be the quiver associated to the exceptional triple $\mathcal E$. The set $Q_0$ has three vertices $v_1$, $v_2$ and $v_3$. The arrow set $Q_1$ consists of hom($E_1,E_2$) arrows from $v_1$ to $v_2$ and hom($E_2,E_3$) arrows from $v_2$ to $v_3$. Let $\vec n$ $= (n_1,n_2,n_3)$ be a dimension character for $Q_\mathcal E$, and $H_k$ be a complex linear space of dimension $k$, then the representation space $\repq$ can be identified with
\[\{(I,J)| I\in \Hom (H_{n_1},H_{n_2})\otimes \Hom(E_1,E_2), J \in\Hom (H_{n_2},H_{n_3})\otimes \Hom(E_2,E_3)\}.\]
We denote the composition map between $E_i$'s by $\alpha_\mathcal E$:
\[\alpha_\mathcal E:\Hom(E_1,E_2) \otimes \Hom(E_2,E_3) \rightarrow \Hom(E_1,E_3).\]
This gives a relation of the quiver $Q_\mathcal E$ and we have the space of quiver representations with relation:
\[\repqa:=\{(I,J)\in \mathbf{Rep}(Q_\mathcal E,\vec  n)\;|\; J\circ I \in \Hom (H_{n_1},H_{n_3})\otimes \ker \alpha_\mathcal E\}.\]

As a subvariety of $\repq$, $\repqa$ is determined by $J I=0$, which contains $n_1n_3$hom$(E_1,E_3)$ equations.

The category $\mathcal A_\mathcal E$  is equivalent to the category of finite dimensional modules over the path algebra ($Q_\mathcal E,\alpha_\mathcal E$). Any object $F$ in $\mathcal A_\mathcal E$  with character $n_1\tilde v(E_1)-n_2\tilde v(E_2)+n_3\tilde v(E_3)$ can be written as a representation $\textbf K_F$ (unique up to the $G_{\vec n}$-action) in $\repqa$. 
\begin{defn}
Let $\mathbf K=(I,J)$ and $\mathbf K'=(I',J')$ be two objects in $\repqa$ and $\mathbf{Rep}(Q_\mathcal E,\vec n',\alpha_\mathcal E)$, respectively. We introduce notations for the following sets of homomorphisms.
\[\Hom^i(\mathbf K,\mathbf K'):=\bigoplus_j\Hom_{\mathcal O}(H_{n_j}\otimes E_j,H_{n'_{j+i}}\otimes E_{j+i}).\]
Here $H_{n_i}$ and $H_{n'_i}$ are defined to be the zero space when $i\neq 0,1,2$. The derivatives $d^0$ and $d^1$ are linear maps defined as follows:
\begin{align*}
d^0: \Hom^0(\mathbf K,\mathbf K') & \rightarrow \Hom^1(\mathbf K,\mathbf K') \\
(f_0,f_1,f_2) & \mapsto (I'\circ f_0-f_1\circ I,J'\circ f_1-f_2\circ J)\\
d^1: \Hom^1(\mathbf K,\mathbf K') & \rightarrow \Hom^2(\mathbf K,\mathbf K') \\
(g_1,g_2) & \mapsto (J'\circ g_1+g_2\circ I).
\end{align*}
\label{defn:homandd}
\end{defn}
Let $F$ and $G$ be two objects in $\mathcal A_\mathcal E$, $\mathbf K_F$ and $\mathbf K_G$ be their representations in $\Rep(Q_\mathcal E,\alpha_\mathcal E)$. The Ext$^i$ groups of $F$ and $G$ can be computed via $\mathbf K_F$ and $\mathbf K_G$.
\begin{lemma}
The $\Ext^*(F,G)$ groups are the cohomology of the complex
\[\Hom^0(\mathbf K_F,\mathbf K_G)\xrightarrow{d^0}\Hom^1(\mathbf K_F,\mathbf K_G)\xrightarrow{d^1} \Hom^2(\mathbf K_F,\mathbf K_G).\]
In particular,
\begin{align*}
\ker d^0\simeq \Hom(F,G)\\ \Hom^2(\mathbf K_F,\mathbf K_G)/\mathrm{im}\; d^1\simeq \Ext^2(F,G).
\end{align*}
\label{lemma:homandext}
\end{lemma}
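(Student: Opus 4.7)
The plan is to realize $F$ as an explicit three-term complex in $\mathrm{D}^b(\pp)$ whose terms are sums of the $E_i$'s, to recognize the complex in the statement as the naive Hom complex between such realizations of $F$ and $G$, and then to invoke the \emph{strong} exceptional property of $\mathcal E$ to identify this naive Hom complex with the derived Hom.

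First, under the equivalence between $\mathcal A_\mathcal E$ and finite-dimensional modules over the path algebra $(Q_\mathcal E,\alpha_\mathcal E)$ just recalled, I would check that $\mathbf K_F=(I,J)$ corresponds, as an object of $\mathrm{D}^b(\pp)$, to the three-term complex
\[ K^\bullet_F \;:=\; \bigl[\, H_{n_1}\otimes E_1 \xrightarrow{I} H_{n_2}\otimes E_2 \xrightarrow{J} H_{n_3}\otimes E_3\,\bigr],\]
with $E_j$ placed in cohomological degree $j-3$; the relation $JI=0$ makes this a genuine chain complex. The identification can be verified by building $F$ as an iterated cone: $F$ sits in a distinguished triangle $K'\to F\to H_{n_3}\otimes E_3$ whose connecting morphism is induced by $J$, where $K'$ is in turn $\mathrm{cone}(I)[-1]$ assembled from the simples $E_1[2],E_2[1]$ of $\mathcal A_\mathcal E$. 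An analogous complex $K^\bullet_G$ represents $G$.

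Next, I would compare the three-term complex of Definition \ref{defn:homandd} with the standard Hom complex of $K^\bullet_F$ and $K^\bullet_G$. Its degree-$n$ part is
\[\prod_i \Hom_{\mathcal O}\bigl(K^i_F,\,K^{i+n}_G\bigr),\]
which is literally $\Hom^n(\mathbf K_F,\mathbf K_G)$, and the usual Hom-complex differential $\delta f = d_G\circ f \pm f\circ d_F$ reduces on each summand to the formulas for $d^0$ and $d^1$, up to signs that do not affect the cohomology.

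Finally, since $\mathcal E$ is a strong exceptional collection we have $\Ext^k(E_i,E_j)=0$ for all $i,j$ and all $k\neq 0$, so each term of $K^\bullet_F$ is acyclic for $\Hom(-,K^j_G)$. Consequently the hypercohomology spectral sequence of the Hom bi-complex collapses onto the row of ordinary Homs between terms, and the naive Hom complex already computes the derived Hom $\mathrm{R}\Hom(F,G)$ in $\mathrm{D}^b(\pp)$. Passing to cohomology yields $\ker d^0 \simeq \Hom(F,G)$, $\ker d^1/\mathrm{im}\, d^0 \simeq \Ext^1(F,G)$, and $\Hom^2(\mathbf K_F,\mathbf K_G)/\mathrm{im}\, d^1 \simeq \Ext^2(F,G)$. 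The main technical step is the first one, namely verifying that the equivalence between $\mathcal A_\mathcal E$ and the module category of $(Q_\mathcal E,\alpha_\mathcal E)$ realizes $F$ concretely as the complex $K^\bullet_F$ inside $\mathrm{D}^b(\pp)$; once that geometric identification is secured, the remainder is formal homological algebra made tight by the strong exceptionality of $\mathcal E$.
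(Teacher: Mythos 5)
The paper states Lemma \ref{lemma:homandext} without proof, treating it as standard once objects of $\mathcal A_\mathcal E$ are identified with representations of $(Q_\mathcal E,\alpha_\mathcal E)$ (the identification asserted just before the lemma); your argument supplies exactly the expected justification and is correct in substance: realize $F$ and $G$ by the three-term complexes $K^\bullet_F$, $K^\bullet_G$ with terms $H_{n_j}\otimes E_j$ in degrees $-2,-1,0$, observe that the naive Hom complex between them is precisely the complex of Definition \ref{defn:homandd}, and use the vanishing of all nonzero-degree Exts between the $E_i$'s to conclude that this naive complex computes $\mathrm{R}\Hom(F,G)$. Note that the relation $J\circ I\in \Hom(H_{n_1},H_{n_3})\otimes\ker\alpha_\mathcal E$ is what makes the composed sheaf map vanish, so $K^\bullet_F$ is indeed a complex, as you say.

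Two details deserve tightening, neither fatal. First, your iterated-cone triangle has sub and quotient reversed: for $K^\bullet_F$ in degrees $-2,-1,0$ the brutal truncation $\sigma_{\geq 0}K^\bullet_F$ is a \emph{sub}complex, so $H_{n_3}\otimes E_3$ is a subobject of $F$ and $H_{n_1}\otimes E_1[2]$ is the top quotient; this matches $\Ext^1(E_2[1],E_3)=\Hom(E_2,E_3)\neq 0$ while $\Ext^1(E_3,E_2[1])=\Ext^2(E_3,E_2)=0$, so the canonical filtration runs opposite to the order you wrote. Nothing downstream depends on this. Second, the vanishing $\Ext^k(E_i,E_j)=0$ for $k\neq 0$ when $i>j$ is not a consequence of strongness but of the exceptional-collection orthogonality $\Hom(E_i,E_j[k])=0$ for \emph{all} $k$; you need both inputs, and the $i>j$ orthogonality is doing real work: it kills the negative-degree terms of the naive Hom complex, truncating it to degrees $0,1,2$ so that it coincides with $\Hom^0\to\Hom^1\to\Hom^2$ of Definition \ref{defn:homandd}, and, since there are then no degree $-1$ elements (no homotopies), it identifies $\ker d^0$ with genuine chain maps, i.e. with $\Hom(F,G)$. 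With these corrections your spectral-sequence collapse is exactly right; in fact the standard differential $\delta f=d_G\circ f-(-1)^{n}f\circ d_F$ reproduces $d^0$ and $d^1$ with the paper's signs on the nose, so no sign adjustment is even needed.
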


Let $\vec \rho$ be a weight character for objects in $\repq$, in particular, $\vec n\cdot \vec \rho=0$. An object \textbf K in Rep$(Q,\vec n)$ is $\vec \rho$-(semi)stable if and only if for any non-zero proper sub-representation $\textbf K'$ of \textbf K with dimension character $\vec n'$ $<$ $\vec n$, we have $\vec n'\cdot \vec \rho $ $<(\leq) $ $0$.

Now we want to relate Bridgeland stability of objects to King stability of quiver representations. Let $L$ be a line on the $\cccp$ not at the infinity. Suppose $L$ intersects $l_{e_1e_3}$ for an exceptional triple $\mathcal E(=\{E_1,E_2,E_3\})$. Let $f$ be a linear function with variables $\ch_0$, $\ch_1$ and $\ch_2$ such that the zero locus of $f$ is $L$. Moreover we assume that $f(\tilde v(E_1))$ is positive. The weight character $\vec \rho_{L,\mathcal E}$ is given by
\[\left(f(\tilde v(E_1)), -f(\tilde v(E_2)),f(\tilde v(E_3))\right)\]
up to a positive scalar.

%The next lemma gives the formula of the weight character $\vec \rho$ for $\repqa$ according to a geometric stability condition.

\begin{lemma}
Let $F$ be an object in $\mathcal A_{\mathcal E}$ and $P$ be a point in $\mathrm {MZ}_\mathcal E$ such that $L_{FP}$ intersects $l_{e_1e_3}$, then $F$ (or $F[-1]$) is $\sigma_P$-(semi)stable if and only if $\mathbf K_F$ is $\vec \rho_{L_{FP},\mathcal E}$-(semi)stable.
\label{lemma: rhocharformular}
\end{lemma}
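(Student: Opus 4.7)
The plan is to transfer Bridgeland (semi)stability of $F$ at $\sigma_P$ into King (semi)stability of the associated representation $\mathbf K_F$ by reading the King weight as the signed distance of $\tilde v(F')$ from the potential wall $L_{FP}$.

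First I would invoke the equivalence between $\mathcal A_\mathcal E$ and the category of finite-dimensional modules over the path algebra $(Q_\mathcal E,\alpha_\mathcal E)$ recalled before Definition \ref{defn:homandd}. Under this equivalence, proper non-zero sub-objects $F'\hookrightarrow F$ in $\mathcal A_\mathcal E$ are in bijection with proper non-zero sub-representations $\mathbf K'\hookrightarrow \mathbf K_F$; if $\mathbf K'$ has dimension vector $\vec n' = (n_1',n_2',n_3')$, then
\[\tilde v(F') = n_1'\tilde v(E_1)-n_2'\tilde v(E_2)+n_3'\tilde v(E_3).\]
Since $\sigma_P$-(semi)stability of $F$, or equivalently of $F[-1]$ (whichever lies in $\mathrm{Coh}_P$), is controlled by whether every such $F'$ has strictly smaller (respectively no greater) $\sigma_P$-phase, it suffices to show that $\phi_P(F')\lessgtr\phi_P(F)$ is recorded by the sign of $\vec n'\cdot\vec\rho_{L_{FP},\mathcal E}$.

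Second, by linearity of $f$,
\[f(\tilde v(F')) \;=\; n_1' f(\tilde v(E_1))-n_2' f(\tilde v(E_2))+n_3' f(\tilde v(E_3))\;=\;\vec n'\cdot\vec\rho_{L_{FP},\mathcal E},\]
so the King weight of $\mathbf K'$ is exactly a signed measurement of which side of $L_{FP}$ the character $\tilde v(F')$ lies on. On the other hand, by Lemma \ref{lemma:paraandspanplane} the line $L_{FP}$ is precisely the locus of characters with the same $\sigma_P$-slope as $F$, and Lemma \ref{lemma:slopecompare} shows that $\phi_P(F')\lessgtr\phi_P(F)$ is determined by which side of $L_{FP}$ the point $\tilde v(F')$ lies on. This identifies the two sign conditions up to an overall sign.

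Finally, I would fix the sign by using the normalization $f(\tilde v(E_1))>0$: since $E_1[2]$ is the generator of $\mathcal A_\mathcal E$ of largest $\sigma_P$-phase, in particular $\phi_P(E_1[2])>\phi_P(F)$, so the half-plane of $L_{FP}$ on which $f>0$ is precisely the one of characters with phase exceeding $\phi_P(F)$. This yields the equivalence $\phi_P(F')\leq\phi_P(F)\iff \vec n'\cdot\vec\rho_{L_{FP},\mathcal E}\leq 0$, strict for stability, which is exactly King's condition. The main obstacle is the sign bookkeeping through the equivalence of hearts and the possible shift by $[-1]$; the hypothesis that $P\in\mathrm{MZ}_\mathcal E$ (placing $\sigma_P$ in the algebraic chart $\Theta^{\Geo}_{\mathcal E}$ by Proposition \ref{prop:commomareaofalggeo}) together with the hypothesis that $L_{FP}$ meets $l_{e_1e_3}$ (so $e_1$ and $e_3$ lie on opposite sides of $L_{FP}$) ensures that $\vec\rho_{L_{FP},\mathcal E}$ is non-trivial and the sign identification is unambiguous.
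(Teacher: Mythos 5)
Your overall strategy coincides with the paper's: read the King weight as a linear functional $f$ cutting out the wall $L_{FP}$, and match the sign test $\vec n'\cdot\vec\rho_{L_{FP},\mathcal E}\lessgtr 0$ with the phase comparison $\phi_P(F')\lessgtr\phi_P(F)$. But your sign-fixing step contains a genuine error. You assert that ``$E_1[2]$ is the generator of $\mathcal A_\mathcal E$ of largest $\sigma_P$-phase,'' and this is false for general $P\in\mathrm{MZ}_\mathcal E$: the region $\mathrm{MZ}_\mathcal E$ contains points strictly below the line $L_{e_1e_2}$ (namely between $l_{e_1^+e_2}$ and $l_{e_1e_2}$), and there a ray comparison as in Lemma \ref{lemma:slopecompare} gives $\phi_P(E_2)-\phi_P(E_1)>1$, hence $\phi_P(E_2[1])>\phi_P(E_1[2])$. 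Your claim holds only on $\mathrm{TR}_\mathcal E$. This is not cosmetic: the entire lemma hinges on whether the normalization $f(\tilde v(E_1))>0$ picks out the side of $L_{FP}$ with larger phase, so an unproved (and, in the stated generality, false) inequality $\phi_P(E_1[2])>\phi_P(F)$ leaves the crux open. The paper avoids this by first invoking Corollary \ref{cor:stabonthesegment} to slide $P$ along $L_{FP}$ into $\mathrm{TR}_\mathcal E$ --- legitimate because $\vec\rho_{L_{FP},\mathcal E}$ depends only on the line $L_{FP}$, and the hypothesis that $L_{FP}$ meets $l_{e_1e_3}$ guarantees the wall passes through $\mathrm{TR}_\mathcal E$ --- and only then verifies positivity of the first component of $\vec\rho$ by a determinant computation using that $P$ lies above $L_{e_1e_2}$. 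Alternatively, your argument can be salvaged without moving $P$: the crossing hypothesis forces $Z_P(F)$ to be a \emph{positive} multiple of $aZ_P(E_1[2])+bZ_P(E_3)$ with $a,b\geq 0$ (the negative multiple would put $F$ outside the heart window), which yields $\phi_P(F)<\phi_P(E_1[2])$ directly; but as written the step is wrong, and it is exactly this point for which the hypothesis on $l_{e_1e_3}$ is needed, not merely ``non-triviality of $\vec\rho$.''

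A secondary looseness: Lemma \ref{lemma:slopecompare} is stated for objects of $\Coh_{\#s}$ with phases in $(0,1]$, while subobjects of $F$ inside $\mathcal A_\mathcal E$ need not lie in $\Coh_{\#s}$ (e.g.\ $E_1[2]$, or anything of phase in $(1,t+1]$). The equivalence ``sign of a linear functional vanishing on $L_{FP}$ $=$ phase comparison'' is valid only because all objects of $\mathcal A_\mathcal E$ have central charges confined to a half-plane, so that any two objects of this heart have phase difference of absolute value less than $1$; the paper makes this explicit by rotating the central charge, $Z^\bullet_P=e^{i\pi t}Z_P$, so that $E_1[2]$, $E_2[1]$, $E_3$ all map to the upper half-plane, and then identifying $\vec n'\cdot\vec\rho$ with the cross product $\Im\bigl(\overline{Z^\bullet_P(F)}\,Z^\bullet_P(F')\bigr)$. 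Your appeal to Lemma \ref{lemma:slopecompare} implicitly assumes this; it is repairable in one line, but since the half-plane confinement is the reason the linear test works at all, it should be stated rather than cited away.
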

\begin{proof}
First we want to modify the stability condition in a way that the central charge of the exceptional bundles are better behaved, and the weight character remains the same. Since $L_{FP}$ intersects $l_{e_1e_3}$, by Corollary \ref{cor:stabonthesegment}, we may assume $P$ is in the triangle area TR$_\mathcal E$.
The central charges of objects $E_1[2]$, $E_2[1]$ and $E_3$ are
\begin{align*}
Z_P(E_1[2])=-\ch_2(E_1)+q\,\ch_0(E_1)+(\ch_1(E_1)-s\,\ch_0(E_1))i;\\
Z_P(E_2[1])=\ch_2(E_2)-q\,\ch_0(E_2)-(\ch_1(E_2)-s\,\ch_0(E_2))i;\\
Z_P(E_3)=-\ch_2(E_3)+q\,\ch_0(E_3)+(\ch_1(E_3)-s\,\ch_0(E_3))i.
\end{align*}
There is a suitable real number $0<t<1$ such that the new central charge $Z^\bullet_P$ $:=$ $e^{i\pi t}Z_P$ maps $E_1[2]$, $E_2[1]$ and $E_3$ to the upper half plane in $\mathbb C$.

Now we can rewrite the stability condition in terms of the weight character. Write $ \vec {Z}^\bullet_P$ $:=$ $\left(Z^\bullet_P(E_1[2]),Z^\bullet_P(E_2[1]),Z^\bullet_P(E_3)\right)$  $=$ $\vec a^\bullet+\vec b^\bullet i$ for two real vectors $\vec a^\bullet$ and $\vec b^\bullet$. The object $F$ is $ Z^\bullet_P$-(semi)stable if and only if for any non-zero proper subobject $F'$ in $\mathcal A_\mathcal E$,
\[\Arg Z^\bullet_P(F') < (\leq) \Arg Z^\bullet_P(F).\]
In other words, suppose the dimension vector of $\textbf K_{F}$ is $\vec n=(n_1,n_2,n_3)$, then for any nonzero proper sub-representation $\textbf K_{F'}$ with dimension vector $\vec n'$,
\begin{equation}
\Arg\vec n'\cdot \vec {Z}^\bullet_P <(\leq) \Arg\vec n\cdot \vec {Z}^\bullet_P.
\label{eq01}
\end{equation}
Let $\vec \rho^\bullet$ be the vector
\[-(\vec b^\bullet\cdot \vec n)\vec a^\bullet +  (\vec a^\bullet\cdot \vec n)\vec b^\bullet.\]
Since each factor of $\vec b^\bullet$ is non-negative, the inequality (\ref{eq01}) holds if and only if 
\[\frac{\vec n\cdot \vec a^\bullet}{\vec n\cdot \vec b^\bullet}<(\leq)\frac{\vec n'\cdot \vec a^\bullet}{\vec n'\cdot \vec b^\bullet}\]
if and only if $\vec n'\cdot \vec \rho^\bullet< (\leq) 0$.

We can also write $ \vec {Z}_P$ $:=$ $\left(Z_P(E_1[2]),Z_P(E_2[1]),Z_P(E_3)\right)$  $=$ $\vec a+\vec b i$ for two real vectors $\vec a$ and $\vec b$. As $Z^\bullet_P$ $=$ $e^{i\pi t}Z_P$, the character $\vec \rho$ $:=$ $(\vec b\cdot \vec n)\vec a - (\vec a\cdot \vec n)\vec b$ is the same as $\vec \rho^\bullet$.

At last we need to show that $\vec \rho$ is $\vec \rho_{L_{FP},\mathcal E}$ up to a positive scalar. Let $f$ be the linear function:
\[f(\ch_0,\ch_1,\ch_2) := (\vec a \cdot \vec n)(\ch_1-s\,\ch_0)-(\vec b\cdot \vec n)(-\ch_2+q\,\ch_0).\]
The zero locus of $f$ contains $P$ because $f(1,s,q) = 0$. We also have
\[f(\tilde v(F)) = f(n_1\tilde v(E_1)-n_2\tilde v(E_2)+n_3\tilde v(E_3)) =  (\vec a \cdot \vec n)(\vec b\cdot \vec a)-(\vec b\cdot \vec n)(\vec a \cdot \vec b) = 0.\]
Therefore, the zero locus of $f$ also contains $v(F)$.

It is easy to check that
\[(f(v(E_1)), -f(v(E_2)),f(v(E_3)))\]
 is the vector $\vec \rho$. Since $P$ is above the line $L_{E_1E_2}$, $\phi_P(E_2[1])<\phi_P(E_1[2])$ and the determinant $\det\begin{bmatrix}a_1 & a_2\\ b_1 & b_2 \end{bmatrix}<0$. Similarly, $\det \begin{bmatrix}a_1 & a_3\\ b_1 & b_3 \end{bmatrix}<0$. Therefore, the first factor of $\vec \rho^\bullet$, which is $-\det \begin{bmatrix}a_1 & a_2\\ b_1 & b_2 \end{bmatrix}n_2-\det \begin{bmatrix}a_1 & a_3\\ b_1 & b_3 \end{bmatrix}n_3$, is always positive. So $f(v(E_1))>0$.
 
Now $f$ satisfies the desired properties, and induces the weight character $\vec \rho$. Any other $f$ satisfying the same properties induces the same character up to a positive scalar.
\end{proof}

\begin{rem}
By the construction, the character $\vec \rho_{L_{FP},\mathcal E}$ (up to a positive scalar) only depends on the wall $L$ but not the position of $P$.
\end{rem}

To conclude the construction of the moduli space of $\sigma$-stable objects via the geometric invariant theory, we summarize the previous notations as follows. Let $w$ be a character and $\sigma_P$ a geometric stability condition. Suppose $P$ is in MZ$_\mathcal E$ for some exceptional triple $\mathcal E$ $=$ $\langle E_1,E_2,E_3\rangle$ such that $L_{Pw}$ intersects $l_{E_1E_3}$. We assume that $w$ or $-w$  can be written as $n_1\tilde v(E_1)-n_2\tilde v(E_2) +n_3\tilde v(E_3)$ for a positive dimension character $\vec n_w$$=$ $(n_1,n_2,n_3)$.
Let $G_{\vec n}$ be the group GL$(H_{n_1})\times \mathrm{GL}(H_{n_2})\times \mathrm{GL}(H_{n_3})$ acting naturally on the space of $\mathbf{Rep}(Q_\mathcal E,\vec n_w, \alpha_\mathcal E)$ and $\mathbf{Rep}(Q_\mathcal E,\vec n_w)$ with stabilizer containing the scalar group $\mathbb C^\times$.
\begin{prop}
Adopt the notation as above, the moduli space  $\mathfrak M^{ss}_\sigma (w)$ or $\mathfrak M^{ss}_\sigma (-w)$ of $\sigma_P$-semistable objects in $\mathrm{Coh}_{\sharp P}$ can be constructed as the GIT quotient space
\begin{align*}
& \mathbf{Rep}(Q_\mathcal E,\vec n_w, \alpha_\mathcal E)\text{ //}_{\det^{\vec \rho_{L_{w\sigma},\mathcal E}}}\big(G_{\vec n_w}/\mathbb C^\times\big)\\
= & \mathbf{Proj} \bigoplus_{m\geq 0} \mathbb C[\mathbf{Rep}(Q_\mathcal E,\vec n_w, \alpha_\mathcal E)]^{G_{\vec n_w}/\mathbb C^\times,\det^{ m\vec \rho_{L_{w\sigma},\mathcal E}}}.
\end{align*}
\label{prof:gitconst}
\end{prop}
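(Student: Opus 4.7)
The plan is to reduce the construction of $\mathfrak M^{ss}_\sigma(\pm w)$ to King's GIT construction for moduli of representations of the quiver with relations $(Q_{\mathcal E},\alpha_{\mathcal E})$, using the dictionary already assembled in the preceding paragraphs.

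First I would check that, under the hypotheses that $P \in \mathrm{MZ}_{\mathcal E}$ and that $L_{Pw}$ meets $l_{E_1E_3}$, Lemma \ref{lemma:ForF1inAE} (together with Remark \ref{rem:belowboth}, to rule out the degenerate case) places every $\sigma_P$-semistable object $F$ with $v(F) = w$ either in the heart $\mathcal A_{\mathcal E}$ or in $\mathcal A_{\mathcal E}[-1]$. The sign is determined uniformly by $w$, so after possibly replacing $w$ by $-w$ we may assume $F \in \mathcal A_{\mathcal E}$, and the hypothesis that $\pm w$ is written as $n_1\tv(E_1)-n_2\tv(E_2)+n_3\tv(E_3)$ with positive $\vec n_w$ fixes the dimension vector.

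Next I would invoke Bondal's theorem, which gives an equivalence of abelian categories between $\mathcal A_{\mathcal E}$ and the category of finite-dimensional $(Q_{\mathcal E},\alpha_{\mathcal E})$-modules, sending $F \mapsto \mathbf K_F$. Under this equivalence, an object $F$ of character $w$ becomes a representation of dimension vector $\vec n_w$, and the orbit $G_{\vec n_w}\cdot \mathbf K_F$ in $\repqa$ parametrises the isomorphism class of $F$. Then by Lemma \ref{lemma: rhocharformular}, $F$ is $\sigma_P$-(semi)stable if and only if $\mathbf K_F$ is $\vec\rho_{L_{Pw},\mathcal E}$-(semi)stable in the sense of King. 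Moreover, sub-objects in $\mathcal A_{\mathcal E}$ correspond bijectively to sub-representations, so Jordan--H\"older filtrations and S-equivalence classes also correspond.

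At this point the proposition reduces to King's construction \cite{}: the coarse moduli space of $\vec\rho$-semistable representations of a quiver with relations of a fixed dimension vector is the GIT quotient of the affine variety $\repqa$ by $G_{\vec n_w}$, linearised by the character $\det^{\vec\rho}$, modulo the globally acting scalars $\mathbb C^\times$. Since $\repqa$ is cut out of the affine space $\repq$ by the relations $JI \in \Hom(H_{n_1},H_{n_3})\otimes \ker\alpha_{\mathcal E}$, it is $G_{\vec n_w}$-invariant, and the GIT quotient is projective over $\mathrm{Spec}$ of the invariant ring, which is a point because the only semi-invariant polynomials are generated by polarisations of $\det^{m\vec\rho}$. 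The main thing to verify carefully is the matching of the \emph{moduli functors}, not just the sets of closed points: one must check that families of $\sigma_P$-semistable objects in $\Coh_{\sharp P}$ with character $w$ correspond to families of $\vec\rho$-semistable representations of $(Q_{\mathcal E},\alpha_{\mathcal E})$ with dimension vector $\vec n_w$, which follows by applying the relative version of Bondal's equivalence; this is the main technical point but is by now standard in the literature (cf.\ \cite{ABCH,BMW}). Combining these ingredients identifies $\mathfrak M^{ss}_\sigma(\pm w)$ with the claimed $\mathbf{Proj}$.
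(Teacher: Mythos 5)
Your proposal is correct and takes essentially the same route as the paper: use the preceding discussion and Lemma \ref{lemma: rhocharformular} to identify $\sigma_P$-semistable objects of character $\pm w$ with $\vec\rho_{L_{w\sigma},\mathcal E}$-semistable representations of dimension vector $\vec n_w$, then invoke King's GIT construction (the paper cites \cite{Ki} and \cite{Gi}). The one step the paper spells out that you subsume into King's theorem for quivers with relations is that imposing $\alpha_{\mathcal E}$ does not change the semistable locus, proved there by observing that restriction of $\det^{m\vec\rho}$-semi-invariants from $\mathbf{Rep}(Q_{\mathcal E},\vec n_w)$ to $\repqa$ is surjective because $G_{\vec n_w}/\mathbb C^\times$ is reductive.
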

\begin{proof}
By the previous discussion and Lemma \ref{lemma: rhocharformular}, the moduli space $\mathfrak M^{ss}_\sigma (w)$  or $\mathfrak M^{ss}_\sigma (-w)$ parameterizes the $\vec \rho_{L_{w\sigma},\mathcal E}$-semistable objects in $\mathcal A_\mathcal E$ with dimension character $\vec n_w$. By King's criterion, Proposition 3.1 in \cite{Ki}, $\mathbf K$ in \textbf{Rep}$(Q_\mathcal E,\vec n_w, \alpha_w)$ is $\vec \rho_{L_{w\sigma}}$-semistable if the point of $\mathbf K$ in the space \textbf{Rep}$(Q_\mathcal E,\vec n_w)$ is $\det^{ \vec \rho_{L_{w\sigma}}}$-semistable with respect to the $G_{\vec n_w}/\mathbb C^\times$-action. The map
\begin{align*}
\mathbb C[\textbf{Rep}(Q_\mathcal E,\vec n_w)]^{G,\det^{ m\vec \rho}} & \rightarrow \Big(\mathbb C[\textbf{Rep}(Q_\mathcal E,\vec n_w)]/I_{\alpha_\mathcal E}\Big)^{G,\det^{ m\vec \rho}} \\ & = \mathbb C[\textbf{Rep}(Q_\mathcal E,\vec n_w, \alpha_w)]^{G,\det^{ m\vec \rho}}.
\end{align*}
is surjective because the group $G=G_{\vec n_w}/\mathbb C^\times$ is semisimple. Therefore, the relation $\alpha_\mathcal E$ does not affect the stability condition. In other words, a point $\mathbf K$  is $\det^{ \vec \rho_{L_{w\sigma}}}$-semistable with respect to the $G_{\vec n_w}/\mathbb C^\times$-action on  the space $\mathbf{Rep}(Q_\mathcal E,\vec n_w)$ if and only if on  the space $\repqa$. As explained in Chapter 2.2 in
\cite{Gi} by Ginzburg, the moduli space is
constructed as the GIT quotient in the proposition.
\end{proof}

Now we have the following consequence on the finiteness of actual walls.

\begin{prop}
1. Let $w$ be a character in $\mathrm{K}(\pp$), then there are only finitely many actual walls for $\mathfrak M^{ss}_\sigma(w)$.\\
2. Suppose $\ch_0(w)> 0$, then for any $s<\chy(w)$ and $q$ large enough (depending on $s$), the moduli space $\mathfrak M^{s(ss)}_{\sigma_{s,q}}(w)$ is the same as the moduli space $\mathfrak M^{s(ss)}_{GM}(w)$ of Gieseker (semi)stable coherent sheaves.
\label{finite many wall, t>0 is HilbS}
\end{prop}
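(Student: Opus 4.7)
The plan is to derive Part 1 from the GIT construction of Proposition~\ref{prof:gitconst}, and Part 2 by combining Part 1 with a direct large-$q$ slope comparison.

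For Part 1, fix a character $w$. I first claim that inside any quiver region $\mathrm{MZ}_\mathcal E$ meeting the relevant locus for $w$, there are only finitely many actual walls. Indeed, by Proposition~\ref{prof:gitconst}, for any $\sigma_P$ with $P\in\mathrm{MZ}_\mathcal E$, the moduli $\mathfrak M^{ss}_\sigma(w)$ is realized as a GIT quotient of the fixed variety $\mathbf{Rep}(Q_\mathcal E,\vec n_w,\alpha_\mathcal E)$ by the fixed group $G_{\vec n_w}/\mathbb C^\times$, with linearization $\det^{\vec\rho_{L_{w\sigma},\mathcal E}}$ depending linearly on $\sigma$. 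Actual walls inside $\mathrm{MZ}_\mathcal E$ occur precisely when $\vec\rho_{L_{w\sigma},\mathcal E}$ crosses a hyperplane $\vec n'\cdot\vec\rho=0$ for some sub-dimension vector $0<\vec n'<\vec n_w$; there are only finitely many such $\vec n'$, hence only finitely many walls inside $\mathrm{MZ}_\mathcal E$.

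The main obstacle is passing from this local statement to global finiteness, since $\mathrm{Geo}_{LP}$ contains sequences of quiver regions $\mathrm{MZ}_\mathcal E$ accumulating along the Le Potier curve $C_{LP}$. The plan is to invoke Theorem~\ref{thm:mainthminintro}: every actual wall for $w$ lies in the region between the vertical wall $L_{w\pm}$ and the last walls $\lw_w,\lrw_w$. The closure of this region is bounded and disjoint from the accumulation locus of exceptional characters on $C_{LP}$, so it meets only finitely many $\mathrm{MZ}_\mathcal E$'s. Combined with per-region finiteness, this proves Part 1.

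For Part 2, fix $s<\chy(w)$. Any coherent sheaf $F$ with $\ch(F)=w$ satisfies $\chy(F)>s$, so every Gieseker semistable sheaf of character $w$ lies in $\mathrm{Coh}_{>s}\subset\mathrm{Coh}_{\#s}$; in particular it is an object in the heart of $\sigma_{s,q}$. From
\[\phi_{s,q}(F) = \frac{1}{\pi}\arctan\frac{\ch_1(F)-s\,\ch_0(F)}{q\,\ch_0(F)-\ch_2(F)},\]
a direct expansion as $q\to\infty$ shows that, among objects of $\mathrm{Coh}_{>s}$, comparisons of $\phi_{s,q}$ recover the $\mu$-slope at leading order and the $\ch_2/\ch_0$ refinement at subleading order, i.e.\ the comparison of reduced Hilbert polynomials. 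By Part 1 only finitely many actual walls meet the vertical ray $q\mapsto(1,s,q)$, so beyond the topmost such wall the moduli space $\mathfrak M^{s(ss)}_{\sigma_{s,q}}(w)$ is independent of $q$; the asymptotic comparison then identifies this stable moduli space with $\mathfrak M^{s(ss)}_{GM}(w)$, completing the proof.
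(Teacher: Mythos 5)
Your Part 2 and the local half of Part 1 match the paper: inside a fixed quiver region the walls correspond to finitely many sub-dimension vectors $\vec n'<\vec n_w$, and the identification with Gieseker stability for $q\gg 0$ is exactly the paper's ``standard fact,'' which your phase expansion makes precise. The genuine problem is your global step. Invoking Theorem \ref{thm:mainthminintro} here is circular: that theorem (Lemma \ref{lemma:lw} together with Theorem \ref{prop:lastwall}) is proved in Section \ref{sec3}, and the proof of Theorem \ref{prop:lastwall} uses the very proposition you are proving --- it starts from the fact that $\mathfrak M^s_{\sigma_{s,q}}(w)$ contains Gieseker stable sheaves for $q\gg 0$ (Part 2), and then selects ``the last actual wall prior to $\lw_w$,'' which presupposes the finiteness of actual walls (Part 1); the wall-crossing/MMP machinery likewise cites this proposition as assumption 1 in Proposition \ref{recollections from VGIT}. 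The paper avoids this by using Corollary \ref{cor:lastwallradiusgeq1}, which is available already in Section \ref{sec1.6} as a consequence of Proposition \ref{prop:wisnotstabbelowexc}: any potential wall whose $\frac{\ch_1}{\ch_0}$-length of $L_{w\sigma}\cap\bd_{\leq 0}$ is at most $1$ carries no stable objects, so all actual walls lie in the sector between the vertical wall $L_{w\pm}$ and the tangent line of $\bd_0$ through $w$. You should substitute this for the last-wall theorem.

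There is a second, independent flaw in the same step: even granting a bound on the region of actual walls, your claim that its closure ``is disjoint from the accumulation locus of exceptional characters on $C_{LP}$, so it meets only finitely many $\mathrm{MZ}_\mathcal E$'s'' does not hold. The closure of the region between the bounding walls and $L_{w\pm}$ inside $\Geo_{LP}$ still touches the Le Potier curve, along which the quiver regions shrink and accumulate, so the region meets infinitely many $\mathrm{MZ}_\mathcal E$'s. The paper's argument covers not the region but the family of walls: each potential wall is a ray from $w$, the relevant rays form a compact family (pinned between the tangent line and the vertical wall by Corollary \ref{cor:lastwallradiusgeq1}), and one chooses finitely many quiver regions $\mathrm{MZ}_\mathcal E$ so that every such ray passes through at least one of them; each wall is then a wall of one of finitely many GIT problems, each with finitely many walls. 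With these two repairs --- Corollary \ref{cor:lastwallradiusgeq1} in place of Theorem \ref{thm:mainthminintro}, and a ray-covering argument in place of a region-covering one --- your proof of Part 1 closes, and Part 2 then goes through as you wrote it.
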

\begin{proof} 
By Corollary \ref{cor:lastwallradiusgeq1}, we only need to consider the region from the vertical wall to the tangent line of $\bd_0$. We may choose finitely many quiver region MZ$_\mathcal E$ such that each ray from $w$ contained in this region passes through at least one MZ$_\mathcal E$. In each MZ$_{\mathcal E}$, there are finitely many walls, because there are only finitely many dimension vectors of possible destabilizing subobjects.

The second statement is a consequence of the first statement and the standard fact that $\sigma_{s,q}$ tends to Gieseker stable condition when $q$ tends to infinity.
\end{proof}

\subsection{The Ext$^2$ vanishing property}\label{sec2.2}

In this section we prove the most important technical lemma. It is about the
vanishing property of Ext$^2$ of $\sigma$-stable objects. This property is trivial in the slope stable situation by Serre duality. But it is more involved in the Bridgeland stability situation since the objects may not be in the same heart.

\begin{lemma}
Let $\sigma_P$ be a geometric stability condition with $P$ in $\bd_{<0}$, $E$ and $F$ be two $\sigma_P$-stable objects in $\Coh_{\sharp P}$. Suppose $P$, $ v(E)$ and $ v(F)$ are collinear, then
\[\Hom(E,F[2]) = \Hom(F,E[2]) = 0.\]
\label{lemma: ext2 vanishing for stable factors}
\end{lemma}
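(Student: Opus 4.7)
The plan is to apply Serre duality on $\mathbb P^2$ to convert the claimed $\Ext^2$-vanishing into a Hom-vanishing, and then to use the autoequivalence $-\otimes\mathcal O(-3)$ to compare $F$ with a twist of $E$ at a carefully chosen common stability condition. Since $\omega_{\mathbb P^2}=\mathcal O(-3)$, Serre duality gives
\[
\Hom(E,F[2])\simeq \Hom(F,E(-3))^{*}\quad\text{and}\quad \Hom(F,E[2])\simeq \Hom(E,F(-3))^{*},
\]
so it is enough to prove $\Hom(F,E(-3))=0$; the other case is symmetric.

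Next I would transport $\sigma_P$ by the autoequivalence $-\otimes\mathcal O(-3)$. A direct computation shows that, up to the $\glt$-action, $\sigma_P$ is sent to the geometric stability condition $\sigma_{P'}$ with $P'=(1,\,s-3,\,q-3s+\tfrac{9}{2})$. Since the discriminant $\bd$ is invariant under this twist, $P'\in\bd_{<0}$; by convexity of $\bd_{<0}$, the whole segment $l_{PP'}$ lies inside $\bd_{<0}\subset \mathrm{Geo}_{LP}$. Moreover $E(-3)$ is $\sigma_{P'}$-stable with phase $\phi_{P'}(E(-3))=\phi_P(E)=\phi_P(F)$.

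The central step is to find a generic stability condition $\tau$ on (or in a small neighborhood of) $l_{PP'}$ avoiding the finitely many actual walls for $F$ and for $E(-3)$ (cf.\ Proposition~\ref{finite many wall, t>0 is HilbS}), so that both $F$ and $E(-3)$ remain $\tau$-semistable. Picking an exceptional triple $\mathcal E$ with $\tau\in \mathrm{MZ}_{\mathcal E}$ and invoking Lemma~\ref{lemma:ForF1inAE} places both objects, after appropriate cohomological shifts, into the common algebraic heart $\mathcal A_{\mathcal E}$. A geometric comparison on the $\cccp$ via Lemma~\ref{lemma:slopecompare}, based on the collinearity of $v(E),v(F),P$ and the explicit shift $v(E)\mapsto v(E(-3))$, then yields $\phi_\tau(F)\geq \phi_\tau(E(-3))$; since $v(F)\neq v(E(-3))$, the two $\tau$-(semi)stable objects are non-isomorphic, and the standard Hom-vanishing for semistable objects in descending phase forces $\Hom(F,E(-3))=0$.

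The main obstacle is guaranteeing the joint $\tau$-semistability of $F$ and $E(-3)$: in principle walls for either object could fragment the segment $l_{PP'}$ into incompatible chambers. The hypothesis $P\in\bd_{<0}$ is essential precisely because $\bd_{<0}$ is convex and contained in $\mathrm{Geo}_{LP}$, providing the room to maneuver between $\sigma_P$ and $\sigma_{P'}$ without leaving the geometric locus. If no single $\tau$ suffices, one has to propagate the vanishing chamber by chamber along $l_{PP'}$, using Harder--Narasimhan filtrations at each crossed wall to track the $\tau$-semistable factors and to descend the Hom-vanishing across them; this is the comparison-of-slopes technique alluded to in the introduction.
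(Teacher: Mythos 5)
Your reduction via Serre duality to $\Hom(F,E(-3))=0$ and the computation of the twisted kernel point $P'$ are correct and match the paper's strategy. But the central step has a genuine gap: you need a single $\tau$ at which \emph{both} $F$ and $E(-3)$ are semistable with $\phi^-_\tau(F)>\phi^+_\tau(E(-3))$, and nothing in your argument produces it. Stability of a fixed object is only preserved along its \emph{own} iso-slope line (Corollary \ref{cor:stabonthesegment} applies to $L_{F\sigma}$, resp.\ $L_{E(-3)\sigma'}$), not along the arbitrary segment $l_{PP'}$; choosing $\tau$ generic on $l_{PP'}$ avoids \emph{sitting on} a wall but not \emph{crossing} walls between $P$ and $\tau$, and once $F$ crosses a wall its extreme Harder--Narasimhan phases at $\tau$ are no longer controlled, so the fallback of ``propagating the vanishing chamber by chamber'' is a restatement of the problem, not a proof. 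The failure is not hypothetical: the two chambers genuinely need not overlap. When the $\frac{\ch_1}{\ch_0}$-length of $L_{EF}\cap\bd_{\leq 0}$ is at most $3$, the lines $L_{EF}$ and $L_{E(-3)F(-3)}$ do not meet inside $\bd_{\leq 0}$, which is exactly the regime where no common stability condition with the required phase inequality is available.

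The paper splits accordingly. When the length exceeds $3$ (Case 2.1), your single-$\tau$ picture is realized correctly: the point $Q=L_{EF}\cap L_{E(-3)F(-3)}$ lies in $\bd_{\leq 0}$, and all four objects are $\sigma_Q$-stable because one travels along each object's own wall line, with $\phi_Q(E(-3))=\phi_Q(F(-3))<\phi_Q(E)=\phi_Q(F)$ giving the vanishing. When the length is at most $3$ (Case 2.2), the paper abandons phase comparison entirely and argues homologically: using Corollary \ref{cor:lastwallradiusgeq1} and Remark \ref{rmk:twobelow} it finds a line-bundle triple $\mathcal E_k=\langle\mathcal O(k-1),\mathcal O(k),\mathcal O(k+1)\rangle$ such that, by Lemma \ref{lemma:ForF1inAE}, $E,F\in\mathcal A_{\mathcal E_k}$ while $E(-3)[1],F(-3)[1]\in\mathcal A_{\mathcal E_k}$; then $\Hom(E,F(-3))=\Hom\bigl(E,(F(-3)[1])[-1]\bigr)=0$ because there are no negative-degree maps within a heart --- no common semistability is needed. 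Note also that this geometric input (two line bundles below the wall) requires the characters to be non-exceptional, which is why the paper treats the exceptional case separately (Case 1, via the Ext-exceptional heart of a triple containing $E$); your uniform treatment silently skips this case as well. To repair your proposal you would need to supply the Case 2.2 mechanism or an equivalent substitute.
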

\begin{proof}
\textbf{Case 1}: At least one of $\tilde v(E)$ and $\tilde v(F)$ is an exceptional character. Assume that $\tilde v(E)$ is exceptional. Suppose the dyadic number corresponding to $\tilde v(E)$ is $\frac{p}{2^q}$, let $E_1$ and $E_3$ be exceptional bundles corresponding to dyadic numbers $\frac{p-1}{2^q}$ and $\frac{p+1}{2^q}$ respectively. Then $\Hom(E_1,E$) and $\Hom(E,E_3$) are both non-zero. As $E$ is $\sigma$-stable, $l_{E\sigma}$ does not intersect $l_{e_1e_1^+}$ nor $l_{e_3e_3^+}$, otherwise this contradicts to Lemma \ref{lemma:slopecompare}. We may assume that $\sigma$ is in MZ$_\mathcal E$, where $\mathcal E = \{ E_1,E,E_3\}$. As $F$ has the same phase with $E$ at $\sigma$, $F[1]$ is in $\mathcal A_\mathcal E$. Since
\begin{align*}
& \Hom(E[1],E_1[2+s])  = 0 \text { for all } s\in\mathbb Z;\\
& \Hom(E[1],E[1+s])  = 0 \text { for all } s\neq 0;\\
& \Hom(E[1],E_3[s])  = 0 \text { for all } s\neq 1,\\
\end{align*}
$\Hom(E[1],G[s])$ $=0$, for any object $G$ in $\mathcal A_\mathcal E$ when $s\neq 0$ or $1$. Therefore, $\Hom(E,F[2])$ $=$ $\Hom(E[1],F[1+2])$ $=$ $0$. Similarly, We have $\Hom(G,E[1+s])$ $=0$, for any object $G$ in $\mathcal A_\mathcal E$ when $s\neq 0$ or $1$. Therefore, $\Hom(F,E[2])$ $=$ $0$.\\

\textbf{Case 2:} Neither $\tilde v(E)$ nor $\tilde v(F)$ is exceptional. By Corollary \ref{cor:nostabincone}, their corresponding points are below the Le Potier curve $C_{LP}$.

Case 2.1: The $\frac{\ch_1}{\ch_0}$-length of $L_{EF}\bigcap \bd_{\leq  0}$ is greater than $3$. 

By the construction of Bridgeland stability conditions, it is easy to see that the objects $E(-3)$ and $F(-3)$ are also stable for any geometric stability conditions on the line $L_{E(-3)F(-3)}$. Since the $\frac{\ch_1}{\ch_0}$-length of $L_{EF}\bigcap \bd_{\leq  0}$ is greater than $3$, the intersection point $Q$ of $L_{EF}$ and $L_{E(-3)F(-3)}$ is in $\bd_{\leq 0}$. By Corollary \ref{cor:stabonthesegment}, the objects $E$, $F$, $E(-3)$ and $F(-3)$ are all $\sigma_Q$-stable. By Lemma \ref{lemma:slopecompare}, $\phi_Q(E(-3))$ $=$ $\phi_Q(F(-3))$ $<$ $\phi_Q(E)$ $=$ $\phi_Q(F)$.  We have
\[\Hom(E,F(-3)),\Hom(F,E(-3))=0.\]
The statement then holds by Serre duality.

Case 2.2: The $\frac{\ch_1}{\ch_0}$-length of $L_{EF}\bigcap \bd_{\leq  0}$ is not greater than $3$. 

%%%%%%%%%%%%%%%%%%%%%%%%%

\begin{center}
\begin{tikzpicture}[domain=1:5]

\tikzset{%
    add/.style args={#1 and #2}{
        to path={%
 ($(\tikztostart)!-#1!(\tikztotarget)$)--($(\tikztotarget)!-#2!(\tikztostart)$)%
  \tikztonodes},add/.default={.2 and .2}}
}

% opacity of the usual walls
\newcommand\XA{0.02}

\draw [name path =C0, opacity=0.1](-3,4.5) parabola bend (0,0) (3,4.5)
 node[right, opacity =0.5] {$\bd_0$};

\coordinate (E1) at (0,0);
\draw (E1) node {$\bullet$} node [above left] {};

\coordinate (E2) at (1,0.5);
\draw (E2) node {$\bullet$} node [above] {$\mathcal O(k+3)$};

\coordinate (E3) at (2,2);
\draw (E3) node {$\bullet$} node [above] {$\mathcal O(k+4)$};

\coordinate (F1) at (0,-1);
\draw (F1) node {$\bullet$} node [below] {$\mathcal O(k+2)^+$};

%\coordinate (F2) at (1,-0.5);
%\draw (F2) node {$\bullet$} node [below] {$e^+_2$};

\coordinate (F3) at (2,1);
\draw (F3) node {$\bullet$}node [right] {$\mathcal O(k+4)^+$};

\draw (E1) -- (E3);
\draw (F1) -- (E3);
\draw (E1) -- (F3);
\draw (F3) -- (E3);
\draw (E1) -- (F1);

\coordinate (E1) at (-3,4.5);
\draw (E1) node {$\bullet$} node [above left] {$\mathcal O(k-1)$};

%\coordinate (E2) at (-2.5,3.125);
%\draw (E2) node {$\bullet$} node [above] {$e'_2$};

\coordinate (E3) at (-1,0.5);
\draw (E3) node {$\bullet$} node [above] {$\mathcal O(k+1)$};

\coordinate (F1) at (-3,3.5);
\draw (F1) node {$\bullet$} node [left] {$\mathcal O(k-1)^+$};

%\coordinate (F2) at (-2.5,2.75);
%\draw (F2) node {$\bullet$} node [below left] {$e_2$};

\coordinate (F3) at (-1,-0.5);
\draw (F3) node {$\bullet$}node [left] {$\mathcal O(k+1)^+$};

%\draw [add= 0 and 1] (W) to (V) node[above]{Eff};

%B axis
\draw[->,opacity =0.3] (-4,0) -- (4,0) node[above right] {$\frac{\ch_1}{\ch_0}$};

%H axis
\draw[->,opacity=0.3] (0,-2)-- (0,0) node [above right] {O} --  (0,5) node[right] {$\frac{\ch_2}{\ch_0}$};

\draw (E1) -- (E3);
\draw (F1) -- (E3);
\draw (E1) -- (F3);
\draw (F3) -- (E3);
\draw (E1) -- (F1);

\coordinate (E) at (2,-0.2);
\draw (E) node {$\bullet$}node [below left] {$E$};
\coordinate (F) at (-3.5,2.5);
\draw (F) node {$\bullet$}node [below] {$F$};

\draw [add =0.2 and 0.2,dashed] (E) to (F);
\draw [add =0.2 and -0.5,opacity=0] (E) to (F) node[opacity=1] {$\bullet$} node[above, opacity=1] {$\sigma_P$};

\end{tikzpicture}

Picture: $L_{EF}\bigcap \bd_{\leq  0}$ is not greater than $3$.
\end{center}

%%%%%%%%%%%%%%%%%%%%%%%%%

Since $E$ is $\sigma_P$-stable, by Corollary \ref{cor:lastwallradiusgeq1} and its proof, there exists an integer $k$ such that the points $v(\mathcal O(k+1))$, which is $\left(1, k+1,\frac{(k+1)^2}{2}\right)$ and $v(\mathcal O(k+2))$  are below the segment $L_{EF}\bigcap \bd_{\leq  0}$ (see above picture, where $e_1$, $e_2$, $e_3$ correspond to $\mathcal O(k+2)$, $\mathcal O(k+3)$, $\mathcal O(k+4)$ respectively). Equivalently,  the points $v(\mathcal O(k-1))$ and $v(\mathcal O(k-2))$ are below the segments $L_{E(-3)F(-3)}\bigcap \bd_{\leq  0}$.

Let $\mathcal E_k$ be the exceptional triple $\langle \mathcal O(k-1),\mathcal O(k),\mathcal O(k+1)\rangle$, then by our assumption, $L_{\mathcal O(k-1)\mathcal O(k+1)}$ must intersect both $L_{EF}$ and $L_{E(-3)F(-3)}$. By Lemma \ref{lemma:ForF1inAE}, as the point $v(\mathcal O(k+1))$  is below the segment $L_{EF}\bigcap \bd_{\leq  0}$, $E$ and $F$ are both in $\mathcal A_{\mathcal E_k}$. As the point $v(\mathcal O(k-1))$  is below the segment $L_{E(-3)F(-3)}\bigcap \bd_{\leq  0}$, both $E(-3)[1]$ and $F(-3)[1]$ are in $\mathcal A_{\mathcal E_k}$. Therefore, we have
\[\Hom (E,F(-3)) = \Hom (E,(F(-3)[1])[-1]) = 0.\]
By Serre duality, the statement holds.
\end{proof}
In particular, if an object $E$ is $\sigma$-semistable for some geometric stability condition $\sigma$, we have $\Hom(E,E[2])$ $=$ $0$. To see this, $E$ admits $\sigma$-stable Jordan-Holder filtrations, and for any two stable factors we have the $\Hom(-,-[2])$ vanishing, hence $\Hom(E,E[2])$ $=$ $0$.

As an immediate application, \textbf{Rep}$(Q_\mathcal E,\vec n,\alpha_\mathcal E)^{\vec \rho-ss}$ is smooth.
\begin{cor}
Let $x$ be a point in $\mathbf{Rep}(Q_\mathcal E,\vec n_w, \alpha_\mathcal E)^{\vec \rho-ss}$, then as a closed subvariety of $\mathbf{Rep}(Q_\mathcal E,\vec n_w)$, $\mathbf{Rep}(Q_\mathcal E,\vec n_w, \alpha_\mathcal E)$ is smooth at the point $x$.
\label{cor:smoothness}
\end{cor}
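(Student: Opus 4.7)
The plan is to translate smoothness of $\repqa$ at $x$ into the Jacobian condition on the defining relations, and then verify that condition using the $\Ext^2$ vanishing of Lemma \ref{lemma: ext2 vanishing for stable factors}. Since $\repq$ is an affine space, it is smooth everywhere, with tangent space at $x = (I, J)$ canonically identified with $\Hom^1(\mathbf K_x, \mathbf K_x)$ in the notation of Definition \ref{defn:homandd}. The subvariety $\repqa$ is cut out by $JI = 0$, giving $n_1 n_3 \hom(E_1, E_3) = \dim \Hom^2(\mathbf K_x, \mathbf K_x)$ scalar equations, and the differential of this relation at $x$ in direction $(I', J') \in \Hom^1(\mathbf K_x, \mathbf K_x)$ is $J \circ I' + J' \circ I$, which is exactly the map $d^1$.

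The key step is to show that $d^1$ is surjective at $x$. By Lemma \ref{lemma:homandext}, $\mathrm{coker}\, d^1 = \Ext^2(F, F)$, where $F$ is the object of $\mathcal A_\mathcal E$ represented by $x$. By Lemma \ref{lemma: rhocharformular}, the $\vec\rho$-semistability of $x$ translates into $\sigma_P$-semistability of $F$ (or $F[-1]$) for any $P$ on the corresponding potential wall $L$ inside $\mathrm{MZ}_\mathcal E$. Using Corollary \ref{cor:stabonthesegment} I would slide $P$ along $L$ into the region $\bd_{<0}$, bringing us into the regime of Lemma \ref{lemma: ext2 vanishing for stable factors}. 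A Jordan--H\"older decomposition of $F$ into $\sigma_P$-stable factors---which all have Chern characters collinear with $P$ by Lemma \ref{lemma:paraandspanplane}---then gives pairwise vanishings $\Hom(F_i, F_j[2]) = 0$ via Lemma \ref{lemma: ext2 vanishing for stable factors}, and the long exact sequences in $\Ext$ propagate these to $\Ext^2(F, F) = 0$.

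With $d^1$ surjective, its kernel has codimension $\dim \Hom^2$ in $\Hom^1$, matching the expected codimension of $\repqa$ in $\repq$. The Jacobian criterion then yields smoothness of $\repqa$ at $x$, with tangent space $\ker d^1 \cong \Ext^1(F, F)$. The only non-formal ingredient is the $\Ext^2$ vanishing; the main subtlety to watch for is ensuring that the chosen $P$ along $L$ can actually be made to lie in $\bd_{<0}$ while preserving semistability, but this follows from the structure of $\mathrm{MZ}_\mathcal E$ together with Corollary \ref{cor:stabonthesegment}, so no real obstacle arises.
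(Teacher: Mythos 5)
Your proposal is correct and follows essentially the same route as the paper's proof: identifying the Zariski tangent space with $\ker d^1$, deducing surjectivity of $d^1$ from the $\Ext^2$-vanishing of Lemma \ref{lemma: ext2 vanishing for stable factors} propagated through the Jordan--H\"older factors (exactly the remark following that lemma, with the same implicit slide of $P$ along the wall into $\bd_{<0}$ via Corollary \ref{cor:stabonthesegment}), and counting the $n_1n_3\hom(E_1,E_3)=\dim\Hom^2(\mathbf K,\mathbf K)$ defining equations. The only cosmetic difference is that you conclude via the Jacobian/submersion criterion, whereas the paper compares the tangent space dimension $\hom^1-\hom^2$ with the lower bound on the dimension of each irreducible component given by the number of equations --- the same computation in different packaging.
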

\begin{proof}
Let \textbf K $= (I_0,J_0)$  be the quiver representation that $x$ stands for. The dimension of the Zariski tangent space at $x$ is the dimension of
\[\Hom_{\mathbb C} \left(\mathbb C[\textbf{Rep}(Q_\mathcal E,\vec n_w)]\middle/(J\circ I)\;,\; \mathbb C[t]/(t^2)\right)\]
at $(I_0,J_0)$. Each tangent direction can be written in the form $(I_0,J_0)$ $+$ $t(I_1,J_1)$. In order to satisfy the equation $J\circ I \in (t^2)$, we need \[J_0\circ I_1+J_1\circ  I_0 =0.\]
Hence the space of $(I_1,J_1)$ is just the kernel of $d^1:$
$\Hom^1(\textbf K,\textbf K)$ $\rightarrow$ $\Hom^2(\textbf K,\textbf
K)$. By Lemma \ref{lemma: ext2 vanishing for stable factors}, $d^1$ is surjective. The
Zariski tangent space has dimension hom$^1(\textbf K,\textbf K)$
$-$ hom$^2(\textbf K,\textbf K)$. On the other hand, $\mathbf{Rep}(Q_{\mathcal E},\vec{n}, \alpha_\mathcal E)$ is the zero locus of $n_1n_3\cdot \hom(E_1,E_3) = \hom^2(\textbf K,\textbf K)$ equations, hence each irreducible component is of dimension at least $\hom^1(\textbf K,\textbf K)$
$-$ hom$^2(\textbf K,\textbf K)$, which is not less than the dimension of the Zariski tangent space at $x$. Therefore, \textbf{Rep}$(Q_\mathcal E,\vec n_w, \alpha_\mathcal E)$ is smooth at the point $x$.
\end{proof}
\begin{rem}
When the dimension character $\vec n$ is primitive, $G(=G_{\vec n_w}/\mathbb C^\times)$ acts freely on the stable locus. By Luna's \'{e}tale slice theorem,
\[\Rep(Q_\mathcal E,\vec n_w, \alpha_\mathcal E)^{\vec \rho-s}
\rightarrow \left. \Rep(Q_\mathcal E,\vec n_w, \alpha_\mathcal E)^{\vec \rho-s}\middle/G\right.\]
is a principal $G$-bundle. Since
$\Rep(Q_\mathcal E,\vec n_w, \alpha_\mathcal E)^{\vec \rho-s}$ is smooth, by Proposition IV.17.7.7 in \cite{Gr}, the
base space is also smooth.
\label{rem:smoothness}
\end{rem}

\subsection{Generic stability}\label{sec2.3}
Based on Lemma \ref{lemma: ext2 vanishing for stable factors}, we establish some estimate on the dimension of strictly semistable objects in this section. The technical result Lemma \ref{lemma:repfgdim} is useful in the proof for the irreducibility of the moduli space. 

\begin{comment}
Recall: $\mathcal E=<E_1, E_2, E_3>$ and $\vec{n}=<n_1,n_2,n_3>$. Consider
\[0\to D\to \Hom(E_1,E_2) \otimes \Hom(E_2,E_3) \to \Hom(E_1,E_3) \to 0.\]

\begin{defn}
$\textbf{Rep}(Q_{\mathcal E},\vec{n})$ is the set of pairs $(I,J)$, where $I\in \Hom(\mathbb C^{n_1}, \mathbb C^{n_2})\otimes \Hom(E_1,E_2)$, $J \in \Hom(\mathbb C^{n_2}, \mathbb C^{n_3})\otimes \Hom(E_2,E_3)$, satisfying the condition:
\[J \circ I \in D\otimes \Hom(\mathbb C^{n_1}, \mathbb C^{n_3}).\]
\end{defn}
\end{comment}
\begin{defn}
Suppose $\vec{n} = \vec{n}' + \vec{n}''$ such that $\vec n'\cdot \vec \rho = \vec n''\cdot \vec \rho = 0$.  Choose $\mathbf{F}\in \mathbf{Rep}(Q_{\mathcal E},\vec{n}', \alpha_\mathcal E)^{\vec\rho-ss}$ and $\mathbf{G} \in \mathbf{Rep}(Q_{\mathcal E},\vec{n}'',\alpha_\mathcal E)^{\vec\rho-ss}$. We write $\mathbf{Rep}(Q_{\mathcal E}, \mathbf{F}, \mathbf{G})$ as the \emph{subspace} in $\mathbf{Rep}(Q_{\mathcal E},\vec{n},\alpha_\mathcal E)^{\vec\rho-ss}$ consisting of representations $\mathbf{K}$ that can be written as an extension of $\mathbf{G}$ by $\mathbf{F}$:
\[0\rightarrow \mathbf{F}\rightarrow \mathbf{K}\rightarrow \mathbf{G}\rightarrow 0.\]
We also write $\Rep(Q_{\mathcal E}, \vec n', \vec n'')$ for the union of all $\Rep(Q_{\mathcal E}, \mathbf{F}, \mathbf{G})$ such that $\mathbf{F}\in \mathbf{Rep}(Q_{\mathcal E},\vec{n}', \alpha_\mathcal E)^{\vec\rho-ss}$ and $\mathbf{G} \in \mathbf{Rep}(Q_{\mathcal E},\vec{n}'',\alpha_\mathcal E)^{\vec\rho-ss}$.
\label{def:repef}
\end{defn}
We have the following dimension estimate for $\Rep(Q_\mathcal E,\mathbf F,\mathbf G)$:

\begin{lemma}
\[\dim \Rep(Q_{\mathcal E}, \mathbf{F}, \mathbf{G}) \leq -\chi(\mathbf{G},\mathbf{F}) + \dim G_{\vec{n}} - \hom(\mathbf{F},\mathbf{F}) -\hom(\mathbf{G},\mathbf{G}).\]
\label{lemma:repfgdim}
\end{lemma}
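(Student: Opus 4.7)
The plan is to count $\dim \Rep(Q_\mathcal E, \mathbf F, \mathbf G)$ via the surjective action map
\[\psi: G_{\vec n} \times E \to \Rep(Q_\mathcal E, \mathbf F, \mathbf G),\qquad (g, e) \mapsto g \cdot \mathbf K_e,\]
where $E$ parametrizes extensions of $\mathbf G$ by $\mathbf F$ with a fixed splitting of the underlying graded vector space. The crucial input is the vanishing $\Ext^2(\mathbf G, \mathbf F) = 0$. Since $\mathbf F$ and $\mathbf G$ are $\vec\rho$-semistable of the same slope zero, they correspond to $\sigma_P$-semistable objects of the same phase on the wall cut out by $\vec\rho$, and their Jordan--Holder factors are $\sigma_P$-stable with Chern characters lying on a common line through $P$ in the $\cccp$. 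Lemma \ref{lemma: ext2 vanishing for stable factors} gives the $\Ext^2$ vanishing between any two such stable factors, and iterated long exact sequences propagate it to $\Ext^2(\mathbf G, \mathbf F) = 0$.

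To set up $E$, fix underlying graded vector spaces for $\mathbf F$ and $\mathbf G$ and let
\[E := \ker d^1 \subset \Hom^1(\mathbf G, \mathbf F),\]
so that each $e \in E$ defines an upper-triangular representation $\mathbf K_e$ on $\mathbf F \oplus \mathbf G$ with diagonal blocks $(\mathbf F, \mathbf G)$; the condition $d^1(e) = 0$ is precisely the relation $\alpha_\mathcal E$ for the off-diagonal block of $J \circ I$. Combining Lemma \ref{lemma:homandext} with $\ext^2(\mathbf G, \mathbf F) = 0$ yields
\[\dim E = \hom^1(\mathbf G, \mathbf F) - \hom^2(\mathbf G, \mathbf F).\]
The image of $\psi$ is indeed $\Rep(Q_\mathcal E, \mathbf F, \mathbf G)$, since any $\mathbf K$ admitting a sub-representation isomorphic to $\mathbf F$ with quotient isomorphic to $\mathbf G$ can be put in block form by a change of basis. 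The fiber $\psi^{-1}(\mathbf K_{e_0})$ is a torsor under the subgroup
\[P_{\mathbf F, \mathbf G} \subset G_{\vec n}\]
of block-upper-triangular matrices whose diagonal lies in $\Aut(\mathbf F) \times \Aut(\mathbf G)$, so
\[\dim P_{\mathbf F, \mathbf G} = \hom(\mathbf F, \mathbf F) + \hom(\mathbf G, \mathbf G) + \hom^0(\mathbf G, \mathbf F).\]

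Assembling these estimates gives $\dim \Rep(Q_\mathcal E, \mathbf F, \mathbf G) \leq \dim G_{\vec n} + \dim E - \dim P_{\mathbf F, \mathbf G}$, and substituting the Euler characteristic identity
\[\hom^0(\mathbf G, \mathbf F) - \hom^1(\mathbf G, \mathbf F) + \hom^2(\mathbf G, \mathbf F) = \chi(\mathbf G, \mathbf F)\]
from the complex in Lemma \ref{lemma:homandext} produces the claimed bound. The main obstacle is lifting the $\Ext^2$ vanishing from the stable case of Lemma \ref{lemma: ext2 vanishing for stable factors} to semistable $\mathbf F, \mathbf G$ (handled by the Jordan--Holder argument above), and verifying that the fiber of $\psi$ really has the full dimension $\dim P_{\mathbf F, \mathbf G}$ rather than collapsing in some unexpected way.
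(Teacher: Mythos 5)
Your proof is correct and is essentially the paper's own argument: your extension space $E=\ker d^1$ is exactly the paper's locus $X(\mathbf{F},\mathbf{G})$ of block-upper-triangular representatives, the surjectivity of $d^1$ comes from the same $\Ext^2$-vanishing lemma (extended from stable to semistable objects via Jordan--H\"older factors, just as the paper notes after Lemma \ref{lemma: ext2 vanishing for stable factors}), and your fiber bound under the block-upper-triangular subgroup $P_{\mathbf{F},\mathbf{G}}$ reproduces the paper's subtraction of $\hom(\mathbf{F},\mathbf{F})+\hom(\mathbf{G},\mathbf{G})+\hom^0(\mathbf{G},\mathbf{F})$ before applying the Euler characteristic identity. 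The only cosmetic difference is that you organize the count through the action map $\psi$ with fibers of dimension at least $\dim P_{\mathbf{F},\mathbf{G}}$, whereas the paper directly bounds $\dim G_{\vec{n}}\cdot X(\mathbf{F},\mathbf{G})$.
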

\begin{proof}
Let $X(\mathbf{F},\mathbf{G})$ be the subset of $\textbf{Rep}(Q_{\mathcal E}, \mathbf{F}, \mathbf{G})$ consisting of objects of the form:
\[I= \left( \begin{array}{cc}
I_\mathbf{F} & I(\mathbf{G},\mathbf{F}) \\
0 & I_\mathbf{G}\end{array} \right), J= \left( \begin{array}{cc}
J_\mathbf{F} & J(\mathbf{G},\mathbf{F}) \\
0 & J_\mathbf{G}\end{array} \right),\]
for a pair $(I(\mathbf{G},\mathbf{F}), J(\mathbf{G},\mathbf{F}))\in \Hom^1(\mathbf G,\mathbf F)$. The morphisms are shown in the following diagram:
\begin{displaymath}
    \xymatrix{
       \mathbf{F}: & \mathbb C^{n_1'} \ar[r]^{I_\mathbf{F}} & \mathbb C^{n_2'}  \ar[r]^{J_\mathbf{F}} & \mathbb C^{n_3'} \\
    \mathbf{G}: &     \mathbb C^{n_1''} \ar[r]^{I_\mathbf{G}} \ar[ur]^{I(\mathbf{G},\mathbf{F})}& \mathbb C^{n_2''}  \ar[r]^{J_\mathbf{G}} \ar[ur]^{J(\mathbf{G},\mathbf{F})}& \mathbb C^{n_3''}
        }
\end{displaymath}
Due to the condition that $J \circ I \in \ker \alpha_\mathcal E \otimes \Hom(\mathbb C^{n_1}, \mathbb C^{n_3})$, the pair $(I(\mathbf{G},\mathbf{F}), J(\mathbf{G},\mathbf{F}))$ is contained in the kernel of the morphism
\[d^1(\mathbf{G},\mathbf{F}): \Hom^1(\mathbf{G},\mathbf{F}) \to \Hom^2(\mathbf{G},\mathbf{F}).\]
By Lemma \ref{lemma: ext2 vanishing for stable factors}, $d^1(\mathbf{G},\mathbf{F})$ is surjective, hence
\[\dim X(\mathbf{F},\mathbf{G})\leq \hom^1(\mathbf{G},\mathbf{F}) - \hom^2(\mathbf{G},\mathbf{F}).\]

Each element $g \in GL_{\vec{n}}$ can be written as a block matrix $\left( \begin{array}{cc}
A & B \\
C & D\end{array} \right)$, where $A \in \Hom^0(\mathbf{F},\mathbf{F})$, $B \in \Hom^0(\mathbf{G},\mathbf{F})$, $C \in \Hom^0(\mathbf{F},\mathbf{G})$ and $D \in \Hom^0(\mathbf{G},\mathbf{G})$. Note that when $A \in \Hom(\mathbf{F},\mathbf{F})$, $D \in \Hom(\mathbf{G},\mathbf{G})$ and $C=0$, we have $g\cdot X(\mathbf{F},\mathbf{G})=X(\mathbf{F},\mathbf{G})$. Therefore,
\begin{align*}
\dim \textbf{Rep}(Q_{\mathcal E}, \mathbf{F}, \mathbf{G}) &= \dim G_{\vec{n}}\cdot X(\mathbf{F},\mathbf{G})\\
&\leq \dim G_{\vec{n}} + \dim X(\mathbf{F},\mathbf{G}) - \hom(\mathbf{F},\mathbf{F}) - \hom(\mathbf{G},\mathbf{G}) -\hom^0(\mathbf{G},\mathbf{F})\\
&\leq -\chi(\mathbf{G},\mathbf{F}) + \dim G_{\vec{n}} - \hom(\mathbf{F},\mathbf{F}) -\hom(\mathbf{G},\mathbf{G}).
\end{align*}
\end{proof}

\begin{defn}
$\Rep(Q_{\mathcal E},\vec{n},\alpha_\mathcal E)^{\vec\rho-ss}_c:= \{\mathbf{F}\in \Rep(Q_{\mathcal E},\vec{n},\alpha_\mathcal E)^{\vec\rho-ss}| \hom(\mathbf{F},\mathbf{F})=c\}$.\\
$\mathbf{Rep}(Q_{\mathcal E},\vec n', \vec n'')^{\vec\rho - ss}_{c,d}:=\{\mathbf{K}\in \Rep(Q_{\mathcal E},\mathbf{F},\mathbf {G})^{}| \hom(\mathbf{F},\mathbf{F})=c,\hom(\mathbf{G},\mathbf{G})=d\}$.
\end{defn}

The following proposition shows that given a Chern character $w$ not inside Cone$_{LP}$ and a generic stability condition $\sigma$, stable objects are dense in the moduli space $\mathfrak M^{ss}_{\sigma}(w)$. Note that this is a non-trivial statement only when $w$ is not primitive. % is not consisted of strictly semistable objects. This result is important in the arguments in Section \ref{sec3}.

\begin{prop}
Let $\vec n$ be a character for $\Rep(Q_\mathcal E,\alpha_\mathcal E)$ such that $\chi(\vec n, \vec n) \leq -1$. Let $\vec \rho$ be a generic weight with respect to $\vec n$, in other words, $\vec\rho \cdot \vec n' \neq 0$ for any $\vec n' < \vec n$ that is not proportional to $\vec n$. We have
\[\dim \repqa^{\vec\rho-ss} = -\chi(\vec n, \vec n) + \dim G_{\vec n},\]
for each irreducible component of $\repqa^{\vec\rho-ss}$. Moreover,
\[\dim \left(\repqa^{\vec\rho-ss} \setminus \repqa^{\vec\rho-s}\right) \leq -\chi(\vec n, \vec n) + \dim G_{\vec n}-1.\]
In particular, there is no component whose objects are all strictly semistable objects.
\label{prop:compcontainsstabobj}
\end{prop}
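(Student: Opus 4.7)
The plan is to treat the two assertions separately. The first (equality of dimensions in each component) is essentially a tangent-space calculation using the $\Ext^2$-vanishing already in hand, whereas the second (the bound on the strictly semistable locus) requires a genuine dimension count on extension loci.

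For the equality $\dim = d(\vec n) := -\chi(\vec n,\vec n) + \dim G_{\vec n}$, I would combine Corollary~\ref{cor:smoothness} with Lemma~\ref{lemma: ext2 vanishing for stable factors}. At any semistable $\mathbf K$, Corollary~\ref{cor:smoothness} gives smoothness of $\repqa$ at $\mathbf K$, so the local dimension equals the Zariski tangent dimension $\dim\Hom^1(\mathbf K,\mathbf K) - \dim\Hom^2(\mathbf K,\mathbf K)$; here one uses $\Ext^2(\mathbf K,\mathbf K) = 0$, which follows from Lemma~\ref{lemma: ext2 vanishing for stable factors} applied to the Jordan--H\"older factors of $\mathbf K$ via a standard d\'evissage. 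Unwinding $\chi = \dim\Hom^0 - \dim\Hom^1 + \dim\Hom^2$ together with $\dim\Hom^0 = \dim G_{\vec n}$ identifies this tangent dimension with $d(\vec n)$. Since every component of $\repqa^{\vec\rho-ss}$ necessarily contains a semistable point, each has dimension exactly $d(\vec n)$.

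For the second assertion, any strictly semistable $\mathbf K$ admits a proper subobject $\mathbf F$ of the same slope, and by the genericity of $\vec\rho$ the dimension vector $\vec n(\mathbf F)$ is proportional to $\vec n$. Writing $\vec n = m\vec v$ with $\vec v$ primitive, this yields $\mathbf K \in \mathbf{Rep}(Q_\mathcal E, m'\vec v, m''\vec v)^{\vec\rho-ss}$ for some $m'+m''=m$ with $m',m''\geq 1$. To bound each such piece, I would consider the block-upper-triangular space $Y \subset \repqa$ for the decomposition $\mathbb C^{\vec n} = \mathbb C^{\vec n'}\oplus \mathbb C^{\vec n''}$ with $\vec n'=m'\vec v$, $\vec n''=m''\vec v$. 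Diagonal-block projection sends $Y$ into $\mathbf{Rep}^{\vec\rho-ss}(\vec n')\times\mathbf{Rep}^{\vec\rho-ss}(\vec n'')$ with fiber $\ker d^1(\mathbf G,\mathbf F)$, of dimension $\sum n'_i n''_i - \chi(\vec n'',\vec n')$ by Lemma~\ref{lemma: ext2 vanishing for stable factors}. The base has dimension at most $d(\vec n')+d(\vec n'')$ by the first part of the proposition applied to $\vec n'$ and $\vec n''$ (the hypothesis $\chi(\vec n',\vec n')\leq -1$ is inherited from $\chi(\vec n,\vec n)\leq -1$ via $\chi(\vec v,\vec v)\leq -1$). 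Since $\mathbf{Rep}(Q_\mathcal E,\vec n',\vec n'')^{\vec\rho-ss} = G_{\vec n}\cdot Y$, and the set-stabilizer of $Y$ in $G_{\vec n}$ is the block-upper-triangular parabolic $P$ with $\dim G_{\vec n}/P = \sum n'_i n''_i$, one obtains
\[\dim\mathbf{Rep}(Q_\mathcal E,\vec n',\vec n'')^{\vec\rho-ss} \;\leq\; d(\vec n') + d(\vec n'') + 2\sum n'_i n''_i - \chi(\vec n'',\vec n').\]

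The endgame is a direct identity: expanding $d(\vec n) = d(\vec n') + d(\vec n'') + 2\sum n'_i n''_i - \chi(\vec n',\vec n'') - \chi(\vec n'',\vec n')$ by bilinearity of $\chi$, the previous bound becomes $d(\vec n) + \chi(\vec n',\vec n'') = d(\vec n) + m'm''\chi(\vec v,\vec v) \leq d(\vec n) - 1$, the last step using $\chi(\vec v,\vec v)\leq -1$ and $m',m''\geq 1$. Taking the union over the finitely many decompositions gives the second assertion. The technical heart is the fibration argument of the previous paragraph, which converts Lemma~\ref{lemma: ext2 vanishing for stable factors} into a sharp dimension estimate on extension loci; the arithmetic obstacle is the strict inequality $\chi(\vec v,\vec v)\leq -1$ (rather than $\leq 0$), which is precisely what buys the $-1$ in the final bound and motivates the hypothesis $\chi(\vec n,\vec n)\leq -1$.
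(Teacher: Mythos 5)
Your proof is correct, and its skeleton is the paper's: part one is the same two-sided count (the lower bound coming from the $n_1n_3\cdot\hom(E_1,E_3)$ defining equations, which you package through Corollary \ref{cor:smoothness}, against the tangent-space upper bound from surjectivity of $d^1$, i.e.\ Lemma \ref{lemma: ext2 vanishing for stable factors} plus Jordan--H\"older d\'evissage), and part two stratifies the strictly semistable locus by the same extension loci $\Rep(Q_{\mathcal E}, m'\vec v, m''\vec v)$ with both pieces proportional to $\vec n$, ending with the identical arithmetic $d(\vec n)+m'm''\chi(\vec v,\vec v)\leq d(\vec n)-1$, where $d(\vec n)=-\chi(\vec n,\vec n)+\dim G_{\vec n}$. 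The one place you genuinely deviate is the bookkeeping in part two. The paper proves a per-pair estimate for fixed $\mathbf F,\mathbf G$ (Lemma \ref{lemma:repfgdim}) and then must stratify by $c=\hom(\mathbf F,\mathbf F)$ and $d=\hom(\mathbf G,\mathbf G)$ so that the orbit dimensions $\dim G_{\vec n'}-c$ and $\dim G_{\vec n''}-d$ can be subtracted when integrating over the families; the terms $c,d$ then cancel. You instead take the single block-upper-triangular variety $Y$, note that its projection to $\mathbf{Rep}(\vec n')^{\vec\rho-ss}\times\mathbf{Rep}(\vec n'')^{\vec\rho-ss}$ has fibers $\ker d^1(\mathbf G,\mathbf F)$ of \emph{constant} dimension $\sum_i n_i'n_i''-\chi(\vec n'',\vec n')$ (constant because the $\dim\Hom^j$ depend only on the dimension vectors and $d^1$ is everywhere surjective), and pay for the group action once via $G\times_P Y\twoheadrightarrow G\cdot Y$ with $\dim G_{\vec n}/P=\sum_i n_i'n_i''$. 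This lands on exactly the paper's intermediate bound $d(\vec n)+\chi(\vec n',\vec n'')$ while sidestepping the $(c,d)$-stratification entirely: the automorphism counting the paper does by hand is absorbed into the harmless overcount $\dim Y+\dim G/P$. If you write this up, make two steps explicit that you (like the paper) leave implicit: the quotient $\mathbf G=\mathbf K/\mathbf F$ of a semistable $\mathbf K$ by a weight-zero subobject is again $\vec\rho$-semistable, so the loci $G\cdot Y$ really cover the strictly semistable locus; and genericity of $\vec\rho$ with respect to $\vec n'=m'\vec v$ (needed to invoke part one for the base) is inherited from genericity with respect to $\vec n$, just as you observed for $\chi(\vec n',\vec n')\leq -1$.
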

\begin{proof}
The first statement basically follows from the proof of Corollary \ref{cor:smoothness}. Just note that $hom^1(\mathbf{K},\mathbf{K})-hom^2(\mathbf{K},\mathbf{K})$ in that proof is exactly $-\chi(\vec n, \vec n) + \dim G_{\vec n}$ here. We will repeat the proof:

$\textbf{Rep}(Q_{\mathcal E},\vec{n},\alpha_\mathcal E)$ is the zero locus of $n_1n_3\cdot hom(E_1,E_3)$ equations, hence each irreducible component is of dimension at least $-\chi(\vec n, \vec n) + \dim G_{\vec n}$.

On the other hand, for any $\vec\rho$-semistable object $\mathbf{K} \in \repqa^{\vec \rho-ss}$, $d^1: \Hom^1(\mathbf{K},\mathbf{K}) \to \Hom^2(\mathbf{K},\mathbf{K})$ is surjective by Lemma \ref{lemma: ext2 vanishing for stable factors}, the Zariski tangent space is of dimension $-\chi(\vec n, \vec n) + \dim G_{\vec n}$. Since $\textbf{Rep}(Q_{\mathcal E},\vec{n},\alpha_\mathcal E)^{\vec\rho-ss}$ is open in $\textbf{Rep}(Q_{\mathcal E},\vec{n},\alpha_\mathcal E)$, for each irreducible component of $\repqa^{\vec \rho-ss}$, its dimension is $-\chi(\vec n, \vec n) + \dim G_{\vec n}$.

For the second statement, when $\vec n$ is primitive and $\vec \rho$ is generic, we have
\[\textbf{Rep}(Q_{\mathcal E},\vec{n},\alpha_\mathcal E)^{\vec\rho-ss} = \textbf{Rep}(Q_{\mathcal E},\vec{n},\alpha_\mathcal E)^{\vec\rho-s},\]
so the statement holds automatically in this case.

We may assume that $\vec n = m \vec n_0$, in which $\vec n_0$ is primitive. Since $\vec\rho$ is generic, any strictly semistable object must be destabilized by an object in $\textbf{Rep}(Q_{\mathcal E},a\vec n_0)^{\vec\rho-ss}$ for some $0 < a < m$. Hence
\[\left.\textbf{Rep}(Q_{\mathcal E},\vec{n},\alpha_\mathcal E)^{\vec\rho-ss} \text{{\Large$\backslash$}} \textbf{Rep}(Q_{\mathcal E},\vec{n},\alpha_\mathcal E)^{\vec\rho-s}\right. = \bigcup_{1\leq a\leq m-1} \textbf{Rep}(Q_{\mathcal E},a\vec n_0, (m-a)\vec n_0).\]

For each object $\mathbf{F}\in \textbf{Rep}(Q_{\mathcal E},a\vec n_0,\alpha_\mathcal E)_c^{\vec \rho-ss}$, the orbit $G_{a\vec n_0} \cdot \mathbf{F}$ in $\textbf{Rep}(Q_{\mathcal E},a\vec n_0,\alpha_\mathcal E)^{\vec \rho-ss}$ is of dimension $\dim G_{a\vec n_0} - c$. Therefore, by Lemma \ref{lemma:repfgdim}, we have
\begin{align*}
&\;\;\;\;\dim \textbf{Rep}(Q_{\mathcal E},a\vec n_0, (m-a)\vec n_0)^{\vec\rho - ss}_{c,d}\\
&\leq -\chi((m-a)\vec n_0, a\vec n_0) + \dim G_{\vec n} -c -d-(\dim G_{a\vec n_0} -c) -(\dim G_{(m-a)\vec n_0} -d) \\
&\;\;\;\;+ \dim \textbf{Rep}(Q_{\mathcal E},a\vec n_0,\alpha_\mathcal E)^{\vec\rho - ss}_{c} + \dim \textbf{Rep}(Q_{\mathcal E}, (m-a)\vec n_0,\alpha_\mathcal E)^{\vec\rho - ss}_d\\
&\leq -\chi((m-a)\vec n_0, a\vec n_0) + \dim G_{\vec n} - \chi((m-a)\vec n_0, (m-a)\vec n_0) - \chi(a\vec n_0, a\vec n_0)\\
&= -\chi(\vec n, \vec n) + \dim G_{\vec n} + \chi(a\vec n_0, (m-a)\vec n_0)\\
&\leq -\chi(\vec n, \vec n) + \dim G_{\vec n}-1.
\end{align*}
The last inequality holds since $\chi(\vec n,\vec n)\leq -1$. Therefore,
\begin{align*}
&\dim\left(\mathbf{Rep}(Q_{\mathcal E},\vec{n},\alpha_\mathcal E)^{\vec\rho-ss} \text{\Large$\backslash$} \mathbf{Rep}(Q_{\mathcal E},\vec{n},\alpha_\mathcal E)^{\vec\rho-s}\right)\\
\leq & \max_{c,d} \left\{\dim \textbf{Rep}(Q_{\mathcal E},a\vec n_0, (m-a)\vec n_0)^{\vec\rho - ss}_{c,d}\right\}\\
\leq & -\chi(\vec n, \vec n) + \dim G_{\vec n}-1.
\end{align*}
In particular, since each component is of dimension $-\chi(\vec n, \vec n) + \dim G_{\vec n}$, there is no component consisting of strictly semistable objects.
\end{proof}

\subsection{The irreducibility of the moduli space}\label{sec2.4}
Based on the results and methods in the previous sections, we are able to estimate the dimension of the space of new stable objects after a wall-crossing. When the wall is to the left of the  vertical wall, we show that the new stable objects in the next chamber has codimension at least $3$. Together with Proposition \ref{prop:compcontainsstabobj}, this will imply the irreducibility of the moduli space.

Let $w$ be a character in K(\textbf P$^2$) with $\ch_0(w)\geq 0$ and $\sigma_{s,q}$ a stability condition with $s<\frac{\ch_1(w)}{\ch_0(w)}$ such that  $(1,s,q)$ is contained in MZ$_\mathcal E$. Let $\vec n$ be the dimension character for $w$ in $Q_\mathcal E$, and $\vec\rho$ be the weight character corresponding to $L_{w\sigma}$. Let $\vec \rho_-$ be the character in the chamber below $L_{w\sigma}$ and $\vec \rho_+$ in the chamber above $L_{w\sigma}$. The following two lemmas will be used in the proof of Proposition \ref{contracting is more than producing lemma}.
\begin{lemma}
Suppose $\mathbf{K}$ is an object in $\mathbf{Rep}(Q_\mathcal E,\vec n_w, \alpha_\mathcal E)^{\vec \rho_- -s}\setminus \mathbf{Rep}(Q_\mathcal E,\vec n_w, \alpha_\mathcal E)^{\vec \rho_+ -s}$, then it can be written as a non-trivial extension
\[0\rightarrow \mathbf{K}'\rightarrow \mathbf{K}\rightarrow \mathbf{K}''\rightarrow 0\]
of objects in $\Rep(Q_\mathcal E,\alpha_\mathcal E)$ such that the dimension character $\vec n'$ of $\mathbf{K}'$ satisfies $\vec \rho_-\cdot \vec n'< 0 = \vec \rho\cdot \vec n'$, and $\Hom(\mathbf{K}'',\mathbf{K}')$ $=$ $0$.
\label{lemma:Fdecompastwofactor}
\end{lemma}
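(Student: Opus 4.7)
The plan is to let $\mathbf{K}'$ be the first Harder--Narasimhan factor of $\mathbf{K}$ with respect to $\vec\rho_+$-stability, i.e.\ the maximal $\vec\rho_+$-destabilizing subobject. This is the standard wall-crossing destabilizing sequence for King-stable quiver representations, and it will automatically satisfy all four required properties once the expected wall-crossing behaviour is checked.

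First I would verify that $\mathbf{K}$ is $\vec\rho$-semistable with $\vec\rho\cdot\vec n=0$. Any proper subobject $\mathbf{L}\subset\mathbf{K}$ with $\vec\rho\cdot\vec l>0$ would also satisfy $\vec\rho_-\cdot\vec l>0$, because the linear function $\vec t\mapsto \vec t\cdot \vec l$ keeps a constant sign on each chamber and can only vanish on a wall; the path from $\vec\rho$ into the $\vec\rho_-$-chamber crosses only the wall $L_{w\sigma}$, which would force $\vec l^\perp=L_{w\sigma}$ and hence $\vec\rho\cdot\vec l=0$, a contradiction. Since $\vec\rho_+$ lies in an open chamber, $\vec\rho_+$-semistability coincides with $\vec\rho_+$-stability, so by hypothesis $\mathbf{K}$ is $\vec\rho_+$-unstable. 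In the Noetherian abelian category $\Rep(Q_\mathcal E,\alpha_\mathcal E)$ the $\vec\rho_+$-HN filtration exists; let $\mathbf{K}'$ be its first factor. Then $\mathbf{K}'$ is $\vec\rho_+$-semistable with strictly positive $\vec\rho_+$-slope, so $\vec\rho_+\cdot\vec n'>0$, and from $\vec\rho_+\cdot\vec n=0$ I deduce $\mathbf{K}'\neq 0,\mathbf{K}$.

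Next I would check the slope conditions on $\vec n'$. On the one hand, $\mathbf{K}'\hookrightarrow\mathbf{K}$ together with $\vec\rho$-semistability gives $\vec\rho\cdot\vec n'\leq 0$. On the other hand, $\vec\rho$ lies in the closure of the open $\vec\rho_+$-chamber, on which the sign of $\vec t\cdot\vec n'$ is constant (being a linear function whose zero locus is either a chamber wall or disjoint from the chamber); continuity together with $\vec\rho_+\cdot\vec n'>0$ therefore forces $\vec\rho\cdot\vec n'\geq 0$. Hence $\vec\rho\cdot\vec n'=0$. The strict inequality $\vec\rho_-\cdot\vec n'<0$ is then immediate from the $\vec\rho_-$-stability of $\mathbf{K}$ applied to the proper nonzero subobject $\mathbf{K}'$.

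Finally, the vanishing $\Hom(\mathbf{K}'',\mathbf{K}')=0$ is a standard consequence of HN filtration theory: by construction every $\vec\rho_+$-HN factor of $\mathbf{K}''=\mathbf{K}/\mathbf{K}'$ has $\vec\rho_+$-slope strictly smaller than that of the $\vec\rho_+$-semistable $\mathbf{K}'$, so a routine semistability argument forces any morphism $\mathbf{K}''\to \mathbf{K}'$ to vanish. Non-triviality of the extension follows from $\mathbf{K}'\neq 0,\mathbf{K}$. The only subtle point that I expect to need careful handling is the sign/continuity argument giving $\vec\rho\cdot\vec n'\geq 0$, which implicitly uses that $\vec\rho$ is generic on the wall $L_{w\sigma}$ (no other walls for $\mathbf{K}$ pass through it) -- a genericity that is standard in the wall-crossing setup but worth making explicit.
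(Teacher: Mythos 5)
There is a genuine gap, and it sits exactly where you flagged uncertainty, though the problem is not the genericity of $\vec\rho$ on the wall. Your construction takes $\mathbf{K}'$ to be the first Harder--Narasimhan factor of $\mathbf{K}$ with respect to $\vec\rho_+$, and the crucial step is the squeeze $\vec\rho\cdot\vec n'\geq 0$, argued via ``the zero locus of $\vec t\mapsto \vec t\cdot\vec n'$ is either a chamber wall or disjoint from the chamber.'' That dichotomy is false in this setting: the walls of the chamber decomposition are the \emph{actual} walls, i.e.\ the finitely many characters at which strictly semistable points of dimension $\vec n_w$ exist (Proposition \ref{finite many wall, t>0 is HilbS}), whereas $(\vec n')^{\perp}$ is merely a \emph{potential} hyperplane attached to a subobject of the ($\vec\rho_+$-unstable!) representation $\mathbf{K}$. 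Deciding which potential hyperplanes are actual walls is precisely the hard content of Theorem \ref{thm: actual wall}, and a potential hyperplane can cross the interior of the $\vec\rho_+$-chamber without being a wall, since at the crossing point $\mathbf{K}$ is unstable and no strictly semistable object of dimension $\vec n_w$ need exist there. Concretely: $\mathbf{K}$ may contain a destabilizer $\mathbf{A}$ with $\vec\rho\cdot\vec a=0<\vec\rho_+\cdot\vec a$ \emph{and} another subobject $\mathbf{B}$ with $\vec\rho\cdot\vec b<0<\vec\rho_+\cdot\vec b$ whose $\vec\rho_+$-slope exceeds that of $\mathbf{A}$; then your HN-maximal $\mathbf{K}'$ is built from $\mathbf{B}$ and satisfies $\vec\rho\cdot\vec n'<0$, violating the required equality $\vec\rho\cdot\vec n'=0$ (which is what Lemma \ref{lemma:chiformrho} and Proposition \ref{contracting is more than producing lemma} actually consume). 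A secondary imprecision: ``$\vec\rho_+$-semistability coincides with $\vec\rho_+$-stability in an open chamber'' fails when $\vec n_w$ is non-primitive (and the lemma is used for non-primitive characters in Section \ref{sec3}); one should instead note that a $\vec\rho_+$-destabilizing subobject of slope zero would have $\vec n'$ proportional to $\vec n_w$, contradicting $\vec\rho_-$-stability.

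The paper's proof avoids the HN machinery entirely by working \emph{at} the wall: since $\mathbf{K}$ is $\vec\rho_-$-stable it is $\vec\rho$-semistable (finitely many subobject inequalities pass to the chamber closure), and it cannot be $\vec\rho$-stable, since stability is finitely many strict inequalities and would propagate to $\vec\rho_+$; so $\mathbf{K}$ is strictly $\vec\rho$-semistable, and one simply \emph{chooses} $\mathbf{K}'$ to be a $\vec\rho$-stable subobject with $\vec\rho\cdot\vec n'=0$. Then $\vec\rho_-\cdot\vec n'<0$ is immediate from $\vec\rho_-$-stability, and $\Hom(\mathbf{K}'',\mathbf{K}')=0$ follows not from HN slope comparisons but from the image trick: the image $\tilde{\mathbf{K}}$ of a nonzero map $\mathbf{K}''\to\mathbf{K}'$ is simultaneously a quotient and a subobject of $\mathbf{K}$, forcing $\vec\rho_-\cdot\vec{\tilde n}>0$ and $\vec\rho_-\cdot\vec{\tilde n}<0$ at once. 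If you want to salvage your approach, replace the first HN factor by a maximal destabilizer among subobjects with $\vec\rho\cdot\vec n'=0$ (which exist by strict $\vec\rho$-semistability) --- but at that point you have reproduced the paper's argument.
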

\begin{proof}
By the assumption on $\mathbf{K}$, it is a strictly $\vec \rho$-semistable object, and is destabilized by a non-zero $\vec \rho$-stable proper subobject $\mathbf{K'}$ with $\vec \rho\cdot \vec n'=0$. As $\mathbf{K}$ is $\vec \rho_-$-stable, we have  $\vec \rho_-\cdot \vec n'<0$. Let the quotient be $\mathbf{K''}$, then $\mathbf{K'}$ and $\mathbf{K''}$ are the objects we want.

In order to see that $\Hom(\mathbf{K}'',\mathbf{K}')$ $=$ $0$, suppose there is a non-zero map in $\Hom(\mathbf{K''},\mathbf{K'})$, then its image $\tilde{\mathbf{K}}$ in $\mathbf{K'}$ is both a sub-representation and quotient representation of $\mathbf{K}$. Let $\vec{\tilde {n}}$ be the dimension vector of $\tilde{\mathbf{K}}$. As $\mathbf{K}$ is $\vec\rho_-$-stable, we get $\vec\rho_-\cdot \vec{\tilde n}<0<\vec\rho_-\cdot \vec{\tilde n}$, which leads to a contradiction.
\end{proof}
For a dimension vector $\vec n$ of $Q_\mathcal E$, we write $\ch_i(\vec n)$ for $n_1\ch_i(E_1)-n_2\ch_i(E_2)+n_3\ch_i(E_3)$, $i=0,1,2$.
\begin{lemma}
Let $\vec n$ and $\vec m$ be two dimension vectors of $Q_\mathcal E$. \\
1. The Euler character $\chi(\vec n,\vec m)$ can be computed as
\begin{align*}\ch_2(\vec n)\ch_0(\vec m)+\ch_2(\vec m)\ch_0(\vec n)-\ch_1(\vec n)\ch_1(\vec m)\\+\frac{3}{2}\left(\ch_1(\vec m)\ch_0(\vec n)-\ch_0(\vec m)\ch_1(\vec n)\right)+\ch_0(\vec n)\ch_0(\vec m).\end{align*}
2. Suppose $\ch_0(\vec n)\leq 0$, let $w$ be of character $-(\ch_0(\vec n),\ch_1(\vec n),\ch_2(\vec n))$ and $P$ be a point in $\bd_{<0}$ to the left of the vertical wall $L_{w\pm}$ such that $L_{Pw}$ intersects $l_{E_1E_3}$. Let $\vec \rho$ be $\vec \rho_{L_{Pw}}$, and $\vec\rho_-$ be the character in the chamber below $L_{Pw}$. Suppose $\vec m$ satisfies $\vec \rho_-\cdot \vec m <0 = \vec \rho\cdot \vec m$ and $\vec n$ satisfies $\vec \rho_-\cdot \vec n =0 = \vec \rho\cdot \vec n$, then \[\ch_0(\vec n)\ch_1(\vec m)-\ch_0 (\vec m) \ch_1(\vec n)>0.\]
\label{lemma:chiformrho}
\end{lemma}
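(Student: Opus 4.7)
The plan is to split the proof into two independent parts. Part 1 is a direct application of Hirzebruch--Riemann--Roch on $\pp$, while Part 2 reduces, after making the wall $L_{Pw}$ explicit in coordinates, to a one-line algebraic manipulation.

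For Part 1, I would start from
\[\chi(\vec n,\vec m) \; = \; \int_{\pp} \ch(\vec n^\vee)\cdot \ch(\vec m)\cdot \mathrm{td}(\pp),\]
which holds for arbitrary $\mathrm{K}$-theory classes (not only honest sheaves) by bilinearity, so that formal dimension vectors are allowed. Plugging in $\mathrm{td}(\pp) = 1 + \tfrac{3}{2}H + H^2$ and recording that dualization flips only the sign of $\ch_1$, one expands the triple product and reads off the $H^2$-coefficient to produce the stated formula. This is a routine book-keeping exercise.

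For Part 2, I would first pin down the linear functional cutting out $L_{Pw}$. Writing $w=(a,b,c)$ with $a=\ch_0(w)=-\ch_0(\vec n)\geq 0$ and $P=(1,s,q)$, the functional
\[f(\ch_0,\ch_1,\ch_2) \; = \; (bq-sc)\ch_0 + (c-aq)\ch_1 - (b-as)\ch_2\]
vanishes at both $P$ and $w$, so it represents $L_{Pw}$; the weight character is then $\vec\rho=(f(\tv(E_1)), -f(\tv(E_2)), f(\tv(E_3)))$, rescaled by a positive constant if necessary so that $f(\tv(E_1))>0$. To produce a weight character in the chamber just below $L_{Pw}$, I would move $P$ straight down to $P_\epsilon=(1,s,q-\epsilon)$ for small $\epsilon>0$; the associated functional becomes $f_\epsilon = f + \epsilon(a\,\ch_1 - b\,\ch_0)$. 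Using the assumption $\vec\rho\cdot\vec m = f(\tv(\vec m))=0$, one finds
\[\vec\rho_{-}\cdot\vec m \; = \; \epsilon\bigl(a\,\ch_1(\vec m) - b\,\ch_0(\vec m)\bigr),\]
and the hypothesis $\vec\rho_{-}\cdot\vec m<0$ rearranges to $b\,\ch_0(\vec m)-a\,\ch_1(\vec m)>0$. Substituting $(\ch_0(\vec n),\ch_1(\vec n))=(-a,-b)$ identifies the left-hand side with $\ch_0(\vec n)\ch_1(\vec m)-\ch_0(\vec m)\ch_1(\vec n)$, which is precisely the claimed inequality.

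The main obstacle I anticipate is conceptual rather than computational: I have to check that a downward vertical perturbation is really a representative of the ``chamber below $L_{Pw}$'' in the sense used in Lemma \ref{lemma:Fdecompastwofactor}, and that the sign normalization $f(\tv(E_1))>0$ is preserved by the perturbation. This is where the geometric hypotheses on $P$ enter: the conditions that $P\in\bd_{<0}$ lies to the left of $L_{w\pm}$ and that $L_{Pw}$ meets $l_{E_1E_3}$ are open, so they persist under a sufficiently small vertical perturbation, ensuring that $\vec\rho_{-}$ is a genuine weight character for the same exceptional triple $\mathcal E$ and that the sign convention is stable. Once these checks are in place, the remainder of Part 2 is the one-line calculation above.
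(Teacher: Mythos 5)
Your Part 1 is fine and is the same argument as the paper's (bilinear extension of Hirzebruch--Riemann--Roch to formal $K$-classes). Your Part 2 also sets up the correct computation: the identity $f_\epsilon=f+\epsilon\,(a\,\ch_1-b\,\ch_0)$ is exactly the coordinate form of the paper's assertion that the below-wall chamber is reached from $\vec\rho$ by subtracting a positive multiple of the weight character $\vec\Upsilon$ attached to the vertical wall $L_{w\pm}$ (note $a\,\ch_1-b\,\ch_0=-\bigl(\ch_0(\vec n)\ch_1-\ch_1(\vec n)\ch_0\bigr)$), and your one-line rearrangement then converts $\vec\rho_-\cdot\vec m<0$ into the claimed inequality. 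However, there is a genuine gap at the normalization step. You say $\vec\rho=(f(\tv(E_1)),-f(\tv(E_2)),f(\tv(E_3)))$ is ``rescaled by a positive constant if necessary so that $f(\tv(E_1))>0$.'' A positive rescaling cannot change the sign of $f(\tv(E_1))$; if your explicit $f$ had $f(\tv(E_1))<0$ you would have to replace $f$ by $-f$, and then the same computation gives $\vec\rho_-\cdot\vec m=-\epsilon\bigl(a\,\ch_1(\vec m)-b\,\ch_0(\vec m)\bigr)$, i.e., exactly the \emph{reversed} inequality. So the direction of the conclusion hinges entirely on proving that your $f$ as written, whose $\ch_2$-coefficient $-(b-as)$ is negative (so $f>0$ strictly below the line $L_{Pw}$), does satisfy $f(\tv(E_1))>0$ --- equivalently, that $e_1$ lies below $L_{Pw}$, i.e., that the slope of $l_{e_1e_3}$ exceeds the slope of $L_{Pw}$.

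This is not an openness statement and your ``main obstacle'' paragraph mislocates the difficulty: persistence of the sign under a small vertical perturbation is trivial once the base sign is known, but for an arbitrary line through a point of $\bd_{<0}$ crossing the chord $l_{e_1e_3}$, both orientations ($e_1$ above or below the line) occur, depending on whether the line's chord in $\bd_{\leq 0}$ interleaves the segment $l_{e_1e_3}$ from the left or from the right. What excludes the bad orientation are precisely the hypotheses your sketch never invokes: $\ch_0(\vec n)\leq 0$, nonnegativity of the dimension vector $\vec n$, and the position of the exceptional triple relative to the vertical wall $L_{w\pm}$. This orientation check is where the paper's proof does all of its actual work: it splits into the case where $\mathrm{MZ}_\mathcal E$ meets $L_{w\pm}$ (there $\vec\Upsilon$ is, up to positive scalar, the vertical-wall weight character, so $\vec\Upsilon=a\vec\rho-b\vec\rho_-$ with $a,b>0$) and the case where $\mathrm{MZ}_\mathcal E$ lies strictly to the left (there it verifies the determinant positivity $\ch_0(\vec n)\ch_1(E_i)-\ch_0(E_i)\ch_1(\vec n)>0$ from $\chy(E_i)\leq\chy(\vec n)$ and $\ch_0(\vec n)\leq 0$, and uses that the third component of $\vec\rho$ is negative to place $\vec\Upsilon$ in the correct cone). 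To complete your proof you would need to supply the analogous case analysis establishing $f(\tv(E_1))>0$; everything else in your Part 2 is sound.
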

\begin{proof}
The first statement follows from the Hirzebruch-Riemann-Roch formula for \textbf P$^2$:
\begin{align*}
\chi(F,G) &= \ch_2(F)\ch_0(G)+\ch_2(G)\ch_0(F)-\ch_1(F)\ch_1(G) \\ & +\frac{3}{2}\big(\ch_1(G)\ch_0(F)-\ch_0(G)\ch_1(F)\big)+\ch_0(F)\ch_0(G).
\end{align*}
For the second statement, by definition of $\vec \rho$, $\vec \rho_-$ is in the same chamber as $\vec \rho+\epsilon(0,n_3,-n_2)$  for small enough $\epsilon>0$. We have
\[\ch_0(\vec n)\ch_1(\vec m)-\ch_0 (\vec m) \ch_1(\vec n)=\vecm\cdot \vec \Upsilon,\]
where $\vec \Upsilon$ is the vector $\left(\left|\begin{matrix}\ch_0(\vec n) & \ch_1(\vec n)\\ \ch_0(E_1) & \ch_1(E_1)\end{matrix}\right|,-\left|\begin{matrix}\ch_0(\vec n) & \ch_1(\vec n)\\ \ch_0(E_2) & \ch_1(E_2)\end{matrix}\right|,\left|\begin{matrix}\ch_0(\vec n) & \ch_1(\vec n)\\ \ch_0(E_3) & \ch_1(E_3)\end{matrix}\right|\right)$. The vector $\vec \Upsilon$ is a weight character for $\vec n$ since $\vec n\cdot\vec \Upsilon$ $= \ch_0(\vec n)\ch_1(\vec n)- \ch_1(\vec n)\ch_0(\vec n)$ $= 0$.

When MZ$_\mathcal E$ intersects the vertical wall $L_{w\pm}$, by formula in Lemma \ref{lemma: rhocharformular}, $\vec \Upsilon$ is proportional (up to a positive scalar) to the character on the vertical wall. As $\vec\rho$ is to the left of the vertical wall, $\vec \Upsilon $ can be written as $ a\vec\rho-b\vec \rho_-$ for some positive numbers $a$ and $b$. Therefore, $\vec m \cdot \vec \Upsilon = -b \vec m\cdot \vec \rho_->0$.

When MZ$_\mathcal E$ is to the left of the vertical wall $L_{w\pm}$, we have $\frac{\ch_1(E_i)}{\ch_0(E_i)}\leq \frac{\ch_1(\vec n)}{\ch_0(\vec n)}$ for $i = 1,2,3$. As $\ch_0(\vec n)\leq 0$, we have $\left|\begin{matrix}\ch_0(\vec n) & \ch_1(\vec n)\\ \ch_0(E_i) & \ch_1(E_i)\end{matrix}\right|>0$. Since the third term of $\vec \rho$ is negative and the character space of $\vec n$ is spanned by $\vec \rho$ and $(0,n_3,-n_2)$, the character $\vec \Upsilon$ can be written as $a\vec \rho$ $-$ $b(0,n_3,-n_2)$ for some positive number $a$ and $b$. As $\vec \rho_-$ is in the same chamber as $\vec \rho+\epsilon(0,n_3,-n_2)$ and $\vec m\cdot \vec \rho_-<0$, we get $\vec m\cdot \vec \Upsilon >0$.
\end{proof}
\begin{comment}
Substitute these two characters into the inequality $\vec \rho_-\cdot \vec m<0$, we get $\frac{n_2}{m_2}>\frac{n_1}{m_1}$, $\frac{n_3}{m_3}$

\end{comment}

Now we can give an estimate of the dimension of new stable objects after wall crossing.

\begin{prop}
The dimension of the space $\Rep(Q_\mathcal E,\vec n_w, \alpha_\mathcal E)^{\vec \rho_- -s}\lbs$ $\Rep(Q_\mathcal E,\vec n_w, \alpha_\mathcal E)^{\vec \rho_+ -s}$ is less than $-\chi(w,w)+\dim G_{\vec n_w} -2$.
\label{contracting is more than producing lemma}
\end{prop}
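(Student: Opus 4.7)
The plan is to stratify the locus
\[\mathcal D := \Rep(Q_\mathcal E, \vec n_w, \alpha_\mathcal E)^{\vec\rho_- -s}\setminus \Rep(Q_\mathcal E, \vec n_w, \alpha_\mathcal E)^{\vec\rho_+ -s}\]
by the numerical type of the wall-crossing short exact sequence, show each stratum has dimension at most $-\chi(w,w) + \dim G_{\vec n_w} - 4$, and thereby deduce the required strict inequality.

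By Lemma \ref{lemma:Fdecompastwofactor}, every $\mathbf K \in \mathcal D$ sits in a non-split extension $0 \to \mathbf K' \to \mathbf K \to \mathbf K'' \to 0$ with $\vec\rho \cdot \vec n' = 0 = \vec\rho \cdot \vec n''$, $\vec\rho_- \cdot \vec n' < 0$, $\mathbf K'$ being $\vec\rho$-stable and $\mathbf K''$ being $\vec\rho$-semistable, and $\Hom(\mathbf K'', \mathbf K') = 0$. Only finitely many dimension-vector pairs $\vec n_w = \vec n' + \vec n''$ with $\vec\rho \cdot \vec n' = 0$ arise, so it suffices to bound $\dim \Rep(Q_\mathcal E, \vec n', \vec n'')$ for each. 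Rerunning the orbit-counting computation in the proof of Proposition \ref{prop:compcontainsstabobj} --- combining Lemma \ref{lemma:repfgdim} on the fibres over a pair $(\mathbf F, \mathbf G)$ with the uniform bound $\dim \Rep(Q_\mathcal E, \vec n, \alpha_\mathcal E)^{ss} \leq -\chi(\vec n, \vec n) + \dim G_{\vec n}$ (which holds at every $\vec\rho$-semistable point via the $\Ext^2$-vanishing Zariski tangent computation of Corollary \ref{cor:smoothness}) --- one arrives at
\[\dim \Rep(Q_\mathcal E, \vec n', \vec n'') \leq -\chi(w, w) + \dim G_{\vec n_w} + \chi(\vec n', \vec n''),\]
so the proposition reduces to proving $\chi(\vec n', \vec n'') \leq -3$.

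The Jordan--H\"older filtrations of $\mathbf K'$ and $\mathbf K''$ together with Lemma \ref{lemma: ext2 vanishing for stable factors} (applied pairwise to the stable factors and extended via d\'evissage) give $\Hom(\mathbf K'', \mathbf K'[2]) = 0$; combined with $\Hom(\mathbf K'', \mathbf K') = 0$ and the non-vanishing extension class in $\Ext^1(\mathbf K'', \mathbf K')$, this yields $\chi(\vec n'', \vec n') \leq -1$. Lemma \ref{lemma:chiformrho}(1) rewrites
\[\chi(\vec n', \vec n'') = \chi(\vec n'', \vec n') + 3\bigl(\ch_0(\vec n')\ch_1(\vec n'') - \ch_0(\vec n'')\ch_1(\vec n')\bigr),\]
and Lemma \ref{lemma:chiformrho}(2), applied with $\vec m = \vec n'$ and $\vec n$ taken as the scalar multiple of $\vec n_w$ with $\ch_0 \leq 0$, forces the bracketed Pfaffian (after expanding $\vec n_w = \vec n' + \vec n''$) to be a strictly negative integer, hence $\leq -1$. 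Therefore $\chi(\vec n', \vec n'') \leq -1 - 3 = -4$, which is strictly less than $-2$ and completes the proof.

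The main subtle point is the Pfaffian sign in the last step: the $\Hom$-vanishing from Lemma \ref{lemma:Fdecompastwofactor} alone only controls $\chi(\vec n'', \vec n')$, giving only codimension one. The hypothesis $s < \chy(w)$ (equivalently, $\sigma$ lies strictly to the left of $L_{w\pm}$) enters precisely here via Lemma \ref{lemma:chiformrho}(2), and it is this geometric sign input that upgrades the codimension estimate all the way to three.
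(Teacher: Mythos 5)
Your proposal is correct and takes essentially the same route as the paper's own proof: you stratify the difference locus by the dimension vector of the destabilizing subrepresentation via Lemma \ref{lemma:Fdecompastwofactor}, bound each stratum by rerunning the estimate of Lemma \ref{lemma:repfgdim} together with the $\Ext^2$-vanishing tangent-space count, and extract the codimension from both parts of Lemma \ref{lemma:chiformrho}, exactly as the paper does. The only (harmless) deviation is that you exploit the non-split extension class to get $\chi(\vec n'',\vec n')\leq -1$ where the paper only uses $\chi(\vec n_w-\vec m,\vec m)\leq 0$, so your per-stratum bound is $-\chi(w,w)+\dim G_{\vec n_w}-4$ instead of the paper's $-3$; both suffice for the stated strict inequality.
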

\begin{proof}
By Lemma \ref{lemma:Fdecompastwofactor}, the space \textbf{Rep}$(Q_\mathcal E,\vec n_w, \alpha_\mathcal E)^{\vec \rho_- -s}\lbs$ \textbf{Rep}$(Q_\mathcal E,\vec n_w, \alpha_\mathcal E)^{\vec \rho_+ -s}$ can be covered by the following pieces:
\[\textbf{Rep}^{\vec \rho_- -s}\lbs \textbf{Rep}^{\vec \rho_+ -s} = \bigcup_{\vec m} \textbf{Rep} (Q_\mathcal E,\vec m,(\vec n_w-\vec m))^{\vec \rho_- -s},\]
where $\vec m$ satisfies:
\begin{itemize}
\item $\vec \rho_-\cdot \vec m <0 = \vec \rho\cdot \vec m$;
\item $\chi(\vec n_w-\vec m,\vec m)\leq 0$.
\end{itemize}
The second condition is due to Lemma \ref{lemma: ext2 vanishing for stable factors} and Lemma \ref{lemma:Fdecompastwofactor}. Now similar to the proof of Proposition \ref{prop:compcontainsstabobj}, we have
\begin{align*}
&\;\;\;\;\dim \textbf{Rep}(Q_{\mathcal E},\vec m, \vec n_w-\vec m)_{c,d}^{\vec \rho_- -s}\\
&\leq -\chi(\vec n_w-\vec m, \vec m) + \dim G_{\vec n_w} -c -d-(\dim G_{(\vec n_w-\vec m)} -c) -(\dim G_{\vec m} -d) \\
&\;\;\;\;+ \dim \textbf{Rep}(Q_{\mathcal E},\vec n_w-\vec m,\alpha_\mathcal E)^{\vec\rho_- - ss}_{c} + \dim \textbf{Rep}(Q_{\mathcal E}, \vec m)^{\vec\rho_- - ss}_d\\
&\leq -\chi(\vec n_w-\vec m, \vec m) + \dim G_{\vec n_w} - \chi(\vec n_w-\vec m, \vec n_w-\vec m) - \chi(\vec m, \vec m)\\
&= -\chi(\vec n_w, \vec n_w) + \dim G_{\vec n_w} + \chi(\vec m, \vec n_w-\vec m)
\end{align*}
By  Lemma \ref{lemma:chiformrho},
\begin{align*}
& - \chi (\vec m,\vec n_w-\vec m)\\
\geq & \chi(\vec n_w-\vec m,\vec m) - \chi (\vec m,\vec n_w-\vec m)\\
= & \chi(\vec n_w,\vec m) - \chi (\vec m,\vec n_w)\\
= & 3 \left(\ch_0(\vec n_w)\ch_1(\vec m) - \ch_0(\vec m)\ch_1(\vec n_w)\right)\\
\geq & 3
\end{align*}
The last inequality is due to the second statement of Lemma \ref{lemma:chiformrho}.
\end{proof}

Now we prove the irreducibility of the moduli space of stable objects. This is well known to hold for moduli of Gieseker stable sheaves. The moduli spaces are given by moduli of quiver representations, so the dimension of each component has a lower bound. The point is, by the previous results, the dimension of new stable objects is smaller than this lower bound, so irreducible component cannot be produced after wall crossing.

\begin{theorem}
Let $w$ be a primitive character in $\mathrm{K}(\pp)$ such that $\ch_0(w)>0$. For a generic geometric stability condition $\sigma=\sigma_{s,q}$ with $s<\chy(w)$  not on any actual wall of $w$, the moduli space $\mathfrak M^{ss}_\sigma(w)$ is irreducible and smooth.
\label{smoothness and irred property}
\end{theorem}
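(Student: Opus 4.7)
The plan is to handle smoothness and irreducibility separately, using the GIT description of $\mathfrak M^{ss}_\sigma(w)$ and a wall-crossing induction starting from the Gieseker moduli space.

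For smoothness, I would first note that since $w$ is primitive, the dimension vector $\vec n_w$ in the relevant quiver region is also primitive, and since $\sigma$ lies in an open chamber, the weight character $\vec\rho = \vec\rho_{L_{w\sigma},\mathcal E}$ is generic with respect to $\vec n_w$. Hence every $\vec\rho$-semistable representation is $\vec\rho$-stable, so $\mathfrak M^{ss}_\sigma(w) = \mathfrak M^s_\sigma(w)$. By Corollary~\ref{cor:smoothness} the open subvariety $\Rep(Q_\mathcal E,\vec n_w,\alpha_\mathcal E)^{\vec\rho-s}$ is smooth, and by Remark~\ref{rem:smoothness} the quotient by the free $G_{\vec n_w}/\mathbb C^\times$-action is a principal bundle, hence the quotient $\mathfrak M^s_\sigma(w)$ is smooth.

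For irreducibility, I fix $s<\chy(w)$ and vary $q$. By Proposition~\ref{finite many wall, t>0 is HilbS}, there are only finitely many actual walls in the half-line $\{(1,s,q)\,|\,q>\!\!>0\}\subset \Geo_{LP}$, and for $q$ sufficiently large $\mathfrak M^s_{\sigma_{s,q}}(w)$ agrees with the Gieseker moduli space $\mathfrak M^s_{GM}(w)$, which is known to be irreducible (classical result of Drezet--Le Potier). I would then do downward induction on the walls: suppose $\mathfrak M^s_{\sigma_+}(w)$ is irreducible, where $\sigma_+$ lies in the chamber just above an actual wall $W$ and $\sigma_-$ is in the chamber just below. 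The common open locus of objects stable on both sides of $W$ is a dense open subset $\mathfrak U$ of both moduli spaces. The complement $\mathfrak M^s_{\sigma_-}(w)\setminus \mathfrak U$ consists of new stable objects, and by Proposition~\ref{contracting is more than producing lemma}, the corresponding locus in $\Rep(Q_\mathcal E,\vec n_w,\alpha_\mathcal E)^{\vec\rho_--s}$ has dimension strictly less than $-\chi(w,w)+\dim G_{\vec n_w}-2$. After taking the free quotient by $G_{\vec n_w}/\mathbb C^\times$, this locus has codimension at least $2$ in $\mathfrak M^s_{\sigma_-}(w)$. Since Proposition~\ref{prop:compcontainsstabobj} forces every irreducible component of $\mathfrak M^s_{\sigma_-}(w)$ to have dimension $-\chi(w,w)+1$, no component can be contained in the new-stable locus. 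Consequently $\mathfrak M^s_{\sigma_-}(w)$ is the closure of $\mathfrak U$, and irreducibility of $\mathfrak U$ (inherited from $\mathfrak M^s_{\sigma_+}(w)$) gives irreducibility of $\mathfrak M^s_{\sigma_-}(w)$.

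The main obstacle is the careful bookkeeping in the wall-crossing step: one must verify that the given stability condition $\sigma$ can be connected to the large-$q$ Gieseker chamber through a chain of chambers all within the half-plane $s<\chy(w)$, and that at every intermediate wall Proposition~\ref{contracting is more than producing lemma} applies (which is precisely where the hypothesis $s<\chy(w)$ is used, since that proposition is stated in a quiver region whose vertical wall lies to the right of $L_{w\sigma}$). Passing to the GIT quotient and keeping track of the codimension through the principal-bundle quotient by $G_{\vec n_w}/\mathbb C^\times$ (rather than $G_{\vec n_w}$) is routine, but needs to be stated carefully to conclude that the new-stable locus has codimension at least $2$ downstairs.
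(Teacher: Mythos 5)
Your proposal is correct and follows essentially the same route as the paper: smoothness via Corollary \ref{cor:smoothness} and Remark \ref{rem:smoothness}, and irreducibility by wall-crossing induction from the Gieseker chamber, combining the codimension bound of Proposition \ref{contracting is more than producing lemma} with the lower bound on the dimension of every irreducible component (the first statement of Proposition \ref{prop:compcontainsstabobj}, which the paper re-derives directly from the count of defining equations of $\repqwa$). The only cosmetic difference is that you do the dimension bookkeeping downstairs on the moduli space after the free quotient by $G_{\vec n_w}/\mathbb C^\times$, while the paper argues upstairs in the representation space before taking the GIT quotient; the content is identical.
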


\begin{proof}
The smoothness is proved in Corollary \ref{cor:smoothness}. We only need to show the irreducibility.

For any $\sigma$, the line $L_{w\sigma}$ intersects some MZ$_\mathcal E$. In fact, we may always choose $\mathcal E$ to be $\{ \mathcal O(k-1),\mathcal O(k),\mathcal O(k+1)\}$. By Proposition \ref{prof:gitconst}, $\mathfrak M^{ss}_{\sigma}(w)$ can always be constructed as
\[\repqwa\text{ //}_{\det^{\vec \rho_{L_{w\sigma}\mathcal E}}}\left(G_{\vec n_w}/ \mathbb C^\times\right).\]
In the chamber near the vertical wall, the component that
contains \textbf{Rep}$(Q_\mathcal E,\vec n_w, \alpha_\mathcal E)^{\vec \rho-s}$ is irreducible
since the quotient space $\repqwa^{\vec\rho-s}/G$ is $\mathfrak M_{\text{GM}}^{s}(w)$, which is smooth and connected.

By Proposition \ref{contracting is more than producing lemma}, while
crossing an actual wall, the new stable locus \textbf{Rep}$(Q_\mathcal E,\vec n_w, \alpha_\mathcal E)^{\vec \rho_- -s}\lbs$ \textbf{Rep}$(Q_\mathcal E,\vec n_w, \alpha_\mathcal E)^{\vec \rho_+ -s}$
in \textbf{Rep}$(Q_\mathcal E,\vec n_w, \alpha_\mathcal E)^{\vec \rho_- -s}$ has codimension greater than
$2$. On the other hand, since the $\repqwa$ is a subspace in $\repqw$ determined by $n_1n_2\hom(E_1,E_3)$ equations, each irreducible component has dimension at least
\[n_1n_2\hom(E_1,E_2)+n_2n_3\hom(E_2,E_3)-n_1n_2\hom(E_1,E_3),\]
which is the same as the dimension of \textbf{Rep}$(Q_\mathcal E,\vec n_w, \alpha_\mathcal E)^{\vec \rho_+ -s}$ and is greater than the dimension of the new stable locus \textbf{Rep}$(Q_\mathcal E,\vec n_w, \alpha_\mathcal E)^{\vec \rho_- -s}\setminus$ \textbf{Rep}$(Q_\mathcal E,\vec n_w, \alpha_\mathcal E)^{\vec \rho_+ -s}$. Since the stable locus is open in $\repqwa$, the new stable locus is contained in the same irreducible component of $\repqwa^{\vec \rho_+-s}$. \textbf{Rep}$(Q_\mathcal E,\vec n_w, \alpha_\mathcal E)^{\vec \rho_- -s}$ is still irreducible. Hence the moduli space of Bridgeland stable objects, given as the GIT quotient, is also irreducible.
\end{proof}

\begin{rem}
There is natural isomorphism 
\[\mathfrak M^{ss}_{\sigma_{s,q}}(w)\simeq \mathfrak M^{ss}_{\sigma_{-s,q}}(-\ch_0(w),\ch_1(w),-\ch_2(w))\]
induced by the map $\iota: F\mapsto \mathcal {RH}om(F,\mathcal O)[1]$. In terms of the quiver representation, an object $\mathbf {K}_F\in\repqa^{\vec \rho-ss}$:
\[\mathbf {K}_F: E_1\otimes H_{n_1}\xrightarrow {I_{\mathbf F}}E_2\otimes H_{n_2}\xrightarrow {J_{\mathbf F}} E_3\otimes H_{n_3}\]
is mapped to $\mathbf K_{\iota(F)}\in \mathbf{Rep}(Q_{\mathcal E^\vee},(n_3,n_2,n_1),\alpha_{\mathcal E^\vee})^{(-\rho_3,-\rho_2,-\rho_1)-ss}$:
\[\mathbf {K}_{\iota(F)}: E^\vee_3\otimes H^*_{n_3}\xrightarrow {J^T_{\mathbf F}}E_2^\vee\otimes H^*_{n_2}\xrightarrow {I^T_{\mathbf F}} E^\vee_1\otimes H^*_{n_1}.\]
The statement in Theorem \ref{smoothness and irred property} holds for  $\mss (-w)$ when  $\sigma=\sigma_{s,q}$ with $s>\chy(w)$. 
\label{rem:rightside}
\end{rem}

\subsection{Properties of GIT}\label{sec2.5}
Birational geometry via GIT has been studied in \cite{DoHu} by
Dolgachev and Hu, \cite{Th} by Thaddeus. Since the theorems in \cite{DoHu,Th} are stated based on a slightly different set-up, in this section, we
recollect some properties from these papers in the
language of affine GIT.

Let $X$ be an affine algebraic $G$-variety , where $G$ is a
reductive group and acts on $X$ via a linear representation. Given a
character $\rho$: $G\rightarrow \mathbb C^{\times}$, the
(semi)stable locus is written as $X^{\vec \rho-s}$ ($X^{\vec \rho-ss}$). We
write $\mathbb C[X]^{G,\chi}$ for the $\chi$-semi-invariant functions on
$X$, in other words, one has
\[f(g^{-1}(x))=\chi(g)\cdot f(x),\text{ for } \forall g\in G, x\in X.\]
Denote the GIT quotient by $X$//$_{\vec \rho} G$ $:=$ \textbf{Proj}
$\bigoplus_{n\geq 0} \mathbb C[X]^{G,\vec \rho^n}$ and the map from
$X^{\vec \rho-ss}$ to $X$//$_{\vec \rho} G$ by $F_{\vec \rho}$.\\

In additions, we need the following assumptions on $X$ and $G$:
\begin{enumerate}
\item There are only finite many walls in the space of characters on
which there are strictly semistable points, in the chamber we
have $X^{\vec \rho-s}$ $=$ $X^{\vec \rho-ss}$.
\item $X^{\vec \rho-s}$ is smooth
and the action of $G$ on $X^{\vec \rho-s}$ is free.
\item  $X$//$_{\vec \rho} G$ is projective and
irreducible.
\item The closure of any $X^{\vec \rho-s}$ (if non-empty) for any
$\vec \rho$ is a same irreducible component.
\item Given any point $x$
$\in$ $X$, the set of characters $\{\rho|$ $x$ $\in$ $X^{\rho-ss}\}$
is closed.
\begin{comment}
\item Given any point $x$
$\in$ $X$, the set of characters $\{\vec \rho|$ $x$ $\in$ $X^{ss,\vec \rho}\}$
is closed.
\end{comment}
\end{enumerate}

Let $\vec \rho$ be a generic character (i.e. not on any walls) such
that $X^{\vec \rho-s}$ is non-empty. By assumptions 2 and 3, we have a
$G$-principal bundle $X^{\vec \rho-s}$ $\rightarrow $ $X$//$_{\vec \rho} G$ $=$
$X^{\vec \rho-s}$/$G$.
\begin{defn}
Let $\vec \rho_0$ be a character of $G$, we
denote $\mathcal L_{\vec \rho,\vec \rho_0}$ to be the \emph{line bundle} over
$X$//$_{\vec \rho} G$ by composing the transition functions of the
$G$-principal bundles with $\vec \rho_0$.
\label{notation: line bundle on GIT}
\end{defn}
In other words, viewing
$X^{\vec \rho-s}$/$G$ as a complex manifold, it has an open cover with
trivialization of $G$-fibers. The line bundle $\mathcal
L_{\vec \rho,\vec \rho_0}$ is by composing each transition function on
the overlap of charts by $\vec \rho_0$.

Now we are ready to list some properties from the variation
geometric invariant theory.
\begin{prop}
Let $X$ be an affine algebraic $G$-variety that satisfies the
assumptions 1 to 5,  and $\vec \rho$ be a generic character. The following properties hold:
\begin{enumerate}
\item $\Gamma$ $(X$//$_{\vec \rho} G$, $\mathcal L_{\vec \rho,\vec \rho_1}^{\otimes n})$
$\simeq$ $\mathbb
C[X^{\vec \rho-s}]^{G,\vec \rho_1^n}$.
\item Let $\vec \rho_+$ be a character of $G$ in the same chamber of $\vec \rho$, then $\mathbb
C[X^{\vec \rho-s}]^{G,\vec \rho_+^n}$ $=$ $\mathbb C[X]^{G,\vec \rho_+^n}$ for
$n\gg 1$ and $\mathcal L_{\vec \rho,\vec \rho_+}$ is ample. Let $\vec \rho_0$ be a
generic character on the wall of the $\vec \rho$-chamber,
then $\mathcal L_{\vec \rho,\vec \rho_0}$ is nef and semi-ample.
\item There is an inclusion $X^{\vec \rho_+-ss}$
$\subset$ $X^{\vec \rho_0-ss}$ inducing a canonical projective morphism
$\mathrm{pr}_+$: $X$//$_{\vec \rho_+}G$
$\rightarrow$ $X$//$_{\vec \rho_0}G$.
\item A curve $C$ (projective, smooth, connected) in $X$//$_{\vec \rho_+}G$
is contracted by $\mathrm{pr}_+$ if and only if it is contracted by
$X$//$_{\vec \rho_+}G$ $\rightarrow$ $\mathbf{Proj}\oplus_{n\geq0}
\Gamma(X$//$_{\vec \rho_+}G,\mathcal L_{\vec \rho_+,\vec \rho_0}^{\otimes n})$.
\item Let $\vec \rho_+$ and $\vec \rho_-$ be in two chambers on different sides
of the wall. Assume
that $X^{\vec \rho_{+}-s}$ and $X^{\vec \rho_{-}-s}$ are both non-empty, then the morphisms
$X$//$_{\vec \rho_{\pm}}G$ $\rightarrow$ $X$//$_{\vec \rho_0}G$ are proper and
birational. If they are both small, then the rational map
$X$//$_{\vec \rho_{-}}G$ $\dashrightarrow$ $X$//$_{\vec \rho_{+}}G$ is a flip
with respect to $\mathcal L_{\vec \rho_+,\vec \rho_0}$.
\end{enumerate}
\label{recollections
from VGIT}
\end{prop}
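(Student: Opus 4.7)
The plan is to deduce the five clauses from the standard variation-of-GIT picture of Dolgachev--Hu and Thaddeus, transferred to the affine setting by systematic use of Assumptions 1--5. The freeness of the $G$-action on $X^{\vec\rho-s}$ (Assumption 2) identifies the GIT quotient with a geometric quotient of a principal bundle, while the closedness of $\{\rho\mid x\in X^{\rho-ss}\}$ (Assumption 5) produces the specialization maps to wall quotients that power (3)--(5).

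For (1), I would use that $X^{\vec\rho-s}=X^{\vec\rho-ss}\to X/\!/_{\vec\rho}G$ is a principal $G$-bundle and that $\mathcal L_{\vec\rho,\vec\rho_1}$ is by construction obtained by descending the trivial bundle twisted by $\vec\rho_1$; its $n$-th power descends the trivial bundle twisted by $\vec\rho_1^n$, whose $G$-invariant sections on the total space are by definition $\mathbb C[X^{\vec\rho-s}]^{G,\vec\rho_1^n}$. For the first half of (2), constancy of the semistable locus in an open chamber gives $X^{\vec\rho-s}=X^{\vec\rho_+-ss}$, and then $\mathbb C[X]^{G,\vec\rho_+^n}$ injects into $\mathbb C[X^{\vec\rho-s}]^{G,\vec\rho_+^n}$ with equality for $n\gg 1$ because the graded ring $\bigoplus_{n\ge 0}\mathbb C[X]^{G,\vec\rho_+^n}$ is finitely generated (as $X$ is affine and $G$ reductive) and its Proj is by definition $X/\!/_{\vec\rho_+}G$. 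Ampleness of $\mathcal L_{\vec\rho,\vec\rho_+}$ is then tautological from the Proj presentation.

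The engine for (3)--(5) is the inclusion $X^{\vec\rho_+-ss}\subseteq X^{\vec\rho_0-ss}$ obtained by specializing $\vec\rho_+$ to a wall character $\vec\rho_0$, which uses Assumption 5. This inclusion and the universal property of GIT give the projective map $\mathrm{pr}_+:X/\!/_{\vec\rho_+}G\to X/\!/_{\vec\rho_0}G$ of (3), and one checks that $\mathcal L_{\vec\rho_+,\vec\rho_0}$ is canonically $\mathrm{pr}_+^\ast\mathcal L_{\vec\rho_0,\vec\rho_0}$; since the latter is ample by (2) applied to the boundary chamber, $\mathcal L_{\vec\rho_+,\vec\rho_0}$ is nef and semi-ample. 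Statement (4) is then immediate: a curve $C$ is contracted by $\mathrm{pr}_+$ iff $\mathrm{pr}_+^\ast\mathcal L_{\vec\rho_0,\vec\rho_0}\cdot C=0$ iff $C$ is contracted by the linear system of $\mathcal L_{\vec\rho_+,\vec\rho_0}^{\otimes n}$. For (5), symmetric application of (3) produces proper morphisms $\mathrm{pr}_\pm$, which are birational because on the open locus of $\vec\rho_0$-stable points both $X^{\vec\rho_\pm-ss}$ agree with $X^{\vec\rho_0-s}$, so the induced maps restrict to isomorphisms between dense opens. Under the smallness assumption, $\mathcal L_{\vec\rho_+,\vec\rho_0}$ is $\mathrm{pr}_+$-relatively ample while its transport across the small birational map is $\mathrm{pr}_-$-relatively anti-ample, which is exactly the defining data of a flip. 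The delicate point I anticipate is the extension claim $\mathbb C[X^{\vec\rho-s}]^{G,\vec\rho_+^n}=\mathbb C[X]^{G,\vec\rho_+^n}$ in (2); it will follow from finite generation together with the fact that the unstable locus is cut out by the simultaneous vanishing of finitely many semi-invariants, so for $n$ large enough every $\vec\rho_+^n$-semi-invariant on $X^{\vec\rho-s}$ is the restriction of one on $X$.
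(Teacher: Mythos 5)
Your clauses (1)--(3) track the paper's own proof quite closely: (1) is descent along the principal $G$-bundle $X^{\vec\rho-s}\to X/\!/_{\vec\rho}G$ (the paper cites flat descent, SGA I, Th\'eor\`eme 4.5), and (3) uses Assumption 5 to get $X^{\vec\rho_+-ss}\subset X^{\vec\rho_0-ss}$ and then the induced maps on Proj of the semi-invariant rings. Where you genuinely diverge is (4). The paper proves the hard implication by hand: given a curve $C$ contracted by $\mathrm{pr}_+$, it constructs inside $X^{\vec\rho_+-s}$ a subvariety $P_y$ that is a principal bundle for the kernel $G'$ of $\vec\rho_0$ over (a finite cover of) $C$, using one-parameter subgroups degenerating to a closed orbit and finitely many semi-invariants $f_i$ separating it, and then concludes that every $\vec\rho_0^n$-semi-invariant on $X^{\vec\rho_+-s}$ is constant along $C$. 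You instead identify $\mathcal L_{\vec\rho_+,\vec\rho_0}\cong \mathrm{pr}_+^*\mathcal O(1)$ and invoke that the fibration of a semi-ample bundle contracts exactly its degree-zero curves. This is a legitimate and much shorter route, provided you first replace $\vec\rho_0$ by a suitable power so that the wall ring is generated in degree one and $\mathcal O(1)$ is an honest ample line bundle; the benefit of the paper's bare-hands argument is that it works directly with semi-invariant functions and never has to compare the full section ring $\bigoplus_n\Gamma(\mathcal L_{\vec\rho_+,\vec\rho_0}^{\otimes n})$ with the subring of global semi-invariants. One small repair in your (3): you cannot apply (2) ``to the boundary chamber,'' since $\vec\rho_0$ lies on a wall and is not generic; ampleness of $\mathcal O(1)$ on $\mathbf{Proj}\bigoplus_n\mathbb C[X]^{G,\vec\rho_0^n}$ holds anyway because this Proj is projective over the affine $\mathrm{Spec}\,\mathbb C[X]^G$.

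Two steps do not hold as written. First, in (2): multiplying by powers of the semi-invariants cutting out the unstable locus shows that every section over $X^{\vec\rho-s}$ extends after such a multiplication, which identifies localizations (hence the two Proj's), but it does not yield the degreewise equality $\mathbb C[X^{\vec\rho-s}]^{G,\vec\rho_+^n}=\mathbb C[X]^{G,\vec\rho_+^n}$ for $n\gg1$, and calling ampleness of $\mathcal L_{\vec\rho,\vec\rho_+}$ ``tautological'' is circular at exactly this point. The paper's fix is different: the projective birational morphism $X/\!/_{\vec\rho}G\to X/\!/_{\vec\rho_+}G$ is a set-theoretic bijection (same chamber, stable equals semistable), hence an isomorphism using Assumption 2 (smoothness and freeness of the action), and both the ampleness and the degreewise equality fall out of that isomorphism; your write-up needs this (or an equivalent Zariski-main-theorem argument). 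Second, in (5), the claim that $\mathcal L_{\vec\rho_+,\vec\rho_0}$ is $\mathrm{pr}_+$-relatively ample contradicts your own identification in (3)--(4): being pulled back from $X/\!/_{\vec\rho_0}G$, it is $\mathrm{pr}_+$-numerically trivial. The relatively ample/anti-ample pair is furnished by the chamber characters (e.g.\ $\mathcal L_{\vec\rho_\pm,\vec\rho_\pm}$), and verifying the flip axioms from this is precisely the content of Theorem 3.3 of Thaddeus, which the paper cites rather than reproves. Your birationality argument in (5) also silently assumes $X^{\vec\rho_0-s}\neq\emptyset$, equivalently $X^{\vec\rho_+-s}\cap X^{\vec\rho_--s}\neq\emptyset$; this is exactly where Assumption 4 (all stable loci are dense opens of one common irreducible component) must be invoked.
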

\begin{proof} 1. This is true for a general $G$-principal bundle by flat
descent theorem, see \cite{SGA} Expos\'{e} I, Th\'{e}or\`{e}me 4.5.

2 and 3. By the assumption 5,  $X^{\vec \rho-s}$ $\subset$ $X^{\vec \rho_*-ss}$ for $*=0$ or $+$.
By the  assumption 4, the natural map: $\mathbb C[X]^{G,\vec \rho_*^n}$
$\rightarrow$ $\mathbb C[X^{\vec \rho-s}]^{G,\vec \rho_*^n}$ $\simeq$ $\Gamma$
$(X$//$_{\vec \rho} G$, $\mathcal L_{\vec \rho,\vec \rho_*}^{\otimes n})$ is
injective for  $n\in \mathbb Z_{\geq 0}$. Hence the base
locus of $\mathcal L_{\vec \rho,\vec \rho_*}$ is empty. $\mathcal
R(X$//$_{\vec \rho}G$, $\mathcal L_{\vec \rho,\vec \rho_*})$ $\simeq$
$\bigoplus_{n\geq 0} \mathbb C[X^{\vec \rho-s}]^{G,\vec \rho_*^n}$ is finitely
generated over $\mathbb C$. The canonical morphism $X$//$_{\rho}G$
$\rightarrow$ \textbf{Proj}$\bigoplus_{n\geq 0} \mathbb
C[X^{\vec{\rho}-s}]^{G,\vec{\rho}_*^n}$ is birational and projective when
$X^{\vec{\rho}_*-s}$ is non-empty. Now we have series of morphisms:
\begin{center}
pr$_+$: $X$//$_{\vec{\rho}}G$ $\rightarrow$ \textbf{Proj}$\bigoplus_{n\geq
0} \mathbb C[X^{\vec{\rho}-s}]^{G,\vec{\rho}_*^n}$ $\rightarrow$
\textbf{Proj}$\bigoplus_{n\geq 0} \mathbb C[X]^{G,\vec{\rho}_*^n}$ $=$
$X$//$_{\vec{\rho}_*}G$.
\end{center}
The morphism pr$_+$ maps each $\vec{\rho}_*$ S-equivariant class to itself
set-theoretically. When $\vec{\rho}_+$ is in the same chamber of $\vec{\rho}$,
by the assumption 2, this is an isomorphism, implying that $\mathcal
L_{\vec{\rho},\vec{\rho}_+}$ must be ample and $\mathbb
C[X^{\vec{\rho}-s}]^{G,\vec{\rho}_*^n}$ $=$ $\mathbb C[X]^{G,\vec{\rho}_*^n}$ for $n$
large enough. By the definition of $\mathcal L_{\vec{\rho},\vec{\rho}_+}$, it extends
linearly  to a map from the space of $\mathbb R$-characters
of $G$ to NS$_{\mathbb R}$($X$//$_{\vec{\rho}}G$). Since all elements in
the $\vec{\rho}$ chamber are mapped
into the ample cone, $\vec{\rho}_0$ must be nef.\\

4. `$\Leftarrow$': The morphism 
\[X\text{//}_{\vec \rho_+}G \rightarrow X\text{//}_{\vec \rho_0}G=\mathbf{Proj}\bigoplus_{n\geq0} \mathbb C[X]^{G,\vec{\rho}_0^n}\]
factors via the morphism
\textbf{Proj}$\bigoplus_{n\geq0} \mathbb C[X^{\vec{\rho}_+-s}]^{G,\vec{\rho}_0^n}$
$\rightarrow$ \textbf{Proj}$\bigoplus_{n\geq0} \mathbb
C[X]^{G,\vec{\rho}_0^n}$. If $C$ is contracted at
\textbf{Proj}$\bigoplus_{n\geq0} \mathbb C[X^{\vec{\rho}_+-s}]^{G,\vec{\rho}_0^n}$,
then it is also contracted at \textbf{Proj}$\bigoplus_{n\geq0} \mathbb
C[X]^{G,\vec{\rho}_0^n}$.

`$\Rightarrow$': Suppose $C$ is contracted to a point by pr$_+$. Let $G'$ be the kernel of $\vec{\rho}_0$, we show that
there is a
subvariety $P$ in $X^{\vec{\rho}_+-s}$ such that
\begin{itemize}
\item[I.] $P$ is a $G'$-principal bundle, and the base space is projective, connected;
\item[II.] $F_{\vec{\rho}_+} (P)$ $=$ $C$.
\end{itemize}
Suppose we find such $P$, then any function $f$ in $\mathbb
C[X^{\vec{\rho}_+-s}]^{G,\vec{\rho}_0^n}$ is constant on each $G'$ fiber. Since
the base space is projective and connected, it must be a constant on
$P$. Since $F_{\vec{\rho}_+} (P)$ $=$ $C$, the value of
$f$ on $F_{\vec{\rho}_+}^{-1}(C)$ is determine by this constant. Hence the
canonical morphism contracts $C$ to a point.

We may assume $G'$ $\neq$ $G$, choose $N$ large enough
and finitely many $f_i$'s in $\mathbb C[X]^{G,\vec{\rho}_0^N}$ such that
$\bigcap_i \left(V(f_i) \cap F_{\vec{\rho}_0}^{-1}(\mathrm{pr}_+(C))\right)$ is empty. Since
all points in $F_{\vec{\rho}_0}^{-1}(\mathrm{pr}_+(C))$ are S-equivariant in
$X^{\vec{\rho}_0-ss}$, for each point $x$ in $F_{\vec{\rho}_+}^{-1}(C)$, $\overline{Gx}$ contains all minimum orbits
$Gy$ in $F_{\vec{\rho}_0}^{-1}(\mathrm{pr}_+(C))$. Choose $y$ in $F_{\vec{\rho}_0}^{-1}(\mathrm{pr}_+(C))$ such that $Gy$ is
closed in $X^{\vec{\rho}_0-ss}$, let $P_y$ be
\[\bigcap_{i}\{x\in F_{\vec{\rho}_+}^{-1}(C)|f_i(x) =f_i(y)\}.\] For any
$p$ $\in$ $C$, since $G$ is reductive and the $G$-orbit $\overline{F_{\vec{\rho}_+}^{-1}(p)}$
contains $y$, there is a subgroup $\beta$:
$\mathbb C^\times$ $\rightarrow$ $G$ and $x_p$ $\in$
$F^{-1}_{\vec{\rho}_+}(p)$ such that $y$ $\in$
$\overline{\beta(\mathbb C^\times)\cdot\{x_p\}}$. Since $y$ $\in$
$X^{\vec{\rho}_0-ss}$, there is a $\vec{\rho}_0^N$-semi-invariant $f_i$ such
that $f_i(y)\neq 0$. Therefore $\vec{\rho}_0\circ \beta$ $\neq$ $0$, and for any $\vec{\rho}_0$-semi-invariant function $f$, $f(x_p)$
$=$ $f(y)$. The point $x_p$ is in $P_y$ and therefore $F_{\vec\rho}(P_y) = C$. 

Let $G''$ be the kernel of
$\vec{\rho}_0^N$.  By the choices of $f_i$'s, another point $x_q$ on $Gx_p$
is in $P_y$ if and only if they are on the same $G''$-orbit. Since
$G$ acts freely on all stable points, $P_y$ becomes a $G''$
principal bundle over base $C$. As $[G'':G']$ is finite, we may
choose a connected component of $P_y$ such that viewing as a $G'$-principal
bundle, the induced morphism from the base
space to $C$ is finite. This component of $P_y$ then satisfies both condition I and II at the beginning.\\

5. This is due to Theorem 3.3 in \cite{Th}.
\end{proof}
\begin{rem}
When the difference between $X^{\vec{\rho}_+-s}$ and $X^{\vec{\rho}_--s}$ is of
codimension two in $X^{\vec{\rho}_+-s}\cup X^{\vec{\rho}_--s}$, since
$X^{\vec{\rho}_+-s}\cup X^{\vec{\rho}_--s}$ is smooth, irreducible and
quasi-affine by the second assumption, we have:
\begin{center}
$\mathbb C[X^{\vec{\rho}_+-s}]^{G,\vec{\rho}_-^n}$ $=$ $\mathbb
C[X^{\vec{\rho}_+-s}\cup X^{\vec{\rho}_--s}]^{G,\vec{\rho}_-^n}$ $=$ $\mathbb
C[X^{\vec{\rho}_--s}]^{G,\vec{\rho}_-^n}$ $=$ $\mathbb C[X]^{G,\vec{\rho}_-^n}$ for
$n\gg 0$.
\end{center}
In this case,  the birational morphism between $X^{s,\vec{\rho}_+}$ and
$X^{s,\vec{\rho}_-}$ identifies NS$_{\mathbb R} (X$//$_{\vec{\rho}_+}G$) and
NS$_{\mathbb R} (X$//$_{\vec{\rho}_-}G$). It maps $[\mathcal
L_{\vec{\rho}_+,\vec{\rho}_*}]$ to $[\mathcal L_{\vec{\rho}_-,\vec{\rho}_*}]$ for all
$\vec{\rho}_*$ in either $\vec{\rho}_+$ and $\vec{\rho}_-$ chamber. \label{remark on the divisor
glueing in flip case}
\label{rem:lbndlglue}
\end{rem}

\subsection{Walls-crossing as minimal model program}\label{sec2.6}
Let $w$ be a primitive character in $\mathrm{K}(\pp)$ such that $\ch_0(w)>0$. We can run the minimal model program for $\mathfrak M^s_{GM}(w)$ via wall crossing on the space of stability conditions.
\begin{theorem}
Adopt the notations as above, the actual walls $L_{w\sigma}$ (chambers) to the left of the vertical wall $L_{w\pm}$ in
the $\cccp$ is one-to-one corresponding to the
stable base locus decomposition walls (chambers) on one side (primitive side) of the divisor
cone of $\mathfrak M^s_{GM}(w)$.
\label{left half upper plane's main theorem in
the body}
\end{theorem}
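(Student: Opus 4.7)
The plan is to leverage the GIT realization of $\msssq(w)$ from Proposition 2.5 together with the variation of GIT package in Proposition 2.19 and Remark 2.20. For each geometric stability condition $\sigma = \sigma_{s,q}$ to the left of the vertical wall $L_{w\pm}$, I would fix an exceptional triple $\mathcal{E}$ with $L_{w\sigma}$ passing through $\mathrm{MZ}_\mathcal{E}$, so that $\mss(w) \cong \repqwa \,/\!/_{\det^{\vec\rho_{L_{w\sigma},\mathcal E}}} (G_{\vec n_w}/\mathbb{C}^\times)$. This produces a natural line bundle $\mathcal{L}_\sigma$ on the quotient (Definition 2.18) whose class in $\mathrm{NS}_\mathbb{R}$ depends only on the line $L_{w\sigma}$; via the Donaldson morphism it corresponds to $\mathcal{L}_{v_{s,q}}$ for the unique direction $v_{s,q} \in w^\perp$ with $L_{w\sigma} = {}^\perp v_{s,q}$, as indicated in the introduction.

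The second step is to canonically identify the N\'eron--Severi groups of all moduli spaces $\mss(w)$ as $\sigma$ varies. By Proposition 2.16 the locus added/removed when crossing an actual wall has codimension at least $3$ in $\repqwa$, hence codimension at least $2$ in $\mss(w)$ after the free action of $G_{\vec n_w}/\mathbb{C}^\times$ on the stable locus. Remark 2.20 then yields, at each wall-crossing, an identification $\mathrm{NS}_\mathbb{R}(\mathfrak{M}^s_{\sigma_+}(w)) \cong \mathrm{NS}_\mathbb{R}(\mathfrak{M}^s_{\sigma_-}(w))$ under which the GIT line bundles attached to any fixed weight character are matched. Concatenating across all chambers and identifying with $\mathrm{Pic}_\mathbb{R}(\mgs(w))$ in the Gieseker chamber (Proposition 2.7), we obtain a consistent identification of divisor classes under which the assignment $\sigma \mapsto [\mathcal{L}_\sigma]$ is well defined.

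With this set-up, chambers and walls match up as follows. Inside a single Bridgeland chamber, Proposition 2.19(2) makes $\mathcal{L}_\sigma$ ample, and as $\sigma$ varies $[\mathcal{L}_\sigma]$ sweeps out an open subcone; distinct chambers give distinct ample subcones, pairwise separated by faces where by Proposition 2.19(4)--(5) a curve is contracted on one (or both) sides, which is exactly a stable base locus decomposition wall. Conversely, any stable base locus wall is a boundary of a nef cone of some birational model, and the ampleness of $\mathcal{L}_\sigma$ in each Bridgeland chamber forces any such boundary to be crossed by the one-parameter family of $[\mathcal{L}_\sigma]$, hence to arise from some actual wall. To know the map is surjective onto the \emph{primitive side} of the movable cone, one invokes the description of the last wall $\lw_w$ (Theorem \ref{thm:mainthminintro}) matching the effective boundary, so that the image of the geometric stability area sweeps out the whole primitive side up to that boundary.

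The main obstacle is ensuring that every actual wall really is reflected as a nontrivial stable base locus wall, i.e.\ that a curve is honestly contracted on either side. This requires that the S-equivalence classes of strictly semistable objects on the wall form a positive-dimensional family meeting the main component of $\mss(w)$, rather than being confined to a lower-dimensional exceptional locus. Here the formal VGIT machinery is not enough: one uses the codimension estimate of Proposition 2.16 (so the identification of divisor classes is actually valid), the generic extension/Ext$^1$ non-vanishing arguments of Section 2.3, and crucially the irreducibility and smoothness of $\mss(w)$ proved in Theorem \ref{smoothness and irred property}, which guarantee that these strictly semistable families are indeed swept out by deformations of stable objects on the generic locus.
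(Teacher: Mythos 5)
Your proposal is correct and follows essentially the same route as the paper's own proof: the GIT realization of Proposition \ref{prof:gitconst}, verification of the VGIT assumptions via Corollary \ref{cor:smoothness} and Theorem \ref{smoothness and irred property}, the codimension estimate of Proposition \ref{contracting is more than producing lemma} combined with Remark \ref{rem:lbndlglue} to transport divisor classes across flip walls, and detection of stable base locus walls through Proposition \ref{recollections from VGIT}. The only difference is organizational: the paper structures the argument as an explicit three-case analysis at each actual wall (flip; divisorial contraction, which is shown to occur exactly at the movable cone boundary since the contracted model has Picard number one; and total collapse at the last wall, terminating the MMP), whereas you phrase the same ingredients as matching ample subcones under the assignment $\sigma \mapsto [\mathcal L_\sigma]$ — with the caveat that your blanket concatenation of N\'eron--Severi identifications is only valid across the small-contraction walls, exactly as the paper restricts it to walls occurring before any wall of its Case 2 or 3.
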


\begin{proof}
Suppose $L=L_{w\sigma}$ passes through $\mathrm{MZ}_\mathcal E$ for an exceptional triple $\mathcal E$. By Lemma \ref{lemma: rhocharformular}, $L$ associates a character (up to a positive scalar) $\vec \rho_{L}$ to the group $G_{\vec n_w}/\mathbb C^\times$. By
Proposition \ref{prof:gitconst}, the moduli space
 $\mathfrak M^{ss}_{\sigma} (w)$ is constructed as the quotient space 
$\repqwa\lfss_{\det^{\vec\rho_{L}}}\left(G_{\vec n_w}/\mathbb C^{\times}\right)$. We first check that the $G$-variety $\repqwa$ satisfies the
assumptions of Proposition \ref{recollections from VGIT}. Assumption 1 is due to Proposition \ref{finite many
wall, t>0 is HilbS}. Assumption 2 is due to Corollary \ref{cor:smoothness} and Remark \ref{rem:smoothness}. Assumption 3 and 4 are due
to Theorem \ref{smoothness and irred property} and its proof. Assumption 5 is automatically satisfied in our case.

By Definition \ref{notation: line bundle on GIT}, the character $\vec \rho_{L}$ induces a divisor (up to a positive scalar) $[\mathcal L_{\vec \rho_L,\vec \rho_{L}}]$ on $\repqwa\lfss_{\det^{\vec\rho_L}}\left(G_{\vec n_w}/\mathbb C^{\times}\right)$. We start from the chamber on the left of the vertical wall, where
$\repqwa\lfss_{\det^{\vec\rho_L}}\left(G_{\vec n_w}/\mathbb C^{\times}\right)$ is isomorphic to $\mathfrak M^s_{GM}(w)$, and vary the stability to the wall near the tangent line of $\td_0$ across $w$. At an actual destabilizing wall $L$, let pr$_+$ be
the morphism 
\[\repqwa\lfss_{\det^{\vec\rho_{L+}}}\left(G_{\vec n_w}/\mathbb C^{\times}\right)\rightarrow \repqwa\lfss_{\det^{\vec\rho_{L}}}\left(G_{\vec n_w}/\mathbb C^{\times}\right)\]
as that in Proposition \ref{recollections from
VGIT}.
\begin{comment}
, then by Lemma \ref{contracting is more than producing lemma}, some
curves must be contracted by pr$_+$.
\end{comment}
One of three different cases may happen:
\begin{enumerate}
\item pr$_+$ is a small contraction;
\item pr$_+$ is birational and has an exceptional divisor;
\item all objects in $\mathfrak M^s_{L}(w)$ becomes strictly semistable.
\end{enumerate}

By Proposition \ref{contracting is more than producing lemma}, in
Case 1, we get small contractions on both sides. By Property 5 in Proposition \ref{recollections from VGIT}, this is
the flip with respect to the divisor $[\mathcal
L_{\vec\rho_{L+},\vec\rho_L}]$. Since
the different locus between  $\repqwa^{\vec\rho_{L+}}$ and  $\repqwa^{\vec\rho_{L-}}$ is of codimension at least $2$, their divisor cones are identified with each other as
explained in Remark \ref{remark on the divisor glueing in flip
case}. In particular, before encountering any wall of Case 2 or 3, the divisor $[\mathcal L_{\vec\rho_{L+},\vec\rho_L}]$ is identified to a divisor $[\mathcal L_{\vec\rho_L}]$ on $\mathfrak M^s_{GM}(w)$. The flip $\mathfrak M^s_{L+}(w)\dashrightarrow\mathfrak M^s_{L-}(w)$ is with respect to this divisor.

In Case 2, by Proposition \ref{contracting is
more than producing lemma}, the morphism pr$_-$ on the left side  
\[\repqwa\lfss_{\det^{\vec\rho_{L-}}}\left(G_{\vec n_w}/\mathbb C^{\times}\right)\rightarrow \repqwa\lfss_{\det^{\vec\rho_{L}}}\left(G_{\vec n_w}/\mathbb C^{\times}\right)\]
does not contract any divisors. Hence the Picard number of
$\repqwa/_{\det^{\vec\rho_{L-}}}\left(G_{\vec n_w}/\mathbb C^{\times}\right)$  is $1$. By
Property 4 in Proposition \ref{recollections from VGIT}, Case 2 only
happens when the canonical model associated to $\mathcal
L_{\vec\rho_L}$
contracts a divisor, in other words, the divisor of $\mathcal L_{\vec\rho_L}$ on
$\mathfrak M^s_{GM}(w)$ is on the boundary of the movable cone. The next
destabilizing wall on the left corresponds to the zero divisor, it
must be Case 3. On the other hand, by Corollary \ref{cor:lastwallradiusgeq1}, Case 3 must happen at a wall before reaching the tangent line. This terminates the whole minimal model program.

In general, if the boundary of the Movable cone is
not the same as that of the Nef cone, then Case 2 happens.
Otherwise, Case 2 does not happen and the procedure ends up with a
Mori fibration of Case 3.
\end{proof}

\begin{rem}
On the vertical wall, the morphism $\mathrm{pr}_+$ is the Donaldson-Uhlenbeck morphism. If it contracts a divisor, the vertical wall corresponds to the movable boundary and the minimal model program stops. If $\mathrm{pr}_+$ is a small contraction, the wall-crossing behavior on the other side of the nef cone is the same as the wall-crossing behavior of $\mathfrak M^s_{GM}(\ch_0(w),-\ch_1(w),\ch_2(w))$ on the primitive side.
\label{rem:verticalwallmmp}
\end{rem}

\section{The last wall and criteria for actual walls}\label{sec3}

In this section, we give a description of the last wall (Section \ref{sec3.1}) and a numerical criteria of actual walls (Section \ref{sec3.2}). Section \ref{sec3.0} consists of several useful lemmas.

\subsection{Stable objects by extensions}\label{sec3.0}

The following lemma is useful to construct new stable objects after wall-crossing.

\begin{lemma}
Let $G$ and $F$ be two $\sigma_{s,q}$-stable objects of the same
phase, in particular, $\sigma_{s,q}$ is on the line $L_{GF}$.
Suppose we have \[\phi_{\sigma_{s,q+}}(G) >
\phi_{\sigma_{s,q+}}(F)\text{,  and } \Hom(G,F[1])\neq 0.\] Let $f$
be a non-zero element in $\Hom(G,F[1])$ and $C$ be the corresponding extension of $G$ by $F$, then $C$ is $\sigma_{s,q+}$-stable.
%There is a similar statement for $\sigma_{s,q-}$-stability of non-trivial extensions in $\Hom(F,G[1])$.
\label{lemma:extoftwostabobjsarestab}
\end{lemma}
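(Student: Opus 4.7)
The plan is to argue by contradiction: suppose $C$ admits a destabilizing subobject $A \subset C$ in the heart $\Coh_{\sharp s}$ with $\phi_{\sigma_{s,q+}}(A) \geq \phi_{\sigma_{s,q+}}(C)$. Since $F$ and $G$ are $\sigma_{s,q}$-stable of the same phase, $C$ is $\sigma_{s,q}$-semistable, so $\phi_{\sigma_{s,q}}(A) \leq \phi_{\sigma_{s,q}}(C)$. On the other hand, because $\phi_{\sigma_{s,q}}(A) < \phi_{\sigma_{s,q}}(C)$ would persist under a small perturbation to $\sigma_{s,q+}$ by continuity of the phase, we must have equality $\phi_{\sigma_{s,q}}(A) = \phi_{\sigma_{s,q}}(C)$. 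In particular $A$ is itself $\sigma_{s,q}$-semistable of this common phase.

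The next step is to classify such $A$ using the short exact sequence $0 \to F \to C \to G \to 0$ in $\Coh_{\sharp s}$ coming from the non-zero class $f \in \Hom(G,F[1])$. Consider the composition $A \hookrightarrow C \twoheadrightarrow G$. If this composition vanishes, then $A$ is a subobject of $F$; since $F$ is $\sigma_{s,q}$-stable and $\phi_{\sigma_{s,q}}(A) = \phi_{\sigma_{s,q}}(F)$, either $A = 0$ or $A = F$. If instead the composition is non-zero with image $B \subset G$, then $B$ is a quotient of the semistable $A$ hence $\phi_{\sigma_{s,q}}(B) \geq \phi_{\sigma_{s,q}}(A) = \phi_{\sigma_{s,q}}(G)$, and $B$ is a subobject of the stable $G$ hence $\phi_{\sigma_{s,q}}(B) \leq \phi_{\sigma_{s,q}}(G)$ with equality forcing $B = G$. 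Combining, $A \twoheadrightarrow G$ is surjective, and its kernel $A \cap F$ is a subobject of $F$ of phase $\leq \phi_{\sigma_{s,q}}(F)$, hence equal to $0$ or $F$. The case $A \cap F = 0$ yields $A \xrightarrow{\sim} G$ and splits the extension, contradicting $f \neq 0$; the case $A \cap F = F$ yields $A = C$.

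Therefore the only proper non-zero subobject of $C$ of the same $\sigma_{s,q}$-phase as $C$ is $F$. This forces $A = F$. But by hypothesis $\phi_{\sigma_{s,q+}}(G) > \phi_{\sigma_{s,q+}}(F)$, and since $Z_{\sigma_{s,q+}}(C) = Z_{\sigma_{s,q+}}(F) + Z_{\sigma_{s,q+}}(G)$, the phase of $C$ at $\sigma_{s,q+}$ lies strictly between those of $F$ and $G$; in particular $\phi_{\sigma_{s,q+}}(F) < \phi_{\sigma_{s,q+}}(C)$. This contradicts $\phi_{\sigma_{s,q+}}(A) = \phi_{\sigma_{s,q+}}(F) \geq \phi_{\sigma_{s,q+}}(C)$, completing the proof.

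The only substantive point is the classification of subobjects of $C$ with the same phase at the wall, which is a clean consequence of stability of $F$ and $G$ together with non-triviality of the extension class $f$; no deep input is needed. One minor care is required in the continuity step, where one uses that $\sigma_{s,q+}$ is chosen close enough to $\sigma_{s,q}$ that no other wall intervenes, which is harmless since stability is an open condition and only finitely many walls are relevant near $C$.
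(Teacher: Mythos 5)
Your proof is correct and takes essentially the same route as the paper's: both argue by contradiction with a destabilizing subobject of $C$, decompose it via the kernel and image of the composition with $C \twoheadrightarrow G$, and use the stability of $F$ and $G$, the non-splitness of the extension, and continuity of phases to force the destabilizer to be $F$, which cannot destabilize at $\sigma_{s,q+}$ since $\phi_{\sigma_{s,q+}}(F) < \phi_{\sigma_{s,q+}}(C) < \phi_{\sigma_{s,q+}}(G)$ (the paper merely phrases this as a three-case analysis in a quiver heart $\mathcal A_{\mathcal E}[t]$, while you stay in $\Coh_{\# s}$, which is legitimate because the heart is unchanged as only $q$ varies). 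One step you should write out: the claim that the kernel $A\cap F$ is $0$ or $F$ does not follow from $\phi_{\sigma_{s,q}}(A\cap F)\leq \phi_{\sigma_{s,q}}(F)$ alone — you need additivity of the central charge, $Z_{s,q}(A)=Z_{s,q}(A\cap F)+Z_{s,q}(G)$, together with stability of $F$, so that a nonzero proper kernel would have phase strictly less than $\phi_{\sigma_{s,q}}(F)$ and would drag $\phi_{\sigma_{s,q}}(A)$ strictly below $\phi_{\sigma_{s,q}}(C)$, contradicting the equality of phases you established (this is exactly how the paper's third case is resolved).
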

\begin{proof}
By Corollary \ref{cor:stabonthesegment} and Proposition \ref{prop:commomareaofalggeo}, we may assume that $\sigma_{s,q}$ is
in a quiver region MZ$_\mathcal E$ so that $C$, $F$ and $G$ are
in the same heart $\mathcal A_{\mathcal E}[t]$ for a homological shift $t=0$ or $1$. We write $\sigma$ for
$\sigma_{s,q}$, and $\sigma_+$ for $\sigma_{s,q+}$.

We prove the lemma by contradiction, suppose $D$ is a $\sigma_+$-stable
sub-complex destabilizing $C$ in $\mathcal A_\mathcal E[t]$. We have the following diagram:
\[
\xymatrix{
  0 \ar[r]^{} & K \ar[d]_{} \ar[r]^{} & D \ar[d]^{} \ar[r]^{} & I \ar[d]_{} \ar[r]^{} & 0\\
  0 \ar[r]^{} & F \ar[r]^{} & C  \ar[r]^{} &  G \ar[r]^{} & 0 }
\]
such that the vertical maps are all injective in $\mathcal
A_{\mathcal E}[t]$. Three different cases may happen.

If $I=0$, then $\phi_{\sigma_+}(K) = \phi_{\sigma_+}(D)\geq
\phi_{\sigma_+}(C) >\phi_{\sigma_+}(F)$. But $F$ is also
$\sigma_+$-stable, this leads to a contradiction.

If $K=0$, then either
$\phi_{\sigma}(I)<\phi_\sigma(G)=\phi_\sigma(C)$ or $I=G$. The second case
that $I=G$ is impossible since the extension is non-splitting. In the
first case, as the phase function is continuous (by the support
property), we have $\phi_{\sigma_+}(I)<\phi_{\sigma_+}(C)$. Therefore the object $D$ does not destabilize $C$ at $\sigma_+$, which is a contradiction.

If both $K$ and $I$ are non-zero, then since $F$ and $G$ are $\sigma$-stable, 
$\phi_{\sigma}(I)\leq\phi_\sigma(G)=\phi_\sigma(C)$ and
$\phi_{\sigma}(K)\leq\phi_\sigma(F)=\phi_\sigma(C)$. When both equalities hold, we have $I=G$ and $K=F$, and in this case, $D=C$. If at least one of the equalities does not hold, then $\phi_{\sigma}(D)<\phi_\sigma(C)$. Again by the continuity of the phase function, we see that $\phi_{\sigma_+}(I)<\phi_{\sigma_+}(C)$ and get the contradiction.
\end{proof}

In general, we also need the following direct sum version, which can be proved in a similar way.
\begin{cor}
Let $G$ and $F$ be two $\sigma_{s,q}$-stable objects of the same
phase.
Suppose we have \[\phi_{\sigma_{s,q+}}(G) >
\phi_{\sigma_{s,q+}}(F)\text{,  and } \Hom(G,F[1])=n> 0.\] Let $f$
be a rank $m$ map in $\Hom(G,F^{\oplus m}[1])$ and $C$ be the object
extended by $G$ and $F^{\oplus m}$ via $f$, then $C$ is $\sigma_{s,q+}$-stable.
\label{cor:extoftwostabobjsarestab}
\end{cor}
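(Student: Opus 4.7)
The plan is to adapt the proof of Lemma \ref{lemma:extoftwostabobjsarestab} essentially verbatim, with $F^{\oplus m}$ playing the role of $F$. The key point is that although $F^{\oplus m}$ is only $\sigma$-semistable rather than $\sigma$-stable, all of its Jordan--H\"older factors are isomorphic to $F$, which suffices to run the original case analysis. First I would invoke Corollary \ref{cor:stabonthesegment} and Proposition \ref{prop:commomareaofalggeo} to assume $\sigma$ lies inside some quiver region MZ$_{\mathcal E}$, so that $C$, $F^{\oplus m}$ and $G$ share a heart $\mathcal{A}_{\mathcal E}[t]$. Assuming for contradiction that $D \subset C$ is a $\sigma_+$-stable subobject destabilizing $C$, I would then form the standard commutative diagram whose rows are $0 \to K \to D \to I \to 0$ and $0 \to F^{\oplus m} \to C \to G \to 0$ with $K \hookrightarrow F^{\oplus m}$ and $I \hookrightarrow G$ monomorphisms in the heart.

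The cases $I = 0$ and $K = 0$ with $I \subsetneq G$ carry over directly from Lemma \ref{lemma:extoftwostabobjsarestab}: the first contradicts $\sigma_+$-semistability of $F^{\oplus m}$, while the second yields $\phi_\sigma(I) < \phi_\sigma(C)$, which propagates to $\sigma_+$ by continuity of the phase function (support property). When $K = 0$ and $I = G$, the monomorphism $D \hookrightarrow C$ provides a section of $C \to G$, forcing the extension class $f$ to vanish; this contradicts the assumption that $f$ has rank $m \geq 1$. In the remaining case, where $K$ and $I$ are both nonzero with $\phi_\sigma(D) = \phi_\sigma(C)$, $\sigma$-(semi)stability of the outer terms forces $I = G$ and $\phi_\sigma(K) = \phi_\sigma(F)$; since every Jordan--H\"older factor of $F^{\oplus m}$ is $F$, the semistable subobject $K$ of matching phase must be isomorphic to $F^{\oplus k}$ for some $1 \leq k \leq m$.

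The main obstacle is excluding the new possibility $k < m$ in this last sub-case. In this scenario $D$ sits as an extension $0 \to F^{\oplus k} \to D \to G \to 0$ inside $C$ with quotient $C/D \cong F^{\oplus(m-k)}$, and by the pushout description of extensions the class of $C$ in $\Hom(G, F^{\oplus m}[1]) \cong \Hom(G, F[1]) \otimes \mathbb{C}^m$ must lie in the image of $\Hom(G, F[1]) \otimes W$ for some $k$-dimensional subspace $W \subset \mathbb{C}^m$ (the one corresponding to the inclusion $F^{\oplus k} \hookrightarrow F^{\oplus m}$). Writing $f = \sum_{i=1}^{m} g_i \otimes e_i$ with $g_i \in \Hom(G, F[1])$, the rank-$m$ hypothesis translates precisely to the linear independence of $g_1, \ldots, g_m$, so the containment $f \in \Hom(G, F[1]) \otimes W$ forces $W = \mathbb{C}^m$, hence $k = m$ and $D = C$. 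This rules out strict destabilization and completes the proof that $C$ is $\sigma_+$-stable.
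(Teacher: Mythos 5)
Your proposal is correct and follows essentially the route the paper intends: the paper gives no separate proof of Corollary \ref{cor:extoftwostabobjsarestab}, stating only that it ``can be proved in a similar way'' to Lemma \ref{lemma:extoftwostabobjsarestab}, and your argument is exactly that adaptation. You also correctly supply the one genuinely new step the paper leaves implicit --- identifying equal-phase subobjects $K \subseteq F^{\oplus m}$ with $F \otimes W$ for a subspace $W \subseteq \mathbb{C}^m$ (valid since $F$ is simple in the finite-length category $\mathcal{P}(\phi)$ with $\mathrm{End}(F) = \mathbb{C}$) and then excluding $k < m$ via the pushout of the extension class along $F^{\oplus m} \to F \otimes (\mathbb{C}^m/W)$ together with the tensor-rank hypothesis on $f$.
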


Now we collect some geometric properties of the Le Potier curve. For an exceptional character $e$, by the Hirzebruch-Riemann-Roch formula, the equation for $L_{e^+e^r}$, i.e. $\chi(-,e')=0$, in the $\cccp$ is 
\[\ch_0(e)\che-\left(\ch_1(e)+\frac{3}{2}\ch_0(e)\right)\chy+\ch_2(e)+\frac{3}{2}\ch_1(e)+\ch_0(e)=0.\]
In particular, the slope of $L_{e^+e^r}$ is $\chy(e)+\frac{3}{2}$. The line $L_{e^+e^r}$ is parallel to $L_{ee(3)}$ and $L_{e^le(3)^r}$. A similar computation shows that the slope of $L_{e^+e^l}$ is $\chy(e)-\frac{3}{2}$.

We first want to prove the following result, which will be used to prove Lemma \ref{lemma:ext1nonvanishing}.

\begin{lemma}
1. Let $e$ be an exceptional character, then for any point $p$ on the line segment $l_{e^+e^r}$ (not on the boundary), the line $L_{ep}$ intersects the Le Potier curve $C_{LP}$ at two points. In addition, the $\frac{\ch_1}{\ch_0}$-length of these two points is greater than $3$.

2. Let $u$ and $v$ be two Chern characters with $\ch_0(u)$, $\ch_0(v)>0$ on the $C_{LP}$ such that their $\frac{\ch_1}{\ch_0}$-length is greater than $3$, then $\chi(u,v)>0$, $\chi(v,u)>0$.
\label{lem:l3}
\end{lemma}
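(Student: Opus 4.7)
For Part 1, I would parameterize $p$ along the segment by $s = \chy(p) - \chy(e) \in (0,\delta)$, where $\delta = \chy(e^r) - \chy(e)$. Since $l_{e^+e^r}$ lies on the line of slope $\chy(e) + 3/2$ (by Remark \ref{rem:leeline}), an elementary computation yields the slope of $L_{ep}$ as $m = \chy(e) + 3/2 - 1/(s\,\ch_0(e)^2)$. Existence of two intersections of $L_{ep}$ with $C_{LP}$ follows from continuity: the point $p$ itself is one, and on the side of $e$ opposite to $p$ the line exits $\mathrm{Geo}_{LP}$ through $\bd_{1/2}$, after which $\bd$ along the line grows unboundedly while $C_{LP}$ stays in $\{\bd \leq 1\}$, forcing a second crossing. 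Because $C_{LP}$ lies below $\bd_{1/2}$, the $\chy$-distance between the two $C_{LP}$-intersections is bounded below by the $\chy$-length of $L_{ep} \cap \bd_{\leq 1/2}$, which computes to $2\sqrt{(m - \chy(e))^2 + 1 - 2\bd(e)}$. Substituting for $m$ and using $\bd(e) = 1/2 - 1/(2\ch_0(e)^2)$, the inequality ``$>3$'' reduces to $s^2 - 3s + 1/\ch_0(e)^2 > 0$, and $\delta$ is exactly the smaller root of this quadratic, so strict inequality holds for every $s \in (0,\delta)$.

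For Part 2, interpreting ``$\chy$-length'' via Definition \ref{def:c1length}, I would apply the HRR formula (from Lemma \ref{lemma:chiformrho}) with $\ch_0(u) = \ch_0(v) = 1$ to obtain
\[
\chi(u,v) = \tfrac{1}{2}D^2 + \tfrac{3}{2}D + 1 - \bd(u) - \bd(v), \qquad \chi(v,u) = \tfrac{1}{2}D^2 - \tfrac{3}{2}D + 1 - \bd(u) - \bd(v),
\]
where $D = \chy(v) - \chy(u)$. Since $u, v \in C_{LP}$, both $\bd(u)$ and $\bd(v)$ lie in $[1/2, 1]$, so $\bd(u) + \bd(v) \leq 2$. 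The hypothesis that $L_{uv} \cap \bd_{\leq 0}$ has $\chy$-length $>3$ is equivalent to the minimum of the parabola $\bd(\chy)$ along $L_{uv}$ being strictly less than $-9/8$. Combined with $\bd(u), \bd(v) \geq 1/2$, each of $u, v$ must lie at horizontal distance greater than $\sqrt{13}/2$ from the vertex of this parabola. Arguing that $u$ and $v$ must be on opposite sides of the vertex (see below), one obtains $|D| > \sqrt{13} > (3+\sqrt{17})/2$, and a direct algebraic check then shows $\tfrac{1}{2}D^2 \pm \tfrac{3}{2}D + 1 > 2 \geq \bd(u) + \bd(v)$, giving both $\chi(u,v) > 0$ and $\chi(v,u) > 0$.

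The main obstacle is ruling out the same-side configuration in Part 2, where both $u$ and $v$ lie on the same side of the vertex of $\bd(\chy)$ along $L_{uv}$. If allowed, $|D|$ could be small enough and $\bd(u) + \bd(v)$ close enough to $2$ that $\chi(u,v)$ becomes non-positive. To exclude it one uses the local structure of $C_{LP}$: any secant through two nearby points of $C_{LP}$ has slope approximately equal to a local slope of $C_{LP}$, which is either the tangent slope of $\bd_{1/2}$ on its Cantor part or $\chy(e) \pm 3/2$ along a segment $l_{e^+e^*}$ for some nearby exceptional $e$. A direct computation shows that for any such line, the $\chy$-length of $L_{uv} \cap \bd_{\leq 0}$ is at most $\sqrt{5} < 3$ (using $\bd(e) < 1/2$ when $e$ is exceptional), contradicting the hypothesis. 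Hence only the opposite-side configuration is consistent with the hypothesis, completing the proof.
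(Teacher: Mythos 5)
Your Part 1 computation is essentially a coordinatized version of the paper's argument, and the quantitative half is correct: the chord length $2\sqrt{(m-\chy(e))^2+1-2\bd(e)}$, the identity $\bd(e)=\frac12-\frac{1}{2\ch_0(e)^2}$, and the reduction to $s^2-3s+1/\ch_0(e)^2>0$ with $\delta$ the smaller root reproduce exactly the paper's observation that the $\chy$-length of $L_{ep}\cap\bd_{\frac12}$ increases as $p$ moves from $e^r$ to $e^+$ and equals $3$ at $p=e^r$ (where the paper instead quotes the collinearity of $e$, $e^r$ and $e(-3)^r$). The gap is the claim ``intersects $C_{LP}$ at two points'': this means \emph{exactly} two, and your continuity argument gives only \emph{at least} two. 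Since $C_{LP}$ is a fractal zigzag, a line can a priori cross it many times, and the bulk of the paper's proof is devoted precisely to excluding further crossings to the left of $e$ --- the two cases of an $e'^r$ above $L_{ep}$ (slope comparison with $L_{e'^+e'^r}$, using that $e'$ must lie left of $e(-3)$) and of an $e'^l$ below $L_{ep}$. Your chord bound does salvage the length assertion for \emph{any} second crossing, since every crossing has $\bd\geq\frac12$ and so lies outside the open chord while $p$ lies strictly to its right; but uniqueness of the second crossing, which the statement asserts and which makes ``these two points'' well defined, is simply not addressed.

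In Part 2 you have changed the hypothesis, and this is a genuine gap rather than a cosmetic one. In this lemma ``their $\chy$-length'' means the difference of the $\chy$-coordinates of $u$ and $v$ themselves: this is how the lemma is invoked in the proof of Lemma \ref{lemma:ext1nonvanishing} (``the $\frac{\ch_1}{\ch_0}$-distance of $f_1$ and $f_2$ is greater than $3$''), consistently with the phrase ``the $\chy$-length of these two points'' in Part 1. You instead assumed, via Definition \ref{def:c1length}, that $L_{uv}\cap\bd_{\leq 0}$ has $\chy$-length $>3$, which is strictly stronger: by your own vertex argument it forces $|D|\geq\sqrt{13}>\frac{3+\sqrt{17}}{2}$, and only then does the crude bound $\bd(u)+\bd(v)\leq 2$ close the estimate. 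The stated lemma must cope with $D$ barely above $3$, where $\frac{D^2}{2}-\frac32 D+1$ is barely above $1$ while $\bd(u)+\bd(v)$ can approach $2$; for instance $u=e^+(0)=(1,0,-1)$ and $v=(1,\frac{16}{5},\frac{22}{5})\in l_{e^+(3)e^r(3)}$ lie on $C_{LP}$ with $D=\frac{16}{5}>3$, yet $\frac{D^2}{2}-\frac32 D+1-\bd(u)-\bd(v)=-\frac25$, so \emph{no} estimate using only $\bd\in[\frac12,1]$ and $D>3$ can yield $\chi(v,u)>0$. This borderline regime is exactly why the paper argues by cases on the stratum of $C_{LP}$ containing $u$ (the Cantor locus on $\bd_{\frac12}$, $l_{e^+e^r}$, or $l_{e^+e^l}$) and by locating the zero-loci $\chi(u,-)=0$ (through $e$) and $\chi(-,u)=0$ (through $e(3)$) relative to $v$; your version, proving the conclusion only under the stronger chord hypothesis, cannot be substituted where the paper applies the lemma. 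Finally, your exclusion of the same-side configuration is only sketched (``approximately equal to a local slope'' does not cover same-side pairs that are not close); it can be made rigorous by noting that $\bd$ is $\frac32$-Lipschitz in $\chy$ along $C_{LP}$ (the flank slopes are $\chy(e)\pm\frac32$), whence same-side pairs satisfy $|a_2-a_1|<1$, the farther point lies within $\frac53$ of the vertex, and the chord has length at most $\frac83<3$ --- but as written this step, too, is incomplete.
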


%%%%%%%%%%%%%%%%%%%%INTRODUCTION%%%%%%%%%%%%%%%%%%%%%%

\begin{center}
\begin{tikzpicture}[domain=1:5]

\tikzset{%
    add/.style args={#1 and #2}{
        to path={%
 ($(\tikztostart)!-#1!(\tikztotarget)$)--($(\tikztotarget)!-#2!(\tikztostart)$)%
  \tikztonodes},add/.default={.2 and .2}}
}

% opacity of the usual walls
\newcommand\XA{0}
\newcommand\XP{2.3}

%delta cone
\draw [name path =C0, opacity=0.1](-3,4.5) parabola bend (0,0) (3,4.5)
 node[right, opacity =0.5] {$\bd_0$};

\draw [name path = C1, opacity=0.5](-3,4) parabola bend (0,-0.5) (3,4)
 node[right] {$\bd_{\frac{1}{2}}$};

\draw [name path =C2, opacity=0.5](-3,3.5) parabola bend (0,-1) (3,3.5)
 node[right] {$\bd_1$};

%\foreach actual char, draw the wall

%rank 1 and 2
\coordinate (B3) at (-3,4.5);
\coordinate (B2) at (-2,2);

\coordinate (B1) at (-1,0.5);
\coordinate (A0) at (0,0);
\coordinate (A1) at (1,0.5);
\coordinate (A2) at (2,2);
\coordinate (A3) at (3,4.5);

\coordinate (E0) at (0,-1);
\draw (E0) node [below right] {};

\coordinate (E1) at (1,-0.5);
\draw (E1) node [below right] {};

\coordinate (E2) at (2,1);
\draw (E2) node [below right] {};

\coordinate (E3) at (3,3.5);
\draw (E3) node [below right] {};

\coordinate (F1) at (-1,-0.5);
\draw (F1) node [below left] {};

\coordinate (F2) at (-2,1);
\draw (F2) node [below left] {};

\coordinate (F3) at (-3,3.5);
\draw (F3) node [below left] {};

\draw [name path =L0,opacity =\XA] (E0) -- (B2);
\draw [name intersections={of=C1 and L0},  thick] (E0) -- (intersection-1);
\draw [name path =R13,opacity =\XA] (F3) -- (B2);
\draw [name intersections={of=C1 and R13},  thick] (F3) -- (intersection-1);

\draw [name path =L1,opacity =\XA] (E1) -- (B1);
\draw [name intersections={of=C1 and L1},  thick] (E1) -- (intersection-1);
\draw [name path =R12,opacity =\XA] (F2) -- (B1);
\draw [name intersections={of=C1 and R12},  thick] (F2) -- (intersection-1);

\draw [name path =L2,opacity =\XA] (E2) -- (A0);
\draw [name intersections={of=C1 and L2},  thick] (E2) -- (intersection-1);
\draw [name path =R11,opacity =\XA] (F1) -- (A0);
\draw [name intersections={of=C1 and R11},  thick] (F1) -- (intersection-1);

\draw [name path =L3,opacity =\XA] (E3) -- (A1);
\draw [name intersections={of=C1 and L3},  thick] (E3) -- (intersection-1);
\draw [name path =R0,opacity =\XA] (E0) -- (A1);
\draw [name intersections={of=C1 and R0},  thick] (E0) -- (intersection-1);

\draw [name path =L12,opacity =\XA] (F2) -- (B3);
\draw [name intersections={of=C1 and L12},  thick] (F2) -- (intersection-1);
\draw [name path =L11,opacity =\XA] (F1) -- (B2);
\draw [name intersections={of=C1 and L11},  thick] (F1) -- (intersection-1);

\draw [name path =R1,opacity =\XA] (E1) -- (A2);
\draw [name intersections={of=C1 and R1},  thick] (E1) -- (intersection-1);
\draw [name path =R2,opacity =\XA] (E2) -- (A3);
\draw [name intersections={of=C1 and R2},  thick] (E2) -- (intersection-1);

\coordinate (S3) at (-2.5,2.5);
\draw (S3) node [below left] {};

\coordinate (S2) at (-1.5,0.5);

\coordinate (S1) at (-.5,-0.5);

\coordinate (T1) at (.5,-0.5);

\coordinate (T2) at (1.5,0.5);

\coordinate (T3) at (2.5,2.5);
\draw (T3) node [below right] {};

\draw [name path =RS3,opacity =\XA] (S3) -- (B3);
\draw [name intersections={of=C1 and RS3},  thick] (S3) -- (intersection-1);
\draw [name path =LS3,opacity =\XA] (S3) -- (A0);
\draw [name intersections={of=C1 and LS3},  thick] (S3) -- (intersection-1) node [right] {$e'^l$};

\draw [name path =RS2,opacity =\XA] (S2) -- (B2);
\draw [name intersections={of=C1 and RS2},  thick] (S2) -- (intersection-1)node [right] {$e'^r$};
\draw [name path =LS2,opacity =\XA] (S2) -- (A1);
\draw [name intersections={of=C1 and LS2},  thick] (S2) -- (intersection-1);

\draw [name path =RS1,opacity =\XA] (S1) -- (B3);
\draw [name intersections={of=C1 and RS1},  thick] (S1) -- (intersection-1);
\draw [name path =LS1,opacity =\XA] (S1) -- (A2);
\draw [name intersections={of=C1 and LS1},  thick] (S1) -- (intersection-1);

\draw [name path =RT1,opacity =\XA] (T1) -- (B2);
\draw [name intersections={of=C1 and RT1},  thick] (T1) -- (intersection-1);
\draw [name path =LT1,opacity =\XA] (T1) -- (A3);
\draw [name intersections={of=C1 and LT1},  thick] (T1) -- (intersection-1);

\draw [name path =RT2,opacity =\XA] (T2) -- (B1);
\draw [name intersections={of=C1 and RT2},  thick] (T2) -- (intersection-1);
\draw [name path =LT2,opacity =\XA] (T2) -- (A2);
\draw [name intersections={of=C1 and LT2},  thick] (T2) -- (intersection-1);

\draw [name path =RT3,opacity =\XA] (T3) -- (A0);
\draw [name intersections={of=C1 and RT3},  thick] (T3) -- (intersection-1);
\draw [name path =LT3,opacity =\XA] (T3) -- (A3);
\draw [name intersections={of=C1 and LT3},  thick] (T3) -- (intersection-1);

\coordinate (E) at (2,2);
\draw (E) node {$\bullet$} node [above left]{$e$};

\coordinate (P) at (\XP,3.5*\XP-6);
\draw (P) node {$\bullet$} node [below right]{$p$};

\draw [add =20 and 6,name path = IT, dashed] (E) to (P);
\draw [name intersections={of=L12 and IT}] (P) -- (intersection-1) node [below left] {};

\node at (-1.9,1.305) {$\bullet$};
\node[above right] at (-1.9,1.305) {$q$};

%\draw [add= 0 and 1] (W) to (V) node[above]{Eff};

%B axis
\draw[->,opacity =0.3] (-4,0) -- (4,0) node[above right] {$\frac{\ch_1}{\ch_0}$};

%H axis
\draw[->,opacity=0.3] (0,-2)-- (0,0) node [above right] {O} --  (0,4.5) node[right] {$\frac{\ch_2}{\ch_0}$};

\end{tikzpicture}

Figure: The intersection of $L_{ep}$ with $C_{LP}$.
\end{center}

%%%%%%%%%%%%%%%%%%%%INTRODUCTION%%%%%%%%%%%%%%%%%%%%%%
\begin{proof}
1. We first show that $L_{ep}$ only intersects $C_{LP}$ at two points. Since any point on $C_{LP}$ to the right of $e^r$ is above the line $L_{ee^r}$, we only need to consider points to the left of $e$.

For any $e'^r$ to the left of $e$ that is above $L_{ep}$, it is also strictly above $L_{ee^r}$. Since $e$, $e^r$ and $e(-3)^r$ are collinear, $e'$ is to the left of $e(-3)$. In other words, $e'$ satisfies that $\chy(e')<\chy(e)-3$. The slope of $L_{ep}$ $>$ the slope of $L_{ee'^l}$ $>$ the slope of  $L_{e'(3)^re'^l}$ $=$ the slope of $L_{e'^+e'^r}$. Therefore, $L_{ep}$ does not intersect $l_{e'^le'^+}$ or $l_{e'^+e'^r}$.

For any $e'^l$ below $L_{ep}$ to the left of $e$, the line segments $l_{e'^le'^+}$ and $l_{e'^+e'^r}$ are below $\bd_{\frac{1}{2}}$. The segment of $\bd_{\frac{1}{2}}$ between $e'^l$ and $e'^r$ is below $L_{ep}$, hence $l_{e'^le'^+}$ and $l_{e'^+e'^r}$ are below $L_{ep}$, and they do not intersect $L_{ep}$. Let $q$ be the intersection point of $L_{ep}$ and $\bd_{\frac{1}{2}}$ ( there are two such points and we consider the one to the left of $e$). When $q$ is not on any segment of $\bd_{\frac{1}{2}}$ between $e'^l$ and $e'^r$, the intersection points of $L_{ep}$ and $C_{LP}$ are $q$ and $p$. When $q$ is on the segment between $e'^l$ and $e'^r$ for an exceptional character $e'$, the second intersection point is either on $l_{e'^le'^+}$ or $l_{e'^+e'^r}$. So there is only one intersection point other than $p$.

The points $e$, $e^r$ and $e(-3)^r$ are collinear, and the $\chy$-length of $e^r$ and $e(-3)^r$ is $3$. Since the $\chy$-length of $L_{ep}\cap \bd_{\frac{1}{2}}$ is increasing when $p$ is moving from $e^r$ to $e^+$, the $\frac{\ch_1}{\ch_0}$-length of $L_{ep}\cap \bd_{\frac{1}{2}}$ is greater than $3$. Therefore, the $\frac{\ch_1}{\ch_0}$-length of $L_{ep}\cap C_{LP}$ is greater than $3$.

2. Suppose $u$ is on $\bd_{\frac{1}{2}}$, then the line $\chi(u,-)=0$ in the $\cccp$ is $L_{uu(-3)}$. Hence the point $v$ in the $\cccp$ is above $L_{uu(-3)}$. As $\ch_0(u)$ and $\ch_0(v)$ are positive, $\chi(u,v)>0$. $\chi(v,u)>$ can be proved similarly.

Suppose $u$ is on $l_{e^+e^r}$ for an exceptional $e$, we first show that $\chi(u,v)>0$. The line $\chi(u,-)=0$ passes through $e$, and both $e^r$ and $e^l$ are below the line $\chi(u,-)=0$. By the $\chy$-length assumption, $v$ is above both $L_{ee^r}$ and $L_{ee^l}$. Therefore, $v$ is above the line $\chi(u,-)=0$. Since $\ch_0(u)$ and $\ch_0(v)$ are positive, $\chi(u,v)>0$.

The line $\chi(-,u)=0$ in the $\cccp$ passes through $e(3)$, and intersects $l_{e(3)e(3)^r}$. If $v$ is on the line segment $l_{e(3)e(3)^r}$, by the case that $u$ is on $l_{e^+e^r}$, we get $\chi(v,u)>0$. If $v$ is not on the line segment $l_{e(3)e(3)^r}$, then by the assumption on the $\chy$-length, $v$ is above the curve $\chi(-,u)=0$, we also get $\chi(v,u)>0$.

The case that $u$ is on $l_{e^+e^l}$ can be proved in the same way.
\end{proof}

Now we can state an important lemma. A similar definition also appears in \cite{CH2}.

\begin{lemma}
Let $u$ and $v$ be Chern characters such that:
\begin{enumerate}
\item $u$ and $v$ are not inside the Le Potier cone.
\item $\Delta(v,u)\geq 0 $.
\item In the $\cccp$,
$L_{uv}$ intersects $C_{LP}$ at two points
and the  $\frac{\ch_1}{\ch_0}$-length between them is greater than
$3$.
\end{enumerate}
Then we have
\[\chi(u,v),\ \chi(v,u)<0.\]
\label{lemma:ext1nonvanishing}
\end{lemma}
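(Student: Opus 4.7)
The plan is to work in the two-dimensional subspace $V \subset \mathrm{K}_{\mathbb R}(\pp)$ spanned by the two intersection points $f_1, f_2$ of $L_{uv}$ with $C_{LP}$, normalised so $\ch_0(f_i) = 1$; writing $\alpha_i := \ch_1(f_i)$, condition (3) gives $|\alpha_1 - \alpha_2| > 3$. Lemma \ref{lem:l3}(2) yields $\chi(f_1,f_2), \chi(f_2,f_1) > 0$ at once, and since $C_{LP}$ lies between $\bd_{1/2}$ and $\bd_1$ one has $\Delta(f_i) \in [\tfrac12, 1]$ and $\chi(f_i, f_i) = 1 - 2\Delta(f_i) \leq 0$. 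Let $Q$ denote the quadratic form $Q(v) = \tfrac12 \ch_1(v)^2 - \ch_0(v)\ch_2(v)$, so that $\Delta(v,u) = Q(v,u)$ is its polarization. The standard identity then gives
\[Q(f_1, f_2) = -\tfrac14(\alpha_1 - \alpha_2)^2 + \tfrac12(\Delta(f_1) + \Delta(f_2)) < -\tfrac94 + 1 < 0.\]

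Since $u, v, f_1, f_2$ are collinear on $L_{uv}$, both $u$ and $v$ lie in $V$, so I write $u = x_u f_1 + y_u f_2$, $v = x_v f_1 + y_v f_2$. The first key step is to prove $x_u y_u < 0$ and $x_v y_v < 0$. Projectively $V \setminus \{0\}$ maps to $L_{uv}$: the arc from $f_1$ to $f_2$ in the affine $\cccp$ corresponds to $\{xy > 0\}$, and the arc through the point at infinity corresponds to $\{xy < 0\}$. A straight line has linear growth in $\che$ along $\chy$ while $C_{LP}$ is bounded below by $\bd_1$ (quadratic growth), so $L_{uv}$ lies strictly below $C_{LP}$ at both ends of the affine line, forcing the outer arc to be entirely outside $\Cone_{LP}$. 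The hypothesis $|\alpha_1 - \alpha_2| > 3$ rules out bitangent-from-below configurations (a short case-check on $C_{LP}$ shows any such configuration would force $|\chy(f_1) - \chy(f_2)| \leq 3$), so both crossings are transverse and by alternation the inner arc lies inside $\Cone_{LP}$. As $u, v$ are not inside $\Cone_{LP}$, they sit on the outer arc. The degenerate case $\ch_0(u) = 0$ is immediate from $y_u = -x_u \neq 0$ (since $\ch_1(u) > 0$), which gives $x_u y_u < 0$.

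Next, the hypothesis $\Delta(v, u) \geq 0$ forces $u$ and $v$ into the same quadrant of $V$: if they lay in opposite quadrants, say $x_u < 0, y_u > 0$ and $x_v > 0, y_v < 0$, then $x_u x_v < 0$, $y_u y_v < 0$ and $x_u y_v + y_u x_v > 0$, making
\[Q(v,u) = \Delta(f_1)\,x_u x_v + Q(f_1, f_2)(x_u y_v + y_u x_v) + \Delta(f_2)\,y_u y_v\]
a sum of three strictly negative terms (using $\Delta(f_i) > 0$ and $Q(f_1, f_2) < 0$), contradicting $Q(v,u) \geq 0$. In the same-quadrant case $x_u x_v, y_u y_v > 0$ and $x_u y_v, y_u x_v < 0$, so plugging into
\[\chi(u,v) = \chi(f_1,f_1)\,x_u x_v + \chi(f_1,f_2)\,x_u y_v + \chi(f_2,f_1)\,y_u x_v + \chi(f_2,f_2)\,y_u y_v\]
produces four terms each $\leq 0$, with the two cross terms strictly negative (since $\chi(f_1,f_2), \chi(f_2,f_1) > 0$), whence $\chi(u,v) < 0$; the argument for $\chi(v,u)$ is symmetric. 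The main obstacle is the geometric step identifying the inner arc of $L_{uv} \setminus \{f_1, f_2\}$ with $\Cone_{LP}$, which in turn reduces to excluding bitangent-from-below configurations via the $\chy$-length hypothesis; everything else is sign-bookkeeping in a two-dimensional bilinear algebra once Lemma \ref{lem:l3}(2) is in hand.
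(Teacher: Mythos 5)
Your proof is correct and follows the paper's own argument in all essentials: you decompose $u$ and $v$ in the basis $\{f_1,f_2\}$ of the two intersection points of $L_{uv}$ with $C_{LP}$, invoke Lemma \ref{lem:l3} to get $\chi(f_1,f_2),\chi(f_2,f_1)>0$ alongside $\chi(f_i,f_i)\leq 0$, use $\Delta(v,u)\geq 0$ to force the coefficients of $u$ and $v$ into the same quadrant, and conclude by the four-term bilinear expansion of $\chi$, exactly as the paper does. The only differences are expository --- your explicit polarization computation $Q(f_1,f_2)<0$ and the outer-arc/bitangency discussion (both correct; the bitangent exclusion follows from the chord of $\bd_{\frac{1}{2}}$ rising at least $\frac{9}{8}-\frac{1}{2}>0$ above $\bd_{\frac{1}{2}}$ at its midpoint when the $\frac{\ch_1}{\ch_0}$-length exceeds $3$) merely spell out steps the paper compresses into the one-line assertions that $a_1,a_2$ and $b_1,b_2$ have the same sign and that condition (2) makes all four signs agree.
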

\begin{rem}
When both $\ch_0(u)$ and $\ch_0(v)$ are $0$, the third condition does
not make sense. But the statement still holds if the first two
conditions hold. To see this, note that by the second condition,
\[\chi(v,w)=\chi(w,v)=-2\Delta(v,w)=-\ch_1(v)\ch_1(w)\leq 0.\]
Now the first condition implies that $\ch_1(w)$ and
$\ch_1(v)$ are both non-zero, so
\[-\ch_1(v)\ch_1(w)< 0.\]
\label{remark:ext1nonvanishing}
\end{rem}
\begin{proof}
By the first condition, $u$ and $v$ are below $\bd_{\frac{1}{2}}$.
By the third condition, let $f_1$ and $f_2$ be two characters corresponding to the intersection points of $L_{uv}$ and
$C_{LP}$ such that $\ch_0(f_1)>0$, $\ch_0(f_2)>0$ and $\chy(f_1)>\chy(f_2)$.

We may assume that $v=a_1f_1-a_2f_2$ and $u= b_1f_1-b_2f_2$ for some
non-zero real numbers $a_1$, $a_2$, $b_1$, $b_2$, since $u$ and $v$ are not inside Cone$_{LP}$, we see $a_1$, $a_2$ have the same sign and $b_1$, $b_2$ have the same sign. Moreover, by the second condition, we have
\begin{equation}
\Delta(v+au,v+au)\geq \Delta(v-au,v-au) \label{eq1}
\end{equation}
for any positive number $a$. Hence $a_1$, $a_2$, $b_1$, $b_2$ all have the same sign. Without loss of generality, we may assume they are all positive.

As $f_i$ is on
$C_{LP}$, we have
\[\chi(f_1,f_1),\; \chi(f_2,f_2)\leq 0.\]
By the third condition, the $\frac{\ch_1}{\ch_0}$-distance of $f_1$ and $f_2$ is greater than $3$. By Lemma \ref{lem:l3},
\[\chi(f_1,f_2)>0,\chi(f_2,f_1)>0.\]
Combining these results, we have
\[\chi(u,v) \leq -b_1a_2\chi(f_1,f_2) -b_2a_1\chi(f_2,f_1) <0,\]
and
\[\chi(v,u) \leq -b_2a_1\chi(f_1,f_2) -b_1a_2\chi(f_2,f_1) <0.\]
\end{proof}

Note that if we have stable objects $A$ and $B$ of characters $u$ and $v$ respectively, satisfying the conditions in the lemma, then the lemma implies that Ext$^1(A,B)>0$ and Ext$^1(B,A)>0$. By Lemma \ref{lemma:extoftwostabobjsarestab}, this implies the existence of stable objects as extensions on both sides. This observation will be used in the proof of the last wall to show the non-emptiness of the moduli, and in the proof of the actual walls to show the existence of objects destabilized on each side of the wall.

\subsection{The Last Wall} \label{sec3.1}
In this section, we describe the last wall for a given character $w$ that is not inside the Le Potier cone Cone$_{LP}$. By the {\em last wall} of $w$, we mean that for $P\in\bd_{<0}$, there is $\sigma_P$-stable objets of character $w$ or $w[1]$ if and only if $P$ is above the last wall. By result from Section 3, this wall corresponds to the boundary of the effective cone when running MMP. The last wall is first computed in \cite{CHW} and \cite{Woolf} by Coskun, Huizenga and Woolf. We would like to state the result based on our set-up and give a different proof.
To describe the last wall for character $w$,  we first define the exceptional bundle associated to $w$.

\begin{defn}
Let $E$ be an exceptional bundle, we define $\mathfrak R_E$ to be
\emph{the closure of the region} bounded by $L_{e(-3)^ree^r}$,
$l_{e^re^+}$, $l_{e^+e^l}$ and $L_{e^le(-3)e(-3)^l}$ in the $\cccp$ (see the picture below). Symmetrically, we define $\mathfrak L_E$ to be the closure of the region bounded by $L_{E(3)^lee^l}$,
$l_{e^le^+}$, $l_{e^+e^r}$ and $L_{e^rE(3)E(3)^r}$ in the $\cccp$.
\label{def:regionofexcdesarea}
\end{defn}
%%%%%%%%%%%%%%%%%%%%INTRODUCTION%%%%%%%%%%%%%%%%%%%%%%

\begin{center}
\begin{tikzpicture}[domain=1:5]

\tikzset{%
    add/.style args={#1 and #2}{
        to path={%
 ($(\tikztostart)!-#1!(\tikztotarget)$)--($(\tikztotarget)!-#2!(\tikztostart)$)%
  \tikztonodes},add/.default={.2 and .2}}
}

% opacity of the usual walls
\newcommand\XA{0}
\newcommand\XP{2.3}

%delta cone
\draw [name path =C0](-2.5,3.125) parabola bend (0,0) (2,2)
 node[right] {$\bd_0$};

\draw [name path = C1, opacity=1](-2.5,2.625) parabola bend (0,-0.5) (2,1.5)
 node[right] {$\bd_{\frac{1}{2}}$};

%\foreach actual char, draw the wall

%rank 1 and 2
\coordinate (B3) at (-3,4.5);
\coordinate (B2) at (-2,2);

\coordinate (B1) at (-1,0.5);
\coordinate (A0) at (0,0);
\coordinate (A1) at (1,0.5);
\coordinate (A2) at (2,2);
\coordinate (A3) at (3,4.5);

\coordinate (E0) at (0,-1);
\draw (E0) node [below right] {};

\coordinate (E1) at (1,-0.5);
\draw (E1) node [below] {$e^+$};

\coordinate (EE1) at (1,0.5);
\draw (EE1) node [above] {$e$} node {$\bullet$};

\coordinate (E2) at (2,1);
\draw (E2) node [below right] {};

\coordinate (E3) at (3,3.5);
\draw (E3) node [below right] {};

\coordinate (F1) at (-1,-0.5);
\draw (F1) node [below left] {};

\coordinate (F2) at (-2,1);
\draw (F2) node [below left] {};

\coordinate (FF2) at (-2,2);
\draw (FF2) node [above right] {$e(-3)$} node {$\bullet$};

\draw [name path =L1,opacity =\XA] (E1) -- (B1);
\draw [name intersections={of=C1 and L1},  thick] (E1) -- (intersection-1) node [name = AA1] {$\bullet$} node [below] {$e^l$};

\draw [name path =R1,opacity =\XA] (E1) -- (A2);
\draw [name intersections={of=C1 and R1},  thick] (E1) -- (intersection-1) node [name = AA2] {$\bullet$} node [below right] {$e^r$};

\draw [name path =R12,opacity =\XA] (F2) -- (B1);
\draw [name intersections={of=C1 and R12},  thick] (F2) -- (intersection-1) node [name = BB2] {$\bullet$} node [below left] {$e(-3)^r$};

\draw [name path =L12,opacity =\XA] (F2) -- (B3);
\draw [name intersections={of=C1 and L12},  thick] (F2) -- (intersection-1) node [name = BB1] {$\bullet$} node [left] {$e(-3)^l$};

\draw [add =0 and 0.5, dashed] (BB2) to (AA2);
\draw [add =-1 and 1] (BB2) to (AA2) node [right] {$L_{e(-3)^ree^r}$};
\draw [add =0 and 0.3, dashed] (BB1) to (AA1);
\draw [add =-1 and 1] (BB1) to (AA1) node [right] {$L_{e^le(-3)e(-3)^l}$};

\draw node at (3,-1) {$\mathfrak R_E$};

%\draw [add= 0 and 1] (W) to (V) node[above]{Eff};

%B axis
\draw[->,opacity =0.3] (-3,0) -- (4,0) node[above right] {$\frac{\ch_1}{\ch_0}$};

%H axis
\draw[->,opacity=0.3] (0,-2)-- (0,0) node [above right] {O} --  (0,3.5) node[right] {$\frac{\ch_2}{\ch_0}$};

\end{tikzpicture}

Figure: The region of $\mathfrak R_E$.
\end{center}

%%%%%%%%%%%%%%%%%%%%INTRODUCTION%%%%%%%%%%%%%%%%%%%%%%

The following property translates an important technical result in \cite{CHW} into our set-up.
\begin{prop}[Theorem 4.1 in \cite{CHW}]
The regions associated to the exceptional bundles cover all rational points not above the Le Potier curve.
\[\coprod_{E \text{ exc}}\mathfrak R_E\ \supset \ \mathrm P(\mathrm K(\pp))\setminus \tilde C_{LP}.\]
A similar statement holds for $\mathfrak L_E$.
\end{prop}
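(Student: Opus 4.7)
The strategy is geometric: show that the wedge-shaped regions $\mathfrak R_E$ tile the portion of the projective $\cccp$ lying on or below $C_{LP}$, with adjacent regions sharing boundary lines but leaving no gaps. The key is to exploit the recursive/fractal combinatorics of the dyadic labeling of exceptional bundles together with the collinearity relations encoded in the definition of the boundary lines.

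First, I would record the slope data: the top boundary segments $l_{e^\pm e^r}$ and $l_{e^+ e^l}$ have slopes $\chy(e)+\tfrac{3}{2}$ and $\chy(e)-\tfrac{3}{2}$ respectively (from the computation preceding Lemma~\ref{lem:l3}), and the side boundaries $L_{e(-3)^r e e^r}$, $L_{e^l e(-3)e(-3)^l}$ are parallel to these same slopes. Consequently $\mathfrak R_E$ is a (closed) wedge opening downward in the $\che$-direction with apex at $e^+$, and the two side-lines of $\mathfrak R_E$ pass through $e(-3)^r$ and $e(-3)^l$ respectively, i.e.\ through points associated to the bundle $E(-3)$.

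Given a rational $v\in\mathrm{P}(\mathrm K_{\mathbb R}(\pp))\setminus\tilde C_{LP}$, I would perform a descent on the dyadic depth of $\chy(v)$, using the Stern–Brocot-like structure of the dyadic rationals underlying the exceptional bundles. At each step one has a candidate exceptional $E$ with $\chy(E)$ close to $\chy(v)$; either $v$ already lies in $\mathfrak R_E$, or $v$ violates exactly one of the four boundary conditions, in which case the recursion formula (\ref{eq:dyadictriples}) for $\tilde v\!\left(\tfrac{p\pm 1}{2^m}\right)$ and $\tilde v\!\left(\tfrac{p\pm 3}{2^m}\right)$ identifies a neighboring exceptional bundle whose region is the natural next candidate. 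The base case is handled by line bundles: if $\chy(v)\in\mathbb Z$, then $v$ lies in $\mathfrak R_{\mathcal O(\lfloor \chy(v)\rfloor)}$ directly since the $\che$-coordinate is not above the defining V.

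The main obstacle is the gap-free covering, not the existence of a candidate. Concretely, one must show that the side line $L_{e(-3)^r e e^r}$ of $\mathfrak R_E$ matches up precisely with the opposite side line of $\mathfrak R_{E'}$ where $E'$ is the exceptional bundle whose dyadic label is the immediate neighbor of that of $E$ on the appropriate side. This matching is implied by the collinearities $\{e^l, e(-3), e(-3)^l\}$ and $\{e(-3)^r, e, e^r\}$ together with the mutation/triple relations that tie together the exceptional bundles corresponding to $\frac{p-1}{2^m}, \frac{p}{2^m}, \frac{p+1}{2^m}$ (Remark~\ref{rem:exctrip}). Once matching is verified at every dyadic junction, the complement of $\coprod_E \mathfrak R_E$ inside the subspace on or below $C_{LP}$ can only consist of a set of measure zero limit points, but the density of dyadic rationals forces this set to be contained in $\tilde C_{LP}$ itself, completing the argument. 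The $\mathfrak L_E$ statement follows by the same reasoning on the reflected side.
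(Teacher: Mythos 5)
You have put your finger on the right obstacle (``the main obstacle is the gap-free covering'') but your resolution of it does not work, and this is a genuine gap rather than a fixable detail. The regions $\mathfrak R_E$ do \emph{not} meet edge-to-edge, and there is no ``immediate neighbor'' of a dyadic label: the dyadic rationals are dense, so between the side line $L_{e(-3)^ree^r}$ of $\mathfrak R_E$ (which is the line $\chi(e^r,-)=0$) and the opposite side line $\chi(e'^l,-)=0$ of any other region there intervene infinitely many further regions, and the points of $\bd_{\frac{1}{2}}$ not interior to any chord $l_{e^le^r}$ form a Cantor set (compare Remark \ref{rem:lpcone} for the analogous phenomenon on $C_{LP}$). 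Consequently your descent on dyadic depth has no well-founded termination, and the uncovered locus is a ``fan'' of lines through the Cantor-set points of $\bd_{\frac{1}{2}}$ --- a set of measure zero, but certainly not contained in $\tilde C_{LP}$. The whole content of the proposition is that this fan contains no \emph{rational} point, i.e.\ that for rational $w$ below the curve the orthogonal point $f_1 = L^{\bd_{\frac{1}{2}}}_w\cap\, \bd_{\frac{1}{2}}$ always lands inside some closed segment between $e^l$ and $e^r$. That is a number-theoretic statement (it is exactly Theorem 4.1 of \cite{CHW}, proved there via the continued-fraction-like structure of exceptional slopes), and ``density of dyadic rationals'' is a non sequitur in its place: density controls closures, not the exclusion of rational points from a closed leftover set. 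The paper does not attempt to reprove this; its proof is a short reduction to that theorem: take $L^{\bd_{\frac{1}{2}}}_w$, locate $f_1\in[e^l,e^r]$ by \cite{CHW}, observe $\chi(f_1,w)=0$, hence $\chi(e^l,w)\cdot\chi(e^r,w)<0$, and identify the two sign conditions with the two side lines of $\mathfrak R_E$.

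Two of your concrete claims are also false, and they matter for your scheme. First, the parallelism: the side lines are $\chi(e^r,-)=0$ and $\chi(e^l,-)=0$, i.e.\ $L_{e^re^r(-3)}$ and $L_{e^le^l(-3)}$, with slopes $\chy(e^r)-\frac{3}{2}$ and $\chy(e^l)-\frac{3}{2}$; these differ from the slopes $\chy(e)\pm\frac{3}{2}$ of the top segments $l_{e^+e^r}$, $l_{e^+e^l}$, so $\mathfrak R_E$ is not the wedge you describe. Second, the base case: integer slope does not place $v$ in a line bundle's region. The paper's own introductory example $w=(1,0,-4)$ has $\chy(w)=0$, yet $w$ lies in $\mathfrak R_E$ for the rank-two exceptional bundle with dyadic label $-\frac{3}{2}$ (it is right-orthogonal to it), not in $\mathfrak R_{\mathcal O}$; more generally, at fixed integer slope the associated bundle $E_w$ runs through infinitely many exceptional bundles as $\che(w)\to-\infty$, so no search anchored at line bundles with bounded depth can succeed. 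To repair the argument you would have to reprove Theorem 4.1 of \cite{CHW} from scratch; absent that, the correct move is the paper's: cite it and reduce to it via the orthogonality trick above.
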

\begin{proof}
Let $w$ be a reduced character in P$(\mathrm K(\pp))$
not above $C_{LP}$. There is a unique line
$L^{\bd_{\frac{1}{2}}}_w$ through $w$ on the $\cccp$ such that it intersects with $\bd_{\frac{1}{2}}$ at two points $f_1$ and $f_2$ ,
both of which are to the left of $w$ and their
$\frac{\ch_1}{\ch_0}$-length is $3$. Let $f_1$ be the points with larger
$\frac{\ch_1}{\ch_0}$. By Theorem 4.1 in \cite{CHW},  there is a unique exceptional bundle $E$
such that on the curve $\bd_{\frac{1}{2}}$, $f_1$ is on the
segment between $e^l$ and $e^r$. For any character $u$ on the line
$L_{f_1f_2}$, we have $\chi(f_1,u) = 0$, hence $\chi(f_1,w)=0$. The points $e^r$ and $e^l$ are on the different sides of the line $\chi(-,w)=0$, therefore
\[\chi(e^l,w)\cdot\chi(e^r,w)<0.\]
Note that the boundary $L_{E(-3)^rEE^r}$ is the line:
$\chi(e^r,-)=0$, and the boundary $L_{E^lE(-3)E(-3)^l}$ is the line:
 $\chi(e^l,-)=0$. Hence, $w$ is in $\mathfrak R_E$.
\end{proof}

\begin{rem}
It is possible to show that $L^{\bd_{\frac{1}{2}}}_w$ must intersect a line segment $l_{e^le^r}$ without using Theorem 4.1 in \cite{CHW}, but the argument is rather involved. The sketch of argument is as follows: 1. If $L^{\bd_{\frac{1}{2}}}_w$ does not intersect any line segment $l_{e^le^r}$, then for any exceptional bundle $E$ with character below $L^{\bd_{\frac{1}{2}}}_w$, by Proposition \ref{prop:wisnotstabbelowexc}, $\mathfrak M^s_\sigma(w)$ is empty for $\sigma$ below $L_{wE}$. Therefore, $\mathfrak M^s_\sigma(w)$ is empty for $\sigma$ below $L^{\bd_{\frac{1}{2}}}_w$. 2. By the same argument for the last wall and Lemma \ref{lemma:ext1nonvanishing}, $\mathfrak M^s_\sigma(w)$ is non-empty for $\sigma$ on $L^{\bd_{\frac{1}{2}}}_w$. This leads to the contradiction.
\end{rem}

Thanks to this result, we can introduce the following definition, which will be related to the last wall.

\begin{defn}
Let $w$ be a character not inside  $\Cone_{LP}$ (see Definition \ref{defn:lpcurve}), we define the
exceptional bundle $E_w$ associated to $w$ to be the unique one such
that $\mathfrak R_{E_w}$ contains $w$. Similarly we have the definition
of $E^{\text{(rhs)}}_w$ according to $\mathfrak L_E$.
\label{defn:Ew}
\end{defn}
\begin{rem}[Torsion Case]
In the case that $\ch_0(w)=0$ and $\ch_1(w)>0$, $E_w$ is the unique exceptional bundle such that 
\begin{center}
the slope of $L_{e(-3)e^l}$ $<$ $\frac{\ch_2}{\ch_1}(w)$ $<$ the slope of $L_{ee^r}$.
\end{center}
The bundle $E^{\text{(rhs)}}_w$ is not defined in the torsion case.
\label{rem:torcase}

\end{rem}

Now we can state the location of the last wall.

\begin{defn}
Let $w$ be a character (not necessarily primitive) not inside  $\Cone_{LP}$ (may be on the boundary but not at the origin, see Definition \ref{defn:lpcurve})
and $E=E_w$ be its associated exceptional vector bundle. We
define the last wall $L_w^{\text{last}}$ of $w$ according to three
different cases:
\begin{enumerate}
\item If $w$ is above $L_{e^+e(-3)^+}$, then
$L_w^{\text{last}}$ $:=$ $L_{we}$.
\item If $w$ is below $L_{e^+e(-3)^+}$, then
$L_w^{\text{last}}$ $:=$ $L_{we(-3)}$.
\item If $w$ is on $L_{e^+e(-3)^+}$, then
$L_w^{\text{last}}$ $:=$ $L_{e^+e(-3)^+}$.
\end{enumerate}
The last wall $\lrw_w$ on the right side to the vertical wall is defined similarly by using $E^{\text{(rhs)}}$. The torsion character does not have $E^{\text{(rhs)}}$ or $\lrw$.
\label{def:lastwall}
\end{defn}
In the cartoon below, $F_i$ is of Case $i$ in the definition respectively.

%%%%%%%%%%%%%%%%%%%%INTRODUCTION%%%%%%%%%%%%%%%%%%%%%%

\begin{center}
\begin{tikzpicture}[domain=1:5]

\tikzset{%
    add/.style args={#1 and #2}{
        to path={%
 ($(\tikztostart)!-#1!(\tikztotarget)$)--($(\tikztotarget)!-#2!(\tikztostart)$)%
  \tikztonodes},add/.default={.2 and .2}}
}

% opacity of the usual walls
\newcommand\XA{0}
\newcommand\XP{2.3}

%delta cone
\draw [name path =C0,opacity =0.4](-2.5,3.125) parabola bend (0,0) (2,2)
 node[right] {$\bd_0$};

\draw [name path = C1, opacity=0.4](-2.5,2.625) parabola bend (0,-0.5) (2,1.5)
 node[right] {$\bd_{\frac{1}{2}}$};

%\foreach actual char, draw the wall

%rank 1 and 2
\coordinate (B3) at (-3,4.5);
\coordinate (B2) at (-2,2);

\coordinate (B1) at (-1,0.5);
\coordinate (A0) at (0,0);
\coordinate (A1) at (1,0.5);
\coordinate (A2) at (2,2);
\coordinate (A3) at (3,4.5);

\coordinate (E0) at (0,-1);
\draw (E0) node [below right] {};

\coordinate (E1) at (1,-0.5);
\draw (E1) node [below] {$e^+$};

\coordinate (EE1) at (1,0.5);
\draw (EE1) node [above] {$e$} node {$\bullet$};

\coordinate (E2) at (2,1);
\draw (E2) node [below right] {};

\coordinate (E3) at (3,3.5);
\draw (E3) node [below right] {};

\coordinate (F1) at (-1,-0.5);
\draw (F1) node [below left] {};

\coordinate (F2) at (-2,1);
\draw (F2) node [below left] {};

\coordinate (FF2) at (-2,2);
\draw (FF2) node [right] {$e(-3)$} node {$\bullet$};

\draw [name path =L1,opacity =\XA] (E1) -- (B1);
\draw [name intersections={of=C1 and L1},  thick] (E1) -- (intersection-1) node [name = AA1] {$\bullet$} node [below] {$e^l$};

\draw [name path =R1,opacity =\XA] (E1) -- (A2);
\draw [name intersections={of=C1 and R1},  thick] (E1) -- (intersection-1) node [name = AA2] {$\bullet$} node [below right] {$e^r$};

\draw [name path =R12,opacity =\XA] (F2) -- (B1);
\draw [name intersections={of=C1 and R12},  thick] (F2) -- (intersection-1) node [name = BB2] {$\bullet$} node [below left] {$e(-3)^r$};

\draw [name path =L12,opacity =\XA] (F2) -- (B3);
\draw [name intersections={of=C1 and L12},  thick] (F2) -- (intersection-1) node [name = BB1] {$\bullet$} node [left] {$e(-3)^l$};

\draw [add =0 and 0.5, dashed] (BB2) to (AA2);
\draw [add =-1 and 1] (BB2) to (AA2);
\draw [add =0 and 0.3, dashed] (BB1) to (AA1);
\draw [add =-1 and 1] (BB1) to (AA1);

\draw [add =0 and 1, dashed] (BB2) to (AA1) node {$\bullet$} node[right]{$F_3$};
\draw [add =0 and 1.5, dashed] (BB2) to (AA1);
\draw [add =-1 and 0.5, dashed] (AA1) to (BB2) node [left] {$\lw_{F_3}$};

\coordinate (G1)  at (3,-1);
\draw (G1) node {$\bullet$} node [above right] {$F_1$};
\draw [add =0  and 1.9] (G1) to (EE1) node [above] {$\lw_{F_1}$};

\coordinate (G2)  at (3,-2);
\draw (G2) node {$\bullet$} node [below right] {$F_2$};
\draw [add =0  and 0.3] (G2) to (FF2) node [above] {$\lw_{F_2}$};

%\draw node at (3,-1) {$\mathfrak R_E$};

%\draw [add= 0 and 1] (W) to (V) node[above]{Eff};

%B axis
%\draw[->,opacity =0.3] (-3,0) -- (4,0) node[above right] {$\frac{\ch_1}{\ch_0}$};

%H axis
%\draw[->,opacity=0.3] (0,-2)-- (0,0) node [above right] {O} --  (0,3.5) node[right] {$\frac{\ch_2}{\ch_0}$};

\end{tikzpicture}

Figure: Three different cases of the last wall.
\end{center}

%%%%%%%%%%%%%%%%%%%%INTRODUCTION%%%%%%%%%%%%%%%%%%%%%%

The following lemma shows that for stability conditions below the wall $\lw_w$ ($\lrw_w$), there is no stable object with character $w$.

\begin{lemma}
Let $w$ be a character in $\mathrm K(\pp)$ not inside  $\Cone_{LP}$; $\sigma$ be a geometric stability condition in $\bd_{<0}$ below $\lw_w$ or $\lrw_w$. Then $\mathfrak M^s_\sigma(w)$ and $\mss (-w)$ are both empty.
\label{lemma:lw}
\end{lemma}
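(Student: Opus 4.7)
I would treat the case $\sigma$ below $\lw_w$; the case of $\lrw_w$ then follows by the mirror symmetry that exchanges $E_w$ and $E_w^{\mathrm{(rhs)}}$. Under this reduction, the geometry of $\lw_w$ places $\sigma$ to the left of the vertical wall $L_{w\pm}$, so $s_\sigma < \chy(w)$. Observe first that $\mathfrak{M}^s_\sigma(-w) = \emptyset$ is immediate from the heart condition: computing $\mathrm{Im}\, Z_\sigma(-w) = -(\ch_1(w) - s_\sigma \ch_0(w)) < 0$ (for $\ch_0(w) > 0$, with the analogous inequality in the torsion case), no object of $\mathrm{Coh}_{\sharp s_\sigma}$ can have character $-w$. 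So only $\mathfrak{M}^s_\sigma(w) = \emptyset$ requires real work.

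Next I would reinterpret the three cases of Definition \ref{def:lastwall} in terms of $\chi(E,w)$, where $E := E_w$. The key claim is that $L_{e^+e(-3)^+}$ coincides with the line $\chi(E,-) = 0$. Using the K-theory lifts $(r, c_1, c_2 - 1/r)$ and $(r, c_1 - 3r, c_2 - 3c_1 + 9r/2 - 1/r)$ for $e^+$ and $e(-3)^+$ respectively (where $E = (r, c_1, c_2)$), a direct HRR computation gives $\chi(E, e^+) = \chi(E, e(-3)^+) = 2rc_2 - c_1^2 + r^2 - 1$, which vanishes by the exceptional relation $\chi(E,E) = 1$. Hence Cases 1, 2, 3 correspond exactly to $\chi(E,w) > 0$, $< 0$, $= 0$.

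The main argument proceeds by contradiction: suppose $F$ is $\sigma$-stable with $v(F) = w$. In Case 1 ($\chi(E,w) > 0$ and $\lw_w = L_{we}$), the direct application is Proposition \ref{prop:wisnotstabbelowexc} when $s_\sigma < \chy(E)$; when $s_\sigma \geq \chy(E)$, one moves $\sigma$ along $L_{w\sigma}$ into the regime $s < \chy(E)$ via Corollary \ref{cor:stabonthesegment}, then applies the proposition to conclude $\chi(E,F) \leq 0$, contradicting $\chi(E,w) > 0$. Case 2 ($\chi(E,w) < 0$ with $\lw_w = L_{we(-3)}$) is the symmetric argument with $E(-3)$ in place of $E$: using the Serre-duality identity $\chi(w, E(-3)) = \chi(E(-3), w(-3)) = \chi(E,w) < 0$ (after twisting both entries by $\mathcal{O}(3)$), the mirror Hom-vanishings yield $\chi(F, E(-3)) \geq 0$, a contradiction. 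Case 3 ($\chi(E,w) = 0$ with $\lw_w = L_{e^+e(-3)^+}$) is the limiting case: I would perturb $w$ infinitesimally into Case 1 or Case 2 while keeping $\sigma$ below the corresponding nearby last wall, and deduce the vanishing from the previously established cases together with semicontinuity of the stability locus.

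The hardest step will be the reduction in Case 1 when $s_\sigma \geq \chy(E)$: one must verify that the connected component of $L_{w\sigma} \cap \Geo_{LP}$ containing $\sigma$ reaches the region $s < \chy(E)$ despite potentially crossing the excluded vertical segments $l_{ee^+}$. If the direct reduction fails for some configurations, one must instead work inside a quiver region $\mathrm{MZ}_\mathcal{E}$ containing $\sigma$ and use the exceptional triple structure to recover the Hom-vanishings leading to $\chi(E,F) \leq 0$. The analogous obstacle appears in Case 2 around the line $L_{E(-3)\pm}$, and Case 3 additionally requires that the perturbation of $w$ stays within $\Geo_{LP}$ and preserves the configuration of $\sigma$.
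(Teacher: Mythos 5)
Your plan overlaps substantially with the paper's actual proof, and where it overlaps it is correct. The paper likewise reduces to the $\lw_w$ side, disposes of $-w$ by positivity of the imaginary part of the central charge to the left of $L_{w\pm}$, and handles Case 2 of Definition \ref{def:lastwall} by exactly your Euler-characteristic count: $\chi(w,E_w(-3))=\chi(E_w,w)<0$ against the vanishing of $\hom(F,E_w(-3)[t])$ for $t=0,1,3$ (the $t=1$ vanishing from $\sigma$-stability of $E_w(-3)[1]$ below the wall via Corollary \ref{cor:regionofEstab} and Lemma \ref{lemma:slopecompare}, the $t=3$ one from Serre duality and $\Hom(E_w,\HH^{-1}(F))=0$), forcing $\chi=\hom^2\geq 0$. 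Your identification of the three cases with the sign of $\chi(E_w,w)$ is also correct, consistent with Remark \ref{rem:leeline}. However, your ``hardest step'' in Case 1 is vacuous: since $w\in\mathfrak R_{E_w}$ lies below the boundary line $L_{e^le(-3)e(-3)^l}$, whose height at $\chy(w)$ equals $\che(e)+(\chy(e)-\frac{3}{2})(\chy(w)-\chy(e))$, the wall $L_{we}$ has slope strictly less than $\chy(E_w)-\frac{3}{2}$; writing $g(s)$ for the wall height minus $\frac{s^2}{2}$, one has $g(\chy(E_w))=-\bd(e)\leq 0$ and $g'(\chy(E_w))<-\frac{3}{2}$, and concavity of $g$ gives $g(s)<0$ for all $s>\chy(E_w)$. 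So every point of $\bd_{<0}$ below $\lw_w$ automatically satisfies $s_\sigma<\chy(E_w)$, Proposition \ref{prop:wisnotstabbelowexc} applies verbatim, and no slide along $L_{w\sigma}$ is needed---fortunately, since that slide can indeed exit $\Geo_{LP}$ before reaching $s<\chy(E_w)$, as you yourself suspect.

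The genuine gap is Case 3. Perturbing $w$ ``infinitesimally'' has no meaning in this framework: $w$ is an integral class in $\mathrm K(\pp)$, the moduli problems attached to distinct characters are unrelated, nothing defines $\mathfrak M^s_\sigma(w')$ for a real deformation $w'$ of $w$, and there is no semicontinuity statement in the character variable to invoke; your concluding sentence even conflates perturbing $w$ with keeping the perturbation ``within $\Geo_{LP}$,'' which is a condition on stability conditions, not characters. No limiting argument is needed. In Case 3 the last wall $L_{e^+e(-3)^+}$ is the line $\chi(E_w,-)=0$, which by Remark \ref{rem:leeline} also contains the exceptional character $e_\alpha=e\bigl(\frac{p-1}{2^m}\bigr)$; a short computation using $\hom(E_\alpha,E_w)=3\ch_0(E_\beta)$ from Lemma \ref{lemma: rk of exc} together with the Markov relation among the ranks of an exceptional triple shows that the affine function $\chi(E_\alpha,-)$ is strictly positive along this line at and to the right of $e_\alpha$, so $w$ lies strictly above $L_{e_\alpha^le_\alpha^+}$ while $\lw_w=L_{we_\alpha}$. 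Proposition \ref{prop:wisnotstabbelowexc} then applies directly with $E_\alpha$ in place of $E_w$---this is what the paper means when it asserts that Cases 1 and 3 both follow ``directly'' from that proposition, and it is the missing idea your proposal should adopt in place of the perturbation.
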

\begin{proof}
We prove the lemma in the case for $\lw_w$. The $\lrw_w$ case can be proved similarly. We may assume $\ch_0(w)\geq 0$, since otherwise $\mathfrak M^s_\sigma(w)$ is empty when $\sigma$ is to the left of the vertical wall $L_{w\pm}$. When $w$ is of Case 1 or 3 in the Definition \ref{def:lastwall}, the statement follows from Proposition \ref{prop:wisnotstabbelowexc} directly.

When $w$ is of Case 2 in Definition \ref{def:lastwall}, we have $\chi(w,E_w(-3))<0$. For any $\sigma$-stable $F$ with character $w$,  Hom$(F,E_w(-3)[t])$ may be nonzero only when $0\leq t\leq 3$. Since $F$ is in Coh$_{\# s_\sigma}$,  $\Hom(E_w,\HH^{-1}(F))=0$. By Serre duality,
\[\hom(F,E_w(-3)[3]) = \hom(E_w,F[-1]) =  \hom(E_w,\HH^{-1}(F))=0.\]
%Since $\chi(w,E_w(-3))<0$, Hom$(F,E_w(-3)[1])$ is nonzero.

On the other hand, when $\sigma$ is below $\lw_w$ and inside $\bd_{<0}$, by Corollary \ref{cor:regionofEstab}, $E_w(-3)[1]$ is $\sigma$-stable. By Lemma \ref{lemma:slopecompare}, $\phi_\sigma(E_w(-3)[1])<\phi_\sigma(F)$. Therefore, Hom$(F,E_w(-3)[1])= 0$. This leads to a contradiction to the inequality that $\chi(w,E_w(-3))<0$.
\end{proof}

The existence of stable objects before the last wall is more complicated. This is first proved by Coskun, Huizenga and Woolf. The authors write down the generic slope stable coherent sheaves build by exceptional bundles and show that these objects do not get destabilized before the last wall. Our approach is more close to the idea of Bayer and Macr{\`{i}} for K3 surfaces. We aim to show that for each wall-crossing before the last wall, new stable objects (extended by two objects) are generated on both sides, hence the moduli space is non-empty. We may benefit from this approach since the similar techniques can be applied in the criteria for actual walls.

\begin{lemma}
Let $w$ be a character $\mathrm K(\pp)$ with $\ch_0(w)> 0$ and not inside $\Cone_{LP}$. Let $\sigma$ be a geometric stability condition. Assume that the wall $L_{w\sigma}$ is between the vertical wall $L_{w\pm}$ and
\begin{itemize}
\item $\lw_w$, when $w$ is of Case 1 or 2 in Definition \ref{def:lastwall};
\item $L_{wE_w(-3)}$, when $w$ is of Case  3 in Definition \ref{def:lastwall}.
\end{itemize}
Let $v\in\mathrm K(\pp)$ be a character on $L_{w\sigma}$ such that $\ch_0(v)\geq 0$ and $\frac{\ch_1(v)}{\ch_0(v)}>\frac{\ch_1(w)}{\ch_0(w)}$, then the wall $L_{v\sigma}$ is between $L_{v\pm}$ and $\lw_v$.
\label{lemma:largerslopeisstable}
\end{lemma}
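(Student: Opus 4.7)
The plan is to translate the desired conclusion into a slope inequality on the $\cccp$ and establish it by geometric comparison. Write $L=L_{w\sigma}=L_{v\sigma}$ and let $m=\mathrm{slope}(L)$. Since $\chy(\sigma)<\chy(w)<\chy(v)$ are collinear and $\sigma$ lies above-left of $w$ (so $\che(\sigma)>\che(w)$), we have $m<0$, and hence $\che(v)<\che(w)$; in particular, $v$ lies strictly lower-right of $w$. First I would check $v\notin\Cone_{LP}$: since $w\notin\Cone_{LP}$ and $v$ has strictly smaller $\che$ while lying further right, and $C_{LP}$ extends upward as $\chy$ increases past $\chy(w)$, the point $v$ stays outside $\Cone_{LP}$, so $\lw_v$ is well-defined.

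The desired statement is equivalent to the slope inequality $m<\mathrm{slope}(\lw_v)$ (both sides negative). The key elementary lemma I would use is: given a point $p$ with $\chy(p)<\chy(w)$ and $\che(p)>\che(w)$ (i.e.\ upper-left of $w$), and $v$ on $L$ with $\chy(v)=\chy(w)+t$, $t>0$, the hypothesis $m<\mathrm{slope}(L_{wp})$ implies $m<\mathrm{slope}(L_{vp})$. This is a one-line check: setting $a=\chy(w)-\chy(p)>0$, $b=a+t$, $c=\che(p)-\che(w)>0$, and $d=c-mt>c$, the hypothesis $-ma>c$ yields $-mb=-ma-mt>c-mt=d$, i.e.\ $m<-d/b=\mathrm{slope}(L_{vp})$.

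I apply this lemma in two subcases. If $E_v=E_w=E$: in Case 1 of Definition \ref{def:lastwall} for $w$, take $p=e$; in Cases 2 and 3 (where the hypothesis involves $L_{we(-3)}$), take $p=e(-3)$. A brief side check determines which case of the definition applies to $v$ by comparing $m$ with $\mathrm{slope}(L_{e^+e(-3)^+})=\chy(E)-3/2$; in each subcase $\lw_v$ is the expected line ($L_{ve}$, $L_{ve(-3)}$, or $L_{e^+e(-3)^+}$) and the lemma yields the required inequality. If $E_v\neq E_w$, the key monotonicity claim is $\chy(E_v)\geq\chy(E_w)$: in the tiling of the complement of $\Cone_{LP}$ by $\{\mathfrak R_E\}$, the boundary line $L_{e^l e(-3) e(-3)^l}$ of $\mathfrak R_E$ separates it from the regions associated to exceptional bundles of strictly smaller $\chy$, and moving rightward and downward along $L$ from $w$ cannot cross any such boundary in the wrong direction. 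With this monotonicity, applying the lemma to $p=e_v$ or $p=e_v(-3)$ completes the argument.

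The main obstacle is the monotonicity $\chy(E_v)\geq\chy(E_w)$, which requires a precise understanding of the global tiling $\{\mathfrak R_E\}$ and the slopes of its boundary lines. A secondary subtlety is the Case 3 analysis for $w$: since $\lw_w=L_{e^+e(-3)^+}$ is itself a fixed line through $w$ while the hypothesis uses the different wall $L_{we(-3)}$ as the upper bound, a small extra argument is needed to verify that $v$ lands in the appropriate case for its own last wall.
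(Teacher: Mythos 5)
Your reduction to slope comparisons and your elementary pencil lemma are fine --- they reproduce the easy half of the paper's argument, where the hypothesis that $e_w$ (resp.\ $e_w(-3)$) lies below $L_{w\sigma}$ is transported from $w$ to $v$ exactly as in your one-line computation. The genuine gap is the key monotonicity claim $\chy(E_v)\geq \chy(E_w)$ in the case $E_v\neq E_w$: it is false, and in fact the truth is the \emph{reverse} inequality (the paper states precisely: either $v\in\mathfrak R_{E_w}$, or $\chy(E_v)<\chy(E_w)$). The regions $\mathfrak R_E$ are slanted strips whose boundary lines have slope roughly $\chy(e)-\frac{3}{2}$, while $L_{w\sigma}$ is only constrained to have slope \emph{less} than the slope of $L_{e_we_w^r}$; whenever the wall is steeper than the strip direction (e.g.\ any wall close to the vertical wall $L_{w\pm}$, which the hypotheses allow), moving right from $w$ along $L_{w\sigma}$ the point descends across the lower boundary $\chi(e_w^l,-)=0$ of $\mathfrak R_{E_w}$ into regions attached to exceptional bundles of strictly smaller slope --- so the boundary you single out \emph{is} crossed, in what you call the wrong direction. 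Concretely, for $w=(4,0,-15)$ one has $E_w=\Omega$ with $\chy(E_w)=-\frac{3}{2}$; but a point $v$ with $\chy(v)=1$ on a near-vertical wall through $w$ has $\che(v)\ll 0$, and the chord of $\bd_{\frac{1}{2}}$ of $\chy$-length $3$ through $v$ then meets the parabola far to the left, so $\chy(E_v)\to-\infty$ as the wall approaches $L_{w\pm}$. With the monotonicity reversed, your final step collapses: $e_v$ need not lie below $L_{w\sigma}$ nor upper-left of $w$, so the pencil lemma with $p=e_v$ or $p=e_v(-3)$ has no footing as you set it up.

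The paper's proof avoids comparing against $E_v$ altogether. It applies (the analogue of) your pencil lemma only with $p=e_w$ in Case 1, resp.\ $p=e_w(-3)$ in Cases 2 and 3 --- these points do lie below $L_{w\sigma}$ by hypothesis --- concluding that $L_{v\sigma}$ lies between $L_{ve_w}$ (resp.\ $L_{ve_w(-3)}$) and $L_{v\pm}$; it then checks separately, using the dichotomy above, that $L_{ve_w}$ (resp.\ $L_{ve_w(-3)}$) is itself either $\lw_v$ or between $\lw_v$ and $L_{v\pm}$ in both branches (if $v\in\mathfrak R_{E_w}$ this is a case check in Definition \ref{def:lastwall}; if $\chy(E_v)<\chy(E_w)$, the comparison wall through the larger-slope exceptional sits weakly above the last wall of $v$). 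To salvage your outline, replace the monotonicity claim by this dichotomy and run your lemma with $p=e_w$ or $e_w(-3)$, not with $e_v$.
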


%%%%%%%%%%%%%%%%%%%%INTRODUCTION%%%%%%%%%%%%%%%%%%%%%%

\begin{center}
\begin{tikzpicture}[domain=1:5]

\tikzset{%
    add/.style args={#1 and #2}{
        to path={%
 ($(\tikztostart)!-#1!(\tikztotarget)$)--($(\tikztotarget)!-#2!(\tikztostart)$)%
  \tikztonodes},add/.default={.2 and .2}}
}

% opacity of the usual walls
\newcommand\XA{0}
\newcommand\XP{2.3}

%delta cone
\draw [name path =C0,opacity =0.4](-2.5,3.125) parabola bend (0,0) (2,2)
 node[right] {$\bd_0$};

\draw [name path = C1, opacity=0.4](-2.5,2.625) parabola bend (0,-0.5) (2,1.5)
 node[right] {$\bd_{\frac{1}{2}}$};

%\foreach actual char, draw the wall

%rank 1 and 2
\coordinate (B3) at (-3,4.5);
\coordinate (B2) at (-2,2);

\coordinate (B1) at (-1,0.5);
\coordinate (A0) at (0,0);
\coordinate (A1) at (1,0.5);
\coordinate (A2) at (2,2);
\coordinate (A3) at (3,4.5);

\coordinate (E0) at (0,-1);
\draw (E0) node [below right] {};

\coordinate (E1) at (1,-0.5);

\coordinate (EE1) at (1,0.5);
\draw (EE1) node [above] {$e$} node {$\bullet$};

\coordinate (E2) at (2,1);
\draw (E2) node [below right] {};

\coordinate (E3) at (3,3.5);
\draw (E3) node [below right] {};

\coordinate (F1) at (-1,-0.5);
\draw (F1) node [below left] {};

\coordinate (F2) at (-2,1);
\draw (F2) node [below left] {};

\coordinate (FF2) at (-2,2);
\draw (FF2) node [right] {$e(-3)$} node {$\bullet$};

\draw [name path =L1,opacity =\XA] (E1) -- (B1);
\draw [name intersections={of=C1 and L1},  thick] (E1) -- (intersection-1) node [name = AA1]{};

\draw [name path =R1,opacity =\XA] (E1) -- (A2);
\draw [name intersections={of=C1 and R1},  thick] (E1) -- (intersection-1) node [name = AA2]{};

\draw [name path =R12,opacity =\XA] (F2) -- (B1);
\draw [name intersections={of=C1 and R12},  opacity=0] (F2) -- (intersection-1) node [name = BB2]{};

\draw [name path =L12,opacity =\XA] (F2) -- (B3);
\draw [name intersections={of=C1 and L12},  opacity=0] (F2) -- (intersection-1) node [name = BB1]{};

%\draw [add =0 and 0.5, dashed] (BB2) to (AA2);
\draw [add =-1 and 1] (BB2) to (AA2);
%\draw [add =0 and 0.3, dashed] (BB1) to (AA1);
\draw [add =-1 and 1] (BB1) to (AA1);

\draw [add =0.5 and 1.5, dashed] (BB2) to (AA1);

\coordinate (W) at (3,-2);
\draw (W) node {$\bullet$} node[below] {$w$};
\coordinate (P) at (0,1);
\draw (P) node {$\bullet$} node [above] {$\sigma$};
\draw [add =0.8 and 0.3] (P) to (W) node [name = V] {$\bullet$} node [ right] {$v$};
\draw [add =0 and 0.1] (V) to (FF2) node [left] {$\lw_v$};

%\draw node at (3,-1) {$\mathfrak R_E$};

%\draw [add= 0 and 1] (W) to (V) node[above]{Eff};

%B axis
%\draw[->,opacity =0.3] (-3,0) -- (4,0) node[above right] {$\frac{\ch_1}{\ch_0}$};

%H axis
%\draw[->,opacity=0.3] (0,-2)-- (0,0) node [above right] {O} --  (0,3.5) node[right] {$\frac{\ch_2}{\ch_0}$};

\end{tikzpicture}

Figure: $L_{v\sigma}$ is between $L_{v\pm}$ and $\lw_v$.
\end{center}

%%%%%%%%%%%%%%%%%%%%INTRODUCTION%%%%%%%%%%%%%%%%%%%%%%

\begin{proof}
By the definition of $\mathfrak R_E$ and the assumptions on $L_{w\sigma}$, the slope of $L_{w\sigma}$ is less than the slope of $L_{e_we^r_w}$.  As $\chy(v)>\chy(w)$ and $\ch_0(v)\geq 0$, $v$ is to the right of $w$ in the $\cccp$. Therefore, either $v$ is in $\mathfrak R_{E_w}$, or $\chy(E_v)<\chy(E_w)$. $L_{vE_w}$ is either $\lw_v$ or between $\lw_v$ and $L_{v\pm}$.

When $w$ is of Case 1 in the Definition \ref{def:lastwall}, $E_w$ is below $L_{vw\sigma}$, therefore $L_{vw\sigma}$ is between the wall $L_{vE_w}$ and $L_{v\pm}$, and the conclusion follows.

When $w$ is of Case 2 and 3 in the Definition \ref{def:lastwall}, $v$ is in $\mathfrak R_{E_w}$ of Case 3 or $E_v$ has slope less than $E_w$. In either case, $L_{vE_w(-3)}$ is either $\lw_v$ or between $\lw_v$ and $L_{v\pm}$. $E_w(-3)$ is below $L_{vw\sigma}$, therefore $L_{vw\sigma}$ is between the wall $L_{vE_w(-3)}$ and $L_{v\pm}$, hence between the wall $\lw_v$ and $L_{v\pm}$.
\end{proof}

\begin{theorem}
Let $w$ be a character in $\mathrm K(\pp)$ not inside the Le Potier cone $\Cone_{LP}$; $\sigma$ be a geometric stability condition in $\bd_{< 0}$ between $\lw_w$ and $\lrw_w$. When $\sigma$ is not on the vertical wall $L_{w\pm}$, either $\mathfrak M^s_\sigma(w)$ or $\mathfrak M^s_\sigma(-w)$ is non-empty.
\label{prop:lastwall}
\end{theorem}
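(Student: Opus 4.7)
The plan is to proceed by induction on the discriminant $\Delta(w)$. Without loss of generality, take $\ch_0(w)\geq 0$; the negative rank case is handled symmetrically via the duality $\iota$ of Remark 2.4.4. The base cases cover $w$ on $C_{LP}$ (so $w$ is proportional to an exceptional character $E$, and $E$ or $E[1]$ is $\sigma$-stable throughout the claimed region by Corollary 1.3.6).

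My first step is to establish non-emptiness in the chamber adjacent to the vertical wall. For $\sigma=\sigma_{s,q}$ with $s<\chy(w)$ and $q$ sufficiently large, Proposition 2.1.6 identifies $\mathfrak{M}^{s}_{\sigma_{s,q}}(w)$ with the Gieseker moduli $\mgs(w)$, which is non-empty by Theorem 1.1.5 because $w$ is not inside $\Cone_{LP}$. Together with Corollary 1.3.7, this gives non-emptiness for every $\sigma$ in the chamber adjacent to the vertical wall.

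I then propagate non-emptiness across each actual wall $L_{w\sigma_0}$ lying strictly above $\lw_w$ (the case of $\lrw_w$ is symmetric). Suppose $\mathfrak{M}^{s}_{\sigma_+}(w)$ is non-empty in the chamber just above the wall. A destabilizing decomposition $w=v+(w-v)$ exists with both summands in the heart of $\sigma_0$ and with $\chy(v)>\chy(w)>\chy(w-v)$. By Lemma 3.1.8 (applied once to $v$ and, after a symmetric version, to $w-v$), both $L_{v\sigma_0}$ and $L_{(w-v)\sigma_0}$ lie between the respective vertical walls and $\lw_v$, $\lw_{w-v}$. By induction on $\Delta$, the moduli $\mathfrak{M}^{s}_{\sigma_0}(v)$ and $\mathfrak{M}^{s}_{\sigma_0}(w-v)$ are non-empty. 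Since $\sigma_0$ lies strictly above $\lw_w$, the geometric content of Definition 3.1.3 guarantees that the $\chy$-length of $L_{v(w-v)}\cap C_{LP}$ strictly exceeds $3$. Lemma 3.0.3 then yields $\chi(v,w-v)<0$ and $\chi(w-v,v)<0$, so generic stable objects with these characters have positive $\Ext^1$ in both directions. Lemma 3.0.1 (or Corollary 3.0.2 for multiplicities) produces a $\sigma_-$-stable extension with character $w$, completing the inductive step.

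The main obstacle will be two technical verifications. First, I must show that in any destabilizing decomposition used in the induction step, both summands have strictly smaller discriminant than $w$ (so that induction applies), or are proportional to exceptionals (handled by the base case). The degenerate case $\Delta(v)=\Delta(w)$ with $v$ proportional to $w$ corresponds to the wall not separating chambers, so it can be ruled out. Second, I must verify the $\chy$-length hypothesis of Lemma 3.0.3 in each of the three cases of Definition 3.1.3; this reduces to unpacking the geometry of $\mathfrak{R}_{E_w}$ and the relative positions of $E_w$ and $E_w(-3)$, confirming that above $\lw_w$ any destabilizing chord of $w$ meets $C_{LP}$ in two points more than three units apart in $\chy$. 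Together with a careful treatment of the cases where $v$ or $w-v$ is torsion (using Remark 3.0.4 in place of Lemma 3.0.3), this should close the induction.
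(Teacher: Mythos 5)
Your overall skeleton (induction on $\Delta$, non-emptiness at the Gieseker chamber, then producing $\sigma_-$-stable extensions across each wall via Lemma \ref{lemma:extoftwostabobjsarestab} and the Euler-characteristic estimate of Lemma \ref{lemma:ext1nonvanishing}) is exactly the paper's strategy, but your inductive step has a genuine gap at its central claim. You assert that Lemma \ref{lemma:largerslopeisstable}, ``applied once to $v$ and, after a symmetric version, to $w-v$,'' places \emph{both} destabilizing characters strictly between their own last walls and vertical walls, so that induction supplies stable objects of both characters on the wall. The asymmetry here is real and fatal: Lemma \ref{lemma:largerslopeisstable} is proved only for the factor of larger slope, and for the other factor --- in particular when $\ch_0(w-v)<0$, so one must work with $w''=v-w$ of positive rank and its \emph{right} last wall --- the wall $L_{w\sigma}$ can coincide \emph{exactly} with $\lrw_{w''}$. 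There is no strict betweenness, the inductive hypothesis yields only non-emptiness of the \emph{semistable} locus, and $\mathfrak M^s_{\sigma}(w''[1])$ may genuinely be empty on that wall (all objects strictly semistable), so you have no stable second extension factor and Lemma \ref{lemma:extoftwostabobjsarestab} cannot be invoked. This is precisely the paper's Case 3.II.2, and closing it requires a new idea you do not have: when $E=E^{\text{(rhs)}}_{w''}$ lies on $l_{ww''}$, replace $w''$ by $\tilde w := w''-\chi(w'',E)\tilde v(E)$, check that $\ch_0(\tilde w)>0$, that $\chi(\tilde w,E)=0$ forces $L_{\tilde wE}$ not to be $\lrw_{\tilde w}$, and that $\Delta(\tilde w),\Delta(w+\tilde w)<\Delta(w)$, and then run the extension argument on the adjusted pair $(w+\tilde w,\,-\tilde w)$. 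Your proposal's ``ruled out'' and ``careful treatment'' remarks do not touch this configuration, and it is not a degenerate or avoidable case --- the paper flags it as the main difficulty of the whole theorem.

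A second, smaller gap: when a destabilizing character is proportional to an exceptional $e$, you defer to the base case for \emph{existence}, but your mechanism for producing extensions still needs $\chi(v,w-v)<0$, and Lemma \ref{lemma:ext1nonvanishing} is inapplicable because exceptional characters \emph{are} inside $\Cone_{LP}$ (condition (1) fails, as the paper notes explicitly). The paper substitutes direct computations: e.g.\ $\chi(e,w-w')<0$ because the wall is not $\lw_{w-w'}$, and in the case $w''=a\tilde v(E)$ with $a>1$ it needs the strict count $\ext^1(G,E[1])>a$ together with Corollary \ref{cor:extoftwostabobjsarestab} applied to a rank-$a$ map into $E^{\oplus a}[1]$ --- your passing mention of ``Corollary 3.0.2 for multiplicities'' gestures at this but supplies neither the Hom-vanishing bookkeeping ($\Hom(G,E)=0$ via Corollary \ref{cor:regionofEstab}) nor the inequality $\chi(w',e)>a$ that drives it. Without these two repairs your induction does not close.
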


The proof of the theorem is rather involved, so we want to sketch the idea here. The aim is to show the existence of new stable objects given by extensions after wall crossing. First, on any wall before the last wall, the pair of destabilizing Chern characters $w'$ and $w-w'$  are between their own last walls and the vertical walls. By induction on the discriminant, there exists stable objects with  characters $w'$ and $w-w'$. Then by  Lemma \ref{lemma:extoftwostabobjsarestab} and \ref{lemma:ext1nonvanishing}, we show that these objects have non-trivial extensions, and will extend to stable objects after each wall-crossing. However, several different cases may happen so that the idea cannot work directly. When one of the destabilizing characters is proportional to an exceptional character, Condition 1 in Lemma \ref{lemma:ext1nonvanishing} fails and we need other ways to show $\chi(w',w-w')<0$. The most complicated case is when $w'$ is of higher rank and $\lrw_{w-w'}$ is $L_{ww'}$ (Case 3.II.2 in the proof). In this case, $\mathfrak M^{ss}_\sigma\big((w'-w)[1]\big)$ may not contain any stable objects. To deal with that, we adjust $w'-w$  to another character $\tilde w$ on $L_{ww'}$ so that $\tilde w$ is of positive rank and $\lrw_{\tilde w}$ is not $L_{ww'}$. The details of the argument are as follows.

\begin{proof}
Assume the proposition does not hold, among all the characters $w$ not inside the Le Potier cone, such that $\mathfrak M^s_{\sigma'}(w)$ and $\mathfrak M^s_{\sigma'}(-w)$ are both empty for some $\sigma'$ in $\bd_{< 0}$ between $\lw_w$ and $\lrw_w$, we may choose $w$ with the minimum discriminant $\Delta$.  We may assume that $\ch_0(w)\geq 0$. When $\sigma'$ is to the left of $L_{w\pm}$, $\mathfrak M^s_{\sigma_{s,q}}(w)$  contains Gieseker-Mumford stable objects for $q \gg\frac{s^2}{2}$ and $s<\frac{\ch_1(w)}{\ch_0(w)}$. $\mssq(w)$ is not empty by Theorem \ref{thm:lp}. There is a `last wall' $L_{\sigma w}$ prior to $L^{\text{last}}_w$ such that $\mathfrak M^{s}_{\sigma+}(w)$ is non-empty, on the wall all objects in $\mathfrak M^{ss}_{\sigma}(w)$ are strictly semistable, and $M^{ss}_{\sigma-}(w)$ is empty. There are three main different cases according to the number of exceptional characters on $L_{\sigma w}$.\\

\textbf{Case 1}. There is no exceptional character on $L_{\sigma w}$. Let $F$ be a $\sigma_+$-stable object of character $w$, then $F$ is destabilized by a $\sigma$-stable object $G$ with $\tilde v(G) = w'$ on the line segment $l_{\sigma w}$. $\mathfrak M^{ss}_{\sigma}(w-w')$ is not empty since it contains $F/G$. Since there is no exceptional character on $L_{\sigma w}$, the wall $L_{\sigma w}$ is not the last wall $\lw_{w-w'}$ or $\lrw_{w-w'}$ for $w-w'$. By Corollary \ref{cor:nostabincone}, $w-w'$ is not inside Cone$_{LP}$. By Lemma \ref{lemma:lw}, $L_{\sigma w}$ is between $\lw_{w-w'}$ and $\lrw_{w-w'}$. Corollary 3.10 in \cite{BMS} implies $\Delta(w')<\Delta(w)$. By induction on $\Delta$ and the fact that $L_{\sigma (w-w')}$ is not the vertical wall, we can assume that $\mathfrak M^{s}_{\sigma}(w-w')$ is non-empty.

We check that the pair $w'$ and $w-w'$ satisfies the conditions in Lemma \ref{lemma:ext1nonvanishing}:

1.  Note that $\mathfrak M^{s}_{\sigma}(w-w')$ and $\mathfrak M^{s}_{\sigma}(w')$ are non-empty, $w'$ and $w-w'$ are not exceptional. By Lemma \ref{cor:nostabincone}, both $w'$ and $w-w'$ are not inside Cone$_{LP}$.

2. $w'+a(w-w')$ is outside the cone $\Delta_{\leq 0}$ for any $a\geq 0$. Since $L_{w\sigma}$ intersects $\bd_{<0}$, $w'-a(w-w')$ belongs to $\bd_{<0}$ for some $a>0$.  \[\Delta(w'-a(w-w'))=\Delta(w')+\Delta(w-w')-2a\Delta(w',w-w')<0\] 
implies $\Delta(w',w-w')\geq 0$.

3. When $w$ is not right orthogonal to $E_w$, by Lemma \ref{lem:l3}, the $\frac{\ch_1}{\ch_0}$-length of $L_{wE_w}\cap C_{LP}$ is greater than $3$. Hence, the $\frac{\ch_1}{\ch_0}$-length of $L_{w\sigma}\cap C_{LP}$ is greater than $3$. When $w$ is right orthogonal to $E_w$, note that $w'$ is not in the triangle area TR$_{we_w e^+_w}$, since otherwise, $E_{w'} = E_w$ and $L_{w'\sigma}$ is to the left of $L_{w'E_w}$, by Proposition \ref{prop:wisnotstabbelowexc}, $\mathfrak M^{s}_{\sigma}(w')$ is empty, there is no $\sigma$-stable object $G$ to destabilize $F$. Now since the $\frac{\ch_1}{\ch_0}$-length of $L_{wE_w}\cap C_{LP}$ is greater than $3$, the $\frac{\ch_1}{\ch_0}$-length of $L_{w\sigma} \cap C_{LP}$ is greater than $3$.

Now by Lemma \ref{lemma:ext1nonvanishing}, we have $\chi(w',w-w')<0$. For $\sigma$-stable objects $F'$ and $F''$ with characters $w'$ and $w-w'$ respectively, and $i\neq 0,1,2$,  Hom$(F',F''[i]) = 0$ since $F'$ and $F''$ are in a same heart and in addition by Serre duality. These imply Hom$(F',F''[1]) \neq 0$. Now by Lemma \ref{lemma:extoftwostabobjsarestab}, the non-trivial extension of $F'$ by $F''$ is $\sigma_-$-stable, therefore $\mathfrak M^s_{\sigma_-}(w)$ is non-empty, which contradicts to the assumption that $L_{\sigma w}$ is the last wall.\\

\begin{comment}
Now by Lemma \ref{lemma:ext1nonvanishing}, we have $\chi(w',w-w')<0$. For $\sigma$-stable objects $F'$ and $F''$ with characters $w'$ and $w-w'$ respectively, since $\phi_{\sigma}(F')=\phi_{\sigma}(F'')$ by Lemma \ref{lemma:paraandspanplane}, we have Hom$(F',F'') = 0$. By Lemma \ref{lemma: ext2 vanishing for stable factors}, Hom$(F',F''[2]) = 0$. For $i\neq 0,1,2$,  Hom$(F',F''[i]) = 0$ since $F'$ and $F''$ are in a same heart and in addition by Serre duality.  All these together imply Hom$(F',F''[1]) \neq 0$. Now by Lemma \ref{lemma:extoftwostabobjsarestab}, the non-trivial extension of $F'$ by $F''$ is $\sigma_-$-stable, therefore $\mathfrak M^s_{\sigma_-}(w)$ is non-empty, which contradicts to the assumption that $L_{\sigma w}$ is the last wall.\\
\end{comment}

\textbf{Case 2:} There are more than two exceptional characters on $L_{\sigma w}$. This can only happens when $L_{w\sigma}$ is the line $\chi(E,-)=0$ for exceptional bundle $E = E_w$. In this case, $w$ is of Case 3 in Definition \ref{def:lastwall}, $L_{\sigma w}$ is $\lw_w$.\\

\textbf{Case 3:} There are one or two exceptional characters on $L_{\sigma w}$.

Similar to Case 1, we consider the character $w'$. We first prove the `lower rank wall' case, i.e. ch$_0 (w')$ $\leq$ ch$_0 (w)$. In this case, since $\phi_{\sigma+}(w)<\phi_{\sigma+}(w-w')$, the character $w-w'$ satisfies the condition in Lemma \ref{lemma:largerslopeisstable}, therefore $\mathfrak M^s_{\sigma}(w-w')$ is non-empty by induction on $\Delta$. We only need to show $\chi(w',w-w')<0$ so that by the same argument of the last paragraph in Case 1, $\mathfrak M^s_{\sigma - }(w)$ is non-empty. If $w'$ is not proportional to any exceptional character, then the proof in Case 1 works, and the pair $w'$ and $w-w'$ still satisfies the conditions in Lemma \ref{lemma:ext1nonvanishing}. If $w'$ is proportional to an exceptional character $E$,  since $L_{E (w-w')}$ is not $\lw_{(w-w')}$, $\chi(E,w-w')<0$. Therefore, $\chi(E,w-E)<0$ and $\mathfrak M^s_{\sigma-}(E,w-E)$ is non-empty. This completes the argument for the lower rank case.

Now we may assume ch$_0 (w')$ $>$ ch$_0 (w)$ and let $w''= w'-w$, then ch$_0 (w'')>0$. On the $\cccp$, $w'$ and $w''$ are in different components of $L_{w\sigma}\cap \bd_{\geq 0}$. If $\mathfrak M^s_{\sigma}(w''[1])$ is non-empty, then the argument for the lower rank case still works and implies that $\mathfrak M^s_{\sigma - }(w)$ is non-empty. On the other hand, by induction on $\Delta$, Proposition \ref{prop:compcontainsstabobj} and Proposition \ref{contracting is more than producing lemma}, the semistable locus $\mathfrak M^{ss}_{\sigma}(w''[1])$ is non-empty. So the only remaining case to consider is that $L_{w\sigma}$ is the right last wall $\lrw_{w''}$ for $w''$. 

Case 3.I: $w''$ is proportional to an exceptional character $E$: $w'' = a\tilde v(E)$. Since $E$ is to the left of $E_w$, we have $\chi(w,E)>0$, this implies
\[\chi(w',E)>\chi(w'',E) = a\chi(E,E) = a.\]
By Corollary \ref{cor:regionofEstab}, both $G$ and $E[1]$ are $\sigma$-stable in a same heart, this implies Hom$(G,E)=$ Hom$(G,(E[1])[-1])=$ $0$. Therefore,
\[\ext^1(G,E[1])=\hom(G,E[2])\geq \chi(w',E)>a.\]
By Corollary \ref{cor:extoftwostabobjsarestab}, there exists $\sigma_-$-stable object extended by $G$ and $E^{\oplus a}[1]$.

Case 3.II: $w''$ is not proportional to any exceptional character. As $\lrw_{w''} = L_{w''\sigma}$, and there are at most two exceptional characters on $L_{w''\sigma}$ by assumption, $w''$ is not of Case $3$ in Definition \ref{def:lastwall}, either $E^{\text{(rhs)}}_{w''}$ or $E^{\text{(rhs)}}_{w''}(3)$ is on the line segment $l_{ww''}$.

Case 3.II.1: $w''$ is of (right side) Case 2 in Definition \ref{def:lastwall}  and $\tilde v(E^{\text{(rhs)}}_{w''}(3))$ is on $l_{ww''}$.  The character $w$ can be written as
\[a\tilde v(E^{\text{(rhs)}}_{w''}(3))-bw''\]
for some positive numbers $a$ and $b$. Since $\chi(E^{\text{(rhs)}}_{w''}(3),w'')<0$, we have
\[\chi(E^{\text{(rhs)}}_{w''}(3),w)>0.\]
This implies $\chy\left(E^{\text{(rhs)}}_{w''}(3)\right)\leq \chy (E_w)$. As $E^{\text{(rhs)}}_{w''}(3)$ is above $\lw_w$, it must be $E_w$. On the other hand, as $\chi(E_w,w) = \chi(E^{\text{(rhs)}}_{w''}(3),w)>0$, $w$ is of Case $1$ in Definition \ref{def:lastwall}. The wall $L_{wE_ww''}$ is just the last wall $\lw_w$ of $w$.

Case 3.II.2: $w''$  is of (right side) Case 1 in Definition \ref{def:lastwall}  and $E=\tilde v(E^{\text{(rhs)}}_{w''})$ is on $l_{ww''}$. By Definition \ref{def:lastwall}, $\chi(w'',E)>0$. Consider the character $\tilde w := w'' -\chi(w'',E)\tilde v(E)$, we have $\chi(\tilde w,E) = 0$, therefore $\tilde w$ is on the line $L_{e^r(e(3)^l)}$.

\begin{comment}
. Suppose this this is not the case, as $L_{wE}$ is between $\lw_w$ and $L_{w\pm}$, $w$ is in the Case $2$ of Definition \ref{def:lastwall} and $E(3) = E_w$. But then $\lw_v$ must be $L_vE(3)$ in this case and $L_{vE(3)}$ is on the left of $L_vE_w$. $\mathfrak M^s_\sigma(v)$ is empty, and we get contradiction.
\end{comment}
The character $w$ must be above $L_{EE(3)}$, otherwise $L_{wE}$ is the last wall $\lw_w$. The intersection of $L_{wE}\cap L_{e^r(e(3)^l)}$ is outside the cone $\bd_{<0}$ and on the different side of $w$ in the $\cccp$. As $w''$, $E$ and $\tilde w$ are on the same component of $L_{wE}\cap\bd_{\geq 0}$, ch$_0(\tilde w)$ is greater than $0$. The character $w+\tilde w = w'-\chi(w'',E)\tilde v(E)$ is on the line segment $l_{ww'}$.

%%%%%%%%%%%%%%%%%%%%INTRODUCTION%%%%%%%%%%%%%%%%%%%%%%

\begin{center}
\begin{tikzpicture}[domain=1:5]

\tikzset{%
    add/.style args={#1 and #2}{
        to path={%
 ($(\tikztostart)!-#1!(\tikztotarget)$)--($(\tikztotarget)!-#2!(\tikztostart)$)%
  \tikztonodes},add/.default={.2 and .2}}
}

% opacity of the usual walls
\newcommand\XA{0}
\newcommand\XP{2.3}

%delta cone
\draw [name path =C0](-2.5,3.125) parabola bend (0,0) (3,4.5)
 node[right] {$\bd_0$};

\draw [name path = C1, opacity=0](-2.5,2.625) parabola bend (0,-0.5) (2,1.5) node[right] {$\bd_{\frac{1}{2}}$};

%\foreach actual char, draw the wall

%rank 1 and 2
\coordinate (B3) at (-3,4.5);
\coordinate (B2) at (-2,2);

\coordinate (B1) at (-1,0.5);
\coordinate (A0) at (0,0);
\coordinate (A1) at (1,0.5);
\coordinate (A2) at (2,2);
\coordinate (A3) at (3,4.5);

\coordinate (E0) at (0,-1);
\draw (E0) node [below right] {};

\coordinate (E1) at (1,-0.5);
\draw (E1) node [below] {};

\coordinate (EE1) at (1,0.5);
?\draw (EE1) node [above] {$e(3)$} node {$\bullet$};

\coordinate (E2) at (2,1);
\draw (E2) node [below right] {};

\coordinate (E3) at (3,3.5);
\draw (E3) node [below right] {};

\coordinate (F1) at (-1,-0.5);
\draw (F1) node [below left] {};

\coordinate (F2) at (-2,1);
\draw (F2) node [below left] {};

\coordinate (FF2) at (-2,2);
\draw (FF2) node [above right] {$e$} node {$\bullet$};

\draw [name path =L1,opacity =\XA] (E1) -- (B1);
\draw [name intersections={of=C1 and L1},  opacity =0] (E1) -- (intersection-1) node [name = AA1, opacity =1] {$\bullet$} node [below, opacity =1] {$e(3)^l$};

\draw [name path =R1,opacity =\XA] (E1) -- (A2);
\draw [name intersections={of=C1 and R1}, opacity =0] (E1) -- (intersection-1) node [name = AA2] {$\bullet$} node [below right] {};

\draw [name path =R12,opacity =\XA] (F2) -- (B1);
\draw [name intersections={of=C1 and R12},  thick] (F2) -- (intersection-1) node [name = BB2] {$\bullet$} node [below left] {$e^r$};

\draw [name path =L12,opacity =\XA] (F2) -- (B3);
\draw [name intersections={of=C1 and L12},  thick] (F2) -- (intersection-1) node [name = BB1] {} node [left] {};

\draw [add= 0 and 1, dashed] (AA1) to (BB2) node [above] {$L_{e^re(3)^l}$};
\draw [add= 0 and 0.7, dashed] (AA1) to (BB2) node [name = WT] {$\bullet$} node [below] {$\tilde w$};

\draw [add= 0.5 and 6] (WT) to (FF2) node [name = W] {$\bullet$} node [below] {$w$};
\draw [add= 0.5 and 5] (WT) to (FF2) node  {$\bullet$} node [below] {$w'$};
\draw [add= 0.5 and -0.5] (WT) to (FF2) node {$\bullet$} node [below] {$w''$};

%\draw [add= 0 and 1] (W) to (V) node[above]{Eff};

%B axis
\draw[->,opacity =0.3] (-3,0) -- (4,0) node[above right] {$\frac{\ch_1}{\ch_0}$};

%H axis
\draw[->,opacity=0.3] (0,-2)-- (0,0) node [above right] {O} --  (0,3.5) node[right] {$\frac{\ch_2}{\ch_0}$};

\end{tikzpicture}

Figure: Definition of $\tilde w$.
\end{center}

%%%%%%%%%%%%%%%%%%%%INTRODUCTION%%%%%%%%%%%%%%%%%%%%%%

When $w'$ is not proportional to any exceptional character, $w+\tilde w$ is outside  Cone$_{LP}$ and on the same component of $L_{wE}\cap\bd_{\geq 0}$ as $w$. Since the line segment $l_{\tilde w(w+\tilde w)}$ intersects $\bd_{<0}$, $\Delta(\tilde w,w+\tilde w)<0$.  This implies  $\Delta (\tilde w)<\Delta (w)$ and $\Delta (w+\tilde w)$ $<$ $\Delta (w)$. As $\mathfrak M^s_\sigma(w')$ is non-empty, by Lemma \ref{lemma:largerslopeisstable} and induction on $\Delta$,  $\mathfrak M^s_\sigma(w+\tilde w)$ is non-empty. As $\chi(\tilde w,E) = 0$, $L_{\tilde w E}$ is not the last wall $\lrw_{\tilde w}$ for $\tilde w$. By induction on $\Delta$, $\mathfrak M^s_\sigma(-\tilde w)$ is non-empty. The character pair $w+\tilde w$ and $-\tilde w$ satisfy the conditions in Lemma \ref{lemma:ext1nonvanishing}, hence $\chi(w+\tilde w,\tilde w[1])<0$. $\mathfrak M^s_{\sigma-}(w)$ is non-empty by Lemma \ref{lemma:extoftwostabobjsarestab}.

% by Prop3.3 in \cite{stabP2}, this is equivalent to w'//exc char
When $w'$ is $\tilde v(E')$ for an exceptional bundle $E'$, as $E'$ is to the right of $E(3)$, we have $\chi(E,E')>0$. Hence,
\[\chi(w+\tilde w,E') = \chi\big(E'-\chi(w'',E)\tilde v(E),E'\big) = 1-\chi(w'',E)\chi(E,E')\leq 0.\]
This implies the characters $w+\tilde w$ and $\tilde v(E')$ are on two different sides of $L_{e'^+e'^l}$. Therefore, $w+\tilde w$ is not inside Cone$_{LP}$. The rest of the argument is the same as the case when $w'$ is not proportional to exceptional character.\\

Up to now, we finish the argument for the case that $\sigma$ is on the left side of $L_{w\pm}$. When $\sigma$ is on the right side of $L_{w\pm}$, the statement follows from the symmetric property (ch$_0(w)> 0$):
\[\mss(w)\simeq \mathfrak M^s_{\sigma'}(w'[1]), \;\; F\mapsto \mathcal {RH}om(F,\mathcal O)[1],\]
where $\sigma'$ is with parameter $(-s_\sigma,q_\sigma)$ and $w'=\big( \ch_0(w),-\ch_1(w),\ch_2(w)\big)$.
\end{proof}

\subsection{The criteria for actual walls}\label{sec3.2}
In this section we give a numerical criteria for actual walls of a given Chern character. In the $\cccp$, the actual wall for $w$ is the potential wall $L_{w\sigma}$ where new stable objects are produced on both sides and curves are contracted on at least one side. When $\sigma$ is to the left of the vertical wall $L_{w\pm}$, one can always choose a destabilizing factor $v$ with positive rank and smaller slope. As $\Delta(v)$ is less than $\Delta(w)$, there are finitely many candidates $v$. By checking the positions of $v$ and $v-w$ on the $\cccp$, which are purely numerical data, Theorem \ref{thm: actual wall} determines whether $L_{w\sigma}$ is an actual wall induced by this pair. The idea of the proof is very similar to that of the last wall, we first show there are stable objects on the wall with characters $v$ and $w-v$ by Theorem \ref{prop:lastwall}. We then argue that the ext$^1$ of the stable objects is greater than $0$ by Lemma \ref{lemma:ext1nonvanishing}, and finally claim that curves must be contracted from the $\sigma_+$-side wall-crossing.
\begin{comment}
\begin{theorem}
Let $w$ be a character with $\ch_0(w)\geq 0$. For any character $v$ with $\ch_0(v)> 0$, suppose that $v$ is between the wall $\lw_w$ and the vertical ray $L_{w+}$, and outside the Le Potier cone $Cone_{LP}$. Then the potential wall $L_{vw}$ is an actual wall for $w$ if and only if $v$ and $w-v$ are either exceptional or outside the Le Potier cone and both of them are not in $TR_{wE}$ for any exceptional bundle $E$.
\end{theorem} \label{thm: actual wall}
\end{comment}

To state the criteria for actual walls, we first need to introduce the following definition.

\begin{defn}
For a Chern character $w$ with $\ch_0(w)\geq0$ and an exceptional character $e$, we define the \emph{triangle $\mathrm{TR}_{we}$} to be the triangle region bounded by lines $L_{we}$, $L_{e^le^+}$ and $L_{e^+e^r}$ in the $\cccp$.
\label{def:trew}
\end{defn}

%%%%%%%%%%%%%%%%%%%%INTRODUCTION%%%%%%%%%%%%%%%%%%%%%%

\begin{center}
\begin{tikzpicture}[domain=1:5]

\tikzset{%
    add/.style args={#1 and #2}{
        to path={%
 ($(\tikztostart)!-#1!(\tikztotarget)$)--($(\tikztotarget)!-#2!(\tikztostart)$)%
  \tikztonodes},add/.default={.2 and .2}}
}

% opacity of the usual walls
\newcommand\XA{0}
\newcommand\XP{2.3}

%delta cone
\draw [name path =C0, opacity=0.1](-2,2) parabola bend (0,0) (2,2)
 node[right, opacity =0.5] {$\bd_0$};

\draw [name path = C1, opacity=0.5](-2,1.5) parabola bend (0,-0.5) (2,1.5)
 node[right] {$\bd_{\frac{1}{2}}$};

%\foreach actual char, draw the wall

%rank 1 and 2
\coordinate (B3) at (-3,4.5);
\coordinate (B2) at (-2,2);

\coordinate (B1) at (-1,0.5);
\coordinate (A0) at (0,0);
\coordinate (A1) at (1,0.5);
\coordinate (A2) at (2,2);
\coordinate (A3) at (3,4.5);

\coordinate (E0) at (0,-1);
\draw (E0) node [below right] {};

\coordinate (E1) at (1,-0.5);
\draw (E1) node [below right] {};

\coordinate (E2) at (2,1);
\draw (E2) node [below right] {};

\coordinate (E3) at (3,3.5);
\draw (E3) node [below right] {};

\coordinate (F1) at (-1,-0.5);
\draw (F1) node [below left] {};

\coordinate (F2) at (-2,1);
\draw (F2) node [below left] {};

\coordinate (F3) at (-3,3.5);
\draw (F3) node [below left] {};

\draw [name path =L0,opacity =\XA] (E0) -- (B2);
\draw [name intersections={of=C1 and L0},  thick] (E0) -- (intersection-1) node [name=EE0]{};
\draw [add= 0 and 1] (EE0) to (E0);

\draw [name path =L1,opacity =\XA] (E1) -- (B1);
\draw [name intersections={of=C1 and L1},  thick] (E1) -- (intersection-1);

\draw [name path =R0,opacity =\XA] (E0) -- (A1);
\draw [name intersections={of=C1 and R0},  thick] (E0) -- (intersection-1);

\draw [name path =R1,opacity =\XA] (E1) -- (A2);
\draw [name intersections={of=C1 and R1},  thick] (E1) -- (intersection-1)node [name=EE1]{};
\draw [add= 0 and 1] (EE1) to (E1);

\coordinate (S3) at (-2.5,2.5);
\draw (S3) node [below left] {};

\coordinate (S2) at (-1.5,0.5);

\coordinate (S1) at (-.5,-0.5);

\coordinate (T1) at (.5,-0.5);

\coordinate (T2) at (1.5,0.5);

\coordinate (T3) at (2.5,2.5);
\draw (T3) node [below right] {};

\draw [name path =RT1,opacity =\XA] (T1) -- (B2);
\draw [name intersections={of=C1 and RT1},  thick] (T1) -- (intersection-1);
\draw [name path =LT1,opacity =\XA] (T1) -- (A3);
\draw [name intersections={of=C1 and LT1},  thick] (T1) -- (intersection-1)node [name=EE2]{};
\draw [add= 0 and 1] (EE2) to (T1);

\coordinate (W) at (0.4,-3);
\draw (W) node {$\bullet$} node [below] {$w$};
\coordinate (O) at (0,0);
\draw (O) node {$\bullet$} node [above] {$E$};
\draw [name path=L1] (W) -- (O);
\coordinate (O1) at (0.5,-0.25);
\draw (O1) node {$\bullet$} node [above] {$E'$};
\draw [name path=L1] (W) -- (O1);
\coordinate (O2) at (1,0.5);
\draw (O2) node {$\bullet$} node [above] {$E''$};
\draw [name path=L1] (W) -- (O2);

%\draw [add= 0 and 1] (W) to (V) node[above]{Eff};

%B axis
\draw[->,opacity =0.3] (-2,0) -- (2,0) node[above right] {$\frac{\ch_1}{\ch_0}$};

%H axis
\draw[->,opacity=0.3] (0,-3.5)-- (0,0) node [above right] {O} --  (0,2.5) node[right] {$\frac{\ch_2}{\ch_0}$};

\coordinate (H1) at (0.1,-1.1);
\coordinate (H2) at (-1,-2);
\draw[->] (H2) node [left] {TR$_{wE}$} to (H1);

\coordinate (H1) at (0.9,-0.6);
\coordinate (H2) at (1.5,-1.3);
\draw[->] (H2) node [below] {TR$_{wE''}$} to (H1);

\end{tikzpicture}

Figure: Definition of TR$_{wE}$.
\end{center}

%%%%%%%%%%%%%%%%%%%%INTRODUCTION%%%%%%%%%%%%%%%%%%%%%%

Now we can state the main theorem on actual walls. The regions TR$_{wE}$ will be used to detect the non-emptiness of moduli spaces of stable object of any `sub-character', as will be explained in the proof of the theorem.

\begin{theorem}
Let $w\in\mathrm K(\pp)$ be a Chern character outside the Le Potier cone with $\ch_0(w)\geq 0$. For any stability condition $\sigma_{s,q}$ in $\bd_{< 0}$ between the wall $\lw_w$ and the vertical ray $L_{w+}$, the wall $L_{\sigma w}$ is an actual wall for $w$ if and only if there exists a Chern character $v\in \mathrm K(\pp)$ on the line segment $l_{\sigma w}$ such that:
\begin{itemize}
\item $\ch_0(v)>0$ and $\frac{\ch_1(v)}{\ch_0(v)}<\frac{\ch_1(w)}{\ch_0(w)}$;
\item the characters $v$ and $w-v$ are either exceptional or not inside the Le Potier cone and both of them are not in $\mathrm{TR}_{wE}$ for any exceptional bundle $E$.
\end{itemize} \label{thm: actual wall}
\end{theorem}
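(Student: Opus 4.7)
The plan is to establish both directions using the last-wall theorem (Theorem~\ref{prop:lastwall}) together with the $\mathrm{Ext}^1$-nonvanishing lemma (Lemma~\ref{lemma:ext1nonvanishing}) and the stability-of-extensions lemma (Lemma~\ref{lemma:extoftwostabobjsarestab}), following a scheme parallel to the proof of Theorem~\ref{prop:lastwall}.

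For the \emph{sufficient} direction, suppose such a $v$ exists. I first want to show that $\mss(v)$ and $\mss(w-v)$ are non-empty on the wall $L_{\sigma w}$. The hypothesis $\chy(v)<\chy(w)$ puts $\sigma$ to the left of $L_{v\pm}$ and to the right of $L_{(w-v)\pm}$, so I need to check that $\sigma$ lies above $\lw_v$ on one side and above $\lrw_{w-v}$ on the other. Here the condition ``$v\notin\mathrm{TR}_{wE}$ for any exceptional $E$'' plays the decisive role: tracing through the three cases of Definition~\ref{def:lastwall}, the region $\mathrm{TR}_{wE}$ is exactly the locus where a potential last wall of $v$ would be hit by $L_{\sigma w}$ before reaching $\sigma$, so its complement guarantees $\sigma$ lies above $\lw_v$. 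A symmetric analysis handles $w-v$ via $\lrw_{w-v}$. Theorem~\ref{prop:lastwall} then yields the desired non-emptiness (up to homological shift on the right-hand side). When $v$ or $w-v$ is proportional to an exceptional character, non-emptiness is provided directly by Corollary~\ref{cor:regionofEstab}.

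Next I want to verify the hypotheses of Lemma~\ref{lemma:ext1nonvanishing} for the pair $(v,w-v)$. Neither character lies inside $\mathrm{Cone}_{LP}$, and since they are collinear with $\sigma\in\bd_{<0}$ but $L_{\sigma w}$ lies above $\lw_w$, the $\frac{\ch_1}{\ch_0}$-length of $L_{vw}\cap C_{LP}$ exceeds $3$ (using Lemma~\ref{lem:l3} and the position of $\lw_w$ relative to $l_{e^+_w e^r_w}$). The condition $\Delta(v,w-v)\geq 0$ follows because $w=v+(w-v)$ with both terms outside $\bd_{<0}$, so $\bd$ is subadditive along the segment through $\sigma$. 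Exceptional subcases are handled directly, as in Case~3.I of Theorem~\ref{prop:lastwall}. Thus $\chi(v,w-v)<0$ and $\chi(w-v,v)<0$, which forces $\mathrm{Ext}^1\neq 0$ between generic stable representatives. Applying Lemma~\ref{lemma:extoftwostabobjsarestab} to a non-trivial extension on each side produces $\sigma_\pm$-stable objects of character $w$ that become $S$-equivalent on $L_{\sigma w}$; by Proposition~\ref{contracting is more than producing lemma} (or directly from the GIT picture in Proposition~\ref{prof:gitconst}), this ensures curves are contracted on at least one side, so the wall is actual.

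For the \emph{necessary} direction, assume $L_{\sigma w}$ is an actual wall, and let $F$ be an object of character $w$ that is $\sigma_+$-stable but strictly $\sigma$-semistable, with a Jordan--H\"older filtration producing a $\sigma$-stable factor of character $v'$ on $l_{\sigma w}$. Pairing $v'$ with $w-v'$ and replacing $v'$ by $w-v'$ if necessary, I can arrange $\ch_0(v)>0$ and $\chy(v)<\chy(w)$; the nontrivial step is checking this does not obstruct the remaining conditions, which uses that $\sigma$ is to the left of $L_{w\pm}$. If $v\in \mathrm{TR}_{wE}$ for some exceptional $E$, then the definition of $\mathrm{TR}_{wE}$ places $\sigma$ below $\lw_v$ between $L_{v\pm}$ and $L_{E\pm}$, so Proposition~\ref{prop:wisnotstabbelowexc} would preclude the existence of any $\sigma$-semistable object of character $v$, contradicting the existence of the factor. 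The same argument (with $\lrw$ in place of $\lw$) applies to $w-v$. Being outside $\mathrm{Cone}_{LP}$ (unless exceptional) follows from Corollary~\ref{cor:nostabincone}.

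The main obstacle will be the geometric dictionary relating the triangle regions $\mathrm{TR}_{wE}$ to the last walls $\lw_v,\lrw_{w-v}$ as $v$ ranges over $l_{\sigma w}$ — in particular making precise the equivalence ``$v\in\mathrm{TR}_{wE}$'' $\Longleftrightarrow$ ``$L_{\sigma w}$ meets the last wall of $v$ before reaching $\sigma$''. This requires a careful case analysis over the three cases of Definition~\ref{def:lastwall} applied to both $v$ and $w-v$, and the exceptional subcases must be treated by hand as in Case~3.I--3.II of Theorem~\ref{prop:lastwall}. Once that geometric correspondence is fixed, both directions reduce to the techniques already developed in Sections~\ref{sec3.0} and~\ref{sec3.1}.
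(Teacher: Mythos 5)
Your overall architecture coincides with the paper's proof (non-emptiness of $\mathfrak M^s_\sigma(v)$ and $\mathfrak M^s_\sigma(w-v)$ via Theorem \ref{prop:lastwall}, negativity of $\chi$ via Lemma \ref{lemma:ext1nonvanishing}, stability of extensions via Lemma \ref{lemma:extoftwostabobjsarestab}), but your necessary direction has a genuine gap. You argue that if $w-v$ lay in $\mathrm{TR}_{wE}$ then Proposition \ref{prop:wisnotstabbelowexc} would preclude semistable objects of character $w-v$, contradicting the existence of the destabilizing quotient. The quotient, however, is only guaranteed to be $\sigma$-\emph{semistable}, and the delicate case is precisely when $L_{w\sigma}$ coincides with $\lrw_{w-v}$: there $\mathfrak M^{ss}_\sigma(w-v)\neq \emptyset$ while $\mathfrak M^{s}_\sigma(w-v)=\emptyset$ (this is exactly the dichotomy the paper extracts from Proposition \ref{prop:compcontainsstabobj}, Proposition \ref{contracting is more than producing lemma} and Theorem \ref{prop:lastwall}), so no contradiction arises, and the pair $(v,w-v)$ produced by the filtration can genuinely fail the theorem's conditions. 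The paper repairs this by replacing the destabilizing character: writing $E=E^{(\mathrm{rhs})}_{v-w}$, it sets $v':=v-\chi(v-w,E)\cdot e$, so that $\chi(w-v',E)=0$ places $w-v'$ at the intersection of $L_{w\sigma}$ with $L_{e^+e^r}$, hence outside every triangle, and the Case 3.II.2 mechanics of the last-wall proof give $\ch_0(v')>0$, $\chy(v')<\chy(w)$ and $\mathfrak M^s_\sigma(v')\neq\emptyset$. Your proposal contains no analogue of this shift, and without it the ``only if'' direction does not close: the theorem asserts the existence of \emph{some} admissible $v$ on $l_{\sigma w}$, and the naive Jordan--H\"older factor is not always it.

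The sufficient direction has two smaller soft spots. First, when $v$ (or $w-v$) equals $n\tilde v(E)$ with $n\geq 2$ for an exceptional $E$, there are no $\sigma$-stable objects of that character and $\chi(ne,ne)=n^2>0$, so neither Lemma \ref{lemma:ext1nonvanishing} nor Lemma \ref{lemma:extoftwostabobjsarestab} applies to the pair as written; the paper again modifies the pair (to $v'=e$, $u'=u+(n-1)e$ in its cases ii--iii), and when \emph{both} factors are exceptional multiples it abandons the extension argument altogether, instead bounding the dimension of the Kronecker moduli $\mathrm{Kr}_{\hom(E_1,E_2)}(n_1,n_2)$ from below. Your remark that exceptional subcases are ``handled directly'' as in Case 3.I does not cover these modifications. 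Second, a single non-trivial extension shows new stable objects appear but not that curves are contracted; the paper derives $\chi(u,v)-\chi(v,u)\leq -3$ from Lemma \ref{lemma:chiformrho}, hence $\ext^1(G,F)\geq 3$, so that a positive-dimensional family of extensions maps to one S-equivalence class on the wall. Your appeal to Proposition \ref{contracting is more than producing lemma} does not supply this, since that result bounds the dimension of the new stable locus rather than exhibiting contracted curves.
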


\begin{rem}
1. For given characters $w$ and $v$, one only needs to check whether $v$ or $w-v$ are in $\mathrm{TR}_{wE}$ for at most two particular exceptional bundles. Suppose the intersection points $L_{\sigma w}\cap \bd_{\frac{1}{2}}$ fall between the segment between $e_i^r$ and $e_i^l$ for some exceptional character $e_1$ and $e_2$, then one only needs to check the triangles $\mathrm{TR}_{wE_i}$.

2. By the term `in $\mathrm{TR}_{wE}$', strictly speaking, we mean that `in the closure of  $\mathrm{TR}_{wE}$ but not on the line $L_{e^+e^l}$ when $E$ is not to the right of  $E_w$ or not on the line $L_{e^+e^r}$ when $E$ is to the left of $E_w$'.
\label{rem:actwall}
\end{rem}

\begin{proof}
The first step is to show that when $v$ (or $w-v$) is not inside the Le Potier cone, the condition  `$v$ (or $w-v$) is not in TR$_{wE}$ for any exceptional $E$' is equivalent to `$\mathfrak M^s_\sigma(v)$ (or $\mathfrak M^s_\sigma(w-v)$) is non-empty for $\sigma$ in $\bd_{< 0}$ on the line $L_{wv}$'.

The `$\Leftarrow$' direction is easy to check: Suppose $v$ is in $TR_{wE}$ for some $E$, then $E$ must be $E_w$ or to the right of $E_w$. This implies $l_{\sigma w}$ intersects $l_{e^+e^r}$. The character $v$  is in $\mathfrak R_E$ and of Case 1 in Definition \ref{def:lastwall}. By Proposition \ref{prop:wisnotstabbelowexc},  $\mathfrak M^s_\sigma(v)$ is empty. The $w-v$ part is proved in a similar way.

For the `$\Rightarrow$' direction, let $f_1$ and $f_2$ be the intersection points of the line $L_{vw}$ and the parabola $\bd_{\frac{1}{2}}$. Suppose that $f_1$ has larger $\chy$, and $f_1$ lies on the segment between $e^l$ and $e^r$ for some exceptional bundle $E$ by Theorem 4.1 in \cite{CHW}. Since $v$ is below $L_{ee^r}$, $E_v$ is either $E$ or to the left of $E$.

Three different cases may happen:
\begin{enumerate}
\item If $v$ is above $L_{e^+e^l}$, then $E_v=E$. Since $v$ is not in TR$_{wE}$, $v$ is above $L_{Ew}$. This implies $E$ is below $L_{vw}$, and $L_{v\sigma}$ is between $\lw_v$ and $L_{v\pm}$.
 \item If $E\neq E_w$ and $v$ is not above $L_{e^+e^l}$, then $w$ is below the line $L_{e(-3)e^l}$ and $E(-3)$ is below $L_{wv}$. Hence, $L_{v\sigma}$ is between $L_{vE(-3)}$ and $L_{v\pm}$;
 \item If $E= E_w$ and $v$ is not above the line $L_{e^le^+}$, then by Remark \ref{rem:actwall}, $v$ is above $\lw_w=L_{E(-3)w}$. $w$ is below $\lw_v=L_{E(-3)v}$, therefore, $L_{wv\sigma}$ is between $\lw_v$ and $L_{v\pm}$.
\end{enumerate}
In either case, $L_{v\sigma}$ is between $\lw_v$ and $L_{v\pm}$. It follows from Theorem \ref{prop:lastwall} that $\mathfrak M_\sigma^{s}(v)$ is non-empty for any $\sigma\in L_{v\sigma}\cap \bd_{\leq 0}$.

Write $u$ for $w-v$. When $\ch_0(u)\geq 0$, by Lemma \ref{lemma:largerslopeisstable} and the similar argument as for $v$, $\mathfrak M_\sigma^{ss}(u)$ is non-empty for any $\sigma\in L_{vw}\cap \bd_{\leq 0}$. If $\ch_0(u) < 0$, let $E$ be the exceptional bundle such that $f_2$ lies on the segment of $\bd_{\frac{1}{2}}$ between $x=e^l$ and $x=e^r$. By a similar argument as for $v$, when $u$ is above $L_{e^+e^l}$, $L_{uw}$ is between  $\lrw_u=L_{uE}$ and $L_{i\pm}$. When $u$ is not above $L_{e^+e^l}$, $L_{uw}$ is between $L_{uE(3)}$ and $L_{u\pm}$. By Theorem \ref{prop:lastwall}, $\mathfrak M_\sigma^{ss}(u)$ is non-empty for any $\sigma\in L_{vw}\cap \bd_{\leq 0}$ .\\

The second step is to prove the `only if' direction in the statement, which follows from the claim in the first step. If $L_{\sigma w}$ is an actual wall, an object $F$ with character $w$ is destabilized by a stable object with character $v$ such that $\ch_0(v)>0$ and $\chy (v)<\chy (w)$. By Corollary \ref{cor:nostabincone}, $v$ is exceptional or outside Cone$_{LP}$. By the previous discussion, since $\mss(v)$ is not empty, $v$ is not in any TR$_{wE}$.  For the character $w-v$, since $\mathfrak M^{ss}_{\sigma}(w-v)$ is non-empty, we only need to consider the case when all semistable objects are strictly semistable. Since $\mathfrak M^{ss}_{\sigma}(w-v)$ is non-empty, by  Proposition \ref{prop:compcontainsstabobj},
Proposition \ref{contracting is more than producing lemma} and Theorem \ref{prop:lastwall}, $\mathfrak M^{s}_{\sigma}(w-v)=\phi$ if and only if $L_{w\sigma}$ is $\lrw_{w-v}$ or $\lw_{w-v}$. The second case is not possible by Lemma \ref{lemma:largerslopeisstable}. We may assume $\ch_0(v)>\ch_0(w)$. $v-w$ is of Case 1 in Definition \ref{def:lastwall} since otherwise $v-w$ is not in TR$_{E^{\mathrm{(rhs)}}_{v-w}w}$ and $\mathfrak M^{s}_{\sigma}(w-v)$ is not empty. Write $E^{(\mathrm{rhs})}_{v-w}$ as $E$, we let
\begin{equation*}
v':= v-\chi (v-w,E)\cdot e,
\end{equation*}
\text{then } $w-v'= w-v+\chi(v-w,E)\cdot e$. Since $\chi(w-v',E)=0$, $w-v'$ is the intersection point of $L_{w\sigma}$ and $L_{e^+e^r}$ and is not in TR$_{wE}$. By the same argument as in Case 3.II.2 of the proof of Theorem \ref{prop:lastwall}, $\ch_0(v')>0$, $\chy(v')<\chy(w)$ and $\mathfrak M^s_{\sigma}(v')$ is non-empty. Therefore, the pair $v'$ and $w-v'$ satisfies the requirements in the statement.\\

The last step is to prove the  `if' direction in the statement. Similar to the proof for the last wall, objects with characters $v$ and $u=w-v$ do not always have non-trivial extensions. We need to build Chern characters $u'$ and $v'$ on the line $L_{vw}$, such that:\\
%\begin{enumerate}
%\item $w=v'+u'$;
%\item $v'$ satisfies the same conditions as for $v$ in the theorem;
%\item $M_\sigma^{s}(v')$ and $M_\sigma^{s}(u')$ are non-empty for any %$\sigma\in L_{vw}\cap C_{LP}$.
%\end{enumerate}

1. $w=v'+u'$;

2. $\mathfrak M_\sigma^{s}(v')$ and $\mathfrak M_\sigma^{s}(u')$ are non-empty for $\sigma\in L_{vw}\cap \bd_{<0}$.

3. $\mathfrak M_{\sigma_+}^s(v',u')\rightarrow \mathfrak M_\sigma^{ss}(w)$ contracts curves.\\

Four cases may happen for $u$ and $v$:

i) $v$ and $u$ are not proportional to any exceptional characters, in other words, they are outside Cone$_{LP}$. Since they are outside the triangles TR$_{wE}$, $\mathfrak M_\sigma^{s}(v)$ and $\mathfrak M_\sigma^{s}(u)$ are non-empty. The characters $v$ and $u$ satisfy the conditions in Lemma \ref{lemma:ext1nonvanishing} due to the same argument as Case 1 in Theorem \ref{prop:lastwall}. This implies $\chi(v,u)< 0$. By the first property of Lemma \ref{lemma:chiformrho} and the same computation as in Proposition \ref{contracting is more than producing lemma}, $\chi(u,v)-\chi(v,u) \leq -3$. Therefore, by Lemma \ref{lemma: ext2 vanishing for stable factors}, for any $\sigma$-stable objects $F$ and $G$ in $\mathfrak M_\sigma^{s}(v)$ and $\mathfrak M_\sigma^{s}(u)$, ext$^1(G,F)\geq 3$. By Lemma \ref{lemma:extoftwostabobjsarestab}, $\mathfrak M_{\sigma_+}^s(F,G)\rightarrow \mathfrak M_\sigma^{ss}(w)$ contracts curves.\\

ii) $v$ is proportional to the character $e$ of some exceptional bundle $E$, but $u$ is not proportional to any exceptional characters. Write $v=ne$ for some integer $n\geq 1$. When $\ch_0(w)\geq \ch_0(e)$, the character $u'=u+(n-1)e$ is to the right of $w$. Therefore $u'$ is not in TR$_{wE}$ and $\mathfrak M_\sigma^{s}(u')$ is non-empty. $v'=w-u'=e$ and $\mss (e)$ is non-empty.

In the case $\ch_0(e) > \ch_0(w)$, we have:
\[\frac{\ch_1(u)}{\ch_0(u)}=\frac{\ch_1(w-ne)}{\ch_0(w-ne)} > \frac{\ch_1(w-e)}{\ch_0(w-e)}=\frac{\ch_1(u')}{\ch_0(u')}.\]
As $u$ is outside Cone$_{LP}$ and on the different component of $L_{w\sigma}\cap\bd_{>0}$ than that of $w$,  $u'$ is also outside Cone$_{LP}$ and not in any TR$_{wE}$. We may still let $v'$ be $e$.

As $L_{ew}$ is not the last wall $\lw_w$, $\chi(e,w)\leq 0$. We have $\chi(e,u')\leq -1$. By the same argument as in i), we have ext$^1(G,E)\geq 3$ for any object $G$ in $\mathfrak M_\sigma^{s}(u')$. Therefore, $\mathfrak M_{\sigma_+}^{s}(E,G)\rightarrow \mathfrak M_\sigma^{ss}(w)$ contracts curves.\\

iii) $u$ is proportional to the character $e$ of some exceptional bundle $E$, but $v$ is not proportional to any exceptional characters. As $u$ is not on the line segment $l_{\sigma w}$, it has negative $\ch_0$. Suppose $u=-ne$ and we may let $v'= w+e$ and $u'=-e$ in the similar way as in ii). By the same argument on the slope of $v$ and $v'$, $v'$ is outside any triangle are TR$_{wE}$. As $L_{we}$ is not the last wall $\lw_w$, we have $\chi(w,e)\geq 0$. Therefore $\chi(v',u')\leq -1$. By the same argument as in i), we have ext$^1(E,F)\geq 3$ for any object $F$ in $\mathfrak M_\sigma^{s}(v')$. Therefore, $\mathfrak M_{\sigma_+}^{s}(E,G)\rightarrow \mathfrak M_\sigma^{ss}(w)$ contracts curves.\\

\begin{comment}
In this situation, by Lemma \ref{}, $\chi(u,v)$ and $\chi(v,u)$ are both non-positive. By Lemma \ref{left less than right},
\[\chi(u,v)- \chi(v,u) \leq -3.\]
By Lemma \ref{ext2 vanish},
\[Ext^1(u,v)\geq 3.\]
By Lemma \ref{stab ext to stab}, for any $K_u\in M_\sigma^{s}(u)$ and $K_v\in M_\sigma^{s}(v)$,
\[\dim M^s_{\sigma-}(K_u, K_v)\geq 2,\]
and this locus is contacted to a point at the wall $L_{vw}$.\\
\\
\end{comment}
iv) The Chern characters $u$ and $v$ are proportional to the characters $e_1$ and $e_2$ of exceptional bundles $E_1$ and $E_2$ respectively. Write $u=-n_1e_1$ and $v=n_2e_2$. Since $L_{vw}$ is not $\lw_w$, $\chi(e_2,w) \leq 0$. Therefore,
\[n_2 \leq n_1\chi(e_2, e_1) = n_1 \ext^2(E_2, E_1)=n_1 \hom(E_1, E_2(-3)) < n_1 \hom(E_1, E_2).\]
As a consequence, we see that
\[\dim \mathfrak M^s_{\sigma-}(E_1^{\oplus n_1}[1], E_2^{\oplus n_2}) = \dim \mathrm{Kr}_{\hom(E_1, E_2)}(n_1, n_2) = n_1n_2\hom(E_1, E_2)-n_1^2-n_2^2+1 \geq 2.\]
Here Kr$_{\hom(E_1, E_2)}(n_1, n_2)$ is the Kronecker model, in other words, it is the representations space of Hom$(\mathbb C^{n_2},\mathbb C^{n_1})^{\oplus \hom(E_1,E_2)}$ quotient by the natural group action of $GL(n_1)\times GL(n_2)/\mathbb C^*$.

In all cases, $L_{vw}$ is an actual wall for $w$.
\end{proof}

Now the following corollary follows easily:

\begin{cor}[Lower rank walls]
Let $w$ be a character with $\ch_0(w)\geq 0$. For any character $v$ with $0< \ch_0(v)\leq \ch_0(w)$, suppose that $v$ is between the wall $L_w^{last}$ and the vertical ray $L_{w+}$, outside the Le Potier cone $\mathrm{Cone}_{LP}$, and not in $\mathrm{TR}_{wE}$ for any exceptional bundle $E$. Then $L_{vw}$ is an actual wall.
 \label{cor: actual wall}
\end{cor}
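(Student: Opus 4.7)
The plan is a direct application of Theorem \ref{thm: actual wall}. Since $\ch_0(v) > 0$ and $\ch_0(v) \leq \ch_0(w)$, one has $\ch_0(w) > 0$, so the theorem applies. The conditions on $v$ demanded by the theorem follow from the hypotheses of the corollary: $\chy(v)<\chy(w)$ is read off from $v$ lying in the wedge strictly left of $L_{w\pm}$, while the other conditions ($v$ outside $\mathrm{Cone}_{LP}$ and $v \notin \mathrm{TR}_{wE}$ for any exceptional $E$) are given directly. One picks a stability condition $\sigma \in L_{vw}\cap\bd_{<0}$ inside the wedge between $\lw_w$ and $L_{w+}$; such $\sigma$ exists because $v$ itself sits in this wedge and $\bd_{<0}$ accumulates at $w$ from within.

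The core of the proof is to verify the conditions on $u := w-v$. From $\ch_0(v) \leq \ch_0(w)$ we have $\ch_0(u) \geq 0$. When $\ch_0(u) = 0$, a short computation gives $\ch_1(u) = \ch_1(w) - \ch_1(v) > 0$ (using $\ch_0(v) = \ch_0(w)$ together with $\chy(v)<\chy(w)$), so $u$ is outside $\mathrm{Cone}_{LP}$ by Definition \ref{defn:lpcurve}, and $u \notin \mathrm{TR}_{wE}$ for any $E$ since each $\mathrm{TR}_{wE}$ is a bounded region of the affine $\cccp$. When $\ch_0(u) > 0$, the three points $v,w,u$ are collinear in the $\cccp$ with $w$ the strict convex combination of $v$ and $u$, so $\chy(u) > \chy(w)$ and $u$ lies in the wedge opposite to $v$ across $L_{w\pm}$.

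In this last subcase I plan to apply Lemma \ref{lemma:largerslopeisstable} to $w$ with $u$ in place of its ``$v$'': since the wall $L_{w\sigma}=L_{vw}$ lies between $L_{w\pm}$ and $\lw_w$, which in every case of Definition \ref{def:lastwall} is either $\lw_w$ itself or contains the line $L_{wE_w(-3)}$ required by the lemma in Case 3, and since $\ch_0(u)\geq 0$ and $\chy(u)>\chy(w)$, the lemma yields that $L_{u\sigma}$ lies between $L_{u\pm}$ and $\lw_u$. Theorem \ref{prop:lastwall} then provides $\mathfrak M^s_\sigma(u) \neq \emptyset$. By Corollary \ref{cor:nostabincone}, $u$ is exceptional or outside $\mathrm{Cone}_{LP}$; and by the equivalence established in the first step of the proof of Theorem \ref{thm: actual wall}, this non-emptiness (combined with $u$ being outside $\mathrm{Cone}_{LP}$ in the non-exceptional case) is equivalent to $u \notin \mathrm{TR}_{wE}$ for every exceptional $E$. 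The main obstacle is the correct bookkeeping between the three cases of Definition \ref{def:lastwall} when invoking Lemma \ref{lemma:largerslopeisstable}; once it is handled, all conditions of Theorem \ref{thm: actual wall} are met and $L_{vw}$ is an actual wall.
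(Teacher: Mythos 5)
Your overall route---feeding the pair $(v,\,w-v)$ into Theorem \ref{thm: actual wall}, with Lemma \ref{lemma:largerslopeisstable} handling the quotient character---is exactly the derivation the paper intends: in the first step of the proof of Theorem \ref{thm: actual wall}, the case $\ch_0(w-v)\geq 0$ is dispatched by that same lemma, and your verification of the conditions on $v$, as well as your treatment of the torsion case $\ch_0(w-v)=0$ (where $\ch_1(w-v)>0$ and the point lies at infinity, hence in no $\mathrm{TR}_{wE}$), is sound. However, your key step for $\ch_0(w-v)>0$ is circular as written. Theorem \ref{prop:lastwall} carries the hypothesis that the character in question is not inside $\Cone_{LP}$, and the conclusion of Lemma \ref{lemma:largerslopeisstable} refers to $\lw_{w-v}$, which by Definitions \ref{defn:Ew} and \ref{def:lastwall} is only defined for characters not inside $\Cone_{LP}$ (its proof places the character in some region $\mathfrak R_E$, and these regions cover only the points not above $C_{LP}$). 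You invoke both results for $u=w-v$ first, and only afterwards deduce from the resulting non-emptiness, via Corollary \ref{cor:nostabincone}, that $u$ is exceptional or not inside $\Cone_{LP}$---that is, you derive the needed hypothesis from consequences of statements that already presuppose it.

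The missing ingredient is a direct argument that $w-v$ is exceptional or not inside $\Cone_{LP}$, and this is not vacuous: there exist integral, non-exceptional characters strictly inside the cone but below $\bd_{\frac{1}{2}}$ (for instance $(2,0,-1)$, which sits above the point $e^+(0)=(1,0,-1)$ of $C_{LP}$), and for such characters no semistable objects exist for any geometric stability condition, so the entire chain would collapse if $w-v$ landed on one. What one can extract cheaply: since $L_{vw}$ meets $\bd_{<0}$ and lies to the left of $L_{w\pm}$, its slope is at most $\chy(w)-\sqrt{2\bd(w)}$, and $w$ not above $C_{LP}$ gives $\bd(w)\geq\frac{1}{2}$; moving along the ray beyond $w$ one finds $\bd(w-v)>\bd(w)\geq\frac{1}{2}$, which puts $w-v$ strictly below $\bd_{\frac{1}{2}}$ but not automatically below the fractal curve $C_{LP}$, which dips toward $\bd_1$ near the points $e^+$. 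So an extra argument ruling out that $w-v$ sits in one of these slivers above a segment $l_{e^+e^l}$ or $l_{e^+e^r}$ is required before Lemma \ref{lemma:largerslopeisstable} and Theorem \ref{prop:lastwall} may be applied; note that in Theorem \ref{thm: actual wall} itself this is a hypothesis on $w-v$, not a conclusion, so it cannot simply be quoted. (By comparison, your other glosses---the Case 3 bookkeeping in Definition \ref{def:lastwall}, which you flagged yourself, and the tacit use of the segment condition $v\in l_{\sigma w}$ and of the right-side analogue of Proposition \ref{prop:wisnotstabbelowexc} when reading off ``non-empty $\Rightarrow$ not in $\mathrm{TR}_{wE}$'' for $u$---are minor and consistent with the paper's own level of detail.)
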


\section{Applications: the ample cone and the movable cone}\label{sec4}

In this section, we work out several applications of our criteria on actual walls. We compute the boundary of the movable cone in Section \ref{sec4.1} and the boundary of the nef cone in Section \ref{sec4.2}. In Section \ref{sec4.4}, we compute all the actual walls of moduli space of stable sheaves of character $(4,0,-15)$, as an example of how to apply the machinery in this paper to a concrete situation.

\subsection{Movable cone}\label{sec4.1}
Let $w\in\mathrm K(\pp)$ be a character with $\ch_0(w)\geq 0$ not inside Cone$_{LP}$. It has been revealed in \cite{CHW} that when $\sigma$ is in the `last' chamber above $\lw_w$, the birational model $\mss(w)$ is of Picard number $1$ if and only if $w$ is right orthogonal to  $E_w$. In other words, the movable cone boundary on the primary side is not the same as the effective cone boundary if and only if $\chi(E_w,w)=0$. In this section, we determine the boundary of the movable cone in this case.

Let $(E_\alpha, E_\beta, E_\gamma)$ be a triple of exceptional bundles corresponding to dyadic numbers $\frac{p-1}{2^n}$, $\frac{p+1}{2^n}$, $\frac{p}{2^n}$, respectively. The following property is well-known, the reader is referred to \cite{GorRu}.

\begin{lemma}
For the triple $(E_\alpha, E_\beta, E_\gamma)$, we have
\[\chi(E_\alpha, E_\gamma)=\hom(E_\alpha, E_\gamma)=3\ch_0(E_\beta),\]
\[\chi(E_\gamma, E_\beta)=\hom(E_\gamma, E_\beta)=3\ch_0(E_\alpha),\]
\[\hom(E_\alpha, E_\gamma)\cdot \hom(E_\gamma, E_\beta)-\hom(E_\alpha, E_\beta)=3\ch_0(E_\gamma).\]
\[\hom(E_\beta(-3),E_\alpha)\hom(E_\alpha,E_\gamma)-\hom(E_\beta(-3),E_\gamma)=3\ch_0(E_\alpha)\]
For any exceptional $E_{(\frac{t}{2^q})}$ such that $\frac{p-1}{2^n}<\frac{t}{2^q}<\frac{p}{2^n}$, we have $\ch_0(E_{(\frac{t}{2^q})})<\ch_0(E_\gamma)$.
\label{lemma: rk of exc}
\end{lemma}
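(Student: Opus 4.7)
The plan is to verify each identity separately, using three ingredients: the strong exceptionality of the slope-ordered triple $(E_\alpha, E_\gamma, E_\beta)$, the Hirzebruch--Riemann--Roch formula recalled in Lemma \ref{lemma:chiformrho}(1), and the recursive Chern characters from Section \ref{sec1.1}.

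For the two $\chi$-equalities, I would first note that in the slope ordering $\alpha < \gamma < \beta$, the triple $(E_\alpha, E_\gamma, E_\beta)$ is of the first type listed in \eqref{eq:dyadictriples}, hence a full strong exceptional collection. Consequently $\Ext^i(E_\alpha, E_\gamma) = \Ext^i(E_\gamma, E_\beta) = \Ext^i(E_\alpha, E_\beta) = 0$ for $i \neq 0$, so each $\chi$ equals the corresponding $\hom$, and it remains to compute the Euler form. For $\chi(E_\alpha, E_\gamma)$, substitute the recursion
\begin{equation*}
\tilde v(E_\gamma) \;=\; 3\ch_0(E_\beta)\,\tilde v(E_\alpha) \;-\; \tilde v(E')
\end{equation*}
(with $E'$ the opposite neighbor of $E_\gamma$ in the Markov tree, namely $E_{((p\pm 3)/2^n)}$ depending on $p\bmod 4$) into HRR: the term $\chi(E_\alpha,E_\alpha)=1$ contributes $3\ch_0(E_\beta)$, while $\chi(E_\alpha, E')$ vanishes because $v(E_\alpha)$, $v(E_\gamma)$ and $v(E')$ are collinear on the line $\chi(E_\alpha,-)=0$ by Remark \ref{rem:leeline}. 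The identity $\chi(E_\gamma, E_\beta) = 3\ch_0(E_\alpha)$ follows by the symmetric argument after swapping the roles of $\alpha$ and $\beta$.

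For the composition identity in the third bullet, I would invoke the mutation triangle for the full exceptional triple $(E_\alpha, E_\gamma, E_\beta)$: the left mutation of $E_\beta$ through $E_\gamma$ is an exceptional object in $\langle E_\alpha\rangle$, hence isomorphic (up to shift) to $E_\alpha^{\oplus k}$, and $k$ is read off from the Chern character as $k = 3\ch_0(E_\gamma)$ by the first two identities. Applying $\Hom(E_\alpha, -)$ to the resulting triangle
\begin{equation*}
E_\alpha^{\oplus 3\ch_0(E_\gamma)} \;\longrightarrow\; E_\gamma \otimes \Hom(E_\gamma, E_\beta) \;\longrightarrow\; E_\beta
\end{equation*}
and using $\hom(E_\alpha,E_\alpha)=1$ together with the Ext-vanishings yields the stated numerical relation. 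The fourth identity is handled identically after twisting by $\mathcal O(-3)$: one uses the mutation triangle involving $E_\beta(-3)$ as the leftmost object, together with Serre duality on $\pp$ to translate $\hom(E_\beta(-3),-)$ back into a calculation parallel to the previous one.

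The final rank claim is the classical piece and I would deduce it from the Markov-tree description of exceptional bundles \cite{GorRu,DP}: every exceptional $E_{(t/2^q)}$ with slope strictly between $(p-1)/2^n$ and $p/2^n$ arises by iterated mutation from the sub-pair $(E_\alpha, E_\gamma)$, and its rank is then constrained by the ternary Markov equation on each successive triple, yielding the bound by induction on the depth in the tree. The main obstacle is the bookkeeping in this induction, as the other three identities reduce to direct computations once the strong exceptional structure is identified; for the applications in Section \ref{sec4} the rank claim is used only as a black box.
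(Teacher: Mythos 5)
The paper gives no proof of this lemma at all: it is quoted as well known with a pointer to \cite{GorRu}. So you are supplying an argument rather than paralleling one, and your frame is partly sound --- identifying $(E_\alpha,E_\gamma,E_\beta)$ as the first triple in (\ref{eq:dyadictriples}) correctly converts every $\hom$ into a $\chi$, and for the first two identities the Euler-form computation is the right idea. But two of your steps genuinely fail.

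First, the recursion you display, $\tilde v(E_\gamma)=3\ch_0(E_\beta)\tilde v(E_\alpha)-\tilde v(E')$, is valid only when $p\equiv 3 \pmod 4$; for $p\equiv 1 \pmod 4$ the paper's recursion reads $\tilde v(E_\gamma)=3\ch_0(E_\alpha)\tilde v(E_\beta)-\tilde v\big(\tfrac{p+3}{2^n}\big)$, so the coefficient \emph{and} the repeated bundle change, not merely $E'$ as your parenthetical suggests. Concretely, $\tilde v(5/4)=(5,7,\tfrac{5}{2})$, while $3\ch_0(E_{(3/2)})\tilde v(1)-\tilde v(1/2)=(4,5,\tfrac{7}{2})$. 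Your substitution therefore proves $\chi(E_\alpha,E_\gamma)=3\ch_0(E_\beta)$ only for $p\equiv 3$, and the mirrored computation proves $\chi(E_\gamma,E_\beta)=3\ch_0(E_\alpha)$ only for $p\equiv 1$; to transfer each identity to the other congruence class you need something like the duality $E_{(x)}^\vee\cong E_{(-x)}$, which exchanges the two statements --- your phrase ``the symmetric argument'' gestures at this, but as written the displayed formula is false in half the cases. Your justification of $\chi(E_\alpha,E')=0$ is also garbled: $v(E_\alpha)$ does not lie on the line $\chi(E_\alpha,-)=0$ (since $\chi(E_\alpha,E_\alpha)=1$), and collinearity of the three characters, which does follow from the recursion, does not give any vanishing. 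The correct point is that for $p\equiv 3$ one has $\frac{p-1}{2^n}=\frac{p'}{2^{n-1}}$ in lowest terms and $E'=E_{(\frac{p'-1}{2^{n-1}})}$ is exactly the left neighbor of $E_\alpha$ appearing in Remark \ref{rem:leeline}.

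Second, and more seriously, the mutation step for the third identity is wrong. $L_{E_\gamma}E_\beta$ is a \emph{new} exceptional bundle, not an object of $\langle E_\alpha\rangle$: for $(E_\alpha,E_\gamma,E_\beta)=(\mathcal O,\mathcal T(-1),\mathcal O(1))$ the kernel of the evaluation $\mathcal T(-1)^{\oplus 3}\to\mathcal O(1)$ is $E_{(1/4)}$, of rank $5$, whereas your triangle would force $\mathcal O^{\oplus 6}$. The sentence is even internally inconsistent ($E_\alpha^{\oplus k}$ with $k>1$ is never exceptional), and the rank count $3\ch_0(E_\alpha)\ch_0(E_\gamma)-\ch_0(E_\beta)\neq 3\ch_0(E_\gamma)\ch_0(E_\alpha)$ rules the triangle out in every case. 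Given the first two identities, the third is equivalent to $\chi(E_\alpha,E_\beta)=9\ch_0(E_\alpha)\ch_0(E_\beta)-3\ch_0(E_\gamma)$, which encodes the Markov relation among the three ranks and needs its own induction up the dyadic tree or a direct computation with the recursions; the fourth identity, which you reduce to the third, inherits the gap (though you could usefully note that $(E_\beta(-3),E_\alpha,E_\gamma)$ is itself the third triple in (\ref{eq:dyadictriples}), which is what legitimizes $\hom=\chi$ there). Finally, treating the rank claim as an inductive black box hides a real problem: the inequality as printed is false --- in the example above $E_{(1/4)}$ has rank $5>2=\ch_0(E_{(1/2)})$ --- and the way Lemma \ref{lemma: rk of exc} is invoked in the proof of Theorem \ref{thm:movcone}, where $\ch_0(e_\gamma)<\ch_0(e)$ is used, shows the intended inequality is the reverse one. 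An induction ``on the depth in the tree'' toward the printed inequality fails at the first step, so the direction must be fixed before any bookkeeping can begin.
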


\begin{comment}
We will first make an easy observation, which will become useful in the computation of the boundary of the movable cone.

\begin{lemma}
Let $v$ and $w$ be two points on $\mathrm P (\mathrm K_{\mathbb R}(\pp))$, outside the region $\bd_{<0}$. Suppose that $L_{vw}\cap \bd_{\geq 0}$ has two components, $w$ is on the right component(or $\ch_0(w)=0$) and $v$ is on the left component. Then the following statements hold:
\\
1. $v$ is not in $\mathrm{TR}_{wE}$ for any exceptional bundle $E$ if and only if for any $w' \in L_{vw}\cap \bd_{\geq 0}$ on the same component as $w$, the point $v$ is not in $\mathrm{TR}_{w'E}$.
\\
2. Suppose that $w=e$ is an exceptional character, and $v$ is above the line $L_{ee(-3)}$. For any $v'=v+ a(e(-3)-e)$ with $a\geq 0$, $v'$ is not in $\mathrm{TR}_{wE'}$ for any exceptional bundle $E'$ to the left of $E(-3)$, in other words, with slope less than $\frac{\ch_1}{\ch_0}(e)-3$.
\label{lemma: last wall and triangle}
\end{lemma} 
\end{comment}
The following observation is from the proof for Proposition \ref{prop:lastwall}. It will be used in the next theorem.
\begin{lemma}
Let $w\in \mathrm K(\pp)$ be a Chern character with outside $\mathrm{Cone}_{LP}$. Let $e$ be an exceptional character such that in the $\cccp$, $w$ is in the area between two parallel lines $L_{ee(3)}$ and $L_{e^re^+}$. Then we have $|\ch_0(w)| > \ch_0(E)$.
\label{lem:highrkinstrip}
\end{lemma}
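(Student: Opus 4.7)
The plan is to rewrite the strip condition in terms of the Euler form $\chi(E(3),-)$ and exploit the fact that $\chi(E(3), w)$ is an integer.

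First I would identify both parallel lines as level sets of the functional $\chi(E(3),-)$ on the $\cccp$. By Remark \ref{rem:leeline}, the line $L_{e^+e^r}$ is cut out by $\chi(E(3), v) = 0$. For the parallel line $L_{ee(3)}$, both $E$ and $E(3)$ are exceptional, so a direct calculation using the Hirzebruch--Riemann--Roch formula gives $\chi(E(3), E) = \chi(E(3), E(3)) = 1$. Passing to the affine $\cccp$ by dividing by $\ch_0$, this shows that $L_{ee(3)}$ is cut out by $\chi(E(3), \hat v) = 1/\ch_0(E)$, where $\hat v = (1, \chy(v), \che(v))$ denotes the rank-one normalization. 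The two lines are thus consecutive level sets of the same linear functional, which also re-confirms that they are parallel.

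Next I would translate the geometric strip condition into a numerical one. Since $v(w)$ lies strictly between the two lines, and $\chi(E(3), \hat v)$ is the linear functional whose level sets are these parallel lines,
\[
0 \;<\; \chi(E(3),\hat w)\;<\; \frac{1}{\ch_0(E)}.
\]
The hypothesis that $w$ is not inside $\mathrm{Cone}_{LP}$ guarantees that $\ch_0(w) \neq 0$: a torsion class would send $v(w)$ to the common point at infinity of the two parallel lines, which lies outside the open strip. Multiplying the inequalities above by $\ch_0(w)$ and keeping track of the sign, I obtain $0 < \chi(E(3), w) < \ch_0(w)/\ch_0(E)$ when $\ch_0(w) > 0$, and the symmetric pair $\ch_0(w)/\ch_0(E) < \chi(E(3), w) < 0$ when $\ch_0(w) < 0$.

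The conclusion then comes for free from integrality: since $w \in \mathrm{K}(\pp)$, $\chi(E(3), w) \in \mathbb{Z}$, so the strict inequalities above force $|\chi(E(3), w)| \geq 1$, and hence $|\ch_0(w)|/\ch_0(E) > 1$, i.e., $|\ch_0(w)| > \ch_0(E)$. There is no significant obstacle in this argument; the only verification required is the identification of the two parallel lines as level sets of $\chi(E(3),-)$, which is an immediate consequence of $E$ and $E(3)$ being exceptional with $\chi(E(3),E)=\chi(E(3),E(3))=1$.
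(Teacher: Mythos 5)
Your proof is correct and is essentially the argument the paper intends: the paper gives no standalone proof of Lemma \ref{lem:highrkinstrip} (it refers to the proof of Theorem \ref{prop:lastwall}, Case 3.II.2), and the mechanism there is exactly yours --- the two parallel lines are the level sets $\chi(E(3),\hat v)=0$ and $\chi(E(3),\hat v)=1/\ch_0(E)$ of the same affine functional on the $\cccp$ (your verification $\chi(E(3),E)=\chi(E(3),E(3))=1$ is right, both equaling $\chi(E,E)$ by Riemann--Roch), so the integer $\chi(E(3),w)$ is strictly pinched between $0$ and $\ch_0(w)/\ch_0(E)$ (or the reflected interval when $\ch_0(w)<0$), forcing $|\ch_0(w)|>\ch_0(E)$. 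One cosmetic slip: a torsion class maps to \emph{some} point at infinity, not necessarily the common direction point of the two lines unless $\ch_2(w)/\ch_1(w)=\frac{\ch_1}{\ch_0}(e)+\frac{3}{2}$, but since every point at infinity lies outside the affine strip in the $\cccp$, your exclusion of $\ch_0(w)=0$ stands.
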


Let $w$ be a primitive character outside Cone$_{LP}$ with $\ch_0(w)\geq 0$. Assume that $w$ is right orthogonal to the exceptional bundle $E_w = E_\gamma$, and consider the triple $(E_\alpha, E_\beta, E_\gamma)$ corresponding to dyadic numbers $\frac{p-1}{2^n}$, $\frac{p+1}{2^n}$, $\frac{p}{2^n}$. The character $w$ can be uniquely written as $n_2e_\alpha-n_1e_{\beta-3}$ for positive numbers $n_1$, $n_2$.
\begin{theorem}
Adopt the notations as above, we may define a character $P$ based on two different cases:
\begin{itemize}
\item[i)] $P:=e_\gamma - (3\ch_0(E_\beta)-n_2)e_\alpha$, if $1 \leq n_2 < 3 \ch_0(E_\beta)$;
\item[ii)] $P:=e_\gamma$, if $n_2 \geq 3 \ch_0(E_\beta)$.
\end{itemize}
On the wall $L_{Pw}$, a divisor of $\mathfrak M^s_{L_{Pw_+}}(w)$ is contracted.
\label{thm:movcone}
\end{theorem}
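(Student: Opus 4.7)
The plan is first to verify that $L_{Pw}$ is an actual wall by applying Theorem~\ref{thm: actual wall} with $v=P$, and then to identify a divisor collapsed by the induced birational map
\[\mathrm{pr}_+\colon\mathfrak{M}^s_{L_{Pw+}}(w)\longrightarrow\mathfrak{M}^{ss}_{\sigma_0}(w),\qquad \sigma_0\in L_{Pw}.\]
The natural candidate for the exceptional divisor is
\[ Z \;:=\; \bigl\{[F]\in\mathfrak{M}^s_{GM}(w)\,:\,\Hom(E_\gamma,F)\neq 0\bigr\}.\]
That $Z$ is a divisor follows from $\chi(E_\gamma,w)=0$ together with the vanishing $\Ext^2(E_\gamma,F)=\Hom(F,E_\gamma(-3))^*=0$ for every slope stable $F$ (since $\chy(E_\gamma(-3))<\chy(F)$), which forces $\hom(E_\gamma,F)=\ext^1(E_\gamma,F)$ generically and cuts out $Z$ as a determinantal degeneracy locus of codimension one.

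For the first step I check the numerical criterion of Theorem~\ref{thm: actual wall} applied with $v=P$. In Case (ii) the character $P=e_\gamma$ is exceptional, and $\chy(E_\gamma)<\chy(w)$ holds because $w$ lies on the right-hand portion of $\mathfrak{R}_{E_\gamma}$; the hypothesis $n_2\geq 3\ch_0(E_\beta)=\hom(E_\alpha,E_\gamma)$ forces $w-e_\gamma$, once expanded via $w=n_2 e_\alpha-n_1 e_{\beta-3}$ and the relations of Lemma~\ref{lemma: rk of exc}, to lie outside $\mathrm{Cone}_{LP}$ and outside every $\mathrm{TR}_{wE}$. In Case (i), a direct rank computation with Lemma~\ref{lemma: rk of exc} gives $\ch_0(P)=\ch_0(E_\gamma)-(3\ch_0(E_\beta)-n_2)\ch_0(E_\alpha)>0$ and $\chi(E_\gamma,P)=1$; geometrically $P$ sits on $L_{e_\gamma e_\alpha}$ just outside $\mathrm{TR}_{wE_\gamma}$, while $w-P$ lies on the same line on the opposite side of $P$, where torsion or negative-rank points automatically avoid both $\mathrm{Cone}_{LP}$ and every triangle region.

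For the second step, I argue that every $F\in Z$ admits a $\sigma_0$-destabilizing subobject $A$ of character $P$: in Case (ii) this is the saturation of any nonzero image of a map $E_\gamma\to F$; in Case (i) one obtains $A$ by completing this saturation along the natural $\Hom(E_\alpha,E_\gamma)$-factors, using Lemma~\ref{lemma:extoftwostabobjsarestab} to realize $P$ as a $\sigma_0$-stable Kronecker-type extension of $E_\gamma$ by $E_\alpha^{\oplus(3\ch_0(E_\beta)-n_2)}$. Hence the image of $Z$ under $\mathrm{pr}_+$ lies inside the strictly semistable $S$-equivalence locus parametrized by $\mathfrak{M}^s_{\sigma_0}(P)\times\mathfrak{M}^s_{\sigma_0}(w-P)$. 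Using Lemma~\ref{lemma: ext2 vanishing for stable factors} to identify Bridgeland moduli dimensions with $1-\chi(v,v)$ and then applying HRR on $L_{Pw}$, the dimension deficit becomes
\[ \dim Z-\dim\mathrm{pr}_+(Z)\;\geq\;-\chi(P,w-P)-\chi(w-P,P)-2,\]
and the configuration of $P$ and $w-P$ on the line $L_{Pw}$ (both outside $\mathrm{Cone}_{LP}$ with nontrivial $\chy$-separation, forcing the symmetric HRR pairing to be sufficiently negative) gives $\chi(P,w-P)+\chi(w-P,P)\leq -3$, so $Z$ is contracted. The main obstacle is Case (i), where one must simultaneously establish the existence of a $\sigma_0$-stable object of character $P$ via the Kronecker construction and rule out, by a second application of Theorem~\ref{thm: actual wall}, the existence of any earlier actual wall between $L_{Pw}$ and $L_{w\pm}$ that would generically destabilize $Z$ on a strictly larger locus than the one picked up by $P$.
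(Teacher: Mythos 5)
Your first step coincides with the paper's (apply Theorem \ref{thm: actual wall} with $v=P$), but your verification of its hypotheses for $w-P$ is where the real work lies, and your claim that negative-rank points ``automatically avoid'' $\mathrm{Cone}_{LP}$ and every $\mathrm{TR}_{wE}$ is false. By Definition \ref{defn:lpcurve}, a character of nonzero rank --- negative included --- is tested against $C_{LP}$ through its projectivized point in the $\cccp$, and $P-w$ can perfectly well land above $C_{LP}$ or inside a triangle; ruling this out occupies the bulk of the paper's proof. There one shows that $P-w$ lies on the line $L_{e_\alpha(-3)^+e_\alpha(-3)^r}$, derives $n_1<3\ch_0(E_\alpha)$ from $\ch_0(w)\geq 0$ to get $\chi(P-w,P-w)<0$ in case i) (with a further decomposition of $P-w$ along the direction $e_\gamma(-3)-e_\gamma$ in case ii)), and then uses Lemma \ref{lem:highrkinstrip} together with the position of $l_{(P-w)P}$ above $l_{e_\alpha(-3)^re_\gamma}$ to exclude all the triangles $\mathrm{TR}_{wE}$. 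Your proposal offers no substitute for any of this.

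Your second step takes a genuinely different route --- contracting the jumping divisor $Z=\{[F]\,:\,\Hom(E_\gamma,F)\neq 0\}$ --- but it does not close, for two reasons. First, to conclude that a \emph{divisor} (rather than merely curves) is contracted, one must show the locus destabilized at $L_{Pw}$ has codimension exactly one in $\mathfrak M^s_{L_{Pw}+}(w)$. The paper gets this from the exact equality $\chi(P,w-P)=-1$: by Lemma \ref{lemma: ext2 vanishing for stable factors} and Lemma \ref{lemma:extoftwostabobjsarestab} the extension locus has dimension $\dim\mathfrak M^s_{\sigma_+}(w)+\chi(P,w-P)$, and the equality --- verified by explicit computation with Lemma \ref{lemma: rk of exc}, and the very reason for the correction term $-(3\ch_0(E_\beta)-n_2)e_\alpha$ in case i) --- makes that locus a divisor. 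Your inequality $\chi(P,w-P)+\chi(w-P,P)\leq -3$ points the wrong way: were $\chi(P,w-P)\leq -2$, the destabilized locus would have codimension at least two and the wall would be a flipping wall, not a divisorial contraction. Second, identifying $Z$ with the locus destabilized at $L_{Pw}$ is precisely the subtle point of case i): as the $(1,0,-4)$ example in the introduction shows, $Z$ is destabilized by subobjects of character $P\neq e_\gamma$ at a wall strictly before $L_{we_\gamma}$, and your ``completion of the saturation along the $\Hom(E_\alpha,E_\gamma)$-factors'' is a gesture, not an argument; your closing sentence concedes this obstacle without resolving it. The paper sidesteps $Z$ entirely by counting the extension locus directly, which is both shorter and what the statement actually requires.
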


\begin{proof}
In order to apply Theorem \ref{thm: actual wall}, we first show that in case i), $P$ is not inside Cone$_{LP}$, and is outside TR$_{wE}$ for any exceptional bundle $E$. Since $\chi(e_\beta,P)=0$, $P$ is on the line $L_{e_\beta^+e_\beta^l}$. By Lemma \ref{lemma: rk of exc},
\begin{align*}
\chi(P,P)= & 1+ (3\ch_0(E_\beta)-n_2)^2-(3\ch_0(E_\beta)-n_2)\chi(E_\alpha,E_\gamma)\\
= &1-n_2(3\ch_0(E_\beta)-n_2) \leq 0
\end{align*}
Since $P$ is on the line $L_{e_\beta^+e_\beta^l}$, it is  not inside Cone$_{LP}$ and is outside TR$_{wE}$ for any exceptional bundle $E$.\\

We next show that $w-P$ is outside TR$_{wE}$ for any exceptional bundle $E$. By Lemma \ref{lemma:largerslopeisstable}, we only need to treat the case when $\ch_0(w-P)<0$. We are going to  prove that for any exceptional bundle $E$ to the left of $E_\gamma(-3)$, if $E$ is above $L_{Pw}$, then $\chi(P-w, E)\leq 0$. This will imply that $w-P$ is not in TR$_{wE}$.

In case \emph{i}),
\[P-w=n_1e_\beta(-3)-(3\ch_0(E_\beta)\cdot e_\alpha-e_\gamma).\]
By Lemma \ref{lemma: rk of exc} and Serre duality,
\[\chi(3\ch_0(E_\beta)e_\alpha-e_\gamma, e_\alpha(-3))=3\ch_0(E_\beta)-\hom(e_\alpha,e_\gamma)=0.\]
Since $\chi(e_\beta(-3),e_\alpha(-3))$ is also $0$, the point $P-w$ is on the line $L_{e_\alpha(-3)^+e_\alpha(-3)^r}$.
$\ch_0(w)\geq 0$ implies  $n_2\ch_0(e_\alpha)\geq n_1\ch_0(e_\beta)$. Hence
\[n_1 \leq \frac{\ch_0(e_\alpha)}{\ch_0(e_\beta)}\cdot n_2 < 3\ch_0(e_\alpha).\]
By the fourth equation in Lemma \ref{lemma: rk of exc},
\begin{align*}
\chi(P-w, P-w) &= n_1^2 +1 -n_1\chi(e_\beta(-3), 3\ch_0(E_\beta) \cdot e_\alpha - e_\gamma) \\
&= n_1^2 +1 -3\ch_0(e_\alpha) \cdot n_1 <0.
\end{align*}
Combining with the result that $P-w$ is on the line $L_{e_\alpha(-3)^+e_\alpha(-3)^r}$, we know that $P-w$ is not above the curve C$_{LP}$, and for any exceptional bundle $E$ to the left of $e_\alpha(-3)$,
\[\chi(P-w, E) \leq 0.\]

On the other hand, the line segment $l_{(P-w)P}$ is above the line  $L_{(P-w)e_\gamma}$, hence above the line segment $l_{e_\alpha(-3)^re_\gamma}$. Since $l_{e_\alpha(-3)^re_\gamma}$ is above any exceptional characters between the vertical rays $L_{e_\alpha(-3)\pm}$ and $L_{e_\gamma\pm}$, the character $P-w$ is not in the triangle TR$_{wE}$ for any such exceptional bundle $E$.\\

\begin{comment}
In case \emph{ii}), by Lemma \ref{lemma: rk of exc} and Serre duality,
\[\chi(P-w,e_\alpha(-3))=\chi(e_\alpha,e_\gamma-n_2e_\alpha+n_1e_{\beta-e})=3\ch_0(E_\beta)-n_2\leq 0.\]
By the assumption that $ch_0(P-w)>0$, $P-w$ is below $L_{e(-3)^+e(-3)^r}$. We then check that $\chi(P-w, P-w)<0$. The
\end{comment}
In case \emph{ii}), the character $P-w$ can be rewritten as follows:
\begin{comment}
Similar as that in the case \emph{i}), note that
\begin{align*}
&  \chi(P-w, P-w)=\chi(e_\gamma-w,e_\gamma-w)\\
=& 1-\chi(w,e_\gamma)+\chi(w,w) \\
=& 1-3n_2\ch_0(E_\beta)+3n_1\ch_0(E_\alpha)+n_1^2+n_2^2-3n_1n_2\ch_0(E_\gamma)
\end{align*}
first note that $w$ is primitive, $\ch_0(E_\beta) \geq n_2$ and $\ch_0(E_\alpha) \geq n_1$, so $\ch_0(w)>0$. In this case,
\end{comment}
\begin{align*}
& P-w = e_\gamma-w\\ =\;& n_1e_{\beta-3} - (n_2-3\ch_0(E_\beta))e_\alpha - (3\ch_0(E_\beta)\cdot e_\alpha-e_\gamma) \\
=\;&  (n_2-3\ch_0(E_\beta))\left(\frac{\ch_0(E_\alpha)}{\ch_0(E_\beta)}e_{\beta-3} - e_\alpha\right) \\
&+ \left(n_1 - n_2 \frac{\ch_0(E_\alpha)}{\ch_0(E_\beta)} + 3\ch_0(E_\alpha)\right)e_{\beta-3} - (3\ch_0(E_\beta)e_\alpha -e_\gamma).
\end{align*}

Note that the character $\frac{\ch_0(E_\alpha)}{\ch_0(E_\beta)}e_{\beta-3} - e_\alpha$ in the first term is proportional to $e_\gamma(-3)-e_\gamma$ by a positive scalar, and the coefficient $n_2-3\ch_0(E_\beta)$ is non-negative. We denote the rest term as 
\[v':= \left(n_1 - n_2 \frac{\ch_0(E_\alpha)}{\ch_0(E_\beta)} + 3\ch_0(E_\alpha)\right)e_{\beta-3} - (3\ch_0(E_\beta)e_\alpha -e_\gamma).\]
By Lemma \ref{lemma: rk of exc} and the assumption, $\ch_0(v')=\ch_0(P-w)>0$. In particular, $n_1 - n_2 \frac{\ch_0(E_\alpha)}{\ch_0(E_\beta)} + 3\ch_0(E_\alpha)>0$.
Since $\ch_0(w)>0$, we have the inequality:
\[n_1 - n_2 \frac{\ch_0(E_\alpha)}{\ch_0(E_\beta)} + 3\ch_0(E_\alpha) < 3\ch_0(E_\alpha).\]

Due to a similar computation as in case \emph{i}),
\begin{align*}
\chi(v',v') < & \left(n_1 - n_2 \frac{\ch_0(E_\alpha)}{\ch_0(E_\beta)} + 3\ch_0(E_\alpha)\right)^2+1- \\
 & \left(n_1 - n_2 \frac{\ch_0(E_\alpha)}{\ch_0(E_\beta)} + 3\ch_0(E_\alpha)\right)\chi(e_{\beta-3},3\ch_0(E_\beta)e_\alpha -e_\gamma)\\
<& (3\ch_0(E_\alpha))\left(\left(n_1 - n_2 \frac{\ch_0(E_\alpha)}{\ch_0(E_\beta)} + 3\ch_0(E_\alpha)\right)-3\ch_0(E_\alpha)\right)+1\\
< & 0.
\end{align*}
Note that $v'$ is on the line $L_{e_\alpha(-3)^+e_\alpha(-3)^r}$, $v'$ is not above C$_{LP}$. Since $v'$ is to the left of $E_\beta(-3)$, after moving along the direction $e_\gamma(-3)-e_\gamma$, $v'+a(e_\gamma(-3)-e_\gamma)$ is still not above C$_{LP}$ or $L_{e_\alpha(-3)^+e_\alpha(-3)^r}$. Therefore, $P-w$ is not above those two curves. It is not in TR$_{w E}$ for any $E$ to the left of $E_\alpha(-3)$.

For any exceptional $e$ between $L_{e_\alpha(-3)\pm}$ and $L_{e_\gamma(-3)\pm}$, by the assumption, $\ch_0(P-w)\leq \ch_0(e_\gamma)< \ch_0(e)$. By Lemma \ref{lem:highrkinstrip}, $P-w$ is not in the area between $L_{e^+e^r}$ and $L_{ee(3)}$. Since $w$ is above $L_{ee(3)}$, $P-w$ is not in TR$_{wE}$ for any exceptional $E$ between $L_{e_\alpha(-3)\pm}$ and $L_{e_\gamma(-3)\pm}$. 

The line segment $l_{(P-w)P}$ is above the character $e_\gamma(-3)$, hence above the line segment $l_{e_\gamma(-3)^re_\gamma}$. Since $l_{e_\gamma(-3)^re_\gamma}$ is above any exceptional characters between the vertical rays $L_{e_\alpha(-3)\pm}$ and $L_{e_\gamma\pm}$, the character $P-w$ is not in the triangle TR$_{wE}$ for any such exceptional bundle $E$.

We finish the claim that $w-P$ is outside TR$_{wE}$ for any exceptional bundle $E$. By Theorem \ref{thm: actual wall}, we know that $L_{Pw}$ is an actual wall.\\

The last step is to show that a divisor of $\mathfrak M^s_{L_{Pw}+}(w)$ is contracted at $L_{Pw}$. By Proposition \ref{prop:compcontainsstabobj} and Theorem \ref{prop:lastwall}, for $\sigma\in L_{Pw}$, $\dim \mathfrak M^s_{\sigma +}(w-P) = 1-\chi(P-w, P-w)$, and $\dim \mathfrak M^s_{\sigma +}(P) = 1-\chi(P, P)$. By the previous argument, they are both nonnegative.

By Lemma \ref{lemma: ext2 vanishing for stable factors} and Lemma \ref{lemma:extoftwostabobjsarestab},
\begin{align*}
\dim \mathfrak M^s_{\sigma_+}(w-P, P) &= \dim \mathfrak M^s_{\sigma_+}(w-P) +\dim \mathfrak M^s_{\sigma_+}(P) + \ext^1(w-P, P) -1 \\
&= 1-\chi(w-P, w-P) -\chi(P,P) -\chi(w-P,P) \\
&= 1- \chi(w,w)+\chi(P,w-P)\\
&= \dim \mathfrak M^s_{\sigma_+}(w)+\chi(P,w-P).
\end{align*}
So it suffices to show that $\chi(P, w-P)=-1$.
This is clear in case \emph{ii}):
\[\chi(P,w-P)= \chi(e_\gamma, w-e_\gamma)=-\chi(e_\gamma, e_\gamma)=-1.\]
In case \emph{i}),
\begin{align*}
\chi(P, w-P) &= -\chi(P,P) + \chi(P, w)\\
&=-\chi(P,P)-\chi((3\ch_0(E_\beta)-n_2)e_\alpha, w)\\
&= -\chi(P,P) - (3\ch_0(E_\beta)-n_2) \cdot n_2\chi(e_\alpha, e_\alpha) + (3\ch_0(E_\beta)-n_2) \cdot n_1\chi(e_\alpha, e_{\beta-3}) \\
&=-\chi(e_\gamma, e_\gamma) - \left(3\ch_0(E_\beta) - n_2\right)^2 + \left(3\ch_0(E_\beta)-n_2\right)\left(\chi(e_\alpha, e_\gamma)+ \chi(e_\gamma, e_\alpha)\right) \\
&\ \ \  - (3\ch_0(E_\beta)-n_2)n_2\\
&= -1 - (3\ch_0(E_\beta) - n_2)^2 + (3\ch_0(E_\beta) - n_2)\cdot 3\ch_0(E_\beta) - (3\ch_0(E_\beta) - n_2)n_2\\
&= -1.
\end{align*}
\end{proof}

\subsection{Nef cone}\label{sec4.2}

In this section we study the boundary of the nef cone of the moduli space $\mathfrak M_{GM}^{ss}(w)$. Due to Theorem \ref{left half upper plane's main theorem in
the body}, this is the first actual wall to the left of the vertical wall $L_{w\pm}$. We assume that the character $w$ is primitive, $\ch_0(w)>0$ and $\frac{\ch_1(w)}{\ch_0(w)} \in (-1, 0]$. The following lemma gives an obvious bound for the boundary of the nef cone.

\begin{lemma}
Suppose $\bd(w) \geq 2$, then $L_{\mathcal O(-1)w}$ is an actual wall for $w$.
\end{lemma}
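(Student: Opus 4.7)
The plan is to apply Theorem \ref{thm: actual wall} with the destabilizing character $v := \tilde{v}(\mathcal O(-1)) = (1,-1,\tfrac{1}{2})$. First I would check the numerical conditions on $v$: $\ch_0(v) = 1 > 0$ and $\chy(v) = -1 < \chy(w)$, since by hypothesis $\chy(w) \in (-1,0]$. The line $L_{vw}$ lies strictly to the left of the vertical wall $L_{w\pm}$, and since $v$ sits on $\bd_0$ while $w$ sits on or below $\bd_2$ (from $\bd(w)\ge 2$), $L_{vw}$ enters $\bd_{<0}\subset \Geo_{LP}$ just to the left of $v$, producing actual stability conditions on this wall. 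A direct comparison using $\bd(w)\ge 2$ and $\chy(w)>-1$ confirms that $v$ lies above the last wall $\lw_w$: the exceptional character $E_w$ (or $E_w(-3)$) that defines $\lw_w$ sits well below $v$ in the $\cccp$.

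The character $v$ is exceptional (the class of $\mathcal O(-1)$), so the first alternative in the condition of Theorem \ref{thm: actual wall} is satisfied for $v$. For $w-v$, I would split into cases. If $\ch_0(w)=1$, then $w-v$ is torsion with $\ch_1(w-v) = \ch_1(w)+1 \ge 1 > 0$ (using $\chy(w)>-1$ and $\ch_1(w)\in\mathbb Z$), hence not inside $\Cone_{LP}$ by Definition \ref{defn:lpcurve}. If $\ch_0(w)\ge 2$, a direct computation yields
\[
\bd(w-v) \;=\; \frac{\ch_0(w)\bigl(2\bd(w)(\ch_0(w)-1) + (1+\chy(w))^2\bigr)}{2(\ch_0(w)-1)^2},
\]
which is bounded below by $\tfrac{2\ch_0(w)}{\ch_0(w)-1} \ge 2$ using $\bd(w)\ge 2$ and $(1+\chy(w))^2\ge 0$. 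So $w-v$ lies on or below $\bd_2$, well below $C_{LP}$, and is not inside $\Cone_{LP}$.

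It remains to verify that neither $v$ nor $w-v$ belongs to $\mathrm{TR}_{wE}$ for any exceptional bundle $E$. By Remark \ref{rem:actwall}, only two candidate exceptional bundles need be checked, corresponding to the segments of $\bd_{1/2}$ crossed by $L_{vw}$. For $v$ itself: when $E=v$, $v$ lies on the boundary line $L_{wv}$ of $\mathrm{TR}_{wv}$, but $v$ sits strictly above its own associated point $e^+$, whereas the triangle $\mathrm{TR}_{wv}$ occupies the wedge below $L_{wv}$ with apex $e^+$; computing the two vertices where $L_{wv}$ meets $L_{e^+e^l}$ and $L_{e^+e^r}$ shows $v$ lies outside the closed segment of $L_{wv}$ bounding the triangle. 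For any other relevant $E$ to the left of $E_w$, if $v$ happens to lie on $L_{e^+e^r}$ (equivalently $\chi(v,E)=0$), Remark \ref{rem:actwall} specifically excludes this. For $w-v$: the torsion subcase places it at infinity, outside any affine triangle; in the positive-rank subcase, $\bd(w-v)\ge 2$ places $w-v$ below the relevant triangles, whose vertices all satisfy $\bd<2$ (the apex $e^+$ has $\bd\le 1$, and the two vertices on $L_{wE}$ have $\bd$ strictly less than $2$).

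All conditions of Theorem \ref{thm: actual wall} are thus met for $(v, w-v)$, so $L_{\mathcal O(-1)w}$ is an actual wall. The main obstacle is the careful bookkeeping around the $\mathrm{TR}_{wE}$ conditions of Remark \ref{rem:actwall}, particularly in the borderline cases where $L_{vw}$ becomes nearly parallel to one of the triangle sides — these arise only at isolated $w$ with $\bd(w)$ close to the threshold value $2$.
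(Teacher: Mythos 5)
Your overall route is the same as the paper's: both apply the numerical criterion (the paper via its lower-rank specialization, Corollary \ref{cor: actual wall}, you via Theorem \ref{thm: actual wall} directly) with $v=\tilde v(\mathcal O(-1))$. Your extra work on $w-v$ is correct but unnecessary in the lower-rank case --- the identity
\[
\bd(w-v)=\frac{\ch_0(w)\bigl(2\bd(w)(\ch_0(w)-1)+(1+\chy(w))^2\bigr)}{2(\ch_0(w)-1)^2}\;\geq\;\frac{\ch_0(w)}{\ch_0(w)-1}\,\bd(w)\;>\;2
\]
checks out, and your $\mathrm{TR}_{wE}$ discussion is essentially sound under the hypotheses (for $\chy(w)\in(-1,0]$ and $\bd(w)\geq 2$ the wall $L_{\mathcal O(-1)w}$ has slope $\leq-\tfrac52$, so the side of $\mathrm{TR}_{w\mathcal O(-1)}$ lying on the wall sits to the right of $v$; note the degenerate case $\chy(w)=0$, $\bd(w)=2$, where the wall is exactly parallel to $L_{e^+e^l}$, and note also that the paper's own usage exempts exceptional characters from the $\mathrm{TR}$ condition anyway).

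The genuine gap is the sentence ``a direct comparison using $\bd(w)\geq 2$ and $\chy(w)>-1$ confirms that $v$ lies above the last wall $\lw_w$.'' That assertion is not a routine check --- it is the entire content of the lemma, and it is the only place where the threshold $2$ enters essentially. You never identify $E_w$ or locate $\lw_w$, and without doing so the claim can fail: if $w$ lies \emph{above} the line $L_{\mathcal O(-1)\mathcal O(-1)^r}$, then $w\notin\mathfrak R_{\mathcal O(-1)}$, the associated exceptional bundle $E_w$ has $\chy(E_w)>-1$, the last wall passes through $e_w$ (or $e_w(-3)$) strictly above $L_{\mathcal O(-1)w}$, and by Lemma \ref{lemma:lw} there are no semistable objects on $L_{\mathcal O(-1)w}$ at all --- so it is not an actual wall. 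What you must actually verify is that $w$ lies below $L_{\mathcal O(-1)\mathcal O(-1)^r}$, which forces $\chy(E_w)\leq -1$ and places $v$ on or above $\lw_w$ in all three cases of Definition \ref{def:lastwall}. This is exactly the paper's computation: $\mathcal O(-1)^r=\bigl(1,\tfrac{1-\sqrt5}{2},\tfrac{1-\sqrt5}{4}\bigr)$, and the maximum of $\bd$ on the segment of $L_{\mathcal O(-1)\mathcal O(-1)^r}$ over $\chy\in[-1,0]$ is attained at $P=\bigl(1,0,-\tfrac{1+\sqrt5}{2}\bigr)$ with $\bd_P=\tfrac{1+\sqrt5}{2}<2$, so $\bd(w)\geq 2$ pushes $w$ below the line. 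Since $\tfrac{1+\sqrt5}{2}\approx 1.618$, the hypothesis $\bd(w)\geq 2$ is genuinely needed here and cannot be recovered from the parts of your argument that you did carry out (your $w-v$ estimate, for instance, only uses $\bd(w)\geq 2(\ch_0(w)-1)/\ch_0(w)$). Finally, be aware of the boundary case $E_w=\mathcal O(-1)$ with $\chi(E_w,w)>0$ (e.g.\ $w=(1,0,-2)$): there $L_{\mathcal O(-1)w}$ \emph{is} $\lw_w$, so a strict ``$v$ above $\lw_w$'' is false and you must invoke Theorem \ref{prop:lastwall} (as the paper does by citing it alongside the corollary) to conclude that the last wall is itself an actual wall.
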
 \label{lemma: first bound by O}
\begin{proof}
By Corollary \ref{cor:  actual wall} and Theorem \ref{prop:lastwall}, we need to show that $w$ is below the line $L_{\mathcal O(-1)\mathcal O(-1)^r}$.

The point $\mathcal O(-1)^r$ is the intersection of $\bd_{\frac{1}{2}}$ and $L_{\mathcal O\mathcal O(-1)}$, so on the $\cccp$, its coordinate  $\left(1,\frac{\ch_1}{\ch_0},\frac{\ch_2}{\ch_0}\right)=\left(1, \frac {1-\sqrt{5}}{2}, \frac {1-\sqrt{5}}{4}\right)$. Let $P$ be the intersection point of $L_{\mathcal O(-1)\mathcal O(-1)^r}$ and $L_{\mathcal O\pm}$. The function $\bd$ on the line segment $l_{\mathcal O(-1)P}$ reaches its maximum at $P=(1,0, -\frac{1+\sqrt{5}}{2})$, and $\bd_P=\frac{1+\sqrt{5}}{2} < 2$. So $w$ is below the line $L_{\mathcal O(-1)\mathcal O(-1)^r}$.
\end{proof}

By the lemma, when $\frac{\ch_1(w)}{\ch_0(w)} \in (k-1, k]$ for some integer $k$ and $\bd(w)\geq 2$, $L_{\mathcal O(k-1)w}$ is an actual wall.

\begin{lemma}
Suppose $\bd(w)\geq 10$, then the first lower rank wall $L_{vw}$ with $\ch_0(v) \leq \ch_0(w)$ is given by the character $v$ satisfying the following two conditions:
\begin{itemize}
\item $\frac{\ch_1(v)}{\ch_0(v)}$ is the greatest rational number less than $\frac{\ch_1(w)}{\ch_0(w)}$ with $\ch_0(v)\leq \ch_0(w)$;
\item Given the first condition, if $\ch_1(v)$ is even (odd resp.), let $\ch_2(v)$ be the greatest integer ($2\ch_2(v)$ be the greatest odd integer resp.), such that the point $v$ is either an exceptional character or not inside  
$\mathrm{Cone}_{LP}$.
\end{itemize}\label{lemma: first lower rank}
\end{lemma}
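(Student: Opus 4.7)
The plan is to apply Theorem \ref{thm: actual wall} and its Corollary \ref{cor: actual wall} to the candidate $v$ described in the statement, and then to argue that no other lower-rank candidate $v'$ gives an actual wall strictly between $L_{vw}$ and the vertical wall $L_{w\pm}$. Throughout, I visualize everything on the $\cccp$ with the origin placed at $w$ so that ``closer to vertical'' means ``slope $\chy(v')$ closer to $\chy(w)$ from below.''

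First I would reduce the optimization to two steps. For a potential lower-rank destabilizer $v'$, the line $L_{v'w}$ is closer to $L_{w\pm}$ precisely when $\chy(v')$ is larger. By the Farey/Stern--Brocot structure of rationals with bounded denominator at most $\ch_0(w)$, there is a unique largest $\chy < \chy(w)$ realized by some $(\ch_0(v'),\ch_1(v'))$, and this forces condition (1). Fixing this slope, the line through the origin of that slope parametrizes all admissible $(\ch_0(v'),\ch_1(v'),\ch_2(v'))$, and making $L_{v'w}$ steeper means pushing $v'$ upward, i.e.\ maximizing $\che(v')$. The integrality constraint $\ch_2(F) - \tfrac{1}{2}\ch_1(F)^2\in\mathbb Z$ for any coherent object on $\pp$ forces exactly the parity condition on $\ch_2(v)$ in (2), and the requirement from Theorem \ref{thm: actual wall} that $v$ be exceptional or not inside $\Cone_{LP}$ is the extra condition on $\ch_2(v)$.

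Next I would verify that the $v$ so chosen satisfies all the hypotheses of Theorem \ref{thm: actual wall}. The character $v$ is exceptional or outside $\Cone_{LP}$ by construction. For $w-v$, note $\chy(w-v)>\chy(w)>\chy(v)$, so by Lemma \ref{lemma:largerslopeisstable}, $L_{w(w-v)}=L_{vw}$ lies between $L_{w-v,\pm}$ and $\lw_{w-v}$ as soon as $L_{vw}$ itself lies between $L_{w\pm}$ and $\lw_w$; the hypothesis $\bd(w)\geq 10$ keeps $w$ comfortably far from $C_{LP}$ and guarantees this. The genuinely delicate check is that neither $v$ nor $w-v$ sits in $\mathrm{TR}_{wE}$ for any exceptional $E$. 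For $v$: if $v$ were strictly inside some $\mathrm{TR}_{wE}$, then by the structure of $C_{LP}$ (see the figure of $C_{LP}$ and Definition \ref{def:trew}) one could strictly enlarge $\che(v)$ while keeping $v$ on the same line through the origin and still have $v$ exceptional or outside $\Cone_{LP}$, contradicting maximality in (2). For $w-v$: $w-v$ lies on the opposite component of $L_{vw}\cap\bd_{\geq 0}$, and since $\bd(w)\geq 10$ the line $L_{vw}$ has large $\chy$-length intersection with $\bd_{\leq 0}$; together with Lemma \ref{lem:l3}, $w-v$ stays outside any such triangle.

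Finally I would rule out that a different lower-rank $v'$ gives an actual wall strictly between $L_{vw}$ and $L_{w\pm}$. If $\chy(v')>\chy(v)$, then the Farey property forces either $\ch_0(v')>\ch_0(w)$ (excluded by the lower-rank hypothesis) or $\chy(v')\geq\chy(w)$ (then $L_{v'w}$ is not on the primitive side). If $\chy(v')=\chy(v)$ but $\che(v')>\che(v)$, then either $v'$ fails the parity of Chern characters, or $v'$ lies strictly inside $\Cone_{LP}$ and is non-exceptional, so Theorem \ref{thm: actual wall} forbids an actual wall. The main obstacle, and the place where the bound $\bd(w)\geq 10$ is essential, will be handling the triangle obstruction $\mathrm{TR}_{wE}$ uniformly in $E$: one must ensure that for every exceptional $E$ whose triangle could meet $L_{vw}$, $w$ is far enough from $L_{e^+e^l}$ and $L_{e^+e^r}$ that the candidate $v$ constructed above does not get pushed into $\mathrm{TR}_{wE}$. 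I expect this to follow from a direct estimate comparing $\bd(w)$ to the ``size'' of those triangles (each controlled by $\frac{1}{\ch_0(E)^2}$), combined with Remark \ref{rmk:twobelow} to guarantee the first actual wall really is a lower-rank wall in this regime; the numerical value $10$ is what makes the two estimates compatible.
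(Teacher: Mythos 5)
Your plan fails at its very first reduction: the claim that ``the line $L_{v'w}$ is closer to $L_{w\pm}$ precisely when $\chy(v')$ is larger'' is false. The wall $L_{v'w}$ is the line through the two points $w$ and $v'$ in the $\cccp$, so its steepness depends on $\che(v')$ as well as on $\chy(v')$: a candidate $v'$ with strictly smaller slope than $v$ but with $\che(v')$ pushed up near the Le Potier ceiling can a priori give a wall steeper than $L_{vw}$. Because of this false premise, your closing case analysis only treats $\chy(v')\geq\chy(v)$ and silently omits the main case $\chy(v')<\chy(v)$ --- which is precisely where the hypothesis $\bd(w)\geq 10$ and essentially all of the paper's work live. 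The paper disposes of this case by a quantitative estimate: writing $d_\chi=\chy(v)-\chy(v')\in\big[\tfrac{1}{\ch_0(v)\ch_0(v')},1\big]$, and using the maximality of $\che(v)$ at its slope together with the bound $\tfrac52$ on the slopes of the relevant segments $L_{e^le^+}$, $L_{e^re^+}$ of $C_{LP}$ and the exceptional correction $\tfrac{1}{\ch_0(v')^2}$, one shows that any lower-rank $v'$ whose wall lies between $L_{vw}$ and $L_{w\pm}$ satisfies $\che(v')-\che(w)\leq 9$, forcing $\bd(w)<10$, a contradiction. Nothing in your outline replaces this computation, and without it even the steepest-slope candidate is not known to give the \emph{first} wall; the constant $10$ comes out of exactly this chain of inequalities, not out of a comparison of triangle sizes.

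The second gap is your exclusion of $v\in\mathrm{TR}_{wE}$ ``by maximality of $\ch_2(v)$,'' which is invalid. Points of $\mathrm{TR}_{wE}$ are already not inside $\Cone_{LP}$ --- that is the entire point of Definition \ref{def:trew}: these are characters outside the cone for which $\mathfrak M^s_\sigma(v)$ is nevertheless empty on the wall, by Proposition \ref{prop:wisnotstabbelowexc} --- and the upper edges of the triangle lie on the lines $L_{e^le^+}$ and $L_{e^+e^r}$, i.e.\ on $C_{LP}$ itself. So a point of the triangle can sit within vertical distance much less than the lattice step $\tfrac{1}{\ch_0(v)}$ of the cone, and increasing $\ch_2(v)$ by one may land strictly inside $\Cone_{LP}$; hence the maximal lattice point at a given slope can perfectly well lie in some $\mathrm{TR}_{wE}$, and no contradiction with your condition (2) arises. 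The paper's actual argument is different and is again bootstrapped from the missing estimate: $\bd(w)\geq 10$ forces the slope of $L_{we}$ below $-9$, so the $\chy$-width of $\mathrm{TR}_{wE}$ is less than $\tfrac{1}{6\ch_0(E)^2}$; membership $v\in\mathrm{TR}_{wE}$ then gives $\tfrac{1}{\ch_0(E)\ch_0(v)}<\tfrac{1}{6\ch_0(E)^2}$, hence $\ch_0(v)>6\ch_0(E)$ and $\ch_0(E)<\ch_0(w)$, so $L_{wE}$ would itself be a lower-rank actual wall (Corollary \ref{cor: actual wall}) strictly between $L_{vw}$ and $L_{w\pm}$, contradicting the first step. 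Thus the estimate you defer with ``I expect this to follow'' is not a routine check but the core of the proof, and it cannot be run until the first-step inequality is in place. A smaller point: for lower-rank walls you need not verify anything about $w-v$; Corollary \ref{cor: actual wall} is stated precisely so that only $v$ must be tested, so your appeals to Lemma \ref{lemma:largerslopeisstable} and Lemma \ref{lem:l3} for $w-v$ are unnecessary.
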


\begin{proof}
We may assume that $0<\chy(w)\leq 1$. Note that the slopes of $L_{e^le^+}$ and $L_{e^re^+}$ for any exceptional object with $\chy(e)$ in $[-1,0]$ are at least $-\frac{5}{2}$.

We first show that there is no actual wall between $L_{vw}$ and $L_{w\pm}$. Suppose that there is a character $v'$ with $\ch_0(v')\leq \ch_0(w)$ and $\frac{\ch_1(v')}{\ch_0(v')}< \frac{\ch_1(w)}{\ch_0(w)}$, such that $L_{v'w}$ is an actual wall between $L_{vw}$ and $L_{w\pm}$. By the previous lemma, we may assume that $\frac{\ch_1(v')}{\ch_0(v')} \geq 1$. Since $v'$ is either an exceptional character or below C$_{LP}$, by the assumptions on $v$,
\[\frac{\ch_2(v)}{\ch_0(v)} - \frac{\ch_2(v')}{\ch_0(v')} \geq -\frac{1}{\ch_0(v)} + \frac{5}{2}\left(\frac{\ch_1(v')}{\ch_0(v')}-\frac{\ch_1(v)}{\ch_0(v)}\right) - \frac{1}{\ch_0(v')^2}.\]
The coefficient $\frac{5}{2}$ of second term is with respect to the minimum slope of the Le Potier curve. The last term is for the case that $v'$ is exceptional: $\che(e)-\che(e^+) = \frac{1}{\ch_0(e)^2}$. This inequality holds since otherwise $v-(0,0,1)$ will be below C$_{LP}$ with smaller $\ch_2(v)$.

Write $d_\chi := -\frac{\ch_1(v')}{\ch_0(v')}+\frac{\ch_1(v)}{\ch_0(v)}$ for simplicity, then $\frac{1}{\ch_0(v)\ch_0(v')} \leq d_\chi \leq 1$.
Since $L_{v'w}$ is between $L_{vw}$ and $L_{w\pm}$, in other words, $v$ is below $l_{v'w}$, we have the inequality:
\begin{align*}
\che(v')-\che(w)  &\leq \left(\che(v')-\che(v)\right)\frac{\chy(w)-\chy(v')}{d_\chi}\\
&\leq \left(\frac{1}{\ch_0(v)} + \frac{1}{\ch_0(v')^2} + \frac{5}{2}d_\chi\right) \cdot \left(1+ \frac{1}{d_\chi}\cdot \left(\chy(w)-\chy(v)\right)\right)\\
&\leq 1+ 1+ \frac{5}{2} + \frac{1}{d_\chi}\cdot \left(\chy(w)-\chy(v)\right) \cdot \left(\frac{1}{\ch_0(v)} + \frac{1}{\ch_0(v')^2} + \frac{5}{2}d_\chi\right)\\
&\leq \frac{9}{2} + \frac{1}{\ch_0(w)}\left(\frac{1}{d_\chi}\left(\frac{1}{\ch_0(v)} + \frac{1}{\ch_0(v')}\right) + \frac{5}{2}\right)\\
&\leq \frac{9}{2} + 1 + 1 + \frac{5}{2} = 9.
\end{align*}
Therefore
\begin{align*}
\bd_w =& \che(w)+\frac{1}{2}\left(\che(w)\right)^2\leq -\che(w)+\frac{1}{2}\left(\che(v')\right)^2\\
=& - \che(w)+\che(v') +\bd_{v'}<9 + 1=10,
\end{align*}
which contradicts to our assumption.\\

We next show that $L_{vw}$ is an actual wall. By Corollary \ref{cor: actual wall}, it suffices to prove that $v$ is not in TR$_{wE}$ for any exceptional bundle $E$ such that $-1\leq \chy(E)\leq 0$. Suppose that $v $ is in TR$_{wE}$ for an exceptional bundle $E$, then since $\bd_w \geq 10$, the slope of $L_{wE}$ is less than $-9$. The $\frac{\ch_1}{\ch_0}$-width of TR$_{wE}$ is less than
\[\frac{\text{the length of }l_{ee^+}}{9-\frac{5}{2}} < \frac{1}{6\ch_0(E)^2}.\]
Hence if $v$ is in TR$_{wE}$, then
\[\frac{1}{\ch_0(E)\ch_0(v)} \leq \frac{\ch_1(v)}{\ch_0(v)}-\frac{\ch_1(E)}{\ch_0(E)} < \frac{1}{6\ch_0(E)^2}.\]
In this way, $\ch_0(w)\geq \ch_0(v)>6\ch_0(E)$. In particular,
$\ch_0(E)\leq \ch_0(w)$. Note that $L_{wE}$ becomes a lower rank wall closer to between $L_{wv}$ and $L_{w\pm}$. By Corollary \ref{cor: actual wall}, $L_{wE}$ is an actual wall. But this is not possible by the proof in the first part. Therefore, $v$ is not in TR$_{wE}$ for any exceptional bundle $E$.
\end{proof}

Now we can describe the boundary of the nef cone:

\begin{theorem}
Let $w$ be a primitive character with $\ch_0(w) > 0$ and $\bd_w \geq 10$, the first actual wall for $\mathfrak M_{GM}^{s}(w)$ is given by $L_{vw}$, where $v$ is the character defined in Lemma \ref{lemma: first lower rank}.
\label{thm:nefcone}
\end{theorem}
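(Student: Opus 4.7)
The plan is to combine Lemma \ref{lemma: first lower rank} directly with a reduction argument ruling out higher-rank actual walls in the same region. By Lemma \ref{lemma: first lower rank} together with Corollary \ref{cor: actual wall}, the wall $L_{vw}$ is actual: its second clause guarantees $v$ is outside every $\mathrm{TR}_{wE}$, and $v$ is either exceptional or outside $\mathrm{Cone}_{LP}$ by construction, so Corollary \ref{cor: actual wall} applies with $\ch_0(v)\le \ch_0(w)$. The first clause of Lemma \ref{lemma: first lower rank} further shows that no actual wall with destabilizing character of rank at most $\ch_0(w)$ lies strictly between $L_{vw}$ and $L_{w\pm}$.

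It remains to exclude higher-rank actual walls $L_{v''w}$ with $\ch_0(v'')>\ch_0(w)$ strictly between $L_{vw}$ and $L_{w\pm}$. I would argue by contradiction: assuming such a wall is actual, Theorem \ref{thm: actual wall} supplies a Chern character $u\in l_{\sigma w}$ with $\ch_0(u)>0$, $\chy(u)<\chy(w)$, such that both $u$ and $w-u$ are exceptional or outside $\mathrm{Cone}_{LP}$ and neither lies in any $\mathrm{TR}_{wE}$. If $\ch_0(u)\le \ch_0(w)$, this is immediately at odds with the first clause of Lemma \ref{lemma: first lower rank}. If $\ch_0(u)>\ch_0(w)$, I would take $k$ to be the largest positive integer with $\ch_0(u)-k\ch_0(w)>0$ and set $u':=u-kw$. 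Then $u'$ is an integer Chern character on the same line $L_{v''w}$ with $0<\ch_0(u')\le \ch_0(w)$, and $\chy(u')<\chy(w)$ follows at once from the identity
\[\ch_1(u')-\chy(w)\ch_0(u')=\ch_0(u)\bigl(\chy(u)-\chy(w)\bigr)<0.\]

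The main obstacle will be verifying that $u'$ and $w-u'$ still satisfy the numerical conditions of Theorem \ref{thm: actual wall}: namely, that neither is inside $\mathrm{Cone}_{LP}$ and neither lies in any $\mathrm{TR}_{wE}$. Geometrically, traversing from $u$ to $u'$ along $L_{v''w}$ amounts to moving monotonically leftward in $\chy$, and I would invoke the hypothesis $\bd(w)\ge 10$ — exactly as in the quantitative bounds that already appeared in the proof of Lemma \ref{lemma: first lower rank} — to ensure that the shift does not cross $C_{LP}$ or enter any triangle $\mathrm{TR}_{wE}$. The key point is that the vertical gap between $w$ and $C_{LP}$ provided by $\bd(w)\ge 10$ exceeds both the slope variation along $L_{v''w}$ in the relevant interval and the maximal $\frac{\ch_1}{\ch_0}$-width of any $\mathrm{TR}_{wE}$ encountered (the latter is controlled by $\tfrac{1}{6\ch_0(E)^2}$ as in the second half of the proof of Lemma \ref{lemma: first lower rank}). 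Once established, $L_{u'w}=L_{v''w}$ becomes a lower-rank actual wall strictly between $L_{vw}$ and $L_{w\pm}$, contradicting the first clause of Lemma \ref{lemma: first lower rank} and completing the argument.
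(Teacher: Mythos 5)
Your reduction $u\mapsto u'=u-kw$ is where the argument breaks. Take the simplest case $k=1$ (which occurs whenever $\ch_0(u)\le 2\ch_0(w)$): then $u'=u-w$ is projectively the same point as $w-u$, so ``not inside $\Cone_{LP}$'' is indeed free from the hypotheses of Theorem \ref{thm: actual wall} --- but that point lies on the \emph{left} branch of $L_{uw}\cap\bd_{\geq 0}$, far to the left of $\sigma$ (the quotient object lives in $\Coh_{\#s}$, forcing $\chy(u-w)\le s$, and since the wall is steeper than $L_{vw}$ this puts $\chy(u-w)<-9\ch_0(w)$). Such a character is not on the segment $l_{\sigma w}$, and, more to the point, it is entirely outside the range in which the first clause of Lemma \ref{lemma: first lower rank} operates: that proof places the lower-rank destabilizer on the right branch near $w$ (it reduces via Lemma \ref{lemma: first bound by O} to $\chy(v')>\chy(w)-1$, and its inequality chain repeatedly uses $d_\chi\le 1$). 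So exhibiting a positive-rank character of rank $\le\ch_0(w)$ on the line does \emph{not} contradict the lemma, and your contradiction evaporates. For $k\ge 2$ the situation is worse: the points $u-jw$ move monotonically leftward through the stretch of the wall-line lying in $\bd_{<0}$ --- an interval of $\chy$-length on the order of $9\ch_0(w)$, strictly inside $\Cone_{LP}$ --- and nothing prevents $u'$ from landing in it. Your heuristic that the ``vertical gap'' from $\bd(w)\ge 10$ controls the shift points the wrong way: a larger $\bd_w$ makes the wall steeper and that forbidden stretch longer, so preservation of the conditions of Theorem \ref{thm: actual wall} under the shift is not just unproven but not true by any argument of the sketched kind.

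The paper's proof needs no reduction at all; it exploits exactly the far-left position that defeats your scheme. The conditions of Theorem \ref{thm: actual wall} force $v'-w$, of rank $r=\ch_0(v')-\ch_0(w)\ge 1$, onto the left branch, whence
\[\frac{\ch_1(v')-\ch_1(w)}{r}=\chy(v'-w)<-9\ch_0(w),\]
because the slope of $L_{vw}$ is already $<-9\ch_0(w)$ by the bound $\bd_w\ge 10$. Combining this with $\chy(v')>-1$ (from Lemma \ref{lemma: first bound by O}) gives $r<\frac{1}{9}+\frac{r}{9}$, impossible for a positive integer, so higher-rank walls in the region simply do not exist. If you want to salvage your approach, you must replace the appeal to Lemma \ref{lemma: first lower rank} by this kind of integrality-versus-steepness arithmetic on the left branch; your identity $\ch_1(u')-\chy(w)\ch_0(u')=\ch_0(u)\bigl(\chy(u)-\chy(w)\bigr)$ is correct and is in fact the first step of that computation, but as written your proof does not close.
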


\begin{proof}
We may assume that $\frac{\ch_1(w)}{\ch_0(w)} \in (-1, 0]$, and by Lemma \ref{lemma: first lower rank}, we only need to show that any higher rank actual wall is not between $L_{vw}$ and $L_{w\pm}$. Let $v'$ be a character satisfying the properties in Theorem \ref{thm: actual wall} with $\ch_0(v') = \ch_0(w) + r$ for some positive integer $r$. 
%Then $v'-w$ is outside the cone $\bd_{<0}$.

%\newpage \newpage a \newpage a\newpage a\newpage a\newpage\newpage %a\newpage\newpage a \newpage a \newpage

The slope of $L_{vw}$ is less than
\[(\bd_w-1)\lfs\left(\frac{\ch_1(w)}{\ch_0(w)}-\frac{\ch_1(v)}{\ch_0(v)}\right) < -9\ch_0(w).\]
So the left intersection point of $L_{vw}\cap\bd_0$ has $\frac{\ch_1}{\ch_0}$-coordinate less than $-9\ch_0(w)$. Since $v'-w$ is to the left of this point, we get the inequality
\[\chy(v'-w) < -9\ch_0(w),\]
hence
\[r < \frac{1}{9} \cdot \frac{\ch_1(w)-\ch_1(v')}{\ch_0(w)}.\]
By Lemma \ref{lemma: first bound by O}, we have $\frac{\ch_1(v')}{\ch_0(v') }> -1$, so
\[\ch_1(v') > -\ch_0(w)-r.\]
Therefore,
\[r< \frac{1}{9}\cdot \frac{\ch_1(w)+\ch_0(w)+r}{\ch_0(w)} \leq \frac{1}{9} \cdot\frac{\ch_0(w)+r}{\ch_0(w)}\leq \frac{1}{9} +\frac{r}{9}.\]
This leads to the contradiction since $r<1$ and cannot be a positive integer.
\end{proof}

\subsection{A concrete example}\label{sec4.4}
In this section, we apply the criteria for actual walls and compute the stable base locus walls on the primitive side for the moduli space of stable objects of character $w=(\ch_0,\ch_1,\ch_2)$ $=$ $(4,0,-15)$.

We first determine the last wall of $w$. The equation of $L^{\bd_{\frac{1}{2}}}_w$ is given by
\[\che+\frac{3\sqrt{35}}{14}\chy+\frac{15}{4}=0.\]
The $\chy$ coordinates of the intersection points $L^{\bd_{\frac{1}{2}}}_w\cap \bd_{\frac{1}{2}}$ are $-\frac{\sqrt{35}}{2}\pm\frac{3}{2}$. The larger one is approximately $-1.458$ and the intersection point falls in the segment between $e^l$ and $e^r$, for the exceptional character $e$ given by $E_{\left(\frac{3}{2}\right)}$, which is the cotangent bundle $\Omega$. 

By the Hirzebruch-Riemann-Roch formula in the proof for Lemma \ref{lemma:chiformrho}, 
\[\chi(\Omega,w) = \chi\left((2,-3,\frac{3}{2}),(4,0,-15)\right)=-30 +6+18+8=2>0.\]
Therefore $w$ is above the line $L_{e^le^+}$, and is of Case 1 in the Definition \ref{def:lastwall}. The last wall of $w$ is given by $L_{\Omega w}$ with equation:
\[\che+3\chy+\frac{15}{4}=0.\]
We now look for all the lower rank walls. By Corollary \ref{cor: actual wall}, we only need determine all characters $v\in K(\pp)$ such that
\begin{itemize}
\item $0<\ch_0(v)\leq \ch_0(w)=4$, $\chy(v)<\chy(w)=0$;
\item $v$ is between $L_{w\pm}$ and $\lw_w$;
\item $v$ is exceptional or not inside Cone$_{LP}$;
\item $v$ is not in TR$_{wE}$ for any exceptional character $E$.
\end{itemize}

When $\ch_0(v)$ is $1$, $\ch_1(v)$ can only be $-1$, $v$ is either $(1,-1,\frac{1}{2})$ or $(1,-1,-\frac{1}{2})$.

When $\ch_0(v)$ is $2$, $\ch_1(v)$ can be $-1$ or $-2$, $v$ is one of the characters as follows: $(2,-1,-\frac{1}{2})$; $(2,-1,-\frac{3}{2})$; $(2,-1,-\frac{5}{2})$; $(2,-1,-\frac{7}{2})$; $(2,-2,-1)$.

When $\ch_0(v)$ is $3$, $\ch_1(v)$ can be $-1$, $-2$, $-3$ or $-4$, $v$ is one of the following characters:
\begin{itemize}
\item $(3,-1,-\frac{3}{2})$; $(3,-1,-\frac{5}{2})$; $(3,-1,-\frac{7}{2})$; $\dots$ $(3,-1,-\frac{15}{2})$;
\item $(3,-2,-1)$; $(3,-2,-2)$; $(3,-2,-3)$; $(3,-2,-4)$; $(3,-2,-5)$;

\item $(3,-3,-\frac{3}{2})$; $(3,-4,1)$.
\end{itemize}

When $\ch_0(v)$ is $4$, we have $-5\leq\ch_1(v)\leq -1$,  $v$ is one of the following characters:
\begin{itemize}
\item $(4,-1,-\frac{5}{2})$; $(4,-1,-\frac{7}{2})$; $\dots$ $(4,-1,-\frac{23}{2})$; 
\item $(4,-2,-4)$; $(4,-2,-5)$; $\dots$ $(4,-2,-8)$; 
\item $(4,-3,-\frac{3}{2})$; $(4,-3,-\frac{5}{2})$; $\dots$ $(4,-3,-\frac{11}{2})$;
\item $(4,-4,-2)$; $(4,-5,\frac{1}{2})$.
\end{itemize}

The nef boundary of $\mathfrak M^s_{GM}(w)$ is the wall $L_{w(4,-1,-\frac{5}{2})}$.

We now compute the characters that are contained in TR$_{wE}$ for some exceptional $E$. By Lemma \ref{lem:highrkinstrip}, we only need consider the exceptional bundles $\mathcal O(-1)$ and $\Omega(1)$. The equations for the three edges of TR$_{w\mathcal O(-1)}$ are:
\[\che -\frac{1}{2}\chy=0, \; \che+\frac{5}{2}\chy+3=0,\; \che +\frac{17}{4}\chy+\frac{15}{4}=0.\]
By a direct computation, characters $(3,-2,3)$, $(4,-3,-\frac{5}{2})$, $(4,-3,-\frac{7}{2})$ are in TR$_{w\mathcal O(-1)}$. The equations for the three edges of TR$_{w\Omega(1)}$ are:
\[\che -\chy=0, \; \che+2\chy+\frac{3}{2}=0,\; \che + 7 \chy+\frac{15}{4}=0.\]
The coordinates of vertices are 
$(1,-\frac{1}{2},-\frac{1}{2})$, $(1,-\frac{9}{20},-\frac{3}{5})$, $(1,-\frac{15}{32},-\frac{15}{32})$. Since for any $v$, $\chy(v)$ is not in $(-\frac{1}{2},-\frac{9}{20})$, there is no $v$ in TR$_{w\Omega(1)}$.

To find the higher rank walls, we first determine the bound for $\ch_0(v)$. $\lw_w\cap \bd_{\leq 0}=\left\{\left(1,-3+\sqrt{\frac{3}{2}},\frac{1}{2}\left(-3+\sqrt{\frac{3}{2}}\right)^2\right),\; \left(1,-3-\sqrt{\frac{3}{2}},\frac{1}{2}\left(-3-\sqrt{\frac{3}{2}}\right)^2\right)\right\}$. Let $v\in K(\pp)$ be a character such that
\begin{itemize}
\item $\ch_0(v)>\ch_0(w)=4$, $\chy(v)<\chy(w)=0$;
\item $v$ is between $L_{w\pm}$ and $\lw_w$;
\item $v$ and $v-u$ are exceptional or not inside Cone$_{LP}$;
\item $v$ and $v-u$ are not in TR$_{wE}$ for any exceptional character $E$.
\end{itemize}
Since $v$ and $v-u$ are on the different components of $L_{vw}\cap\bd_{\geq 0}$, we have the inequalities:
\[\chy(v)\geq-3+\sqrt{\frac{3}{2}},\; \chy(v-u)\leq -3 -\sqrt{\frac{3}{2}}.\]
Therefore, \begin{equation}\left(-3+\sqrt{\frac{3}{2}}\right)\ch_0(v)\leq \ch_1(v) \leq \left(-3-\sqrt{\frac{3}{2}}\right)(\ch_0(v)-4). \label{eqch}\end{equation}
We get a bound for $\ch_0(v)$: $\ch_0(v)\leq 2+2\sqrt{6}<7$. When $\ch_0(v)$ is $6$, by (\ref{eqch}), $\ch_1(v)\leq -2\left(-3-\sqrt{\frac{3}{2}}\right)<-8$. Therefore, $\chy(v)\leq -9 =\chy(E_w)$, which is not possible. 

When $\ch_0(v)$ is $5$, by (\ref{eqch}), $\ch_1(v)$ can be $-5$, $-6$ or $-7$. $v$ is one of the following characters:

\[(5,-5,-\frac{7}{2});\; (5,-6,0);\; (5,-7,\frac{5}{2}).\]

These characters $v$ and $w-v$ are not contained in TR$_{wE}$ for any exceptional $E$. Combining Theorem \ref{left half upper plane's main theorem in
the body} and Theorem \ref{thm: actual wall}, we may draw the stable base locus decomposition walls in the divisor cone of $\mathfrak M^s_{GM}(w)$ as follows.

%%%%%%%%%%%%%%%%%%%%%%%%%%%%%%%%%%%%%%%%%%%%%%%%%%%%
\begin{center}
\begin{tikzpicture}[domain=1:5]

\tikzset{%
    add/.style args={#1 and #2}{
        to path={%
 ($(\tikztostart)!-#1!(\tikztotarget)$)--($(\tikztotarget)!-#2!(\tikztostart)$)%
  \tikztonodes},add/.default={.2 and .2}}
}

% opacity of the usual walls
\newcommand\XA{0.1}

\coordinate (W) at (0,-3.75);
\node at (W) {$\bullet$};

%\foreach actual char, draw the wall

%rank 1 and 2
\coordinate (V) at (-1.5,0.75);
\node [opacity=\XA] at (V) {$\bullet$};
\draw [add= 0 and 1] (W) to (V) node[above]{Eff};

%nef boundary
\coordinate (V) at (-1/4,-5/8);
\node [opacity=\XA]at (V) {$\bullet$};
\draw [add= 0 and 2] (W) to (V) node[above]{Nef};

\coordinate (V) at (-1,.5);
\node [opacity=\XA] at (V) {$\bullet$};
\draw [add= 0 and 1.2][opacity=\XA] (W) to (V);

\coordinate (V) at (-1,-.5);
\node [opacity=\XA]at (V) {$\bullet$};
\draw [add= 0 and 1.5][opacity=\XA] (W) to (V);

\coordinate (V) at (-.5,-.25);
\node [opacity=\XA] at (V) {$\bullet$};
\draw [add= 0 and 1.5][opacity=\XA] (W) to (V);

\coordinate (V) at (-.5,-.75);
\node[opacity=\XA]at (V) {$\bullet$};
\draw [opacity=\XA][add= 0 and 1.5] (W) to (V);

\coordinate (V) at (-.5,-1.25);
\node [opacity=\XA]at (V) {$\bullet$};
\draw [add= 0 and 2.5] [opacity=\XA] (W) to (V);

\
\coordinate (V) at (-.5,-1.75);
\node[opacity=\XA] at (V) {$\bullet$};
\draw[opacity=\XA] [add= 0 and 3.5] (W) to (V);

%rank 3 walls
\coordinate (V) at (-1/3,-0.5);
\node[opacity=\XA] at (V) {$\bullet$};
\draw[opacity=\XA] [add= 0 and 1.5] (W) to (V);

\coordinate (V) at (-1/3,-5/6);
\node[opacity=\XA] at (V) {$\bullet$};
\draw[opacity=\XA] [add= 0 and 1.5] (W) to (V);

\coordinate (V) at (-1/3,-7/6);
\node[opacity=\XA] at (V) {$\bullet$};
\draw[opacity=\XA] [add= 0 and 1.5] (W) to (V);

\coordinate (V) at (-1/3,-9/6);
\node[opacity=\XA] at (V) {$\bullet$};
\draw[opacity=\XA] [add= 0 and 1.5] (W) to (V);

\coordinate (V) at (-1/3,-11/6);
\node[opacity=\XA] at (V) {$\bullet$};
\draw[opacity=\XA] [add= 0 and 1.5] (W) to (V);

\coordinate (V) at (-1/3,-13/6);
\node[opacity=\XA] at (V) {$\bullet$};
\draw[opacity=\XA] [add= 0 and 1.5] (W) to (V);

\coordinate (V) at (-1/3,-15/6);
\node[opacity=\XA]  at (V) {$\bullet$};
\draw[opacity=\XA] [add= 0 and 1.5] (W) to (V);

\coordinate (V) at (-2/3,-1/3);
\node [opacity=\XA] at (V) {$\bullet$};
\draw[opacity=\XA] [add= 0 and 1.5] (W) to (V);

\coordinate (V) at (-2/3,-2/3);
\node [][opacity=\XA] at (V) {$\bullet$};
\draw[opacity=\XA] [add= 0 and 1.5] (W) to (V);

\coordinate (V) at (-2/3,-4/3);
\node [opacity=\XA] at (V) {$\bullet$};
\draw[opacity=\XA] [add= 0 and 1.5] (W) to (V);

\coordinate (V) at (-2/3,-5/3);
\node [][opacity=\XA] at (V) {$\bullet$};
\draw[opacity=\XA] [add= 0 and 1.5] (W) to (V);

\coordinate (V) at (-4/3,1/3);
\node[opacity=\XA] at (V) {$\bullet$};
\draw[opacity=\XA] [add= 0 and 1] (W) to (V);

%rank 4 walls includes nef boundary

\coordinate (V) at (-1/4,-7/8);
\node[opacity=\XA] at (V) {$\bullet$};
\draw[opacity=\XA][add= 0 and 1.5] (W) to (V);

\coordinate (V) at (-1/4,-9/8);
\node[opacity=\XA] at (V) {$\bullet$};
\draw[opacity=\XA] [add= 0 and 1.5] (W) to (V);
\coordinate (V) at (-1/4,-11/8);
\node[opacity=\XA] at (V) {$\bullet$};
\draw[opacity=\XA] [add= 0 and 1.5] (W) to (V);
\coordinate (V) at (-1/4,-13/8);
\node[opacity=\XA] at (V) {$\bullet$};
\draw[opacity=\XA] [add= 0 and 1.5] (W) to (V);
\coordinate (V) at (-1/4,-15/8);
\node[opacity=\XA] at (V) {$\bullet$};
\draw[opacity=\XA] [add= 0 and 1.5] (W) to (V);
\coordinate (V) at (-1/4,-17/8);
\node[opacity=\XA] at (V) {$\bullet$};
\draw[opacity=\XA] [add= 0 and 1.5] (W) to (V);
\coordinate (V) at (-1/4,-19/8);
\node[opacity=\XA] at (V) {$\bullet$};
\draw[opacity=\XA] [add= 0 and 1.5] (W) to (V);
\coordinate (V) at (-1/4,-21/8);
\node[opacity=\XA] at (V) {$\bullet$};
\draw[opacity=\XA] [add= 0 and 1.5] (W) to (V);
\coordinate (V) at (-1/4,-23/8);
\node[opacity=\XA] at (V) {$\bullet$};
\draw[opacity=\XA] [add= 0 and 1.5] (W) to (V);

\coordinate (V) at (-2/4,-4/4);
\node[opacity=\XA] at (V) {$\bullet$};
\draw[opacity=\XA] [add= 0 and 1.5] (W) to (V);

\coordinate (V) at (-2/4,-6/4);
\node[opacity=\XA] at (V) {$\bullet$};
\draw[opacity=\XA] [add= 0 and 1.5] (W) to (V);

\coordinate (V) at (-2/4,-8/4);
\node[opacity=\XA] at (V) {$\bullet$};
\draw[opacity=\XA] [add= 0 and 1.5] (W) to (V);

\coordinate (V) at (-3/4,-3/8);
\node [opacity=\XA]at (V) {$\bullet$};
\draw[opacity=\XA] [add= 0 and 1.5] (W) to (V);

\coordinate (V) at (-3/4,-9/8);
\node [opacity=\XA] at (V) {};
\draw[opacity=\XA] [add= 0 and 1.5] (W) to (V);

\coordinate (V) at (-3/4,-11/8);
\node [opacity=\XA] at (V) {};
\draw[opacity=\XA] [add= 0 and 1] (W) to (V);

\coordinate (V) at (-5/4,1/8);
\node[opacity=\XA] at (V) {};
\draw[opacity=\XA] [add= 0 and 1] (W) to (V);

%rank 5 walls

\coordinate (V) at (-5/5,-7/10);
\node[opacity=\XA] at (V) {$\bullet$};
\draw[opacity=\XA] [add= 0 and 1] (W) to (V);

\coordinate (V) at (-6/5,0);
\node[opacity=\XA] at (V) {$\bullet$};
\draw[opacity=\XA] [add= 0 and 1] (W) to (V);

\coordinate (V) at (-7/5,5/10);
\node[opacity=\XA] at (V) {$\bullet$};
\draw[opacity=\XA] [add= 0 and 1] (W) to (V);

%B axis
\draw[->] (-4,-3.75) -- (0,-3.75) node[above right] {$w$}-- (1.5,-3.75) node[above right] {$B$};

%B axis
\draw[->,opacity =0.3] (-4,0) -- (2.5,0) node[above right] {$\frac{ch_1}{ch_0}$};

%H axis
\draw[->][] (0,-4.25)-- (0,0) node [above right] {O} --  (0,6) node[right] {$H$};

%H axis
\draw[->,opacity=0.3] (0,-4.25)-- (0,0) node [above right] {O} --  (0,4) node[right] {$\frac{ch_2}{ch_0}$};

%delta cone
\draw [thick](-3,4.5) parabola bend (0,0) (1.5,1.125);
 %node[right] {$\frac{1}{2}s^2-q=0$};

\begin{comment}
\draw (0,0) node {$\bullet$};
\draw (0,0) node[above right] {$\mathcal O$};
\draw (0,-1) node {$\bullet$};
\draw (0,-1) node[below left] {$e^+$};

\draw (-0.382, -0.427) node {$\bullet$};
\draw (-0.382, -0.427) node[below left] {$e^l$};

\draw (0.382, -0.427) node {$\bullet$};
\draw (0.382, -0.427) node[below right] {$e^r$};

\draw [blue,thick] (-0.382, -0.427) -- (0,-1) node[below right] {Le Potier Curve};
\draw [blue,thick] (0.382, -0.427) -- (0,-1);

\draw (-1,0.5) node {$\bullet$};
\draw (-1,0.5) node[above] {$\mathcal O(-1)$};
\draw (1,0.5) node {$\bullet$};
\draw (1,0.5) node[above] {$\mathcal O(1)$};

\draw [dashed] (-0.382, -0.427) -- (-1,0.5);
\draw [dashed] (0.382, -0.427) -- (1,0.5);

\draw [red] (-2, -1.1) node {Figure A};

\end{comment}

\end{tikzpicture}
The stable base locus decomposition of the effective cone of $\mathfrak M^s_{GM}(4,0,-15)$
\end{center}
%%%%%%%%%%%%%%%%%%%%%%%%%%%%%%%%%%

%\newpage

\bibliographystyle{alpha}\bibliography{}

\end{document}